\documentclass[11pt]{amsart} 
\usepackage[lmargin=1in,rmargin=1in,tmargin=1in,bmargin=1in]{geometry}
\usepackage[ps,all,arc,rotate]{xy}
\usepackage{graphicx, float, epstopdf}
\usepackage{color}
\usepackage[unicode,bookmarks=false]{hyperref}
\usepackage{centernot}
\usepackage{fancyhdr}
\usepackage{multirow}
\usepackage[utf8]{inputenc}
\usepackage{amsfonts,amssymb,amsmath,amsthm,mathrsfs}
\usepackage{graphics, setspace}
\usepackage{braket}
\usepackage{mathtools}
\usepackage{tikz}  
\usepackage{upgreek}
\usepackage{xcolor}
\usepackage{array,esint}

\numberwithin{equation}{section}
\numberwithin{figure}{section}
\allowdisplaybreaks[4]         
 
\newtheorem{theorem}{Theorem}[section]
\newtheorem{lemma}[theorem]{Lemma}
\newtheorem{proposition}[theorem]{Proposition}

\newtheorem{corollary}[theorem]{Corollary}
\theoremstyle{definition}
\newtheorem{definition}[theorem]{Definition}
\newtheorem{remark}[theorem]{Remark}
\usepackage{booktabs}
\usepackage{array}
\usepackage{vcell}
\newcommand{\Z}{\mathbb{Z}}

\newcommand{\R}{\mathbb{R}}
\newcommand{\C}{\mathbb{C}}
\newcommand{\N}{\mathbb{N}}

\newcommand{\ID}{\mathbf{1}}

\newcommand{\e}{\operatorname{e}}

\newcommand{\real}{\operatorname{Re}}

\newcommand{\quadand}{\quad \textnormal{and} \quad}

\newcommand{\norm}[1]{\left\lVert#1\right\rVert}

\newcommand{\g}{\mathfrak{g}}
\newcommand{\rad}{\operatorname{rad}}
\newcommand{\A}{\mathcal{A}}
\newcommand{\fM}{\mathfrak{M}}
\newcommand{\fm}{\mathfrak{m}}
\newcommand{\da}{d_{a/b}}
\newcommand{\dma}{d_{-a/b}}

\begin{document}

\title[Heath-Brown identities for fractional powers of $\zeta$]{Heath-Brown identities for fractional powers of $\zeta$}
\author[N.~Robles]{Nicolas Robles}
\address{RAND Corporation, Engineering and Applied Sciences,
Arlington, VA}
\email{robles.nicolas.m@gmail.com}

\subjclass[2020]{Primary: 11L07, 11N37. Secondary: 11M06, 11P55.}
\keywords{Exponential sums, divisor functions, fractional powers of the Riemann zeta-function, Heath-Brown identities, Vaughan identity, Type I/II sums, circle method, Selberg-Delange method, Newton's binomial theorem, moments}
\begin{abstract}
We construct finite Heath-Brown-type identities for the fractional
powers $\zeta(s)^{\pm a/b}$ of the Riemann zeta-function, for every reduced fraction $a/b$ with
$0 < a/b < 1$, from Newton's binomial series in the algebra of
arithmetic functions, and we use them to prove the
Vinogradov-quality bound
$\sum_{n \le x} d_{\pm a/b}(n)\e(n\alpha) \ll_{a,b}
( x q^{-1/2} + x^{4/5} + x^{1/2} q^{1/2} ) (\log 2x)^{C}$, for some constant $C=C(a,b)>0$, whenever
$|\alpha - r/q| \le 1/q^2$ with $(r, q) = 1$. The bound carries no
$x^{\varepsilon}$ loss, and the same machinery gives the endpoint
case of the M\"obius function with an absolute constant. As
applications we determine the major-arc expansion of
$S_z(x, \alpha) = \sum_{n \le x} d_z(n)\e(n\alpha)$ to arbitrary
logarithmic precision for real $0<|z|<1$. For rational
$z \in (-1,1)$, we determine the order of magnitude of
$\sup_{\alpha} |S_z(x, \alpha)|$ and prove an asymptotic formula for the moments $\int_0^1 |S_z(x, \alpha)|^{s}\, d\alpha$ for every fixed real $s > 2$. The minor-arc analysis avoids the theory of $L$-functions entirely, and all constants are effective except those inherited from the Siegel-Walfisz theorem.
\end{abstract}

\maketitle

\section{Introduction}\label{sec:intro}

\subsection{Motivation and previous results}\label{subsec:motivation}

For $z \in \C$ let $d_z$ denote the multiplicative function defined on
prime powers by
\begin{equation}\label{eq:dz-def}
d_z(p^k) = \binom{-z}{k}(-1)^k
= \frac{z(z+1)\cdots(z+k-1)}{k!} \qquad (k \ge 0).
\end{equation}
Here and throughout, $p$ denotes a prime and $k$ a nonnegative
integer. Since $\sum_{k \ge 0} d_z(p^k) t^k = (1-t)^{-z}$ for $|t| < 1$, the
function $d_z$ is the sequence of Dirichlet coefficients of the
$z$-th power of the Riemann zeta-function:
\[
\sum_{n = 1}^{\infty} \frac{d_z(n)}{n^{s}} = \zeta(s)^{z}
\qquad (\real s > 1).
\]
This is the notation of \cite[\S 17.2.1, Exercise~9]{MV}. The family
$d_z$ interpolates the classical multiplicative functions: $d_1 = \ID$
is the constant function $1$, $d_k = \tau_k$ is the $k$-fold divisor
function for every integer $k \ge 1$, and $d_{-1} = \mu$ is the
M\"obius function. The subject of this paper is the exponential sum
\[
S_z(x, \alpha) := \sum_{n \le x} d_z(n)\e(n\alpha),
\qquad \e(\theta) := e^{2\pi i \theta},
\]
for \emph{fractional} values of $z$. Throughout this introduction,
$r/q$ denotes a reduced fraction with $|\alpha - r/q| \le 1/q^2$.

Let us first recall the standard estimates available at the integer points of the family. For the von Mangoldt function,
Vinogradov's estimate (\cite[Theorem~17.1]{MV},
\cite[Theorem~23.8]{K}) states that
\begin{equation}\label{eq:vinogradov}
\sum_{n \le x} \Lambda(n)\e(n\alpha)
\ll \Big( \frac{x}{\sqrt q} + x^{4/5} + \sqrt{xq} \Big)
(\log x)^{5/2}.
\end{equation}
For the M\"obius function, which is the point $z = -1$ of our family,
one has (\cite[\S 17.2.1, Exercise~1, eq.~(17.33)]{MV}; see also
\cite[Theorem~1.4]{BRZ} and
\cite[Exercise~23.4(b)]{K})
\begin{equation}\label{eq:mobius-eps}
\sum_{n \le x} \mu(n)\e(n\alpha)
\ll_{\varepsilon} \Big( \frac{x}{\sqrt q} + \sqrt{xq} \Big)
(\log x)^{3} \;+\; x^{4/5 + \varepsilon}.
\end{equation}
Here $\varepsilon$ denotes an arbitrarily small positive number, and
the implied constant depends on it. In \cite{BRZ} this estimate
drives an application to partitions.
In both \eqref{eq:vinogradov} and \eqref{eq:mobius-eps}
the exponent $4/5$ has the same source. A combinatorial identity --
Vaughan's identity \cite{Va} (see \cite[eq.~(17.5)]{MV},
\cite[Lemma~23.1]{K}), or Heath-Brown's identity \cite{HB} --
decomposes the weight into
finitely many convolutions in which every factor is either
\emph{short}, meaning supported on a small interval $[1, v]$, or \emph{long and smooth}: in general a fixed smooth weight such as
$\ID$ or a power of $\log$ (\cite[Theorem~23.5]{K} is stated for
$f * \log^{v}$); in the identities of this paper, every long factor
is a convolution power of $\ID$ itself. The type~I/type~II method of \cite[Chapter~17]{MV},
\cite[Chapter~23]{K} then converts this structure into cancellation.

For the higher divisor functions $\tau_k = d_k$ with $k \ge 2$, the
same shape holds, and in fact no identity is required to reach it.
The factorization $\tau_k = \ID^{*k}$ already exhibits the weight as
a convolution in which every factor is smooth, and a dyadic
decomposition of the factors feeds the type~I/type~II machinery
directly. Carrying this out, Pandey \cite[Proposition~4]{Pa} proved that for every fixed integer $k \ge 2$,
\begin{equation}\label{eq:pandey}
\sum_{n \le x} \tau_k(n)\e(n\alpha)
\ll_k \Big( \frac{x}{\sqrt q} + x^{4/5} + \sqrt{qx} \Big)
(\log x)^{O(1)},
\end{equation}
which is precisely the shape \eqref{eq:vinogradov}, uniformly across
the integer column of the family. This bound is what powers the minor
arcs in his asymptotic evaluation of the moments
$\int_0^1 |\sum_{n \le x} \tau_k(n) \e(n\alpha)|^s\, d\alpha$ for
real $s > 2$.

The fractional functions $d_z$, $z \notin \Z$, sit in a genuine gap
between these situations. There is no factorization of $d_z$ into
smooth pieces, so the elementary route to \eqref{eq:pandey} is
closed; and none of the classical identities applies as stated. To
our knowledge, the one fractional case treated in the two source chapters just cited is the
object of \cite[\S 17.2.1, Exercise~9]{MV}, which is devoted to
$d_{1/2}$. Part~(c) of that exercise proposes a decomposition of
$d_{1/2}$ modelled on Vaughan's identity, and part~(d) derives from
it the bound
\begin{equation}\label{eq:six-sevenths}
\sum_{n \le x} d_{1/2}(n)\e(n\alpha)
\ll \Big( \frac{x}{\sqrt q} + x^{6/7} + \sqrt{xq} \Big)
(\log x)^{3}.
\end{equation}

Comparing \eqref{eq:six-sevenths} with \eqref{eq:vinogradov},
\eqref{eq:mobius-eps} and \eqref{eq:pandey}, two questions
arise. First, does an estimate of this shape hold for every
rational exponent $z = \pm a/b$ in the \emph{closed} interval
$[-1, 1]$, and not only for $z = 1/2$? Second, can the middle
exponent $6/7$ be pushed down to the classical Type I/Type II benchmark $4/5$?

Three pieces of evidence suggested that the answer to both questions
should be yes. The first came from looking again at the exercise
itself. The decomposition of \cite[\S 17.2.1, Exercise~9(c)]{MV} is
not exploited in full in part~(d): the triple-convolution piece --
the analogue of the term $S_2$ in the standard notation for
Vaughan's identity -- is estimated there only as a type~I sum. A
routine adaptation, treating that piece bilinearly with the two
cutoffs re-balanced at $u = v = x^{1/3}$, lowers the middle exponent
from $6/7$ to $5/6$, still with purely logarithmic losses (the
coefficients involved are divisor-bounded, so their mean squares
cost only powers of $\log x$). The final proof in this
paper supersedes that intermediate estimate, but it was the first
indication that the identity of the exercise had not been pushed to
its limit. The second piece of evidence is the behaviour at the
neighbouring exponents: the integer column sits at $4/5$ by \eqref{eq:pandey},
and the endpoint $\mu = d_{-1}$ sits at $4/5$ up to $x^\varepsilon$
by \eqref{eq:mobius-eps}. It would be strange, indeed unsatisfactory, for the fractional
interior of the family to behave worse than both boundaries.

The third piece of evidence, of a different kind, came from a
companion program on the additive side. In \cite{GR}, Gafni and the
author studied $S_f(\alpha; X) = \sum_{n \le X} f(n) \e(n\alpha)$
for the class $\mathcal F_0$ of \emph{additive} functions $f$ with
$f(p) = 1$ at every prime -- the prototypes being $\omega$, the
number of distinct prime factors, and $\Omega$, the number of prime
factors with multiplicity -- and proved, for every
$\Delta \in (0, \tfrac12)$,
\begin{equation}\label{eq:GR}
S_{\omega}(\alpha; X)
\ll \Big( \frac{X}{q^{\Delta}} + X^{5/6}
+ X^{1-\Delta} q^{\Delta} \Big) (\log X)^{4},
\end{equation}
with an analogous bound across the class; the estimate was then
converted, through the circle method, into the asymptotic solution of
the ternary Goldbach-Vinogradov problem for $\Omega$. The
concluding section of \cite{GR} raises the problem of bounding the
twisted sums $\sum_{n \le X} z^{f(n)} \e(n\alpha)$ and identifies
the generalized divisor function $d_z$ as the closely related central
object for which a bound would be `very interesting' to derive. The
present paper takes up precisely that problem, for the function $d_z$
itself; it is a pleasant coincidence that the additive class of
\cite{GR} rests, for now, at the same middle exponent $5/6$ as the
intermediate multiplicative estimate described above.

The device that realizes this hope is an identity of Heath-Brown
type for $\zeta^{\pm a/b}$. Heath-Brown's identity for $\mu$ and
$\Lambda$ (\cite{HB}; see \cite[\S 17.1.1, Exercise~6]{MV}, \cite[Exercise~23.5]{K}, \cite[Proposition~13.3]{IK}) may be viewed as the depth-$K$ member of a
family whose depth-$2$ member is Vaughan's identity \cite{Va}.
It is instructive to compare \cite[Theorem~13.9]{IK}, which is
likewise free of the $x^{\varepsilon}$ loss but carries the middle
term $x^{9/10}$ in place of $x^{4/5}$. The middle exponent records
the lower edge of the range in which the bilinear (type~II)
estimate is applied. The bilinear estimate saves the square root of
the shorter side of the factorization, so the worst case sits at the
lower edge of the admissible window: an edge $x^{\vartheta}$
contributes
$x \cdot x^{-\vartheta/2}$, so an edge $x^{1/5}$ yields $x^{9/10}$,
while the full window $x^{2/5} < M \le x^{3/5}$, when it is available,
yields $x^{4/5}$; when it is not available, the same combinatorial lemma
forces a long smooth variable and the type~I alternative applies.
The exponent $4/5$ also governs the exponential sum over the
prime-detecting function in the partition setting \cite{Va2}. A recent announcement of Maynard, Pandey and Radziwi{\l}{\l},
reported in Radziwi{\l}{\l}'s CANT 2026 talk \cite{MPR}, improves this
exponent to $19/24$ in the corresponding estimate for exponential sums
over primes, using large-value estimates. A
complementary direction is taken in \cite{DRZZ, DRZZ2}, where exponential
sums twisted by a broad class of arithmetic functions --
multiplicative, additive, or neither -- are bounded within a
common framework; the generality there is wider than ours, and
correspondingly the estimates for any single weight are not of
Vinogradov quality, which is the trade-off our restriction to the
family $d_{\pm a/b}$ is designed to avoid. Increasing the
depth produces more convolution pieces, but it shortens the short
factors to length $x^{1/K}$; and at depth $K = 5$ every dyadic
configuration of variables falls either to the bilinear (type~II)
estimate or to the linear (type~I) estimate. This is the numerology
behind the exponent $4/5$ in \eqref{eq:vinogradov}, and the feature
of the identity that makes it work is the one emphasized in
\cite[p.~240]{K}: all of the long functions appearing in it are
smooth.

For an integer power of $\zeta$ the identity terminates for
algebraic reasons, the relevant binomial expansion being finite. For
a fractional power $\zeta^{-a/b}$ the natural generating identity is
Newton's binomial series
\[
(1 - t)^{-1/b} = \sum_{k \ge 0} \binom{-1/b}{k} (-t)^{k},
\]
which does \emph{not} terminate. The truncation at depth $K$ must
instead be earned by a support argument. If $F$ denotes the partial
sum of $\zeta^{-a/b}$ up to $V$, then the remainder
\[
H = 1 - \zeta^{a} F^{b}
\]
has Dirichlet coefficients supported on the integers exceeding $V$.
Consequently $H^{K}$ is supported beyond $V^{K}$, and it contributes
nothing below $x$ as soon as $V^{K} \ge x$. Carrying this out, first
for $d_{\pm 1/2}$ and then for general $d_{\pm a/b}$, occupies
Section~\ref{sec:identity}. The resulting identities have explicit
rational coefficients -- for instance $\tfrac{315}{128}$,
$-\tfrac{105}{32}$, $\tfrac{189}{64}$, $-\tfrac{45}{32}$,
$\tfrac{35}{128}$ in the case $z = -1/2$, $K = 5$ -- and every long
factor in them is a positive power of $\zeta$, hence smooth.

\subsection{Our new results}\label{subsec:results}

For the minor-arc results -- Theorem~\ref{thm:main} and
Corollary~\ref{cor:minor}, and the identities of
Section~\ref{sec:identity} behind them -- we fix integers $a, b$
with
\begin{equation}\label{eq:ab-cond}
1 \le a < b, \qquad (a, b) = 1,
\end{equation}
so that the exponents $z = \pm a/b$ range over the nonzero rationals
of the open interval $(-1, 1)$; the endpoints $z = \pm 1$ are
discussed after the statements. The major-arc statements below
(Theorems~\ref{thm:local} and~\ref{thm:major}) hold for every real
$z$ with $0 < |z| < 1$, while Theorems~\ref{thm:sup}
and~\ref{thm:scalar} return to rational $z$. Our main result is the following.

\begin{theorem}\label{thm:main}
Fix $a, b$ satisfying \eqref{eq:ab-cond}. There is a constant $C = C(a,b) > 0$ such that
for every $x \ge 3$, every $\alpha \in \R$, and every reduced
fraction $r/q$ with $|\alpha - r/q| \le 1/q^2$,
\begin{equation}\label{eq:main}
\sum_{n \le x} d_{\pm a/b}(n)\e(n\alpha)
\ll_{a,b}
\Big( \frac{x}{\sqrt{q}} + x^{4/5} + \sqrt{xq} \Big)
(\log 2x)^{C}.
\end{equation}
\end{theorem}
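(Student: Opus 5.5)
The plan is to combine a finite Heath-Brown-type identity for $\zeta^{\pm a/b}$ with the standard Type~I/Type~II machinery.

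\textbf{The identity.} Set $V = x^{1/5}$ and $K = 5$. Let $F$ be the Dirichlet series whose coefficients are those of $d_{-a/b}$ restricted to $[1,V]$. Then $H = 1 - \zeta^{a}F^{b}$ has coefficients supported on $(V,\infty)$, so $H^5$ vanishes on $[1,x]$. For $\zeta^{-a/b}$ we expand, up to $x$,
\[
\zeta^{-a/b} = F\,(1-H)^{-1/b}
\]
by Newton's series. Every term with $H^{k}$, $k\ge 5$, vanishes below $x$, so the series truncates to $\sum_{k<5}\binom{-1/b}{k}(-H)^k$. Expanding $H^k = (1-\zeta^aF^b)^k$ binomially writes $d_{-a/b}$ on $[1,x]$ as a finite rational combination of convolutions
\[
F^{1+bj}\,\zeta^{aj}, \qquad 0\le j\le 4 .
\]
In each of these, every factor is either $\ID$ (long and smooth) or a copy of $F$, which is supported on $[1,V]$. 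For $\zeta^{a/b}$ the same works with $\zeta^{a/b} = \zeta\cdot\zeta^{-(b-a)/b}$, or directly by truncating $\zeta^{a/b}=\zeta^a F^{b-a}\cdots$; either way we get at most $5a$ copies of $\ID$ and a bounded number of copies of $F$.

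The coefficients of $F$ are bounded by a divisor-type function $\tau_{c}$, so all mean squares cost only powers of $\log x$. The number of terms and factors depends only on $(a,b)$, which is why the final exponent $C$ depends on $(a,b)$ and not on $\varepsilon$.

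\textbf{Reduction to dyadic sums.} Next, split each variable dyadically, costing $(\log x)^{O_{a,b}(1)}$. This reduces the problem to sums
\[
\sum_{n_1\cdots n_m\le x} \lambda_1(n_1)\cdots\lambda_m(n_m)\,\e(\alpha n_1\cdots n_m),
\qquad n_i\sim N_i,
\]
where short variables satisfy $N_i\le x^{1/5}$ and long variables carry the weight $\ID$. The cutoff $n\le x$ is handled by a standard Perron/smoothing device, or by the usual trick of absorbing the condition with a $\log$ loss.

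We then apply the combinatorial lemma (Heath-Brown's). Either some subproduct of the $N_i$ lies in $[x^{2/5},x^{3/5}]$, and we apply the bilinear estimate (Type~II, \cite[Chapter~17]{MV}):
\[
\ll \Big(\frac{x}{\sqrt q}+\frac{x}{\sqrt M}+\sqrt{xM}+\sqrt{xq}\Big)(\log x)^{O(1)},
\]
which gives $x^{4/5}$ when $M\in[x^{2/5},x^{3/5}]$. Or else some smooth variable has $N_i> x^{3/5}$, and we apply Type~I: sum over the long variable $\ID$ by the geometric series, and bound
\[
\sum_{m\le x^{2/5}}\min\Big(\frac{x}{m},\frac{1}{\|m\alpha\|}\Big)\ll \Big(\frac{x}{q}+x^{2/5}+q\Big)\log(qx).
\]
The combinatorial lemma applies because every short factor has size at most $x^{1/5}$: if no long factor exceeds $x^{3/5}$, greedily accumulating factors of size at most $x^{1/5}$, or a single long factor in $(x^{1/5},x^{3/5}]$ combined with others, lands a product in $[x^{2/5},x^{3/5}]$. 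The point needing care is that long factors in $(x^{1/5},x^{2/5})$ must be combinable. Since every factor is at most $x^{3/5}$ and the total is $x$, a standard greedy argument works once each factor is at most $x^{2/5}$ or lies in the window. A factor in $(x^{3/5},x]$ triggers Type~I.

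\textbf{Main obstacle.} I expect the main obstacle to be the identity step. Two things must be checked there. First, the truncated Newton expansion must really have only positive powers of $\zeta$ as long factors, with the number of $\ID$-factors bounded in terms of $(a,b)$. Second, the coefficient bounds for $F$ and for the combined short variables must be divisor-bounded, so that the Type~II Cauchy--Schwarz step loses only $(\log x)^{C}$. A loss of $x^\varepsilon$ would destroy the claim. Divisor-boundedness follows from $|d_{-a/b}(n)|\le \tau(n)$ and from products of at most $O_{a,b}(1)$ such factors being bounded by $\tau_{O(1)}$, whose second moments are $x(\log x)^{O(1)}$.
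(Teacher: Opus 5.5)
There is a genuine gap in the combinatorial step, and the difficulty of the theorem sits exactly there. First discard the trivial pieces with $N_1\cdots N_R\le x^{4/5}$, which you also need to do. After that, your dichotomy says that either a subproduct lies in $[x^{2/5},x^{3/5}]$ or some smooth variable exceeds $x^{3/5}$. This is false. Take three $\zeta$-variables of size about $x^{1/3}$ each and all $F$-variables equal to $1$; this configuration occurs, for example, in the term $\zeta^{3a}F^{3b+1}$. The subproducts are $x^{1/3}$, $x^{2/3}$ and $x$, so none lies in the window, and no variable exceeds $x^{3/5}$. Greedy accumulation works only with steps of logarithmic size at most $1/5$, which is the width of the window. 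Smooth variables in $(x^{1/5},x^{2/5})$ can jump over the window, so the condition ``each factor at most $x^{2/5}$'' is not enough. What is actually true (Lemma~\ref{lem:window}) is weaker: some smooth variable exceeds $x^{3/10}$, and the product of the others is below $x^{7/10}$.

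This changes the Type~I problem materially. The complementary variable $m$ is a product of several variables, so its coefficient is bounded only by $\tau_K(m)$, not by $1$. Your displayed estimate for the unweighted sum $\sum_m\min(x/m,1/\norm{m\alpha})$ therefore does not apply. With $R$ up to $x^{7/10}$ (already $x^{2/3}$ in the example), both routine ways of handling the weight fail:
\begin{itemize}
\item Cauchy--Schwarz against $\sum\tau_K^2$ gives the middle term $\sqrt{xR}=x^{17/20}$ (resp.\ $x^{5/6}$), which exceeds $x^{4/5}$.
\item Pulling out $\max_m\tau_K(m)$ loses a factor $x^{o(1)}$.
\end{itemize}
A divisor-weighted Type~I bound with only logarithmic loss, uniform in $q$, is not available; the paper points out that it is false in general.

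The paper closes the gap by splitting in $q$:
\begin{itemize}
\item For $q\le x^{3/10}$ or $q\ge x^{7/10}$ it uses the mean-square bound (Lemma~\ref{lem:type-I-ms}). There $x^{17/20}$ is dominated by $x/\sqrt q$ or by $\sqrt{xq}$ respectively.
\item For $x^{3/10}<q<x^{7/10}$ it uses the pointwise bound with $x^{o(1)}$ loss (Lemma~\ref{lem:type-I-pw}). There $x/q$, $R$ and $q$ are all $\ll x^{7/10}$, and the gap up to $x^{4/5}$ absorbs the loss.
\end{itemize}
Some such device is needed; as written, your argument does not reach $x^{4/5}$ with only logarithmic losses.

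The identity step, which you flagged as the main obstacle, matches the paper's construction and is fine. It does need two justifications: that substituting $H$ into Newton's series is legitimate, and that $b$-th roots with constant term $1$ are unique.
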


Estimate \eqref{eq:main} is \eqref{eq:vinogradov} with $\Lambda$
replaced by $d_{\pm a/b}$: all three terms carry only a power of
$\log x$, and in particular there is no $x^{\varepsilon}$ anywhere in
the statement. A word on where the losses live inside the proof may
be useful. A pointwise divisor bound of size $x^{o(1)}$ is used at
exactly one place, in the type~I analysis of the middle range
$x^{3/10} < q < x^{7/10}$; there the type~I support is at most
$x^{7/10}$ up to bounded factors, and the gap between the exponents
$7/10$ and $4/5$ absorbs the loss into the middle term of
\eqref{eq:main} before it can reach the statement. Outside that
range, the type~I sums are handled by a mean-square estimate that
loses only logarithms. This two-range organization is not a
refinement but a necessity: the tempting alternative -- a pointwise,
divisor-weighted type~I estimate with logarithmic losses, uniform in
$q$ -- is false in general, and the proof is arranged so that no
statement of that kind is ever used.

The statement actually consumed by the circle method is the
following corollary, in which each of the three terms of the minor-arc estimate is
$O_{a,b,A}\big( x (\log x)^{-A} \big)$ throughout the stated range of
denominators.

\begin{corollary}\label{cor:minor}
Fix $a, b$ satisfying \eqref{eq:ab-cond} and let $C = C(a,b)$ be as
in Theorem~\ref{thm:main}. Let $A > 0$. If $x \ge x_0(a,b,A)$, if
$\alpha \in \R$ admits a reduced approximation $r/q$ with
$|\alpha - r/q| \le 1/q^2$, and if
\[
(\log x)^{2A + 2C} \le q \le \frac{x}{(\log x)^{2A + 2C}},
\]
then
\[
\sum_{n \le x} d_{\pm a/b}(n)\e(n\alpha)
\ll_{a,b,A} \frac{x}{(\log x)^{A}}.
\]
\end{corollary}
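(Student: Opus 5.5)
The plan is to deduce the corollary directly from Theorem~\ref{thm:main}, bounding each of the three terms on the right of \eqref{eq:main} separately under the hypothesis on $q$. No new arithmetic input is needed; the argument is bookkeeping of logarithmic powers. Fix $A>0$ and write $L=\log x$, so that $\log 2x \le 2L$ for $x\ge 3$ and hence $(\log 2x)^C \ll_C L^C$.

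\emph{The first term.} The lower bound $q \ge L^{2A+2C}$ gives $\sqrt q \ge L^{A+C}$, and therefore
\[
\frac{x}{\sqrt q}\,(\log 2x)^C \ll \frac{x}{L^{A+C}}\,L^C = \frac{x}{L^A}.
\]

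\emph{The third term.} The upper bound $q\le x/L^{2A+2C}$ gives $xq \le x^2/L^{2A+2C}$, so $\sqrt{xq} \le x/L^{A+C}$. Multiplying by $(\log 2x)^C \ll L^C$ again yields $\ll x/L^A$.

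\emph{The middle term.} Here we have $x^{4/5}(\log 2x)^C \ll x^{4/5}L^C \le x/L^A$ as soon as $x^{1/5}\ge L^{A+C}$ (up to the constant). This holds for $x\ge x_0(a,b,A)$, because $C$ depends only on $a,b$.

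Adding the three contributions gives the claim, with the implied constant depending on $a,b,A$ (the dependence on $C$ is absorbed into $a,b$). The hypothesis that the range of $q$ be nonempty is harmless: it also holds for $x\ge x_0$. No step presents a real obstacle. The only point needing care is that $C$ in the $q$-range is the same constant as in Theorem~\ref{thm:main}, so that the $\log$-powers cancel exactly.
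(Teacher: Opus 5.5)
Your proposal is correct and matches the paper's proof essentially line by line. It uses the same three-term bookkeeping from Theorem~\ref{thm:main}: the lower bound on $q$ kills $x/\sqrt{q}$, the upper bound kills $\sqrt{xq}$, and the threshold $x \ge x_0(a,b,A)$ absorbs $x^{4/5}(\log x)^{C}$, after $(\log 2x)^{C} \ll_C (\log x)^{C}$.
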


The hypothesis $a < b$ leaves out the endpoints $z = \pm 1$ of the
closed interval. At $z = 1$ the sum $S_1(x, \alpha)$ is a geometric
series and satisfies the elementary bound
$\min(x, \norm{\alpha}^{-1})$, far stronger than \eqref{eq:main}; and
$z = 0$ gives $d_0 = \delta$, for which the sum is trivial. The
endpoint $z = -1$ is the M\"obius function, and it satisfies the
same estimate.

\begin{theorem}\label{thm:mu}
There is an absolute constant $C_0 > 0$ such that for all $x \ge 3$, all $\alpha \in \R$, and every reduced
fraction $r/q$ with $|\alpha - r/q| \le 1/q^2$,
\[
\sum_{n \le x} \mu(n)\e(n\alpha)
\ll
\Big( \frac{x}{\sqrt q} + x^{4/5} + \sqrt{xq} \Big) (\log x)^{C_0} .
\]
\end{theorem}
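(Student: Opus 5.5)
The proof runs the machinery behind Theorem~\ref{thm:main} at the integer point $z=-1$, where the identity is classical. The starting point is Heath-Brown's identity for $\mu$ at depth $K=5$. Let $F(s)=\sum_{m\le V}\mu(m)m^{-s}$ with $V=x^{1/5}$. Then $1-\zeta F$ has coefficients supported on $(V,\infty)$, so $(1-\zeta F)^5$ contributes nothing to $n\le x$. Expanding $\zeta^{-1}(1-\zeta F)^5=0$ in the range $n\le x$ gives
\[
\mu(n)=\sum_{j=1}^{5}(-1)^{j-1}\binom{5}{j}\,\bigl(\ID^{*(j-1)}*(\mu\ID_{\le V})^{*j}\bigr)(n),\qquad n\le x .
\]
This is exactly the $b=1$ specialization of the binomial construction of Section~\ref{sec:identity}: the series terminates for algebraic reasons and has integer coefficients. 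Every long factor is $\ID$, which is smooth, and every short factor is $\mu$ restricted to $[1,x^{1/5}]$, hence bounded by $1$. Each of the at most $5$ terms is a convolution of at most $9$ factors. I would split every variable dyadically, giving $O((\log x)^{9})$ configurations $N_1\cdots N_k\asymp$ (at most) $x$. A smooth cutoff or Perron-free truncation handles the condition $n\le x$; I would remove it by the standard device of a sharp cutoff with an extra $\log x$ loss.

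The combinatorial step is the one used at depth $5$ in the proof of Theorem~\ref{thm:main}. In each configuration, either some product of a subset of the variables lies in $[x^{2/5},x^{3/5}]$, or some single long smooth variable $N_i=\ID$-variable exceeds $x^{2/5}$. This holds because every short variable is at most $x^{1/5}$: one greedily accumulates short variables until the window is crossed, and a variable that jumps past the window must be a long one, of size greater than $x^{2/5}$.

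In the first case I would apply the type~II estimate of \cite[Ch.~17]{MV}/\cite[Ch.~23]{K} with $M\in[x^{2/5},x^{3/5}]$. The coefficients on each side are convolutions of $\ID$ and truncated $\mu$, so they are bounded by divisor functions $\tau_9$. Their mean squares are therefore $\ll M(\log x)^{O(1)}$, with no $x^{\varepsilon}$. The result is $\bigl(x q^{-1/2}+x^{4/5}+\sqrt{xq}\bigr)(\log x)^{O(1)}$.

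In the second case the sum is of type~I: an inner sum $\sum_{n\sim N}\e(mn\alpha)\ll\min(N,\norm{m\alpha}^{-1})$ weighted by coefficients $c_m$ of support $M\ll x^{3/5}$. This is where the loss must be controlled, exactly as described after Theorem~\ref{thm:main}, and I expect it to be the main obstacle. I would reuse the two-range organization proved for Theorem~\ref{thm:main}.

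For $q\le x^{3/10}$ or $q\ge x^{7/10}$, I would apply Cauchy--Schwarz:
\[
\sum_m |c_m|\min(N,\norm{m\alpha}^{-1})\le\Bigl(\sum_m|c_m|^2\min\Bigr)^{1/2}\Bigl(\sum_m\min\Bigr)^{1/2},
\]
together with the standard bound $\sum_{m\le M}\min(x/m,\norm{m\alpha}^{-1})\ll(x/q+M+q)\log$. The mean square of $c_m\ll\tau_8(m)$ should cost only logarithms after splitting according to $q$ as in that proof.

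In the middle range $x^{3/10}<q<x^{7/10}$, I would use the pointwise bound $|c_m|\ll x^{o(1)}$. The type~I bound then gives $(x/q+M+q)x^{o(1)}\ll x^{7/10+o(1)}$, which is absorbed into $x^{4/5}$.

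Since all the ingredients are those of Theorem~\ref{thm:main} with $a=b=1$, and the $\mu$-identity needs no rational-coefficient bookkeeping, the exponent of $\log$ comes out as an absolute constant $C_0$. The only point to verify is that no step of that proof used $a<b$ beyond the construction of the identity.
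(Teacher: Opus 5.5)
Your outline follows the paper's route: the $b=1$ identity, dyadic localization, a type~II/type~I dichotomy, and the two-range type~I treatment. However, the combinatorial step is false as you state it. It is not true that, absent a subproduct in $[x^{2/5},x^{3/5}]$, some smooth variable exceeds $x^{2/5}$. Counterexample: in the terms $\zeta^3F^4$ and $\zeta^4F^5$, take three $\ID$-variables of size about $x^{0.31}$ each and all other variables equal to $1$. The total size $x^{0.93}$ is admissible. Every single variable is below $x^{2/5}$, while every pair (about $x^{0.62}$) and the triple exceed $x^{3/5}$, so neither of your alternatives holds. Your greedy argument only shows that the variable jumping over the window exceeds $x^{1/5}$ (so it is smooth), not $x^{2/5}$.

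The correct statement is Lemma~\ref{lem:window}. First discard trivially the pieces with $N_1\cdots N_R\le x^{4/5}$; you must do this too, since for such pieces neither alternative can hold. Then either a window subproduct exists, or the largest variable exceeds $x^{3/10}$ and the others multiply to less than $x^{7/10}$. The proof shows that if all exponents $u_i\le 3/10$ and there is no good subset, then at most one $u_i$ lies in $(1/5,3/10]$ and the greedy argument forces $\sum u_i\le 2/5$.

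Consequently, the grouped type~I coefficient is supported up to $2^{K}x^{7/10}$, not $x^{3/5}$. This is exactly why the split of $q$ at $x^{3/10}$ and $x^{7/10}$ is needed. Your proposal is inconsistent on this point: with support $\ll x^{3/5}$, the mean-square type~I bound $x/\sqrt q+\sqrt{xM}+\sqrt{xq}$ would already be acceptable for every $q$, and the middle-range argument you import would be unnecessary. With $M\ll x^{7/10}$, the term $\sqrt{xM}\le x^{17/20}$ is absorbed by $x/\sqrt q$ when $q\le x^{3/10}$ and by $\sqrt{xq}$ when $q\ge x^{7/10}$. In between, the pointwise bound gives $x^{7/10+o(1)}$.

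Your Cauchy--Schwarz split is fine provided you bound the first factor trivially: $\sum_m|c_m|^2\min(N,\norm{m\alpha}^{-1})\le N\sum_{m\le M}|c_m|^2\ll NM(\log x)^{O(1)}\ll x(\log x)^{O(1)}$. The mean square needs no splitting according to $q$; the $q$-split serves only to absorb $\sqrt{xM}$. Do not try to bound that weighted sum by a type~I estimate with only logarithmic loss, which is the false estimate the paper warns against. Finally, dispose of the case $q>x$ by the trivial bound, since the type~II and type~I estimates carry factors $\sqrt{\log 2q}$ and $\log 2Tq$. With these corrections your argument becomes the paper's proof.
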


The constant $C_0$ is effective and could be extracted from the
proof; we have
made no attempt to optimize it. For $q$ in the analogous range, with $C_0$ in
place of $C$, the argument of Corollary~\ref{cor:minor} applied to
Theorem~\ref{thm:mu} gives
$\sum_{n \le x} \mu(n)\e(n\alpha) \ll_A x (\log x)^{-A}$. This
is the minor-arc half of Davenport's classical theorem, in which
the same bound holds for \emph{all} $\alpha$ once the major arcs
are supplied with the Siegel-Walfisz theorem for $\mu$; a
textbook treatment is \cite[Exercise~23.4(c)]{K}.

We emphasize that Theorem~\ref{thm:mu} is not a corollary of
Theorem~\ref{thm:main}: the hypothesis $a < b$ excludes the endpoint.
What happens at $z = -1$ is that the identity of
Section~\ref{sec:identity} degenerates into Heath-Brown's classical
identity for $\mu$ \cite{HB}, and the estimation machinery of
Section~\ref{sec:main-proof} applies to it verbatim, the short
coefficients being again bounded by $1$. We state the case separately
because it refines two known estimates: it removes the
$\varepsilon$ from \eqref{eq:mobius-eps} and from
\cite[Theorem~1.4]{BRZ}. Taken together with the elementary endpoint $z = 1$, where
$d_1 = \ID$ and the geometric series gives the same shape,
Theorems~\ref{thm:main} and~\ref{thm:mu} answer the two questions of
Section~\ref{subsec:motivation}: the Vinogradov shape, with middle
exponent $4/5$ and purely logarithmic losses, holds at every nonzero
rational $z$ of the closed interval $[-1, 1]$.

Two remarks on the scope of the statements. First, the rationality
of the exponent is essential to our method: the identity of
Section~\ref{sec:identity} realizes $\zeta^{-a/b}$ as a $b$-th root
of an integer power of $\zeta$, and no analogous algebraic handle is available within this construction for irrational $z$. Second, the restriction $|z| \le 1$ is
what keeps the short coefficients bounded by $1$; for exponents
outside $[-1, 1]$ they need not be, and although the arguments could
be enlarged to allow short coefficients bounded by a fixed divisor
function, at the cost of changing only logarithmic powers, we do not
pursue that generalization here.

The proof of Theorem~\ref{thm:main} has two independent layers. The
first is the identity itself (Section~\ref{sec:identity}): an exact
combinatorial decomposition of $d_{\pm a/b}$, valid on the integers
up to $V^5$, into five convolutions in which every factor is either
supported on $[1, V]$ and bounded by $1$, or a positive convolution
power of the constant function $\ID$. The second is the estimation
layer (Section~\ref{sec:main-proof}), which applies the identity
with $V = x^{1/5}$: a dyadic decomposition; a bilinear (type~II) estimate
whenever some subproduct of the variables can be grouped into the
window $[x^{2/5}, x^{3/5}]$; and, in the remaining configurations,
where a long smooth variable is forced to exist, the two-range
type~I treatment described above. Theorem~\ref{thm:mu} is obtained
by feeding the classical identity for $\mu$ into the same machinery.

\subsection{Applications}\label{subsec:applications}

With Corollary~\ref{cor:minor} in hand, the natural next step is the
one taken for $\Lambda$ in \cite[Chapter~17]{MV} and
\cite[Chapter~23]{K}: a matching major-arc analysis, and then
consequences via the circle method. We give three applications,
proved in Section~\ref{sec:applications}.

On the major arcs the arithmetic is carried by a family of local
factors, one for each modulus $q$, which play for $d_z$ the role that
$\mu(q)/\varphi(q)$ plays for the primes (visible in
\cite[eq.~(23.19)]{K}). Since these factors appear in the main terms
of all three applications, we introduce them first. Recall
Ramanujan's sum
\begin{equation}\label{eq:ramanujan-def}
c_q(m) = \sum_{\substack{1 \le a \le q \\ (a, q) = 1}}
\e\Big( \frac{am}{q} \Big),
\end{equation}
and write $\rad(a)$ for the radical of $a$, the product of the
distinct primes dividing $a$.

\begin{theorem}
\label{thm:local}
Fix a real number $z$ with $0 < |z| < 1$. For $q \ge 1$ set
\begin{equation}\label{eq:g-def}
\g_z(q) := \Big( \prod_{p \mid q} \Big( 1 - \frac1p \Big)^{z} \Big)
\cdot \frac{1}{\varphi(q)} \sum_{\substack{a \ge 1 \\ \rad(a) \mid q}}
\frac{d_z(a)\, c_q(a)}{a},
\end{equation}
where the series converges absolutely. Then $\g_z$ is multiplicative,
$\g_z(1) = 1$,
\begin{equation}\label{eq:gamma-p}
\g_z(p) = 1 - \Big( 1 - \frac1p \Big)^{z - 1},
\end{equation}
and $\g_z(q) \ll_{z, \varepsilon} q^{-1 + \varepsilon}$ for every
$\varepsilon > 0$.
\end{theorem}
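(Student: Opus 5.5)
The plan is to factor the inner sum over $a$ into an Euler product over $p \mid q$, using the multiplicativity of $c_q$ and $d_z$.

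\emph{Absolute convergence.} For $\rad(a)\mid q$ we have $|c_q(a)|\le \varphi(q)$. Since $|z|<1$, also $|d_z(p^k)|\le d_{|z|}(p^k)\le 1$. Hence
\[
\sum_{\rad(a)\mid q} \frac{|d_z(a)|}{a} \le \prod_{p\mid q}\Big(1-\frac1p\Big)^{-1} < \infty .
\]

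\emph{Multiplicativity.} Let $q=q_1q_2$ with $(q_1,q_2)=1$. Every $a$ with $\rad(a)\mid q$ factors uniquely as $a=a_1a_2$ with $\rad(a_i)\mid q_i$. We use the standard identity $c_{q_1q_2}(a_1a_2)=c_{q_1}(a_1a_2)\,c_{q_2}(a_1a_2)$. Since $(a_2,q_1)=1$, we have $c_{q_1}(a_1a_2)=c_{q_1}(a_1)$, because $c_{q}(m)$ depends only on $(m,q)$. Similarly $c_{q_2}(a_1a_2)=c_{q_2}(a_2)$. Together with $\varphi(q)=\varphi(q_1)\varphi(q_2)$, $d_z(a_1a_2)=d_z(a_1)d_z(a_2)$ and absolute convergence, the double sum splits and $\g_z(q)=\g_z(q_1)\g_z(q_2)$. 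The value $\g_z(1)=1$ is immediate, since only $a=1$ contributes.

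\emph{Prime powers.} For $q=p^m$ we need $c_{p^m}(p^k)$:
\[
c_{p^m}(p^k)=\begin{cases}\varphi(p^m), & k\ge m,\\ -p^{m-1}, & k=m-1,\\ 0, & k<m-1.\end{cases}
\]
For $m=1$ this gives
\[
\sum_{k\ge0} \frac{d_z(p^k)c_p(p^k)}{p^k} = -1 + p\Big(1-\frac1p\Big)^{-z},
\]
using $\sum_{k\ge0} d_z(p^k)p^{-k}=(1-1/p)^{-z}$. Multiplying by $(1-1/p)^z/(p-1)$ gives
\[
\frac{p-(1-1/p)^z}{p-1} = 1-\Big(1-\frac1p\Big)^{z-1}.
\]
This proves \eqref{eq:gamma-p}.

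For $m\ge2$ the sum is
\[
-p^{m-1}\frac{d_z(p^{m-1})}{p^{m-1}}+\varphi(p^m)\sum_{k\ge m}\frac{d_z(p^k)}{p^k}.
\]
Dividing by $\varphi(p^m)$, we get
\[
|\g_z(p^m)| \ll \frac{|d_z(p^{m-1})|}{\varphi(p^m)}+\sum_{k\ge m}\frac{|d_z(p^k)|}{p^k}\ll \frac{2}{p^{m-1}(p-1)}+\frac{1}{p^{m}(1-1/p)}\ll p^{-m}.
\]
Here we used $(1-1/p)^z\le 2$ and $|d_z|\le 1$ on prime powers. For $m=1$, the mean value theorem gives $|\g_z(p)|\le c(z)/p$.

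\emph{Conclusion.} Combining, $|\g_z(p^m)|\le B(z)\,p^{-m}$ uniformly in $p$ and $m$. By multiplicativity, $|\g_z(q)|\le B^{\omega(q)}/q\ll_{z,\varepsilon} q^{-1+\varepsilon}$, since $B^{\omega(q)}\ll_\varepsilon q^{\varepsilon}$. The only real point of care is the bookkeeping of the Ramanujan sum values at prime powers and the uniformity of constants in $p$. Everything else is formal.
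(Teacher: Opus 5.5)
Your plan is the paper's own: factor the series over $\rad(a)\mid q$ into local factors, using that $q\mapsto c_q(m)$ is multiplicative and that $c_q(m)$ depends only on $(q,m)$. You then evaluate each factor from the values of $c_{p^k}(p^j)$, bound every local factor by $O(p^{-k})$, and finish with $C^{\omega(q)}\ll_\varepsilon q^{\varepsilon}$. The absolute convergence, the multiplicativity argument, $\g_z(1)=1$, the bound for $m\ge2$, and the final $B^{\omega(q)}/q$ step are all fine.

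The derivation of \eqref{eq:gamma-p}, however, is wrong in two successive displays. The correct value is only asserted, not derived.

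First, the local sum at $q=p$ is not $-1+p(1-1/p)^{-z}$. The $k=0$ term is $c_p(1)=-1$, and each term with $k\ge1$ carries $c_p(p^k)=\varphi(p)=p-1$, so
\[
\sum_{k\ge0}\frac{d_z(p^k)\,c_p(p^k)}{p^k}
= -1+(p-1)\Big(\Big(1-\frac1p\Big)^{-z}-1\Big)
= (p-1)\Big(1-\frac1p\Big)^{-z}-p .
\]

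Second, your simplification $\frac{p-(1-1/p)^z}{p-1}=1-(1-1/p)^{z-1}$ is false for every $p$ and $z$. Equality would force $(p-1)(1-1/p)^z=-1$. For instance, at $p=2$, $z=\tfrac12$ the left side is $2-2^{-1/2}\approx1.29$ while the right side is $1-\sqrt2\approx-0.41$. So the two errors do not cancel.

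The repair is routine. Multiply the correct local sum by $(1-1/p)^z/(p-1)$ to get
\[
1-\frac{p}{p-1}\Big(1-\frac1p\Big)^{z}=1-\Big(1-\frac1p\Big)^{z-1}.
\]
Your own $m\ge2$ formula, $-d_z(p^{m-1})+\varphi(p^m)\sum_{k\ge m}d_z(p^k)p^{-k}$, is valid verbatim at $m=1$ and gives exactly this. Its bound also covers $m=1$, so the separate mean-value-theorem step is unnecessary. This uniform treatment of all $k\ge1$ is what the paper does, obtaining $|\g_z(p^k)|\le 8p^{-k}$.
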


The definition \eqref{eq:g-def} is the expression that the
computation on the arc produces; the content of the theorem is that
this expression is multiplicative and small, and takes the displayed
elementary value at the primes. The first application is a major-arc
expansion in which $\g_z$ carries, at leading order, the entire
dependence on the modulus.

\begin{theorem}
\label{thm:major}
Fix a real number $z$ with $0 < |z| < 1$, a real number $B > 0$, and
an integer $J \ge 1$. There exist complex numbers
$\lambda_j(z, q)$, $0 \le j < J$, depending only on $z, j, q$, with
\begin{equation}\label{eq:lambda-props}
\lambda_0(z, q) = \frac{\g_z(q)}{\Gamma(z)}
\qquad \text{and} \qquad
\lambda_j(z, q) \ll_{z, J, \varepsilon} q^{-1+\varepsilon}
\quad (0 \le j < J,\ \varepsilon > 0),
\end{equation}
where $\g_z$ is the function of Theorem~\ref{thm:local}, such that
the following holds. For all $x \ge 3$, every reduced
fraction $r/q$ with $q \le (\log x)^{B}$, and every real $\beta$ with
$|\beta| \le (\log x)^{B}/x$,
\begin{equation}\label{eq:major}
\sum_{n \le x} d_z(n) \e\Big( n \Big( \frac{r}{q} + \beta \Big) \Big)
=
\sum_{j=0}^{J-1} \lambda_j(z, q)
\int_{2}^{x} (\log t)^{z - 1 - j}\e(\beta t)\, dt
\;+\;
O_{z, B, J}\Big( \frac{x}{(\log x)^{J + 1 - z - 2B}} \Big).
\end{equation}
The implied constant is ineffective.
\end{theorem}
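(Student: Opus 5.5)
The approach is the standard Selberg--Delange analysis of $d_z$ restricted to residue classes, followed by partial summation in $\beta$. Write $n=hm$ with $h=(n,q)$-type splitting. More precisely, decompose $n$ uniquely as $n=am$ with $\rad(a)\mid q$ and $(m,q)=1$; multiplicativity gives $d_z(n)=d_z(a)d_z(m)$. Then
\[
\sum_{n\le x} d_z(n)\e(nr/q)=\sum_{\rad(a)\mid q} d_z(a)\sum_{\substack{m\le x/a\\ (m,q)=1}} d_z(m)\e(amr/q).
\]
For fixed $a$, the phase depends only on $m \bmod q$ with $(m,q)=1$. I would expand it via Dirichlet characters modulo $q$:
\[
\e(amr/q)=\frac{1}{\varphi(q)}\sum_\chi \bar\chi(m)\sum_{(c,q)=1}\chi(c)\e(acr/q).
\]
The principal character contributes $\frac{c_q(a)}{\varphi(q)}\sum_{m\le y,(m,q)=1}d_z(m)$, since $c_q(ar)=c_q(a)$ for $(r,q)=1$.

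For the principal term I apply the Selberg--Delange method (Tenenbaum II.5) to $\sum_{(m,q)=1}d_z(m)m^{-s}=\zeta(s)^z\prod_{p\mid q}(1-p^{-s})^z$. This gives
\[
\sum_{m\le y,(m,q)=1}d_z(m)=y\sum_{j<J}\mu_j(z,q)(\log y)^{z-1-j}+O\big(y(\log y)^{z-1-J}\big),
\]
with $\mu_0=\prod_{p\mid q}(1-1/p)^z/\Gamma(z)$. Uniformity in $q\le(\log x)^B$ comes from the factor $\prod_{p\mid q}(1-p^{-s})^z$ having derivatives bounded by $(\log\log q)^{O(1)}$ near $s=1$.

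For non-principal $\chi$ (the only ineffective step) I use the Siegel--Walfisz analogue for $d_z$: $\sum_{m\le y}d_z(m)\chi(m)\ll y\exp(-c\sqrt{\log y})$ uniformly for modulus $\le(\log y)^{B}$. This follows from the zero-free region, with a Siegel zero handled by Siegel's theorem, via Selberg--Delange applied to $L(s,\chi)^z$. The Gauss-type sums are bounded by $q$, so these terms are negligible. Summing over $a$, I truncate at $a\le x^{1/2}$, using $\sum_{\rad(a)\mid q}|d_z(a)|a^{-1/2}\ll q^{\varepsilon}$ to control the tail. Substituting $\log(x/a)=\log x-\log a$ and re-expanding in powers of $\log x$ produces
\[
\lambda_j(z,q)=\frac{1}{\varphi(q)}\sum_{\rad(a)\mid q}\frac{d_z(a)c_q(a)}{a}\sum_{i\le j}\mu_i(z,q)\binom{z-1-i}{j-i}(-\log a)^{j-i}.
\]
For $j=0$ this matches $\g_z(q)/\Gamma(z)$ by \eqref{eq:g-def}. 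The bound $\lambda_j\ll q^{-1+\varepsilon}$ follows as in the proof of Theorem~\ref{thm:local}: the Ramanujan-sum series factors over $p\mid q$, its local factor being small, and the logarithmic weights only cost $q^{\varepsilon}$ via derivatives in a parameter. Finally, I convert the main term $\sum_j\lambda_j x(\log x)^{z-1-j}$ into the integrals $\int_2^x(\log t)^{z-1-j}dt$, absorbing the differences into the lower-order coefficients.

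To introduce $\beta$, I apply partial summation to $A(t)=\sum_{n\le t}d_z(n)\e(nr/q)$:
\[
\sum_{n\le x}d_z(n)\e(n\alpha)=A(x)\e(\beta x)-2\pi i\beta\int_2^x A(t)\e(\beta t)\,dt.
\]
Inserting the expansion $A(t)=M(t)+E(t)$ and integrating by parts back in the main term yields $\sum_j\lambda_j\int_2^x(\log t)^{z-1-j}\e(\beta t)dt$. The error is $E(x)+|\beta|x\sup_{t\le x}|E(t)|\ll(\log x)^{B}\cdot x(\log x)^{z-1-J}$, with the small-$t$ range trivial. This is the claimed error up to the loss of a $(\log x)^{B}$ factor; the exponent $J+1-z-2B$ has room for a second $(\log x)^{B}$, coming from the uniformity in $q$ (for instance from the $q^{\varepsilon}$ and divisor-sum losses).

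The main obstacle is the uniform-in-$q$ Selberg--Delange step with explicit $q$-dependence, together with the twisted Siegel--Walfisz bound for $d_z\chi$. I expect the bookkeeping that identifies the $\lambda_j$ and proves their $q^{-1+\varepsilon}$ decay, leaning on the multiplicative structure from Theorem~\ref{thm:local}, to be routine by comparison.
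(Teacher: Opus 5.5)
Your proposal is correct and follows essentially the same route as the paper. Both arguments use the $q$-part decomposition with a character expansion. Both then apply a uniform Selberg--Delange expansion for the principal character and a Siegel--Walfisz bound for $d_z\chi$ for the non-principal ones. Next, $\log(x/a)$ is Taylor re-expanded and the log-moments are completed, with Cauchy estimates giving the $q^{-1+\varepsilon}$ decay. Finally, partial summation in $\beta$ is applied with the main term integrated back by parts. Your truncation at $a\le x^{1/2}$ instead of $\exp(\sqrt{\log x})$, and converting to the integrals before rather than after the partial summation, are immaterial.

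One small correction: the uniformity in $q$ of the Selberg--Delange error term does not come from the derivatives of $G_q(s)=\prod_{p\mid q}(1-p^{-s})^z$ near $s=1$, which only control the coefficients. It comes from a bound for $G_q$ throughout the whole region, for instance $|G_q(s)|\le 3^{\omega(q)}$ on $\sigma\ge 3/4$, as the paper uses.
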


The coefficients $\lambda_j(z, q)$ are explicit Selberg-Delange
data: they are assembled at \eqref{eq:lambda-def} from the Taylor
coefficients at $s = 1$ of the associated local generating function,
normalized by reciprocal Gamma factors as in \cite[Ch.~II.5]{Te};
see Propositions~\ref{prop:LSD} and~\ref{prop:beta0}. In particular
\eqref{eq:major} holds for every fixed $J$ -- this is the arbitrary
logarithmic precision of the abstract.

Given any $A > 0$, the choice $J = \lceil A + 1 + 2B \rceil$ makes
the error term in \eqref{eq:major} at most $O(x (\log x)^{-A})$. Note
that the main term of \eqref{eq:major} does not depend on $r$; this
is the analogue of the fact, visible in \cite[eq.~(23.19)]{K}, that
the main term of $\sum_{p \le x} \e(rp/q)$ is
$\mu(q) \operatorname{li}(x)/\varphi(q)$, independently of $r$. The ineffectivity enters through Siegel's theorem, used in the proof
of Proposition~\ref{prop:SW} in Appendix~\ref{app:SW} -- the same
source of ineffectivity as in the classical Siegel-Walfisz theorem
--  and is inherited by every statement resting on
Theorem~\ref{thm:major}.

The second application combines the minor arcs and the major arcs
into uniform statements over all $\alpha \in \R$, with no Diophantine
hypothesis: by Dirichlet's approximation theorem every $\alpha$ is
covered by one of the two regimes.

\begin{sloppypar}
For completely multiplicative $f$ with $|f| \le 1$, Montgomery
and Vaughan \cite{MV77} obtained the sharpest general bounds for
such sums; de la
Bret\`eche and Granville \cite{BG} showed that
$\sum_{n \le x} f(n)\e(n\alpha)$ can be large only when $f$ is
pretentious (see also the minor-arc counterpart \cite{GL}).
Theorem~\ref{thm:sup} complements this structural principle with
the exact order of the supremum for the multiplicative family
$d_z$.
\end{sloppypar}

\begin{theorem}
\label{thm:sup}
Fix $a, b$ satisfying \eqref{eq:ab-cond}.
\begin{enumerate}
\item[(i)] For every $x \ge 1$,
\[
\sup_{\alpha \in \R}\,
\Big| \sum_{n \le x} d_{a/b}(n)\e(n\alpha) \Big|
= \sum_{n \le x} d_{a/b}(n),
\]
the supremum being attained at $\alpha = 0$; consequently, by the
Selberg-Delange asymptotic for the partial sums
(Proposition~\ref{prop:LSD} at $q = 1$), for $x \ge 3$,
\[
\sup_{\alpha \in \R}\,
\Big| \sum_{n \le x} d_{a/b}(n)\e(n\alpha) \Big|
= \frac{x\, (\log x)^{a/b - 1}}{\Gamma(a/b)}
\Big( 1 + O_{a,b}\Big( \frac{1}{\log x} \Big) \Big),
\]
with an effective implied constant. In particular the supremum for
$d_{1/2}$ is asymptotic to $x / \sqrt{\pi \log x}$.
\item[(ii)] For all $x \ge 3$,
\[
\sup_{\alpha \in \R}\,
\Big| \sum_{n \le x} d_{-a/b}(n)\e(n\alpha) \Big|
\asymp_{a, b} \frac{x}{(\log x)^{1 + a/b}}
\]
(for $z = -\tfrac12$ the order is $x (\log x)^{-3/2}$); the
implied constant in the upper bound is ineffective, while that in
the lower bound is effective.
\end{enumerate}
\end{theorem}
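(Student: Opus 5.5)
Part (i) is immediate once positivity is established. For $0<z<1$, every value $d_z(p^k)=z(z+1)\cdots(z+k-1)/k!$ is positive. By multiplicativity, $d_{a/b}(n)>0$ for all $n$. The triangle inequality then gives $|S_{a/b}(x,\alpha)|\le\sum_{n\le x}d_{a/b}(n)=S_{a/b}(x,0)$, with equality at $\alpha=0$. The asymptotic is then the Selberg--Delange formula of Proposition~\ref{prop:LSD} at $q=1$: it gives $\lambda_0(z,1)=\g_z(1)/\Gamma(z)=1/\Gamma(z)$, and the $j=1$ term supplies the relative error $O(1/\log x)$. For that term we take $J=1$ (or $J=2$) with $\beta=0$ and use $\int_2^x(\log t)^{z-1}dt=x(\log x)^{z-1}(1+O(1/\log x))$. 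Since $\alpha=0$ only needs the $q=1$ case, no Siegel zero input is required and the constant is effective. For $z=1/2$ we use $\Gamma(1/2)=\sqrt\pi$.

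For the lower bound in (ii), put $z=-a/b$. The natural candidate is again $\alpha=0$. At $q=1$ the leading coefficient is $1/\Gamma(z)$, which is nonzero because $z\notin\{0,-1,-2,\dots\}$. However, the integral $\int_2^x(\log t)^{z-1}dt$ has size $x(\log x)^{z-1}=x(\log x)^{-1-a/b}$, so the $j=0$ term already gives $\sum_{n\le x}d_z(n)\sim x(\log x)^{-1-a/b}/\Gamma(-a/b)$. This comes from the effective Selberg--Delange theorem at $q=1$ (Proposition~\ref{prop:LSD}), which uses only the zero-free region of $\zeta$. The lower bound is therefore effective.

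For the upper bound in (ii), fix $A>1+a/b$ and cover $\R$ by Dirichlet's theorem with $Q=x/(\log x)^{2A+2C}$: every $\alpha$ has $|\alpha-r/q|\le 1/(qQ)$ with $q\le Q$. We split into two cases.
\begin{itemize}
\item If $q\ge(\log x)^{2A+2C}$, then $|\alpha-r/q|\le1/q^2$, and Corollary~\ref{cor:minor} gives $S\ll x(\log x)^{-A}$, which is acceptable.
\item If $q<(\log x)^{B}$ with $B=2A+2C$, we write $\alpha=r/q+\beta$. When $|\beta|\le(\log x)^{B}/x$, Theorem~\ref{thm:major} expresses $S$ as $\sum_j\lambda_j(z,q)\int_2^x(\log t)^{z-1-j}\e(\beta t)dt$ plus an error $O(x(\log x)^{-A})$, after choosing $J$ large. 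Bounding each integral trivially by $\ll x(\log x)^{z-1}$, and using $|\lambda_j(z,q)|\ll1$ from \eqref{eq:lambda-props}, gives $S\ll x(\log x)^{-1-a/b}$ uniformly in $q$.
\end{itemize}

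The main obstacle is the leftover regime: $q$ small but $|\beta|$ between $(\log x)^B/x$ and $1/(qQ)=(\log x)^{B}/(qx)$. Since $q\ge1$, we have $1/(qQ)\le(\log x)^B/x$, so this regime is empty and the Dirichlet choice covers everything. The bookkeeping of $B$ against $C$ must still be made consistent. In particular, Theorem~\ref{thm:major} must be applied with $B$ large enough that the error term $x(\log x)^{-(J+1-z-2B)}$ is at most $x(\log x)^{-1-a/b}$, which is achieved by taking $J>2B$. The ineffectivity of the upper bound is inherited from Theorem~\ref{thm:major}.
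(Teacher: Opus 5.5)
Your proposal follows the paper's proof essentially step by step. Part (i) is positivity plus Selberg--Delange at $q=1$. The lower bound in (ii) uses $\alpha=0$. The upper bound uses the Dirichlet dissection with $Q=x/(\log x)^{2A+2C}$, feeding Corollary~\ref{cor:minor} and Theorem~\ref{thm:major}; the paper fixes $A=2$ and uses $z-1>-2$.

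The one real omission is the lower bound in (ii) for bounded $x$. Your $\alpha=0$ argument rests on an asymptotic, so it gives the bound only for $x\ge x_2(z)$. Restricting to $\alpha=0$ genuinely fails for small $x$: for $z=-\tfrac12$ one has $\sum_{n\le 3}d_{-1/2}(n)=1-\tfrac12-\tfrac12=0$. You therefore need a separate argument on $3\le x<x_2$. The paper uses Parseval: $\sup_\alpha|S(x,\alpha)|^2\ge\int_0^1|S(x,\alpha)|^2\,d\alpha=\sum_{n\le x}d_z(n)^2\ge d_z(1)^2=1$.

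A minor point on (i): apply Proposition~\ref{prop:LSD} directly to $\zeta^{a/b}$, with $G\equiv1$ and $J=1$, as the paper does. Routing through $\lambda_0(z,1)$ and Theorem~\ref{thm:major} is awkward because that theorem is stated with an ineffective constant. Your observation that $q=1$ involves no non-principal characters, and hence no Siegel zero, is correct, but it requires looking inside that proof.
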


The contrast between the two parts is worth noting: for the positive
exponents the supremum is attained at the trivial frequency and the
extremal size is that of the mean value, while for the negative
exponents the sign changes of $d_{-a/b}$ force cancellation at
\emph{every} frequency, and the supremum drops below the $\ell^1$
norm by a factor of a full power of $\log x$.

The third application is the fractional analogue of Pandey's moment
theorem \cite{Pa} for $\tau_k$. For the remainder of this subsection and in
Section~\ref{subsec:moments-proof} the length of the sum is
denoted $X$, and we abbreviate $S_z(\alpha) := S_z(X, \alpha)$.

\begin{theorem}
\label{thm:scalar}
Fix a rational $z$ with $0 < |z| < 1$ and a real $s > 2$. Then, as
$X \to \infty$,
\[
\int_0^1 | S_z(\alpha) |^{s}\, d\alpha
=
\frac{A_s}{|\Gamma(z)|^{s}}
\Big( \sum_{q = 1}^{\infty} \varphi(q)\, |\g_z(q)|^{s} \Big)
X^{s-1} (\log X)^{s(z-1)}
\Big( 1 + O_{s, z}\Big( \frac{1}{\log X} \Big) \Big),
\]
where
\[
A_s = \frac{2}{\pi} \int_0^{\infty}
\frac{|\sin t|^{s}}{t^{s}}\, dt,
\]
the series converges absolutely, and the leading constant is
positive. The implied constant is ineffective.
\end{theorem}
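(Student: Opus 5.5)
We prove Theorem~\ref{thm:scalar} by the circle method, in the form Pandey used for $\tau_k$: dissect $[0,1)$ into major and minor arcs, use Theorem~\ref{thm:major} (with $J=1$ and a slightly larger $J$ for the error) on the major arcs and Corollary~\ref{cor:minor} on the minor arcs, and then evaluate the major-arc integral explicitly. Fix $B$ large and set $P=(\log X)^{B}$. The major arcs are $\fM(q,r)=\{\alpha: |\alpha-r/q|\le P/X\}$ for $q\le P$ and $(r,q)=1$; these are disjoint for large $X$. The minor arcs $\fm$ form the complement. By Dirichlet's theorem with parameter $Q=X/P$, every $\alpha\in\fm$ has a reduced approximation $r/q$ with $|\alpha-r/q|\le 1/(qQ)\le 1/q^2$. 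If $q\le P$, then $|\alpha - r/q|>P/X$ by construction. If $q>P$, we apply Corollary~\ref{cor:minor}: for $B\ge 2A+2C$ we get $\sup_{\fm}|S_z|\ll X(\log X)^{-A}$. (The case $q\le P$ with $P/X<|\beta|\le 1/(qQ)$ is also covered, either by Corollary~\ref{cor:minor} after re-approximating or by partial summation in Theorem~\ref{thm:major}, as in \cite{K}.)

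The minor-arc contribution is then handled by the standard split $|S|^s=|S|^{s-2}|S|^2$:
\[
\int_{\fm}|S_z|^s\le \sup_{\fm}|S_z|^{s-2}\int_0^1|S_z|^2 \ll \big(X(\log X)^{-A}\big)^{s-2}\sum_{n\le X}|d_z(n)|^2 .
\]
Since $|d_z(n)|\le 1$ for $|z|<1$, the sum is at most $X$. Choosing $A$ large, depending on $s>2$, makes this $o\big(X^{s-1}(\log X)^{s(z-1)-1}\big)$. This is the only place where $s>2$ enters, together with the convergence of $A_s$.

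On $\fM(q,r)$ write $\alpha=r/q+\beta$. Theorem~\ref{thm:major} gives
\[
S_z=\lambda_0(z,q)\,I(\beta)+O\!\big(X(\log X)^{z-2}\big), \qquad I(\beta)=\int_2^X(\log t)^{z-1}\e(\beta t)\,dt ,
\]
after absorbing $j\ge1$ terms (size $q^{-1+\varepsilon}X(\log X)^{z-2}$) and choosing $J$ large against $2B$. We replace $(\log t)^{z-1}$ by $(\log X)^{z-1}$ on $t\ge X/\log X$ up to relative error $O(1/\log X)$, so that $I(\beta)=(\log X)^{z-1}\frac{\e(\beta X)-1}{2\pi i\beta}\,(1+O(1/\log X))+O(\text{small})$, uniformly with the bound $\min(X,|\beta|^{-1})$. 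Using $||u|^s-|v|^s|\ll |u-v|(|u|^{s-1}+|v|^{s-1})$, the main term on each arc becomes
\[
|\lambda_0|^s(\log X)^{s(z-1)}\int_{|\beta|\le P/X}\Big|\frac{\sin \pi\beta X}{\pi\beta}\Big|^s d\beta .
\]
The substitution $t=\pi\beta X$ turns the integral into $X^{s-1}\frac{2}{\pi}\int_0^{\pi P}|\sin t|^s t^{-s}\,dt=X^{s-1}(A_s+O(P^{1-s}))$. The cross and error terms are $O(1/\log X)$ relative, provided $\sum_q\varphi(q)|\lambda_0|^{s-1}q^{-1+\varepsilon}$ converges. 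It does, since $|\g_z(q)|\ll q^{-1+\varepsilon}$ and $s-1>1$. Summing over $r$ gives a factor $\varphi(q)$, and completing the $q$-sum from $q\le P$ to infinity costs $\sum_{q>P}q^{1-s+\varepsilon}$, which is negligible. This yields the stated constant $A_s|\Gamma(z)|^{-s}\sum_q\varphi(q)|\g_z(q)|^s$.

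Positivity of the constant follows from the $q=1$ term, $\g_z(1)=1$. Absolute convergence follows from $\g_z(q)\ll q^{-1+\varepsilon}$ in Theorem~\ref{thm:local}, since $\varphi(q)q^{-s+s\varepsilon}$ is summable for $s>2$. Ineffectivity is inherited from Theorem~\ref{thm:major}.

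The main obstacle is bookkeeping the $1+O(1/\log X)$ error uniformly. The replacement of $(\log t)^{z-1}$ by $(\log X)^{z-1}$ inside $I(\beta)$ must be controlled in $L^s$, not just pointwise. I would do this by integrating by parts to write $I(\beta)=(\log X)^{z-1}\frac{\e(\beta X)-1}{2\pi i\beta}+O\big(\min(X,|\beta|^{-1})(\log X)^{z-2}\big)$, whose $L^s$ norm over the arc is $O\big(X^{s-1}(\log X)^{s(z-1)-1}\big)$ because $s>1$. A second, smaller point is to make sure the $j\ge1$ coefficients decay in $q$ enough to be summed against $\varphi(q)$. Here \eqref{eq:lambda-props} suffices once $s>2$.
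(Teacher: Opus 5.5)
Your architecture is the paper's. You use the same arcs $|\alpha-r/q|\le P/X$ with $q\le P=(\log X)^B$, and the H\"older--Parseval bound on $\fm$ fed by the minor-arc estimate (using Corollary~\ref{cor:minor} instead of Theorem~\ref{thm:main} makes no difference). On $\fM$ you use Theorem~\ref{thm:major} with the perturbation inequality, rescale to $K_0$ to produce $A_s$, and complete the singular series.

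The gap is in the major-arc error bookkeeping. You absorb the $j\ge1$ terms of Theorem~\ref{thm:major} into an error of size $E=q^{-1+\varepsilon}X(\log X)^{z-2}$ that is uniform in $\beta$. In $\big||u|^s-|v|^s\big|\ll|u-v|(|u|^{s-1}+|v|^{s-1})$, with $u=S_z(r/q+\beta)$, the factor $|u|^{s-1}$ can only be controlled through $|v|^{s-1}+|u-v|^{s-1}$. So besides the cross term you checked, you must bound $\int_{|\beta|\le P/X}|u-v|^s\,d\beta$. With an error known only to be $O(E)$ on the whole arc, the best available bound is
\[
E^s\cdot\frac{2P}{X}=2\,q^{(-1+\varepsilon)s}X^{s-1}(\log X)^{s(z-1)}\cdot(\log X)^{B-s}.
\]
This has relative size $(\log X)^{B-s}$, not $O(1/\log X)$. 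Your minor arcs force $B\ge 2A+2C$ with $A(s-2)>s(1-z)+1$, so $B$ is in general much larger than $s$ (it tends to infinity as $s\downarrow 2$), and this step fails. The same happens for every $j<(B+1)/s$.

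The remedy is to keep the decay in $\beta$. By Lemma~\ref{lem:Wj-bounds}, $\lambda_j(z,q)W_j(\beta)\ll q^{-1+\varepsilon}\min\big(X(\log X)^{z-1-j},|\beta|^{-1}\big)$, whose $s$-th power integrates to $O\big(q^{(-1+\varepsilon)s}(X(\log X)^{z-1-j})^{s-1}\big)$, which is harmless. This is why the paper keeps all $J_1$ terms in the model $T_q$, with the decaying bound \eqref{eq:Tq-bound}. The only $\beta$-uniform error is then the remainder $X(\log X)^{-A_1}$ of Theorem~\ref{thm:major}. The choice $J_1=\lceil 4B+4s+8\rceil$ makes it negligible even after multiplying by the arc length (Lemma~\ref{lem:model-replacement}). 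The $j\ge1$ terms are then treated inside $\Delta$ in Lemma~\ref{lem:model-integral}.

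Two smaller points. First, your expansion $I(\beta)=(\log X)^{z-1}\frac{\e(\beta X)-1}{2\pi i\beta}+O\big(\min(X,|\beta|^{-1})(\log X)^{z-2}\big)$ does not come out of integration by parts over $[2,X]$, which leaves an error $O(|\beta|^{-1})$ from the endpoint $t=2$. In fact the difference is genuinely of size about $(\log X)^{z-2}\log(X|\beta|)/|\beta|$ when $X|\beta|$ is large, so the bound as stated is false by that logarithm. This is harmless for $s>1$. However, it needs an argument like Lemma~\ref{lem:kernel-approx}, which splits at $v_1=\exp(-\sqrt{\log X})$ and uses $|\beta X|\le P$ to get $\min\big((\log X)^{-1},(|u|(\log X)^{1/2})^{-1}\big)$ in the variable $u=\beta X$. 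Second, the case ``$q\le P$ with $P/X<|\beta|\le 1/(qQ)$'' in your minor-arc paragraph is empty, since $1/(qQ)=P/(qX)\le P/X$. Dirichlet's theorem itself forces $P<q\le X/P$ on $\fm$.
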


The restriction $s > 2$ is a genuine boundary of the method. At
$s = 2$, Parseval's identity gives
$\int_0^1 |S_z(x, \alpha)|^2\, d\alpha = \sum_{n \le x} d_z(n)^2
\sim c(z)\, x (\log x)^{z^2 - 1}$ with $c(z) > 0$, by the
Selberg-Delange method; the extrapolation of the theorem to $s = 2$
would predict $x (\log x)^{2(z-1)}$, and since
$z^2 - 1 = 2(z-1) + (z-1)^2$, the true second moment is larger by
the positive power $(z-1)^2$ of $\log x$. The major arcs cease to
dominate at $s = 2$, and correspondingly the singular series
$\mathfrak G_s(z)$ diverges as $s \downarrow 2$. The range
$0 < s < 2$ appears to require different methods; see
Section~\ref{sec:conclusion}.

Theorem~\ref{thm:scalar} is proved in
Section~\ref{sec:applications}. The
comparison with \cite{Pa} is instructive. On the minor arcs the two
situations are now on the same footing: Pandey's input is
\eqref{eq:pandey} and ours is Corollary~\ref{cor:minor}, both of
Vinogradov shape. The difference is on the major arcs: for $\tau_k$
these rest on Voronoi summation and are valid for $q$ up to a power
of $X$, which yields the power saving $\delta_{s,k}$ of \cite{Pa}
(a saving that degrades as $k$ grows through the Voronoi error
term); our major arcs, Theorem~\ref{thm:major}, hold in the
restricted range $q \le (\log X)^B$, which is why our error term
saves powers of $\log X$ rather than powers of $X$ -- but they hold
uniformly across the fractional family, where no comparable Voronoi-type formula is available to us.

The plan of the paper is as follows. Section~\ref{sec:tools} is a toolbox: all of its lemmas are proved in full, with two
exceptions quoted from \cite{K} with precise pointers, the bilinear
type~II estimate (Lemma~\ref{lem:typeII}) and Dirichlet's
approximation theorem (Lemma~\ref{lem:dirichlet}). The applications
in Section~\ref{sec:applications} additionally use two analytic
inputs: the Selberg-Delange method, quoted there with a
precise citation as Proposition~\ref{prop:LSD}, and a Siegel-Walfisz
theorem for $d_z \chi$, stated as Proposition~\ref{prop:SW} and
proved in Appendix~\ref{app:SW}. Section~\ref{sec:identity} constructs the Heath-Brown
identity for $\zeta^{\pm a/b}$: first the cases $z = \pm 1/2$ in explicit form, then the general
case, including the endpoint $\mu = d_{-1}$ and the
positive-exponent identity. Section~\ref{sec:main-proof} proves
Theorems~\ref{thm:main} and~\ref{thm:mu} and
Corollary~\ref{cor:minor}. Section~\ref{sec:applications} proves the
three applications. Section~\ref{sec:conclusion} closes with remarks.

\section{Preliminary lemmas}\label{sec:tools}

This section collects the elementary tools used later, in the order
in which they are first needed.
Lemmas~\ref{lem:tau-basic}--\ref{lem:minsq} serve the minor-arc
argument: they contain everything used in the proofs of
Theorems~\ref{thm:main} and~\ref{thm:mu} in Sections
\ref{sec:identity}--\ref{sec:main-proof}, with one exception among them, the bilinear type~II estimate
(Lemma~\ref{lem:typeII}), quoted from \cite{K} with a precise pointer
in place of the proof.
Lemmas~\ref{lem:ramanujan}--\ref{lem:singular-series} are additional
tools for the applications of Section~\ref{sec:applications}; where
a statement uses the major-arc notation introduced there, this is
said explicitly. Several of the lemmas are standard, or are close to
exercises in \cite{MV}, \cite{K}, \cite{IK}, \cite{Te}; proofs are
nevertheless included whenever our statements differ from the
textbook forms or the uniformity of the constants matters, in keeping with the self-contained standard of the paper. The two
lemmas quoted without proof, each with a precise pointer, are
Lemma~\ref{lem:typeII} (\cite[Theorem~23.6]{K}) and
Lemma~\ref{lem:dirichlet} (\cite[Lemma~23.4]{K}).

\subsection{Notation and conventions}\label{subsec:notation}

Throughout, $\log$ denotes the natural logarithm, and $x \ge 3$, so
that $\log\log x > 0$. The letter $p$ always denotes a prime.
Constants implied by $\ll$ and $O(\cdot)$ are absolute unless
dependence is indicated by a subscript; the integers $a, b$ of
Theorem~\ref{thm:main} are regarded as fixed, and from
Section~\ref{sec:identity} onwards all implied constants may depend
on $a$ and $b$ without further mention.

We write $\norm{\theta}$ for the distance from $\theta$ to the
nearest integer, as in \cite[p.~242]{K} and \cite[\S 17.2]{MV};
throughout, an expression $\min(A, 1/\norm{\theta})$ is to be read
as $A$ when $\norm{\theta} = 0$. The
Dirichlet convolution of two arithmetic functions $f, g$ is
$(f * g)(n) = \sum_{de = n} f(d)\, g(e)$, and $f^{*j}$ denotes the
$j$-fold convolution $f * \cdots * f$, with $f^{*0} := \delta$, where
$\delta(n) = \ID_{n = 1}$ is the identity for convolution. We write
$\ID$ for the constant function $\ID(n) = 1$. Following
\cite[eq.~(23.6)]{K}, for a parameter $V \ge 1$ we write
\[
f_{\le V}(n) := \ID_{n \le V} \cdot f(n).
\]
For an integer $m \ge 1$, $\tau_m = \ID^{*m}$ denotes the $m$-fold
divisor function, so that
\[
\tau_m(n) = \#\{ (n_1, \dots, n_m) \in \N^m : n_1 \cdots n_m = n \},
\]
$\tau_1 = \ID$ and $\tau_2 = \tau$. For an arithmetic function $f$ we
write $\|f\|_2^2 = \sum_{n \ge 1} |f(n)|^2$ for the $\ell^2$-norm.
Finally, $\A$ denotes the set of all arithmetic functions
$f : \N \to \C$, regarded as a commutative ring under pointwise
addition and Dirichlet convolution, with multiplicative identity
$\delta$.

\subsection{Divisor function estimates}\label{subsec:tau}

\begin{lemma}\label{lem:tau-basic}
For every $m \ge 1$, the function $\tau_m$ is multiplicative,
\begin{equation}\label{eq:tau-pp}
\tau_m(p^k) = \binom{k + m - 1}{m - 1},
\end{equation}
and
\begin{equation}\label{eq:tau-power}
\tau_m(n) \le \tau(n)^{m-1} \qquad (n \ge 1,\ m \ge 2).
\end{equation}
\end{lemma}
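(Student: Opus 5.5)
Multiplicativity comes first. Since $\ID$ is multiplicative and the Dirichlet convolution of multiplicative functions is multiplicative, induction on $m$ using $\tau_m = \tau_{m-1} * \ID$ shows that $\tau_m$ is multiplicative. For \eqref{eq:tau-pp}, an $m$-tuple $(n_1,\dots,n_m)$ with product $p^k$ must have $n_i = p^{e_i}$ with $e_1+\cdots+e_m = k$ and $e_i \ge 0$. By stars and bars, the number of such compositions is $\binom{k+m-1}{m-1}$. Alternatively, one can compare coefficients in $\sum_k \tau_m(p^k)t^k = (1-t)^{-m}$.

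By multiplicativity of both sides, \eqref{eq:tau-power} reduces to the prime power case:
\[
\binom{k+m-1}{m-1} \le (k+1)^{m-1} \qquad (k \ge 0,\ m \ge 2).
\]
For $k = 0$ both sides equal $1$, so there is nothing to prove. For $k \ge 1$, write
\[
\binom{k+m-1}{m-1} = \prod_{j=1}^{m-1} \frac{k+j}{j}
\]
and note that each factor satisfies $(k+j)/j = 1 + k/j \le 1 + k$, since $j \ge 1$. Multiplying the $m-1$ factors gives the bound.

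Alternatively, induction on $m$ works: from $\tau_m = \tau_{m-1} * \ID$ one gets $\tau_m(n) = \sum_{d\mid n}\tau_{m-1}(d)$. If $\tau_{m-1}(d) \le \tau(d)^{m-2} \le \tau(n)^{m-2}$, then summing over the $\tau(n)$ divisors gives $\tau_m(n) \le \tau(n)^{m-1}$. This route avoids the binomial computation altogether, and I would likely present it as the main argument.

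None of these steps is a real obstacle. The only point needing care is the monotonicity $\tau(d) \le \tau(n)$ for $d \mid n$, which holds because the divisors of $d$ are among the divisors of $n$. The base case $m=2$ is the equality $\tau_2 = \tau$.
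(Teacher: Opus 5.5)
Your proof is correct and is essentially the paper's argument: multiplicativity from $\tau_m=\ID^{*m}$, stars and bars for \eqref{eq:tau-pp}, and induction on $m$ through $\tau_m=\ID*\tau_{m-1}$, bounding each of the $\tau(n)$ summands uniformly. The differences are cosmetic. You run the induction for general $n$, using $\tau(d)\le\tau(n)$ for $d\mid n$, so you need no reduction to prime powers; the paper instead reduces to $n=p^k$ and uses that $\tau_{m-1}(p^i)$ is non-decreasing in $i$. Your alternative bound $\prod_{j=1}^{m-1}(1+k/j)\le(k+1)^{m-1}$ is also valid.
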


\begin{proof}
Multiplicativity: $\tau_m = \ID^{*m}$ is a convolution of
multiplicative functions, hence multiplicative. For
\eqref{eq:tau-pp}, $\tau_m(p^k)$ counts the tuples
$(k_1, \dots, k_m)$ of non-negative integers with
$k_1 + \cdots + k_m = k$, and the number of these is
$\binom{k+m-1}{m-1}$ (the classical ``stars and bars'' count: such
tuples are in bijection with the arrangements of $k$ unlabelled
stars and $m-1$ bars in a row, an arrangement being determined by
the choice of the $m-1$ bar positions among $k + m - 1$ slots).

For \eqref{eq:tau-power}, by multiplicativity it suffices to prove
$\tau_m(p^k) \le \tau(p^k)^{m-1} = (k+1)^{m-1}$, and we induct on
$m$. The case $m = 2$ is an identity. For $m \ge 3$, since
$\tau_m = \ID * \tau_{m-1}$,
\[
\tau_m(p^k) = \sum_{i = 0}^{k} \tau_{m-1}(p^i)
\le (k+1)\, \tau_{m-1}(p^k)
\le (k+1) \cdot (k+1)^{m-2},
\]
where the first inequality uses that $\tau_{m-1}(p^i)$ is
non-decreasing in $i$ (clear from \eqref{eq:tau-pp}), and the second
is the induction hypothesis.
\end{proof}

\begin{lemma}\label{lem:submult}
For every $m \ge 1$ and all integers $u, v \ge 1$,
\[
\tau_m(uv) \le \tau_m(u)\, \tau_m(v).
\]
\end{lemma}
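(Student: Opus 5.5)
Both sides of the inequality are multiplicative in the appropriate sense, so the plan is to reduce to a single prime and then compare binomial coefficients. If $u = \prod_p p^{\alpha_p}$ and $v = \prod_p p^{\beta_p}$, then $uv = \prod_p p^{\alpha_p + \beta_p}$. Since $\tau_m$ is multiplicative (Lemma~\ref{lem:tau-basic}), we get
\[
\tau_m(uv) = \prod_p \tau_m(p^{\alpha_p+\beta_p}), \qquad \tau_m(u)\tau_m(v) = \prod_p \tau_m(p^{\alpha_p})\,\tau_m(p^{\beta_p}).
\]
All factors are positive integers, so it suffices to prove the prime-power inequality
\[
\tau_m(p^{i+j}) \le \tau_m(p^i)\,\tau_m(p^j) \qquad (i, j \ge 0).
\]
The case $m = 1$ is trivial, since both sides equal $1$.

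For the prime-power inequality we use \eqref{eq:tau-pp}. It becomes
\[
\binom{i+j+m-1}{m-1} \le \binom{i+m-1}{m-1}\binom{j+m-1}{m-1}.
\]
The cleanest route is combinatorial, via the stars-and-bars interpretation already used in Lemma~\ref{lem:tau-basic}. The left side counts tuples $(k_1,\dots,k_m)$ of nonnegative integers summing to $i+j$. We need an injection from these into pairs of tuples, one summing to $i$ and one summing to $j$.

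To build it, given $(k_1,\dots,k_m)$, let $\ell$ be the least index with $k_1+\cdots+k_\ell \ge i$. Define the first tuple as $(k_1,\dots,k_{\ell-1}, i-(k_1+\cdots+k_{\ell-1}), 0,\dots,0)$. Define the second as $(0,\dots,0, k_1+\cdots+k_\ell - i, k_{\ell+1},\dots,k_m)$, whose nonzero part starts at position $\ell$. The first tuple sums to $i$ and the second to $j$. This is just splitting the row of $i+j$ stars after the $i$-th star.

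The pair determines the original tuple by adding the two tuples coordinatewise. Indeed, the first has support in positions $\le \ell$ and the second in positions $\ge \ell$, and at position $\ell$ the entries sum to $k_\ell$. So the map is injective, which gives the inequality.

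Alternatively one can argue algebraically, writing
\[
\binom{n+m-1}{m-1} = \prod_{t=1}^{m-1}\frac{n+t}{t}
\]
and noting that $\frac{i+j+t}{t} \le \frac{i+t}{t}\cdot\frac{j+t}{t}$ termwise. This reduces to $t(i+j+t) \le (i+t)(j+t)$, which is equivalent to $0 \le ij$.

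The only step needing care is checking injectivity, specifically that the splitting point is recoverable. The algebraic termwise inequality sidesteps this entirely, so I would present that version as the main argument.
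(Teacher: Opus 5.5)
Your proof is correct, but it takes a different route from the paper's.

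The paper does not reduce to prime powers. It builds an explicit injection on divisor tuples directly: $(n_1,\dots,n_m)$ with $n_1\cdots n_m=uv$ is sent to $\big((u_i),(v_i)\big)$ by the greedy rule $u_1=\gcd(n_1,u)$, $u_i=\gcd\big(n_i,\,u/(u_1\cdots u_{i-1})\big)$, $v_i=n_i/u_i$. It then checks, one $p$-adic valuation at a time, that $\prod_i u_i=u$, and gets injectivity from $n_i=u_iv_i$.

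Your main argument instead uses the multiplicativity and the formula \eqref{eq:tau-pp} from Lemma~\ref{lem:tau-basic}. This reduces the claim to $\binom{i+j+m-1}{m-1}\le\binom{i+m-1}{m-1}\binom{j+m-1}{m-1}$. That inequality follows from the factorization $\prod_{t=1}^{m-1}(n+t)/t$ and the termwise bound $t(i+j+t)\le(i+t)(j+t)$. This version is shorter and purely computational. As a bonus, it shows that for $m\ge2$ equality holds exactly when $(u,v)=1$.

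Your alternative stars-and-bars injection is in fact the paper's construction viewed at a single prime. At each $p$, the greedy gcd takes as much of $v_p(u)$ as possible from $n_1$, then from $n_2$, and so on. That is precisely your cut after the $i$-th star, performed at all primes simultaneously.

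What each approach buys: the paper's version avoids the closed form for $\tau_m(p^k)$ and any reduction step; yours trades that for brevity. One small point: injectivity of your combinatorial map does not require recovering the splitting point, since coordinatewise addition already inverts the map, as you yourself observed.
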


\begin{proof}
We exhibit an injection from the set of $m$-tuples
$(n_1, \dots, n_m)$ with $n_1 \cdots n_m = uv$ into the set of pairs
of $m$-tuples $\big( (u_1, \dots, u_m), (v_1, \dots, v_m) \big)$
with $u_1 \cdots u_m = u$ and $v_1 \cdots v_m = v$.

Given $(n_1, \dots, n_m)$ with $\prod_i n_i = uv$, define
successively
\[
u_1 = \gcd(n_1, u), \qquad
u_i = \gcd\Big( n_i,\ \frac{u}{u_1 \cdots u_{i-1}} \Big)
\quad (2 \le i \le m),
\]
and set $v_i = n_i / u_i$. Each $u_i$ divides the previous remainder
$u/(u_1 \cdots u_{i-1})$, so $u_1 \cdots u_m \mid u$. We claim
$u_1 \cdots u_m = u$. It suffices to check this at each prime $p$:
write $\beta = v_p(u)$ and $\alpha_i = v_p(n_i)$, where $v_p$
denotes the $p$-adic valuation; then
$\sum_i \alpha_i = v_p(uv) \ge \beta$, and by construction
$v_p(u_i) = \min\big( \alpha_i,\ \beta - v_p(u_1 \cdots u_{i-1})
\big)$. If we had $v_p(u_1 \cdots u_m) < \beta$, then at every step
the minimum was attained by the first argument, i.e.\
$v_p(u_i) = \alpha_i$ for all $i$, whence
$v_p(u_1 \cdots u_m) = \sum_i \alpha_i \ge \beta$, a contradiction.
Hence $\prod_i u_i = u$, and consequently
$\prod_i v_i = \prod_i n_i / \prod_i u_i = v$; moreover each
$v_i = n_i/u_i$ is a positive integer since $u_i \mid n_i$.

The map $(n_1, \dots, n_m) \mapsto \big( (u_i)_i, (v_i)_i \big)$ is
injective because $n_i = u_i v_i$ is recovered from the image. This
proves the claimed inequality.
\end{proof}

\begin{lemma}\label{lem:taujk}
For all integers $j, k \ge 1$ and all $n \ge 1$,
\[
\tau_j(n)\, \tau_k(n) \le \tau_{jk}(n).
\]
\end{lemma}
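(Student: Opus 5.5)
Since all three functions are multiplicative (Lemma~\ref{lem:tau-basic}), both sides of the inequality are multiplicative in $n$. It therefore suffices to prove it at prime powers. For $n = p^k$, by \eqref{eq:tau-pp} the claim is
\[
\binom{k+j-1}{j-1}\binom{k+k'-1}{k'-1} \;\le\; \binom{k+jk'-1}{jk'-1},
\]
where, to avoid a clash with the exponent, $k'$ denotes the index of the second divisor function. The cases $j=1$ or $k'=1$ are trivial, so assume $j,k'\ge 2$.

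The cleanest route is combinatorial, avoiding binomial manipulations. Recall that $\tau_m(p^k)$ counts $m$-tuples of nonnegative integers summing to $k$. I will construct an injection from pairs of tuples into $jk'$-tuples. Take $(e_1,\dots,e_j)$ with $\sum e_i = k$ and $(f_1,\dots,f_{k'})$ with $\sum f_l = k$. Think of the first as a weak composition of $k$ into $j$ parts, and the second as a second weak composition of the same $k$ into $k'$ parts. Their common refinement, laid out on the integer interval of length $k$ with cut points, gives a sequence of at most $j+k'-1 \le jk'$ consecutive blocks. Record this as a $jk'$-tuple by placing block sizes into positions indexed by pairs $(i,l)$, for instance in lexicographic order of the "current part indices" of the two compositions.

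Concretely, the $jk'$-tuple is defined by
\[
g_{(i,l)} = \bigl|\,[E_{i-1},E_i)\cap[F_{l-1},F_l)\,\bigr|,
\]
where $E_i, F_l$ are the partial sums. Since the intervals $[E_{i-1},E_i)$ partition $[0,k)$, as do the $[F_{l-1},F_l)$, the sum of all $g_{(i,l)}$ equals $k$. This gives a $jk'$-tuple summing to $k$ (a "transport matrix" with row sums $e_i$ and column sums $f_l$). Injectivity follows because the row sums and column sums of the matrix $(g_{(i,l)})$ recover $(e_i)$ and $(f_l)$. Hence the number of pairs is at most the number of nonnegative $j\times k'$ integer matrices with entry sum $k$, which is $\tau_{jk'}(p^k)$.

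Equivalently, and more simply phrased: the map sending a matrix with total sum $k$ to its (row-sum vector, column-sum vector) is surjective onto the pairs. The north-west corner rule shows that every pair of marginals with equal total is realized by some nonnegative integer matrix. So the count of matrices is at least the count of pairs. I expect the only point needing care is this realizability (surjectivity) step. The interval-intersection construction above gives it directly, and it also makes clear that no further constraint, such as ordering, is needed. Having the inequality at every prime power, multiplicativity yields it for all $n$.
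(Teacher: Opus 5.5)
Your proof is correct, but it takes a different route from the paper's.

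\textbf{The paper's argument.} It does not reduce to prime powers. It writes $\tau_{jk} = (\tau_j)^{*k}$, so that
$$\tau_{jk}(n) = \sum_{m_1\cdots m_k = n}\tau_j(m_1)\cdots\tau_j(m_k).$$
It then bounds each of the $\tau_k(n)$ terms from below by $\tau_j(m_1\cdots m_k) = \tau_j(n)$, using the submultiplicativity $\tau_j(uv)\le\tau_j(u)\tau_j(v)$ of Lemma~\ref{lem:submult}.

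\textbf{Your argument.} You reduce to $n = p^k$, renaming the second index $k'$. This is legitimate, since both sides are nonnegative multiplicative functions. You then inject pairs of weak compositions of $k$ (into $j$ and into $k'$ parts) into $j\times k'$ nonnegative integer matrices of total sum $k$, via $g_{(i,l)} = |[E_{i-1},E_i)\cap[F_{l-1},F_l)|$. The row and column sums recover the pair, and such matrices are counted by $\tau_{jk'}(p^k)$ through \eqref{eq:tau-pp}, so the argument is complete. The preliminary remarks about ``at most $j+k'-1$ blocks'' and lexicographic placement are unnecessary; the explicit formula is all you actually use.

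\textbf{Comparison.} Your route is self-contained: it needs only the stars-and-bars description of $\tau_m(p^k)$ and bypasses Lemma~\ref{lem:submult}, which the paper proves solely to feed this lemma. The paper's route avoids localizing and handles all $n$ at once through the convolution identity, at the cost of first establishing submultiplicativity. The two are closer than they look. At each prime, the greedy gcd allocation in the proof of Lemma~\ref{lem:submult} is the same north-west-corner filling that your interval-intersection formula produces. Unwinding the paper's iterated inequality therefore yields an injection of essentially the same staircase type as yours.
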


\begin{proof}
Since $\tau_{jk} = (\tau_j)^{*k}$ (both sides equal $\ID^{*jk}$),
\[
\tau_{jk}(n)
= \sum_{m_1 \cdots m_k = n} \tau_j(m_1) \cdots \tau_j(m_k).
\]
By Lemma~\ref{lem:submult}, applied repeatedly,
$\tau_j(m_1) \cdots \tau_j(m_k) \ge \tau_j(m_1 \cdots m_k)
= \tau_j(n)$ for every tuple in the sum. Hence
\[
\tau_{jk}(n) \ge \sum_{m_1 \cdots m_k = n} \tau_j(n)
= \tau_j(n)\, \tau_k(n),
\]
since the number of tuples $(m_1,\dots,m_k)$ with
$m_1 \cdots m_k = n$ is $\tau_k(n)$.
\end{proof}

\begin{lemma}\label{lem:tau-max}
There is an absolute constant $C_1 > 0$ such that
\[
\tau(n) \le \exp\Big( C_1 \frac{\log n}{\log\log n} \Big)
\qquad (n \ge 3).
\]
Consequently, by \eqref{eq:tau-power} for $m \ge 2$, and trivially
for $m = 1$, since $\tau_1 \equiv 1$,
\[
\tau_m(n) \le \exp\Big( (m-1) C_1 \frac{\log n}{\log\log n} \Big)
\qquad (n \ge 3,\ m \ge 1).
\]
\end{lemma}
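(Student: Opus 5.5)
The plan is the classical Wigert argument: compare $\tau(n)=\prod_{p^k\| n}(k+1)$ with $n^{\theta}=\prod_{p^k\|n}p^{k\theta}$ for a parameter $\theta\in(0,1]$ chosen in terms of $n$. Write
\[
\frac{\tau(n)}{n^{\theta}}=\prod_{p^k\| n}\frac{k+1}{p^{k\theta}}.
\]
Then split the primes at the threshold $p^{\theta}\ge 2$, i.e.\ $p\ge 2^{1/\theta}$. For such $p$ each factor satisfies $(k+1)/p^{k\theta}\le (k+1)/2^k\le 1$. For the small primes $p<2^{1/\theta}$ we use $p^{k\theta}\ge 2^{k\theta}\ge 1+k\theta\log 2$. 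This gives $(k+1)/p^{k\theta}\le (k+1)/(1+k\theta\log2)\le 1/(\theta\log 2)$, which is uniform in $k$ since $\theta\log 2\le 1$. Since there are at most $2^{1/\theta}$ small primes,
\[
\tau(n)\le n^{\theta}\Big(\frac{1}{\theta\log 2}\Big)^{2^{1/\theta}},
\qquad\text{i.e.}\qquad
\log\tau(n)\le \theta\log n+2^{1/\theta}\log\frac{1}{\theta\log 2}.
\]

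The only real step is choosing $\theta$. For $n$ large, take
\[
\theta=\frac{\log 2\,(1+\eta)}{\log\log n},\qquad 2^{1/\theta}=(\log n)^{1/(1+\eta)}
\]
for a fixed small $\eta>0$, say $\eta=1$. Then:
\begin{itemize}
\item The first term is $\theta\log n\ll \log n/\log\log n$.
\item The second term is at most $(\log n)^{1/2}\log\log\log n\cdot O(1)$, which is $\ll\log n/\log\log n$.
\end{itemize}
This requires $\theta\le 1$, i.e.\ $\log\log n\ge 2\log 2$, so it covers $n\ge n_0$ for some absolute $n_0$. For the finitely many $3\le n<n_0$, the quantity $\log n/\log\log n$ is bounded below by a positive absolute constant. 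Note $\log\log 3>0$; and although $\log\log n$ is small near $n=3$, the ratio is still positive and bounded below on this finite range. Meanwhile $\tau(n)\le n_0$ there. So enlarging $C_1$ covers these $n$ too.

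The consequence for $\tau_m$ follows at once by applying \eqref{eq:tau-power}, $\tau_m(n)\le\tau(n)^{m-1}$ for $m\ge2$, and noting $\tau_1\equiv1$ for $m=1$. No step is a genuine obstacle. The only care needed is the bookkeeping of the parameter choice, so that both terms are $O(\log n/\log\log n)$ with an absolute constant, together with the harmless finite range of small $n$.
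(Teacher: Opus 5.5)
Your proof is correct and follows essentially the paper's argument: compare $\tau(n)$ with $n^{\theta}$, note that the factors at primes $p \ge 2^{1/\theta}$ are at most $1$, and count the at most $2^{1/\theta}$ small primes. The differences are cosmetic. The paper takes $\theta = 1/\log\log n$, giving $(\log n)^{\log 2}$ small primes, and bounds each small-prime factor crudely by $k_p+1 \le 3\log n$. Your choice $\theta = 2\log 2/\log\log n$ with the uniform bound $1/(\theta\log 2)$ is slightly tighter and reaches the same conclusion.
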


\begin{proof}
It suffices to prove the bound for $n \ge n_0$, where $n_0$ is an
absolute constant, since for $3 \le n < n_0$ the bound holds after
enlarging $C_1$ (the left side is bounded and the exponent on the
right is bounded away from $0$ on $[3, n_0]$).

Let $n = \prod_p p^{k_p}$, write $L = \log n$, and set
$\delta = 1/\log L$; assume $n$ is large enough that
$0 < \delta < 1/2$. Then
\[
\frac{\tau(n)}{n^{\delta}}
= \prod_{p \mid n} \frac{k_p + 1}{p^{k_p \delta}} .
\]
We estimate the factors in two ranges.

If $p \ge 2^{1/\delta}$, then
$p^{k_p \delta} \ge 2^{k_p} \ge k_p + 1$, so the factor is at most
$1$.

If $p < 2^{1/\delta}$, we use the crude bound
$k_p + 1 \le 1 + \log_2 n \le 3L$ (valid for $n \ge 2$, since
$2^{k_p} \le p^{k_p} \le n$), so each such factor is at most $3L$;
and the number of such primes is at most
$2^{1/\delta} = 2^{\log L} = L^{\log 2}$.

Combining,
\[
\tau(n) \le n^{\delta}\, (3L)^{L^{\log 2}}
= \exp\Big( \frac{L}{\log L} + L^{\log 2} \log(3L) \Big).
\]
Since $\log 2 < 0.7$, we have
$L^{\log 2}\log(3L) = o(L/\log L)$ as $L \to \infty$; in particular
$L^{\log 2}\log(3L) \le L/\log L$ for all $n \ge n_0$ with $n_0$
absolute. This gives the bound with $C_1 = 2$ for $n \ge n_0$, and
the general case follows as explained above.
\end{proof}

\begin{lemma}\label{lem:tau-mean}
For every fixed $m \ge 1$ and all $Y \ge 1$,
\[
\sum_{n \le Y} \tau_m(n) \ll_m Y (\log 2Y)^{m-1}.
\]
\end{lemma}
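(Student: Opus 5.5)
I will argue by induction on $m$, using the recursion $\tau_m = \ID * \tau_{m-1}$ and interchanging the order of summation. The base case $m = 1$ is immediate: $\sum_{n \le Y} \tau_1(n) = \lfloor Y \rfloor \le Y$, which is the claim with exponent $0$.

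For the inductive step with $m \ge 2$, I write each $n$ as $n = d e$ and count pairs $(d, e)$ with $de \le Y$:
\[
\sum_{n \le Y} \tau_m(n) = \sum_{n \le Y} \sum_{de = n} \tau_{m-1}(e)
= \sum_{d \le Y} \sum_{e \le Y/d} \tau_{m-1}(e).
\]
Since $Y/d \le Y$, the induction hypothesis gives $\sum_{e \le Y/d} \tau_{m-1}(e) \ll_{m} (Y/d)(\log 2Y)^{m-2}$. Here I bound $\log(2Y/d)$ by $\log 2Y$, which is harmless because $\log$ is increasing and $Y/d \ge 1$. Summing over $d \le Y$ then leaves the harmonic sum $\sum_{d \le Y} 1/d \le 1 + \log Y \le 2\log 2Y$, valid for $Y \ge 1$ since $\log 2 > 1/2$. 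This yields $\ll_m Y (\log 2Y)^{m-1}$, as required.

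The only point needing care is that the bound must hold uniformly for every $Y \ge 1$, including values close to $1$. This is exactly why the statement uses $\log 2Y$ rather than $\log Y$: it stays bounded below by $\log 2 > 0$, so the harmonic-sum estimate $1 + \log Y \ll \log 2Y$ is uniform. Beyond this there is no real obstacle. The implied constant grows like $2^{m}$ times the base constant, which is acceptable since the statement allows dependence on $m$.
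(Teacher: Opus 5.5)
Your proposal is correct and follows the paper's proof essentially step for step. Both argue by induction via $\tau_m = \ID * \tau_{m-1}$, bound $\log(2Y/d)$ by $\log 2Y$, and finish with $\sum_{d \le Y} 1/d \le 1 + \log Y \le 2\log 2Y$, which holds because $2\log 2 - 1 > 0$.
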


\begin{proof}
Induction on $m$. For $m = 1$ the left side is
$\lfloor Y \rfloor \le Y$. For $m \ge 2$, since
$\tau_m = \ID * \tau_{m-1}$,
\[
\sum_{n \le Y} \tau_m(n)
= \sum_{d \le Y} \sum_{e \le Y/d} \tau_{m-1}(e)
\ll_m \sum_{d \le Y} \frac{Y}{d}
\Big( \log \frac{2Y}{d}\Big)^{m-2}
\le Y (\log 2Y)^{m-2} \sum_{d \le Y} \frac{1}{d},
\]
by the induction hypothesis. Finally, by comparison with the
integral $\int_1^Y dt/t$ we have
$\sum_{d \le Y} 1/d \le 1 + \log Y$, and
$1 + \log Y \le 2 \log 2Y$ for $Y \ge 1$, since
$2 \log 2Y - (1 + \log Y) = \log Y + 2 \log 2 - 1
\ge 2\log 2 - 1 > 0$. This closes the induction.
\end{proof}

\begin{lemma}\label{lem:tau-meansq}
For every fixed $m \ge 1$ and all $Y \ge 1$,
\[
\sum_{n \le Y} \tau_m(n)^2 \ll_m Y (\log 2Y)^{m^2 - 1}.
\]
\end{lemma}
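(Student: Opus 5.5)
The plan is to reduce the mean square to a mean value of a divisor function, using Lemma~\ref{lem:taujk} and then Lemma~\ref{lem:tau-mean}. Taking $j = k = m$ in Lemma~\ref{lem:taujk} gives, for every $n \ge 1$,
\[
\tau_m(n)^2 = \tau_m(n)\,\tau_m(n) \le \tau_{m^2}(n).
\]
Summing over $n \le Y$ and applying Lemma~\ref{lem:tau-mean} with $m^2$ in place of $m$ then yields
\[
\sum_{n \le Y} \tau_m(n)^2 \le \sum_{n \le Y} \tau_{m^2}(n) \ll_{m} Y (\log 2Y)^{m^2 - 1}.
\]
This is exactly the claimed bound. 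The implied constant depends only on $m^2$, hence only on $m$.

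The steps, in order, are therefore: (1) the pointwise inequality $\tau_m^2 \le \tau_{m^2}$, which is Lemma~\ref{lem:taujk} (itself resting on the submultiplicativity of Lemma~\ref{lem:submult}); (2) summation over $n \le Y$; (3) the mean value bound of Lemma~\ref{lem:tau-mean} applied at level $m^2$. The case $m = 1$ is consistent with this: $\tau_1^2 = 1$, and the bound reads $\ll Y$.

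There is no real obstacle, since both lemmas are already available. The only point to watch is the exponent. A sharper Selberg--Delange argument would give exponent $m^2 - 1$ directly from $\sum_k \tau_m(p^k)^2 p^{-ks}$ behaving like $\zeta(s)^{m^2}$. The crude comparison with $\tau_{m^2}$ gives the same exponent $m^2 - 1$, so nothing is lost and no Euler product analysis is needed.
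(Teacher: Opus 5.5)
Your proposal is correct and is exactly the paper's proof: the pointwise bound $\tau_m(n)^2 \le \tau_{m^2}(n)$ from Lemma~\ref{lem:taujk} with $j = k = m$, followed by Lemma~\ref{lem:tau-mean} at level $m^2$. Your side remark is also right: since $\tau_m(p)^2 = m^2$, the comparison with $\tau_{m^2}$ loses nothing in the exponent of the logarithm.
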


\begin{proof}
By Lemma~\ref{lem:taujk} with $j = k = m$ we have
$\tau_m(n)^2 \le \tau_{m^2}(n)$ pointwise, and then
Lemma~\ref{lem:tau-mean} with $m^2$ in place of $m$ gives the
result.
\end{proof}

\subsection{The functions $d_z$ and Newton's binomial series}\label{subsec:dz}

For $\beta \in \R$ and $k \ge 0$ define
\begin{equation}\label{eq:gk-def}
g_k(\beta) := \frac{\beta(\beta+1)\cdots(\beta + k - 1)}{k!},
\qquad g_0(\beta) := 1,
\end{equation}
so that $g_k(\beta) = \binom{-\beta}{k}(-1)^k$ and, by
\eqref{eq:dz-def}, $d_z(p^k) = g_k(z)$.

\begin{lemma}\label{lem:binom}
For $\beta \in \R$ let
$F_\beta(t) := \exp( -\beta \operatorname{Log}(1-t))$ for
$|t| < 1$, where $\operatorname{Log}$ is the principal logarithm.
Then:
\begin{enumerate}
\item[(i)] $F_\beta$ is holomorphic on the open unit disc and
\[
F_\beta(t) = \sum_{k \ge 0} g_k(\beta)\, t^k \qquad (|t| < 1),
\]
the series converging absolutely there.
\item[(ii)] $F_\beta F_\gamma = F_{\beta + \gamma}$ for all
$\beta, \gamma \in \R$, and $(1-t) F_1(t) = 1$.
\item[(iii)] If $0 < \beta \le 1$ then $0 < g_k(\beta) \le 1$ for all
$k \ge 0$, and $|g_k(-\beta)| \le \beta / k \le 1$ for all $k \ge 1$.
\item[(iv)] If $f, g$ are holomorphic on the unit disc with Taylor
expansions $f = \sum_k c_k(f) t^k$, $g = \sum_k c_k(g) t^k$ there,
then $c_k(fg) = \sum_{i=0}^{k} c_i(f)\, c_{k-i}(g)$ for every
$k \ge 0$. In other words, the map sending a function holomorphic on
the unit disc to its Taylor series at $0$ is a ring homomorphism into
the ring $\C[[t]]$ of formal power series.
\end{enumerate}
\end{lemma}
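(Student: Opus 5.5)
We prove the four parts in the order (i), (iv), (ii), (iii), since (ii) is most cleanly deduced from (i) together with (iv).

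For (i), note that $1-t$ lies in the open right half-plane when $|t|<1$. The principal logarithm is holomorphic there, so $\operatorname{Log}(1-t)$ is holomorphic on the disc, and hence so is $F_\beta$. A holomorphic function on the disc equals its Taylor series at $0$, and that series converges absolutely on the whole disc. It remains to identify the coefficients with $g_k(\beta)$. Differentiating gives
\[
F_\beta'(t)=\frac{\beta}{1-t}F_\beta(t),
\]
that is, $(1-t)F_\beta'(t)=\beta F_\beta(t)$. Writing $F_\beta=\sum c_k t^k$ and comparing coefficients of $t^k$ yields
\[
(k+1)c_{k+1}-kc_k=\beta c_k,
\]
so $c_{k+1}=\frac{\beta+k}{k+1}\,c_k$. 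Since $c_0=F_\beta(0)=1$, induction gives $c_k=g_k(\beta)$.

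Part (iv) is the Cauchy product. If $f=\sum c_i(f)t^i$ and $g=\sum c_j(g)t^j$ converge absolutely for $|t|<1$, their product series may be rearranged and converges to $fg$. By uniqueness of Taylor coefficients, $c_k(fg)=\sum_{i=0}^k c_i(f)\,c_{k-i}(g)$. Compatibility with addition is clear, and the constant function $1$ maps to $1$, so the map is a ring homomorphism into $\C[[t]]$.

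For (ii), $F_\beta F_\gamma=F_{\beta+\gamma}$ holds pointwise because $\exp$ turns the sum of exponents into a product. Also
\[
F_1(t)=\exp(-\operatorname{Log}(1-t))=\frac{1}{1-t},
\]
so $(1-t)F_1(t)=1$. Through (iv), these become the formal identities $g_k(\beta+\gamma)=\sum_{i=0}^k g_i(\beta)\,g_{k-i}(\gamma)$ and $g_k(1)=1$, which is presumably the form used later.

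For (iii), the case $0<\beta\le1$ follows from writing
\[
g_k(\beta)=\prod_{j=1}^{k}\frac{\beta+j-1}{j}.
\]
Each factor is positive, and since $\beta\le1$ each satisfies $\beta+j-1\le j$, so each factor is at most $1$. For $-\beta$ with $k\ge1$,
\[
g_k(-\beta)=\frac{-\beta}{1}\prod_{j=2}^{k}\frac{j-1-\beta}{j}.
\]
For $j\ge2$ we have $0\le j-1-\beta\le j-1$, so
\[
|g_k(-\beta)|\le\beta\prod_{j=2}^{k}\frac{j-1}{j}=\frac{\beta}{k}.
\]
Finally $\beta/k\le1$ because $\beta\le1$ and $k\ge1$.

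None of these steps presents a real obstacle. The only points needing care are that $\operatorname{Log}(1-t)$ is well defined (the branch issue is settled by $\real(1-t)>0$), and the telescoping product in the bound for $g_k(-\beta)$.
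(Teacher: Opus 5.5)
Your proposal is correct and follows the paper's proof in substance. The only differences are cosmetic: in (i) you identify the Taylor coefficients from the recurrence given by $(1-t)F_\beta' = \beta F_\beta$ rather than from the iterated formula $F_\beta^{(k)} = \beta(\beta+1)\cdots(\beta+k-1)F_{\beta+k}$, and in (iv) you use the Cauchy product of absolutely convergent series rather than the Leibniz rule; both variants are equally valid.
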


\begin{proof}
(i) For $|t| < 1$ we have $\real(1 - t) = 1 - \real t > 0$, so $1 - t$ lies
in the right half-plane, where $\operatorname{Log}$ is holomorphic; hence
$F_\beta$ is holomorphic on the unit disc. Since
$\exp(\operatorname{Log} z) = z$, we have
$\exp(-\operatorname{Log}(1-t)) = (1-t)^{-1}$, and since
$\exp(u)\exp(u') = \exp(u + u')$ for complex $u, u'$,
\[
F_\beta'(t)
= F_\beta(t) \cdot (-\beta) \cdot \frac{d}{dt} \operatorname{Log}(1-t)
= \frac{\beta\, F_\beta(t)}{1 - t}
= \beta\, F_{\beta + 1}(t).
\]
By induction on $k$,
$F_\beta^{(k)}(t) = \beta(\beta+1)\cdots(\beta+k-1)\, F_{\beta+k}(t)$,
and since $F_{\beta+k}(0) = \exp(0) = 1$ we get
$F_\beta^{(k)}(0)/k! = g_k(\beta)$. The expansion and its absolute
convergence on the disc of holomorphy now follow from the Taylor
expansion of holomorphic functions \cite[Theorem~10.16]{Ru}.

(ii) $F_\beta F_\gamma = \exp(-\beta L)\exp(-\gamma L)
= \exp(-(\beta+\gamma)L) = F_{\beta+\gamma}$, where
$L = \operatorname{Log}(1-t)$; and
$(1-t)F_1(t) = (1-t)\exp(-\operatorname{Log}(1-t)) = (1-t)(1-t)^{-1} = 1$.

(iii) For $0 < \beta \le 1$ and $k \ge 1$,
\[
g_k(\beta) = \prod_{j=1}^{k} \frac{\beta + j - 1}{j}
= \prod_{j=1}^{k} \frac{j - (1 - \beta)}{j},
\]
and each factor lies in $(0, 1]$ because $0 \le 1 - \beta < 1$. For the
second claim,
\[
|g_k(-\beta)|
= \frac{\big| (-\beta)(1-\beta)(2-\beta)\cdots(k-1-\beta) \big|}{k!}
= \frac{\beta \prod_{j=1}^{k-1}(j - \beta)}{k!}
\le \frac{\beta\, (k-1)!}{k!} = \frac{\beta}{k},
\]
since $0 < j - \beta \le j$ for $1 \le j \le k-1$.

(iv) By the Leibniz rule (induction from the product rule),
$(fg)^{(k)}(0) = \sum_{i} \binom{k}{i} f^{(i)}(0) g^{(k-i)}(0)$; divide
by $k!$.
\end{proof}

\begin{lemma}\label{lem:dz}
Let $\beta, \gamma \in \R$.
\begin{enumerate}
\item[(a)] $d_\beta * d_\gamma = d_{\beta + \gamma}$. In particular,
$d_0 = \delta$, $d_1 = \ID$, $d_m = \ID^{*m} = \tau_m$ for every
integer $m \ge 1$, $\big( d_{\pm a/b} \big)^{*b} = d_{\pm a}$, and
$d_a * d_{-a} = \delta$.
\item[(b)] For every real $z$ with $|z| \le 1$ and all $n \ge 1$,
$|d_z(n)| \le 1$. In particular $0 < \da(n) \le 1$ for all $n$, and
$|\dma(p^k)| \le (a/b)/k$ for $k \ge 1$.
\end{enumerate}
\end{lemma}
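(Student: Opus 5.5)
The plan is to reduce both parts to prime powers, using that all the functions involved are multiplicative, and then read the prime-power identities off Lemma~\ref{lem:binom}.

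For (a), recall that $d_\beta$ is multiplicative by definition, with $d_\beta(p^k) = g_k(\beta)$. The Dirichlet convolution of two multiplicative functions is multiplicative, so $d_\beta * d_\gamma$ is multiplicative. It therefore suffices to compare the two sides on prime powers. At $p^k$ we have
\[
(d_\beta * d_\gamma)(p^k) = \sum_{i=0}^{k} g_i(\beta)\, g_{k-i}(\gamma).
\]
By Lemma~\ref{lem:binom}(iv) applied to $F_\beta$ and $F_\gamma$, the right side is the $k$-th Taylor coefficient of $F_\beta F_\gamma$. By Lemma~\ref{lem:binom}(ii) and (i), $F_\beta F_\gamma = F_{\beta+\gamma}$, whose $k$-th coefficient is $g_k(\beta+\gamma) = d_{\beta+\gamma}(p^k)$. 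This proves the convolution law.

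The special cases then follow directly.
\begin{itemize}
\item[] $d_0 = \delta$, because $g_k(0) = 0$ for $k \ge 1$ and $g_0(0) = 1$.
\item[] $d_1 = \ID$, because $g_k(1) = k!/k! = 1$.
\item[] $d_m = \ID^{*m} = \tau_m$, by induction on $m$ using the convolution law.
\item[] $(d_{\pm a/b})^{*b} = d_{\pm a}$, by $b$ applications of the convolution law.
\item[] $d_a * d_{-a} = d_0 = \delta$, again by the convolution law.
\end{itemize}

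For (b), multiplicativity gives $|d_z(n)| = \prod_{p^k \| n} |g_k(z)|$, so it suffices to show $|g_k(z)| \le 1$ for every $k \ge 0$ and $|z| \le 1$. The cases are as follows.
\begin{itemize}
\item[] For $k = 0$, $g_0(z) = 1$.
\item[] For $z = 0$, the claim is trivial.
\item[] For $0 < z \le 1$, Lemma~\ref{lem:binom}(iii) gives $0 < g_k(z) \le 1$.
\item[] For $-1 \le z < 0$ and $k \ge 1$, apply Lemma~\ref{lem:binom}(iii) with $\beta = -z \in (0,1]$. This gives $|g_k(z)| \le \beta/k \le 1$.
\end{itemize}

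Specialising to $z = a/b$: each local factor $g_k(a/b)$ is strictly positive, so the product over prime powers gives $0 < d_{a/b}(n) \le 1$. Specialising to $z = -a/b$: the bound $|d_{-a/b}(p^k)| \le (a/b)/k$ is exactly the third case with $\beta = a/b$.

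Nothing here is a real obstacle. The one point needing care is in (a): the prime-power identity must be stated in terms of Taylor coefficients of holomorphic functions via Lemma~\ref{lem:binom}(iv), rather than as a formal manipulation of power series. This matters because the product rule $F_\beta F_\gamma = F_{\beta+\gamma}$ was established analytically. Everything else is bookkeeping with multiplicativity.
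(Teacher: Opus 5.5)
Your proposal is correct and follows essentially the same route as the paper. For (a), you reduce to prime powers by multiplicativity and identify $\sum_{i} g_i(\beta)g_{k-i}(\gamma)$ with the $k$-th Taylor coefficient of $F_\beta F_\gamma = F_{\beta+\gamma}$ via Lemma~\ref{lem:binom}(iv), (ii), (i). For (b), you obtain the bounds prime power by prime power from Lemma~\ref{lem:binom}(iii), and you derive the special cases exactly as the paper does.
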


\begin{proof}
(a) Both sides are multiplicative -- each $d_z$ is multiplicative
by \eqref{eq:dz-def}, and a Dirichlet convolution of multiplicative
functions is multiplicative -- so it suffices to compare
values at $n = p^k$. The divisors of $p^k$ are $p^i$, $0 \le i \le k$,
so, using $d_z(p^i) = g_i(z)$ and Lemma~\ref{lem:binom}(iv) and then
(ii),
\[
(d_\beta * d_\gamma)(p^k)
= \sum_{i=0}^{k} g_i(\beta)\, g_{k-i}(\gamma)
= c_k( F_\beta F_\gamma )
= c_k( F_{\beta + \gamma} )
= g_k(\beta + \gamma)
= d_{\beta+\gamma}(p^k).
\]
For the particular cases: $g_k(0) = 0$ for $k \ge 1$ and $g_0(0) = 1$,
so $d_0 = \delta$; $g_k(1) = k!/k! = 1$, so $d_1 = \ID$; iterating the
convolution identity gives $d_m = d_1^{*m} = \ID^{*m} = \tau_m$ and
$(d_{\pm a/b})^{*b} = d_{\pm a}$; and
$d_a * d_{-a} = d_0 = \delta$.

(b) For $z = 0$ the claim is trivial, since $d_0 = \delta$. For
$0 < z \le 1$, Lemma~\ref{lem:binom}(iii) with $\beta = z$ gives
$0 < g_k(z) \le 1$ for all $k \ge 0$, and multiplicativity yields
$0 < d_z(n) \le 1$. For $-1 \le z < 0$, apply
Lemma~\ref{lem:binom}(iii) with $\beta = -z \in (0, 1]$: then
$|d_z(p^k)| = |g_k(-\beta)| \le 1$ for all $k \ge 1$, and
multiplicativity yields $|d_z(n)| \le 1$. In particular
$0 < \da(n) \le 1$, and $|\dma(p^k)| = |g_k(-a/b)| \le (a/b)/k$ for
$k \ge 1$, by the same lemma.
\end{proof}

\subsection{The algebra of arithmetic functions}\label{subsec:algebra}

\begin{lemma}\label{lem:domain}
$\A$ is a commutative ring with multiplicative identity $\delta$, and
it is an integral domain. Consequently, if $f, g, g' \in \A$ satisfy
$f * g = f * g'$ and $f \ne 0$, then $g = g'$.
\end{lemma}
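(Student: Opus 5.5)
The plan is to argue directly with the least element of the support. The ring axioms are routine, and I would verify them only briefly. Commutativity follows from the symmetry of the condition $de = n$. Associativity follows because both $((f*g)*h)(n)$ and $(f*(g*h))(n)$ equal $\sum_{d_1 d_2 d_3 = n} f(d_1) g(d_2) h(d_3)$. Distributivity over pointwise addition is immediate from linearity of the finite sum in each argument. The identity property $(\delta * f)(n) = \sum_{de=n}\delta(d) f(e) = f(n)$ holds because only $d = 1$ contributes.

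For the integral-domain property, suppose $f \ne 0$ and $g \ne 0$. Let $m = \min\{n : f(n) \ne 0\}$ and $k = \min\{n : g(n) \ne 0\}$, which exist by well-ordering of $\N$. The key step is to evaluate the convolution at $mk$:
\[
(f*g)(mk) = \sum_{de = mk} f(d)\, g(e).
\]
A term survives only if $d \ge m$ and $e \ge k$. Since $de = mk$, this forces $d = m$ and $e = k$: if $d > m$ then $e = mk/d < k$, so $g(e) = 0$. Hence $(f*g)(mk) = f(m) g(k) \ne 0$, because $\C$ has no zero divisors. Therefore $f * g \ne 0$. The ring is nonzero since $\delta \ne 0$, so $\A$ is an integral domain.

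The cancellation consequence then follows formally. From $f*g = f*g'$ and distributivity we get $f*(g-g') = 0$, and since $f \ne 0$ the domain property forces $g - g' = 0$.

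There is no real obstacle here. The only point needing care is the minimality argument: the product of the two minimal support elements must be realized by a single pair $(d,e)$. This works because any other factorization $de = mk$ with $d \ge m$ must have $e \le k$, with equality only when $d = m$.
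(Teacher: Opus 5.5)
Your proposal is correct and follows the paper's proof essentially verbatim: the ring axioms are verified by the symmetric sum expressions, the domain property comes from evaluating $f*g$ at $m(f)\,m(g)$ where only the pair $(m(f), m(g))$ survives, and cancellation follows by applying this to $f*(g-g')=0$. Your remark that $\delta \ne 0$, so the ring is nonzero, is a small point the paper leaves implicit.
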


\begin{proof}
\begin{sloppypar}
Commutativity and associativity follow from the symmetric
expressions $(f*g)(n) = \sum_{de = n} f(d)g(e)$ and
$(f{*}g{*}h)(n) = \sum_{de\ell = n} f(d)g(e)h(\ell)$, and $\delta$ is clearly
an identity; distributivity over pointwise addition is immediate.
\end{sloppypar}

For the domain property, suppose $f, g \ne 0$ and let
$m(f) = \min\{ n : f(n) \ne 0 \}$ and likewise $m(g)$. If
$de = m(f) m(g)$ with $f(d) g(e) \ne 0$, then $d \ge m(f)$ and
$e \ge m(g)$, which forces $d = m(f)$ and $e = m(g)$. Hence
$(f * g)\big( m(f) m(g) \big) = f(m(f))\, g(m(g)) \ne 0$, so
$f * g \ne 0$.

The cancellation statement follows by applying the domain property
to $f * (g - g') = 0$.
\end{proof}

\begin{lemma}
\label{lem:root}
Let $b \ge 1$ and let $A, C \in \A$ satisfy $A^{*b} = C^{*b}$ and
$A(1) = C(1) = 1$. Then $A = C$.
\end{lemma}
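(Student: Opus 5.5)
The plan is to factor the difference of $b$-th powers and then cancel in the integral domain $\A$ (Lemma~\ref{lem:domain}). In the commutative ring $\A$ we have the identity
\[
A^{*b} - C^{*b} = (A - C) * \Big( \sum_{j=0}^{b-1} A^{*j} * C^{*(b-1-j)} \Big),
\]
which is verified by telescoping, using only commutativity and associativity. The left side vanishes by hypothesis. Call the second factor $S$.

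The key step is to show $S \ne 0$, and for this I evaluate at $n = 1$. Convolution at $1$ is just the product of the values at $1$: $(f*g)(1) = f(1)g(1)$. Hence $A^{*j}(1) = A(1)^j = 1$ and likewise $C^{*(b-1-j)}(1) = 1$, with the conventions $A^{*0} = C^{*0} = \delta$ and $\delta(1)=1$. Each of the $b$ summands therefore contributes $1$, giving $S(1) = b \ne 0$, so $S \ne 0$ in $\A$.

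Since $\A$ is an integral domain by Lemma~\ref{lem:domain}, the relation $(A - C) * S = 0$ with $S \ne 0$ forces $A - C = 0$, that is, $A = C$. The case $b = 1$ is trivial, and it is also covered by the argument with $S = \delta$.

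I do not expect a real obstacle here. The only point needing care is that the normalization $A(1) = C(1) = 1$ is used precisely to make $S(1) = b$ rather than a sum that could cancel. For example, with $C = -A$ and $b$ even, the conclusion fails. This is the step where the hypothesis enters.
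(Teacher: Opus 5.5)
Your proof is correct. It differs from the paper's only in the choice of factorization.

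The paper factors $X^b - Y^b = \prod_{j=0}^{b-1}(X - \omega^j Y)$ over $\C$ with $\omega = \e(1/b)$. It substitutes $X = A$, $Y = C$ in the $\C$-algebra $\A$, and observes that each factor with $j \ge 1$ is nonzero because its value at $n = 1$ is $1 - \omega^j \neq 0$. The integral-domain property of Lemma~\ref{lem:domain}, extended by induction to finite products, then forces $A - C = 0$.

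You use the integer-coefficient telescoping factorization $(A - C) * \sum_{j=0}^{b-1} A^{*j} * C^{*(b-1-j)}$ and a single evaluation $S(1) = b$. Your route is more economical. It needs no roots of unity and no identity in $\C[X,Y]$ that must be justified and then transported into $\A$. It uses the domain property only once. It also works verbatim for arithmetic functions with values in any integral domain in which $b \neq 0$.

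The paper's route gives a little more structure in exchange. Without any normalization, $A^{*b} = C^{*b}$ already forces $A = \omega^j C$ for some $j$, since one linear factor must vanish. So the $b$-th roots of a given element of $\A$ are exactly the $\omega^j$-multiples of any one of them, and the condition $A(1) = C(1) = 1$ simply selects $j = 0$. Your argument yields only the dichotomy $A = C$ or $S = 0$, and does not classify the roots.
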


\begin{proof}
Let $\omega = \e(1/b)$, a primitive $b$-th root of unity. The
polynomial $X^b - 1 \in \C[X]$ is monic of degree $b$ and vanishes
at the $b$ distinct points $\omega^j$, $0 \le j \le b-1$, so
$X^b - 1 = \prod_{j=0}^{b-1} (X - \omega^j)$. Expanding the
product, this is an equality of coefficients: writing
$\prod_{j=0}^{b-1}(X - \omega^j Y) = \sum_{i=0}^{b} s_i X^{b-i}
Y^{i}$ with $s_i \in \C$ (the signed elementary symmetric functions
of the $\omega^j$), the one-variable identity says $s_0 = 1$,
$s_b = -1$ and $s_i = 0$ for $0 < i < b$. Hence
\begin{equation}\label{eq:XbYb}
X^b - Y^b = \prod_{j=0}^{b-1} (X - \omega^j Y)
\end{equation}
as polynomials in $\C[X, Y]$. Since $\A$ is a commutative
$\C$-algebra, we may substitute $X = A$, $Y = C$ (both sides of
\eqref{eq:XbYb} expand into the same $\C$-linear combination of the
elements $A^{*i} * C^{*(b-i)}$), obtaining
\[
0 = A^{*b} - C^{*b}
= (A - C) * \prod_{j=1}^{b-1} {}^{*}\, (A - \omega^j C),
\]
where the product is a convolution product. For $1 \le j \le b-1$
we have $(A - \omega^j C)(1) = 1 - \omega^j \ne 0$, so each of
these factors is a nonzero element of $\A$. Since $\A$ is an
integral domain (Lemma~\ref{lem:domain}), a finite convolution
product vanishes only if one of its factors vanishes (induction on
the number of factors). The factors with $j \ge 1$ do not vanish,
so $A - C = 0$.
\end{proof}

\subsection{Exponential sum estimates}\label{subsec:expsums}

\begin{lemma}[Geometric series bound]\label{lem:geom}
Let $\beta \in \R$ and let $I$ be an interval whose intersection with $\Z$ consists of exactly $N \ge 1$ integers. Then
\[
\Big| \sum_{n \in I \cap \Z} \e(n\beta) \Big|
\le \min\Big( N,\ \frac{1}{2\norm{\beta}} \Big),
\]
with the convention that the second entry of the minimum is
$+\infty$ when $\norm{\beta} = 0$.
\end{lemma}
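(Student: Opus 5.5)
The two bounds are proved separately, and the minimum then follows at once. The first bound is the triangle inequality: each of the $N$ terms has modulus $1$, so the sum has modulus at most $N$. If $\norm{\beta} = 0$, then $\beta \in \Z$, every term equals $1$, the sum is exactly $N$, and the convention makes the second entry $+\infty$. So from now on we assume $\norm{\beta} > 0$, which means $\e(\beta) \ne 1$.

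Since $I$ is an interval, $I \cap \Z$ is a run of consecutive integers $M+1, \dots, M+N$. Summing the geometric progression gives
\[
\sum_{n=M+1}^{M+N} \e(n\beta)
= \e((M+1)\beta)\,\frac{\e(N\beta) - 1}{\e(\beta) - 1}.
\]
For real $\theta$ we have $|\e(\theta) - 1| = |e^{\pi i\theta}|\,|e^{\pi i \theta} - e^{-\pi i\theta}| = 2|\sin \pi\theta|$. The numerator therefore has modulus at most $2$, and the denominator has modulus $2|\sin\pi\beta|$. Hence the sum has modulus at most $1/|\sin \pi\beta|$.

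It remains to bound $|\sin \pi\beta|$ from below in terms of $\norm{\beta}$. Since $|\sin\pi\beta|$ is $1$-periodic and even, $|\sin \pi\beta| = \sin(\pi\norm{\beta})$ with $\norm{\beta} \in (0, 1/2]$. By concavity of $\sin$ on $[0, \pi/2]$ we have $\sin y \ge 2y/\pi$ there, so $\sin(\pi\norm{\beta}) \ge 2\norm{\beta}$. This gives the bound $1/(2\norm{\beta})$.

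No step is a genuine obstacle. The only points needing care are the reduction from $\beta$ to $\norm{\beta}$ via the periodicity and symmetry of $|\sin \pi\beta|$, and checking that the degenerate case $\norm{\beta} = 0$ agrees with the stated convention.
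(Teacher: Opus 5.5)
Your proof is correct and follows essentially the same route as the paper's: you use the triangle inequality for the bound $N$, then sum the geometric progression to get $1/|\sin \pi\beta|$, and finish with $|\sin\pi\beta| = \sin(\pi\norm{\beta}) \ge 2\norm{\beta}$ by concavity. Your handling of the degenerate case $\norm{\beta}=0$ and of the reduction to $\norm{\beta}$ via periodicity and evenness matches what the paper does.
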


\begin{proof}
The bound by $N$ is the triangle inequality, and covers the case
$\norm{\beta} = 0$. If $\beta \notin \Z$, write the integers of $I$
as $n_0, n_0 + 1, \dots, n_0 + N - 1$; summing the geometric
series,
\[
\Big| \sum_{j = 0}^{N-1} \e\big( (n_0 + j)\beta \big) \Big|
= \Big| \frac{1 - \e(N\beta)}{1 - \e(\beta)} \Big|
\le \frac{2}{|1 - \e(\beta)|}
= \frac{1}{|\sin \pi \beta|}.
\]
Finally $|\sin \pi \beta| = \sin(\pi \norm{\beta}) \ge 2
\norm{\beta}$, since $\sin(\pi t) \ge 2t$ for $0 \le t \le 1/2$
(the function $\sin(\pi t)$ is concave on $[0, 1/2]$ and equals the
linear function $2t$ at the endpoints $t = 0$ and $t = 1/2$). This
is the computation of \cite[eq.~(23.23)]{K}, stated there for the
interval $[1, x]$; the proof for a general interval is identical,
as above.
\end{proof}

\begin{lemma}[{Type I estimate; \cite[eq.~(17.32)]{MV}}]\label{lem:typeI}
Let $\alpha \in \R$ and let $r/q$ be a reduced fraction with
$|\alpha - r/q| \le 1/q^2$. Then for all $N \ge 2$ and $T \ge 1$,
\[
\sum_{0 < t \le T} \min\Big( \frac{N}{t},\ \frac{1}{\norm{t\alpha}} \Big)
\ll \Big( \frac{N}{q} + T + q \Big) \log 2Tq .
\]
\end{lemma}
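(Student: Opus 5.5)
The plan is the classical argument behind \cite[eq.~(17.32)]{MV}: split the range $0 < t \le T$ into consecutive blocks of length $q$, and within each block use the spacing of the points $t\alpha$ modulo $1$. Write $t = hq + j$ with $0 \le h \le T/q$ and $1 \le j \le q$. Then $t\alpha = hq\alpha + jr/q + j\theta$ with $|\theta| \le 1/q^2$, so $|j\theta| \le 1/q$. Thus, inside a fixed block, $t\alpha$ equals a fixed shift $hq\alpha$ plus $jr/q$ plus a perturbation of size at most $1/q$. Since $(r,q)=1$, as $j$ runs over a block the residues $jr \bmod q$ run through all classes exactly once.

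The key counting step follows from this. For each block, and each $k$ with $0 \le k \le q/2$, at most $O(1)$ values of $j$ in the block satisfy $\norm{t\alpha} \in [k/q, (k+1)/q)$. This holds because the points $jr/q + hq\alpha$ are $1/q$-spaced modulo $1$ and the perturbation shifts each by at most $1/q$.

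Consider first the blocks with $h \ge 1$. There $t \ge hq$, and $N/t \le N/(hq)$ for every $t$ in the block. In each such block:
\begin{itemize}
\item the $O(1)$ values of $j$ with $\norm{t\alpha} < 1/q$ contribute at most $O(N/(hq))$ through the first entry of the minimum;
\item the remaining values contribute $\sum_{1 \le k \le q/2} O(q/k) \ll q \log 2q$ through the second entry.
\end{itemize}
Summing over $1 \le h \le T/q + 1$ gives $\ll (N/q)\log 2T + (T/q + 1)\, q\log 2q$. This is $\ll (N/q + T + q)\log 2Tq$.

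The initial block $h=0$, $1 \le t \le \min(q,T)$, is the main obstacle, because $N/t$ is not uniformly small there. I would handle it by exploiting the same spacing with the perturbation made explicit. For $t \le q/2$ the term $jr/q$ dominates $j\theta$ up to $1/(2q)$; more precisely, $\norm{t\alpha} \ge \norm{tr/q} - t/q^2 \ge \norm{tr/q} - 1/(2q)$. Since $q \nmid t$, we have $\norm{tr/q} \ge 1/q$. Hence $\norm{t\alpha} \gg \norm{tr/q}$, and summing $1/\norm{tr/q}$ over $1 \le t \le q/2$ gives $\ll q\log 2q$.

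For $q/2 < t \le q$ this bound can fail, and there I would instead use $N/t \le 2N/q$. Applying this to all $O(q)$ terms directly would be too costly, but the counting argument above again permits only $O(1)$ values of $t$ with $\norm{t\alpha} < 1/q$. Those $O(1)$ terms cost $O(N/q)$, and the rest cost $O(q\log 2q)$ as before.

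If $T < q$, only part of the initial block is present, and the same bounds apply. Collecting all contributions yields the stated estimate with $\log 2Tq$ covering both $\log 2T$ and $\log 2q$.
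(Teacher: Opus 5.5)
Your argument is correct and is essentially the paper's block-spacing proof. The paper uses blocks of $\lceil q/2\rceil$ consecutive integers, in which the points $t\alpha$ are pairwise $(2q)^{-1}$-spaced, so only the closest point of each block needs separate treatment: it is bounded by $4q$ in the first block and by $N/t$ in later blocks. Your length-$q$ blocks with the lattice-perturbation count instead require the extra split of the initial block at $q/2$, which you handle correctly, and both versions arrive at $(N/q)\log 2T + (T+q)\log 2q$.
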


\begin{proof}
The statement is \cite[eq.~(17.32)]{MV} (see also
\cite[eqs.~(23.29)--(23.30), (23.36)]{K} and
\cite[Chapter~13]{IK} for equivalent forms); we
include the classical block-spacing proof, which is short. If
$q = 1$, each term is at most $N/t$, and
$\sum_{t \le T} N/t \le N(1 + \log T) \ll (N/q)\log(2Tq)$. Assume
$q \ge 2$, write $\beta = \alpha - r/q$, and split $[1, T]$ into
blocks of $\lceil q/2 \rceil$ consecutive integers; there are at
most $2T/q + 1$ blocks.

Fix a block $B$. For distinct $t, t' \in B$ we have
$0 < |t - t'| \le \lceil q/2 \rceil - 1 < q$, so
$q \nmid (t - t') r$ and
\[
\norm{(t - t')\alpha}
\ge \norm{(t - t') r/q} - |t - t'|\, |\beta|
\ge \frac1q - \frac{q/2}{q^2} = \frac1{2q}.
\]
Thus the points $t\alpha$, $t \in B$, are pairwise
$(2q)^{-1}$-spaced modulo $1$. Order the points $t\alpha$, $t \in B$, by distance to the nearest
integer. For $j \ge 2$, the points lying within distance strictly
less than $(j-1)/(4q)$ of $\Z$ occupy an open arc of length
$(j-1)/(2q)$ modulo $1$; since $j$ points that are pairwise
$(2q)^{-1}$-spaced span at least $(j-1)/(2q)$, that arc contains at
most $j - 1$ of them. The $j$-th closest point therefore has
distance at least $(j-1)/(4q)$. All points of $B$ except the
closest one therefore contribute at most
\[
\sum_{2 \le j \le \lceil q/2 \rceil} \frac{4q}{j-1}
\le 4q(1 + \log q) \le 8q\log(2q)
\]
to the left side of the claimed inequality, upon bounding each term
by $\norm{t\alpha}^{-1}$.

It remains to bound the contribution of the closest point $t_B^*$
of each block. If $B$ is the first block, then
$1 \le t_B^* \le \lceil q/2 \rceil \le q - 1$, so $q \nmid t_B^*$,
whence $q \nmid t_B^* r$ and
\[
\norm{t_B^*\alpha} \ge \frac1q - \lceil q/2 \rceil\, |\beta|
\ge \frac1q - \frac{q+1}{2q^2}
= \frac{q-1}{2q^2} \ge \frac1{4q}
\qquad (q \ge 2),
\]
so this point contributes at most $4q$. If $B$ is the $k$-th block
with $k \ge 2$, then
$t_B^* > (k-1)\lceil q/2 \rceil \ge (k-1)q/2$, and we use the other
branch of the minimum: the contribution is at most
$N/t_B^* < 2N/((k-1)q)$, and summing over $2 \le k \le 2T/q + 1$
gives $\ll (N/q)\log(2T)$. Adding the three contributions over all
blocks,
\[
\Big( \frac{2T}{q} + 1 \Big) \big( 8q\log(2q) + 4q \big)
+ \frac{N}{q} \cdot O\big( \log(2T) \big)
\ll \Big( \frac{N}{q} + T + q \Big) \log(2Tq),
\]
which is the claimed bound.
\end{proof}

\begin{lemma}[{Type II estimate; \cite[Theorem~23.6]{K}}]\label{lem:typeII}
Let $f, g : \N \to \C$ be arithmetic functions with
$\operatorname{supp}(f) \subseteq [1, y]$ and
$\operatorname{supp}(g) \subseteq [1, z]$. Let $\alpha \in \R$ and
let $r/q$ be a reduced fraction with $|\alpha - r/q| \le 1/q^2$. Then
for all $x \ge 1$,
\[
\sum_{n \le x} (f * g)(n)\e(\alpha n)
\ll
\Big( q + y + z + \frac{yz}{q} \Big)^{1/2} \sqrt{\log 2q}\;
\|f\|_2\, \|g\|_2 .
\]
\end{lemma}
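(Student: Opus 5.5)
The plan is to remove the cutoff $n \le x$ by moving it onto the summation variable that is summed trivially (as a geometric series), and then to count well-spaced points. Write the sum as
\[
\Sigma := \sum_{m \le y} f(m) \sum_{\substack{n \le z \\ mn \le x}} g(n)\e(\alpha m n),
\]
and apply Cauchy--Schwarz in $m$:
\[
|\Sigma|^2 \le \|f\|_2^2 \sum_{m \le y} \Big| \sum_{n \le \min(z, x/m)} g(n)\e(\alpha mn) \Big|^2 .
\]
Expanding the square and interchanging the order of summation gives
\[
\sum_{n_1, n_2 \le z} g(n_1)\overline{g(n_2)} \sum_{m \in I(n_1,n_2)} \e\big(\alpha m (n_1 - n_2)\big),
\qquad
I(n_1,n_2) = \big[1, \min(y, x/n_1, x/n_2)\big].
\]
The key observation is that the truncation $mn_i \le x$ now only shortens the \emph{interval} of $m$. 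So the inner sum is still a geometric series over at most $y$ consecutive integers, and Lemma~\ref{lem:geom} bounds it by $\min\big(y, 1/\norm{\alpha(n_1-n_2)}\big)$, uniformly in $x \ge 1$. (If the interval is empty, the sum is simply $0$.) This is how the statement holds for every $x$ without any extra logarithm.

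Next, bound $|g(n_1)g(n_2)| \le \tfrac12(|g(n_1)|^2 + |g(n_2)|^2)$ and use symmetry. This gives
\[
|\Sigma|^2 \le \|f\|_2^2\,\|g\|_2^2 \,\max_{n_1 \le z} \sum_{n_2 \le z} \min\Big(y, \frac{1}{\norm{\alpha(n_1-n_2)}}\Big)
\le \|f\|_2^2\,\|g\|_2^2 \sum_{|h| < z} \min\Big(y, \frac{1}{\norm{h\alpha}}\Big).
\]
It therefore remains to prove the counting bound
\[
\sum_{|h|<z} \min\big(y, \norm{h\alpha}^{-1}\big) \ll \Big(\frac{z}{q} + 1\Big)\big(y + q\log 2q\big).
\]
Since $(z/q + 1)(y + q\log 2q) \le (yz/q + y + z + q)\log 2q$, taking square roots then yields exactly the stated estimate with the factor $\sqrt{\log 2q}$.

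For the counting bound, split $(-z,z)$ into $\ll z/q + 1$ blocks of $q$ consecutive integers $h = h_0 + j$, $0 \le j < q$. Write $\alpha = r/q + \beta$ with $|\beta| \le 1/q^2$. Then
\[
h\alpha = h_0\alpha + \frac{jr}{q} + j\beta, \qquad |j\beta| < \frac1q .
\]
Since $(r,q) = 1$, the residues $jr \bmod q$ run over all classes modulo $q$. Hence, after a shift by $h_0\alpha$, the $q$ points $h\alpha$ modulo $1$ are perturbations by less than $1/q$ of the $q$ equally spaced points $k/q$. Consequently any arc of length $1/q$ contains $O(1)$ of them. Ordering the points by their distance to $\Z$, the $i$-th closest point has distance $\gg (i-O(1))/q$ from $\Z$. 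The $O(1)$ closest points are bounded by $y$, and the rest contribute $\ll \sum_{i \le q} q/i \ll q\log 2q$. So each block contributes $\ll y + q\log 2q$, and summing over the blocks gives the counting bound.

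The main obstacle is keeping precisely the factor $\sqrt{\log 2q}$, with no $\log x$ and no $\log y$. Two points matter here. First, the $\log$ must come only from the harmonic sum of length $q$ inside a block, which is why the blocks have length $q$ and the $y$ branch of the minimum is used only for $O(1)$ points per block. Second, the cutoff must never be resolved by a Perron-type or additive-character device, since that is what produces a spurious extra logarithm. The interval observation in the first step is exactly what avoids this.
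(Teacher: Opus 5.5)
Your proposal is correct and is essentially the argument the paper points to in \cite[Theorem~23.6]{K}: Cauchy--Schwarz to strip off one coefficient sequence, the cutoff $mn\le x$ absorbed into the length of the geometric series in the freed variable, and then a spacing bound for $\sum_{|h|<z}\min\big(y,\norm{h\alpha}^{-1}\big)$, which plays the role of \cite[eq.~(23.36)]{K}. The differences are cosmetic: you eliminate $f$ rather than $g$, which is immaterial because the bound is symmetric under $(f,y)\leftrightarrow(g,z)$; you prove the block-spacing estimate yourself instead of quoting it; and your final inequality $(z/q+1)(y+q\log 2q)\le(q+y+z+yz/q)\log 2q$ holds only up to a factor $1/\log 2$ when $q=1$, which the $\ll$ absorbs.
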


\begin{proof}[Source of proof]
This is \cite[Theorem~23.6]{K} (see also
\cite[Lemma~13.8]{IK}), proved in full there: after an
application of the Cauchy-Schwarz inequality that removes the factor
$g$, the resulting diagonal-plus-spacing sum is bounded by
\cite[eq.~(23.36)]{K}, which is proved there uniformly in the shift
parameter.
\end{proof}

\begin{lemma}\label{lem:minsq}
Let $\alpha$, $r/q$ be as in Lemma~\ref{lem:typeI} and let
$\kappa \ge 1$ be fixed. Then for all $x \ge 2$ and $T \ge 1$,
\[
\sum_{T < k \le 2^{\kappa} T}
\min\Big( \frac{x}{k},\ \frac{1}{\norm{k\alpha}} \Big)^2
\ll_{\kappa} \frac{x}{T} \Big( \frac{x}{q} + T + q \Big) \log 2Tq .
\]
\end{lemma}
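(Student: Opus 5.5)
The plan is to reduce Lemma~\ref{lem:minsq} to Lemma~\ref{lem:typeI} using the elementary inequality $\min(A,B)^2 \le A\cdot\min(A,B)$, valid for $A, B > 0$ (and with the convention $B = +\infty$ when $\norm{k\alpha}=0$). With $A = x/k$, we get for every $k$ in the range $T < k \le 2^{\kappa}T$
\[
\min\Big( \frac{x}{k},\ \frac{1}{\norm{k\alpha}} \Big)^2
\le \frac{x}{k}\,\min\Big( \frac{x}{k},\ \frac{1}{\norm{k\alpha}} \Big)
< \frac{x}{T}\,\min\Big( \frac{x}{k},\ \frac{1}{\norm{k\alpha}} \Big).
\]
The factor $x/T$ in the claimed bound therefore comes out immediately, and it remains to show
\[
\sum_{T < k \le 2^{\kappa}T} \min\Big( \frac{x}{k},\ \frac{1}{\norm{k\alpha}} \Big) \ll_\kappa \Big( \frac{x}{q} + T + q \Big)\log 2Tq .
\]

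For this, I would extend the range of summation to $0 < k \le 2^{\kappa}T$; this is legitimate since all terms are nonnegative. Then apply Lemma~\ref{lem:typeI} with $N = x \ge 2$ and with $2^{\kappa}T$ in place of $T$. This gives the bound
\[
\Big( \frac{x}{q} + 2^{\kappa}T + q \Big)\log(2^{\kappa+1}Tq).
\]
Since $\kappa$ is fixed, $2^{\kappa}T \ll_\kappa T$. Also $\log(2^{\kappa+1}Tq) \le (\kappa+1)\log 2Tq$, because $2Tq \ge 2$. Together these yield the desired estimate.

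There is no real obstacle here. The only point needing a moment's care is the convention for $\norm{k\alpha} = 0$: in that case both sides of the pointwise inequality reduce to $(x/k)^2$, respectively $(x/k)\cdot(x/k)$, so the inequality still holds.
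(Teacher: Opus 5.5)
Your proposal is correct and is essentially the paper's proof. Both bound each term pointwise by $(x/T)\min\big(x/k,\norm{k\alpha}^{-1}\big)$ on $T<k\le 2^{\kappa}T$, then discard the nonnegative terms with $k\le T$, apply Lemma~\ref{lem:typeI} with $N=x$ and $2^{\kappa}T$ in place of $T$, and absorb $2^{\kappa}$ into the constant; your inequality $\log(2^{\kappa+1}Tq)\le(\kappa+1)\log 2Tq$ simply makes explicit the step the paper records as $\ll_\kappa \log 2Tq$.
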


\begin{proof}
Each term of the sum satisfies
$\min(x/k, \norm{k\alpha}^{-1}) \le x/k \le x/T$ on the range
$T < k \le 2^{\kappa} T$. Therefore
\[
\min\Big( \frac{x}{k},\ \frac{1}{\norm{k\alpha}} \Big)^2
\le
\frac{x}{T}
\min\Big( \frac{x}{k},\ \frac{1}{\norm{k\alpha}} \Big).
\]
Now apply Lemma~\ref{lem:typeI} with $N = x$ and $T$ replaced by
$2^{\kappa} T$, discarding the nonnegative terms with $k \le T$,
and note that
$\log(2 \cdot 2^{\kappa} T q) \ll_{\kappa} \log 2Tq$.
\end{proof}

\subsection{Arithmetic lemmas for the applications}\label{subsec:app-arith}

The remaining tools are used only in Section~\ref{sec:applications}.
We begin with the arithmetic ones. Recall Ramanujan's sum $c_q(m)$
and the radical $\rad(a)$ from Section~\ref{subsec:applications}.

\begin{lemma}
\label{lem:ramanujan}
For all $q \ge 1$ and $m \in \Z$:
\begin{enumerate}
\item[(i)] $c_q(m) = \sum_{d \mid (q, m)} d\, \mu(q/d)$
(Kluyver's formula); in particular $c_q(m)$ depends only on $q$ and $(q, m)$, and
$c_q(1) = \mu(q)$.
\item[(ii)] For a prime power $q = p^{k}$ and $p^{j} \,\|\, (q, m)$
(so $0 \le j \le k$),
\[
c_{p^k}(m) =
\begin{cases}
\varphi(p^{k}) & \text{if } j = k, \\
-p^{k-1} & \text{if } j = k - 1, \\
0 & \text{if } j \le k - 2.
\end{cases}
\]
\item[(iii)] $|c_q(m)| \le (q, m)\, \tau\big( (q, m) \big)$.
\end{enumerate}
\end{lemma}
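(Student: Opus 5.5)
The plan is to prove Kluyver's formula (i) first, by Möbius inversion on the gcd condition, and then to read (ii) and (iii) off it. In \eqref{eq:ramanujan-def} we detect the condition $(a,q) = 1$ by $\sum_{d \mid (a,q)} \mu(d)$, which gives
\[
c_q(m) = \sum_{d \mid q} \mu(d) \sum_{\substack{1 \le a \le q \\ d \mid a}} \e\Big( \frac{am}{q} \Big).
\]
Writing $a = d a'$ with $1 \le a' \le q/d$, the inner sum is a complete sum of the additive character $a' \mapsto \e(a'm/(q/d))$ over a full period. It therefore equals $q/d$ when $(q/d) \mid m$, and $0$ otherwise. Substituting $e = q/d$ gives
\[
c_q(m) = \sum_{e \mid q,\ e \mid m} e\, \mu(q/e) = \sum_{e \mid (q,m)} e\, \mu(q/e),
\]
which is (i). The right-hand side visibly depends only on $q$ and $(q,m)$. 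For $m = 1$ only $e = 1$ survives, so $c_q(1) = \mu(q)$.

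For (ii), take $q = p^k$ and $(q,m) = p^j$. The formula becomes $\sum_{i=0}^{j} p^i \mu(p^{k-i})$, and only the indices with $k - i \in \{0,1\}$ contribute. If $j = k$, the terms $i = k$ and $i = k-1$ give $p^k - p^{k-1} = \varphi(p^k)$. If $j = k-1$, only $i = k-1$ lies in the range, giving $-p^{k-1}$. If $j \le k-2$, every term has $\mu(p^{k-i}) = 0$, so the sum vanishes. The case $k = 0$ (that is, $q = 1$) falls under $j = k$ and gives $c_1(m) = 1 = \varphi(1)$, consistent with the table.

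For (iii), apply the triangle inequality to (i):
\[
|c_q(m)| \le \sum_{e \mid (q,m)} e \le (q,m)\, \tau\big((q,m)\big).
\]
The last step holds because each of the $\tau((q,m))$ terms is at most $(q,m)$.

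The case $m = 0$ needs no separate treatment, provided we use the convention $(q,0) = q$: every $e \mid q$ then divides $m$, and the argument above goes through unchanged. None of the steps is a real obstacle. The only points needing care are the orthogonality of the complete additive character sum, including the case where $q/d$ divides $m$, and the bookkeeping of the boundary cases in (ii).
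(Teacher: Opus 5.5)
Your proposal is correct and follows the paper's proof essentially step for step. You prove Kluyver's formula by M\"obius detection of $(a,q)=1$ together with orthogonality of the complete additive character sum. You then read off (ii) by keeping only the terms with $\mu(p^{k-i}) \ne 0$, and (iii) by the triangle inequality.
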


\begin{proof}
(i) Detecting the condition $(c, q) = 1$ by M\"obius inversion,
\[
c_q(m) = \sum_{c = 1}^{q} \e\Big( \frac{cm}{q} \Big)
\sum_{d \mid (c, q)} \mu(d)
= \sum_{d \mid q} \mu(d) \sum_{\substack{1 \le c \le q \\ d \mid c}}
\e\Big( \frac{cm}{q} \Big)
= \sum_{d \mid q} \mu(d) \sum_{e = 1}^{q/d}
\e\Big( \frac{em}{q/d} \Big),
\]
writing $c = de$. The inner sum is a full sum of the additive
character $e \mapsto \e(em/(q/d))$ over $\Z/(q/d)\Z$; it equals
$q/d$ if $(q/d) \mid m$ and $0$ otherwise. Substituting $d' = q/d$,
\[
c_q(m) = \sum_{\substack{d' \mid q \\ d' \mid m}} \mu\Big(
\frac{q}{d'} \Big)\, d'
= \sum_{d' \mid (q, m)} d'\, \mu\Big( \frac{q}{d'} \Big),
\]
which is (i); the evaluation $c_q(1) = \mu(q)$ is the case
$(q, m) = 1$.

(ii) By (i) with $(q, m) = p^{j}$,
$c_{p^k}(m) = \sum_{i = 0}^{j} p^{i} \mu(p^{k - i})$, and
$\mu(p^{k-i}) = 0$ unless $i \ge k - 1$. If $j \le k - 2$ no term
survives. If $j = k - 1$ only $i = k - 1$ survives, giving
$-p^{k-1}$. If $j = k$ the terms $i = k - 1, k$ survive, giving
$p^{k} - p^{k-1} = \varphi(p^{k})$.

(iii) From (i), $|c_q(m)| \le \sum_{d \mid (q,m)} d \le
(q, m)\, \tau\big( (q,m) \big)$.
\end{proof}

\begin{lemma}\label{lem:cq-mult}
For fixed $m$, the function $q \mapsto c_q(m)$ is multiplicative: if
$(q_1, q_2) = 1$ then $c_{q_1 q_2}(m) = c_{q_1}(m)\, c_{q_2}(m)$.
\end{lemma}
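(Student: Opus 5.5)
The plan is to prove multiplicativity of $q \mapsto c_q(m)$ in two independent ways, either of which suffices. The quickest route goes through Kluyver's formula in Lemma~\ref{lem:ramanujan}(i), which writes $c_q(m) = \sum_{d \mid (q,m)} d\,\mu(q/d)$. For fixed $m$ this says that $c_q(m) = \sum_{d \mid q} h_m(d)\,\mu(q/d)$, where $h_m(d) := d\,\ID_{d \mid m}$. In other words, $c_{\cdot}(m) = h_m * \mu$ as a Dirichlet convolution in the variable $q$.

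It then suffices to check that $h_m$ is multiplicative. Let $(d_1,d_2)=1$. Then $d_1 d_2 \mid m$ holds if and only if $d_1 \mid m$ and $d_2 \mid m$, so $\ID_{d_1d_2\mid m} = \ID_{d_1\mid m}\ID_{d_2 \mid m}$, and $d \mapsto d$ is also multiplicative. The function $\mu$ is multiplicative, and a Dirichlet convolution of multiplicative functions is multiplicative; this is the same fact already used in the proof of Lemma~\ref{lem:dz}(a). Hence $c_{q_1q_2}(m) = c_{q_1}(m)c_{q_2}(m)$ whenever $(q_1,q_2)=1$. The case $m=0$ needs no separate treatment, since then $\ID_{d\mid 0}=1$ for every $d$.

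As a self-contained alternative, I would argue directly from the definition \eqref{eq:ramanujan-def} using the Chinese remainder theorem. Given $(q_1,q_2)=1$, the map $(a_1,a_2) \mapsto a = a_1 q_2 + a_2 q_1 \bmod q_1q_2$ is a bijection from $(\Z/q_1\Z)^\times \times (\Z/q_2\Z)^\times$ onto $(\Z/q_1q_2\Z)^\times$. Under this bijection, $\e(am/(q_1q_2)) = \e(a_1 m/q_1)\,\e(a_2 m/q_2)$, so the sum defining $c_{q_1q_2}(m)$ factors as the product of the sums defining $c_{q_1}(m)$ and $c_{q_2}(m)$.

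There is no real obstacle. The only point requiring a line of justification is the bijectivity in the second route: $a$ is coprime to $q_1$ exactly when $a_1$ is, because $q_2$ is invertible mod $q_1$, and symmetrically for $q_2$. Given that, the map is injective on a finite set of the right cardinality $\varphi(q_1)\varphi(q_2)=\varphi(q_1q_2)$, hence bijective. I would present the convolution argument as the main proof, since it uses only Lemma~\ref{lem:ramanujan}(i).
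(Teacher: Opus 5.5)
Your main argument is correct and is essentially the paper's proof. Both start from Kluyver's formula in Lemma~\ref{lem:ramanujan}(i); the paper factors the divisor sum by hand via $(q_1q_2,m)=(q_1,m)(q_2,m)$ and $\mu(q_1q_2/d)=\mu(q_1/d_1)\,\mu(q_2/d_2)$, whereas you package the same computation as the convolution $h_m*\mu$ of multiplicative functions (your CRT alternative is also valid but not needed).
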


\begin{proof}
By Lemma~\ref{lem:ramanujan}(i),
$c_q(m) = \sum_{d \mid (q, m)} d\, \mu(q/d)$. If $(q_1, q_2) = 1$,
then $(q_1 q_2, m) = (q_1, m)(q_2, m)$ with the two factors coprime
(compare valuations at each prime $p$: at most one of $v_p(q_1)$,
$v_p(q_2)$ is nonzero, so
$\min\big( v_p(q_1) + v_p(q_2), v_p(m) \big)
= \min\big( v_p(q_1), v_p(m) \big)
+ \min\big( v_p(q_2), v_p(m) \big)$), so every divisor $d$ of
$(q_1 q_2, m)$ factors uniquely as $d = d_1 d_2$ with
$d_i \mid (q_i, m)$, and
$\mu(q_1 q_2/d) = \mu(q_1/d_1)\, \mu(q_2/d_2)$ since
$(q_1/d_1, q_2/d_2) = 1$. The double sum then factors.
\end{proof}

\begin{lemma}
\label{lem:fourier}
Let $q \ge 1$, let $m \in \Z$, and let $b$ be an integer with
$(b, q) = 1$. Then
\[
\e\Big( \frac{mb}{q} \Big)
= \frac{1}{\varphi(q)} \sum_{\chi \bmod q}
\overline{\chi}(b)\, \tau(\chi, m),
\qquad \text{where} \qquad
\tau(\chi, m) := \sum_{\substack{1 \le c \le q \\ (c, q) = 1}}
\chi(c) \e\Big( \frac{mc}{q} \Big) .
\]
Moreover $\tau(\chi_0, m) = c_q(m)$ and $|\tau(\chi, m)| \le
\varphi(q)$ for every $\chi$.
\end{lemma}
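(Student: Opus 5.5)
The plan is the standard orthogonality argument in $(\Z/q\Z)^\times$. I first record the Fourier expansion, and then read off the two supplementary facts directly from the definition of $\tau(\chi,m)$.

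For the expansion, I would substitute the definition of $\tau(\chi,m)$ into the right-hand side and interchange the two finite sums:
\[
\frac{1}{\varphi(q)} \sum_{\chi \bmod q} \overline{\chi}(b)\, \tau(\chi, m)
= \sum_{\substack{1 \le c \le q \\ (c,q)=1}} \e\Big( \frac{mc}{q} \Big)
\cdot \frac{1}{\varphi(q)} \sum_{\chi \bmod q} \overline{\chi}(b)\chi(c).
\]
The inner sum is evaluated by orthogonality of Dirichlet characters modulo $q$. For $(b,q)=(c,q)=1$ it equals $1$ if $c \equiv b \pmod q$ and $0$ otherwise. Since $(b,q)=1$, exactly one residue $c\in[1,q]$ with $(c,q)=1$ satisfies $c\equiv b$. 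For that $c$ we have $\e(mc/q)=\e(mb/q)$, because $\e(\cdot)$ has period $1$ and $m(c-b)/q\in\Z$. This gives the identity. The case $q=1$ is covered automatically, since then there is only the trivial character and both sides equal $1$.

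The two remaining claims are immediate. First, $\chi_0(c)=1$ for $(c,q)=1$, so $\tau(\chi_0,m)$ is literally the sum \eqref{eq:ramanujan-def}, which is $c_q(m)$. Second, every term of $\tau(\chi,m)$ has absolute value $|\chi(c)|=1$, and there are $\varphi(q)$ terms, so the triangle inequality gives $|\tau(\chi,m)|\le\varphi(q)$.

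The only input beyond bookkeeping is the orthogonality relation $\sum_{\chi}\overline{\chi}(b)\chi(c)=\varphi(q)\ID_{c\equiv b}$ for reduced residues. This is standard; it follows from the fact that the characters of the finite abelian group $(\Z/q\Z)^\times$ separate points and number $\varphi(q)$. To stay self-contained, I would derive it from the first orthogonality relation $\sum_{c}\chi(c)=\varphi(q)\ID_{\chi=\chi_0}$ via the duality of finite abelian groups, or simply quote it. There is no real obstacle here.
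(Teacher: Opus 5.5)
Your proposal is correct and is essentially the paper's proof. You substitute the definition of $\tau(\chi,m)$, interchange the finite sums, and apply orthogonality of characters on reduced residues so that only $c \equiv b \pmod q$ survives. You then read off $\tau(\chi_0,m)=c_q(m)$ from the definition and $|\tau(\chi,m)|\le\varphi(q)$ from the triangle inequality. Your extra remarks on the uniqueness of the surviving residue, the periodicity of $\e$, and the case $q=1$ only make explicit what the paper leaves implicit.
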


\begin{proof}
Expanding the right-hand side and interchanging the finite sums,
\[
\frac{1}{\varphi(q)} \sum_{\chi} \overline{\chi}(b) \tau(\chi, m)
= \sum_{\substack{c \bmod q \\ (c,q) = 1}} \e\Big( \frac{mc}{q} \Big)
\cdot \frac{1}{\varphi(q)} \sum_{\chi} \overline{\chi}(b) \chi(c) .
\]
By the orthogonality of Dirichlet characters, the inner average is
$1$ if $c \equiv b \pmod q$ and $0$ otherwise (both $b$ and $c$ being
coprime to $q$); only $c \equiv b$ survives, leaving $\e(mb/q)$. The
identification $\tau(\chi_0, m) = c_q(m)$ is the definition
\eqref{eq:ramanujan-def}, and the bound $|\tau(\chi, m)| \le
\varphi(q)$ is the triangle inequality.
\end{proof}

\begin{lemma}\label{lem:rad-sums}
Let $q \ge 1$.
\begin{enumerate}
\item[(i)] For $\sigma > 0$,
$\displaystyle \sum_{\rad(a) \mid q} a^{-\sigma}
= \prod_{p \mid q} \big( 1 - p^{-\sigma} \big)^{-1}$,
the series converging absolutely.
\item[(ii)] For $T \ge 1$,
$\displaystyle \sum_{\substack{a > T \\ \rad(a) \mid q}} \frac1a
\le \frac{4^{\omega(q)}}{\sqrt T}$,
where $\omega(q)$ is the number of distinct prime factors of $q$.
\item[(iii)] For every constant $C \ge 1$ and every
$\varepsilon > 0$, $C^{\omega(q)} \ll_{C, \varepsilon}
q^{\varepsilon}$.
\end{enumerate}
\end{lemma}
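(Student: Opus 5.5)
The three parts are elementary and we treat them in order, each feeding the next.

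For (i), the set $\{a \ge 1 : \rad(a) \mid q\}$ consists exactly of the integers $\prod_{p \mid q} p^{k_p}$ with $k_p \ge 0$, each arising from a unique exponent vector. Since all terms are nonnegative, we may rearrange freely (Tonelli). The sum therefore equals the finite product over $p \mid q$ of the geometric series $\sum_{k \ge 0} p^{-k\sigma} = (1 - p^{-\sigma})^{-1}$. This product is finite, which gives absolute convergence. For $q = 1$ both sides equal $1$.

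For (ii), we use Rankin's trick with exponent $1/2$. If $a > T$, then $(a/T)^{1/2} \ge 1$, so
\[
\sum_{\substack{a > T \\ \rad(a) \mid q}} \frac1a \le T^{-1/2} \sum_{\rad(a) \mid q} a^{-1/2} = T^{-1/2} \prod_{p \mid q} \big(1 - p^{-1/2}\big)^{-1},
\]
by (i) with $\sigma = 1/2$. It remains to bound each factor by $4$. We need $1 - p^{-1/2} \ge 1/4$, i.e.\ $p^{-1/2} \le 3/4$, i.e.\ $p \ge 16/9$. This holds for every prime $p \ge 2$; at $p = 2$ the factor is $(1 - 2^{-1/2})^{-1} \approx 3.41 \le 4$. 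Hence the product is at most $4^{\omega(q)}$.

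For (iii), write $C^{\omega(q)} = \prod_{p \mid q} C$ and compare with $q^{\varepsilon} \ge \prod_{p \mid q} p^{\varepsilon}$. Then
\[
\frac{C^{\omega(q)}}{q^{\varepsilon}} \le \prod_{p \mid q} \frac{C}{p^{\varepsilon}} .
\]
Primes with $p \ge C^{1/\varepsilon}$ contribute factors at most $1$. The remaining primes $p < C^{1/\varepsilon}$ are finitely many, each contributing at most $C$. So the ratio is bounded by $C^{\pi(C^{1/\varepsilon})}$, which depends only on $C$ and $\varepsilon$; the crude bound $C^{C^{1/\varepsilon}}$ also suffices.

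None of these steps is a real obstacle. The only point requiring a check is the numerical constant $4$ in (ii), which comes down to the worst case $p = 2$ in the Euler factor at $\sigma = 1/2$.
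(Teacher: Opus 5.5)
Your proof is correct and follows the paper's argument essentially step for step. Part (i) is unique factorization into a finite product of geometric series. Part (ii) is the bound $a^{-1} \le T^{-1/2} a^{-1/2}$ combined with (i) at $\sigma = 1/2$, with the worst Euler factor at $p = 2$ being $(1-2^{-1/2})^{-1} < 4$. Part (iii) splits the primes dividing $q$ at $C^{1/\varepsilon}$, which gives the same constant $C^{\pi(C^{1/\varepsilon})} \le C^{C^{1/\varepsilon}}$.
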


\begin{proof}
(i) Every $a$ with $\rad(a) \mid q$ factors uniquely as
$a = \prod_{p \mid q} p^{j_p}$ with $j_p \ge 0$; expanding the product
of the geometric series $\sum_{j \ge 0} p^{-j\sigma} =
(1 - p^{-\sigma})^{-1}$ and rearranging (all terms positive) gives the
identity.

(ii) For $a > T$ we have $a^{-1} \le T^{-1/2} a^{-1/2}$, so by (i)
with $\sigma = 1/2$ the sum is at most
$T^{-1/2} \prod_{p \mid q} (1 - p^{-1/2})^{-1} \le T^{-1/2}
\big( (1 - 2^{-1/2})^{-1} \big)^{\omega(q)} \le T^{-1/2}
4^{\omega(q)}$, since $(1 - 2^{-1/2})^{-1} < 4$.

(iii) Split the primes dividing $q$ at $C^{1/\varepsilon}$:
\[
C^{\omega(q)} = \prod_{\substack{p \mid q \\ p \le C^{1/\varepsilon}}} C
\cdot \prod_{\substack{p \mid q \\ p > C^{1/\varepsilon}}} C
\le C^{\pi( C^{1/\varepsilon} )}
\prod_{\substack{p \mid q \\ p > C^{1/\varepsilon}}} p^{\varepsilon}
\le C^{ C^{1/\varepsilon} }\, q^{\varepsilon},
\]
where in the middle step we used $C \le p^{\varepsilon}$ for
$p > C^{1/\varepsilon}$, and $\pi(y) \le y$.
\end{proof}

\begin{lemma}[$\g_z$ at prime powers]\label{lem:gz-primepower}
Let $0 < |z| < 1$ and let $\g_z$ be as in \eqref{eq:g-def}. For
every prime $p$ and every $k \ge 1$,
\begin{equation}\label{eq:gz-primepower}
\g_z(p^{k})
=
\Big( 1 - \frac1p \Big)^{z}
\bigg[
\sum_{j \ge k} \frac{d_z(p^{j})}{p^{j}}
\;-\;
\frac{d_z(p^{k-1})}{p^{k-1} (p - 1)}
\bigg] .
\end{equation}
In particular, the same computation applies verbatim at
$z = \pm 1$, where all the series involved still converge
absolutely: at $z = 1$ one has $\g_1(p^{k}) = 0$ for
all $k \ge 1$; and at $z = -1$ the local series
terminates, with $\g_{-1}(p) = -(2p-1)/(p-1)^2$,
$\g_{-1}(p^2) = (p-1)^{-2}$ and $\g_{-1}(p^{k}) = 0$ for $k \ge 3$.
\end{lemma}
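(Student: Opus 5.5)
The plan is to evaluate the defining expression \eqref{eq:g-def} directly at $q = p^k$, using the explicit prime-power values of Ramanujan's sum from Lemma~\ref{lem:ramanujan}(ii). For $q = p^{k}$ the product over $p \mid q$ has the single factor $(1 - 1/p)^{z}$. The condition $\rad(a) \mid p^k$ means $a = p^{j}$ with $j \ge 0$, so the series in \eqref{eq:g-def} becomes
\[
\frac{1}{\varphi(p^k)} \sum_{j \ge 0} \frac{d_z(p^{j})\, c_{p^k}(p^{j})}{p^{j}} .
\]
Absolute convergence is immediate from two facts. First, $|d_z(p^j)| \le 1$ by Lemma~\ref{lem:dz}(b), which covers all $|z| \le 1$ and so includes the endpoints $z = \pm 1$. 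Second, $|c_{p^k}(p^j)| \le \varphi(p^k)$, so the terms are dominated by a geometric series in $p^{-j}$.

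Next I would insert Lemma~\ref{lem:ramanujan}(ii), noting that $(p^k, p^j) = p^{\min(j,k)}$:
\begin{itemize}
\item for $j \ge k$, $c_{p^k}(p^j) = \varphi(p^k)$;
\item for $j = k-1$, $c_{p^k}(p^j) = -p^{k-1}$;
\item for $j \le k-2$, $c_{p^k}(p^j) = 0$.
\end{itemize}
Dividing by $\varphi(p^k) = p^{k-1}(p-1)$, the range $j \ge k$ contributes $\sum_{j \ge k} d_z(p^j)/p^j$. The single term $j = k-1$ contributes
\[
-\frac{p^{k-1}\, d_z(p^{k-1})}{p^{k-1} \cdot p^{k-1}(p-1)} = -\frac{d_z(p^{k-1})}{p^{k-1}(p-1)} .
\]
Multiplying by $(1 - 1/p)^{z}$ gives \eqref{eq:gz-primepower}. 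Nothing here used $0 < |z| < 1$ beyond the bound $|d_z(p^j)| \le 1$, which is why the same computation applies verbatim at $z = \pm 1$.

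For the special values I would just substitute into \eqref{eq:gz-primepower}.
\begin{itemize}
\item At $z = 1$ we have $d_1 \equiv 1$. The tail is $\sum_{j \ge k} p^{-j} = p^{-k}(1 - 1/p)^{-1} = 1/(p^{k-1}(p-1))$, which cancels the subtracted term exactly, so $\g_1(p^k) = 0$.
\item At $z = -1$ we have $d_{-1}(1) = 1$, $d_{-1}(p) = -1$ and $d_{-1}(p^j) = 0$ for $j \ge 2$, so every series terminates, and the prefactor is $(1 - 1/p)^{-1} = p/(p-1)$.
\item For $z = -1$, $k = 1$: the bracket is $-1/p - 1/(p-1) = -(2p-1)/(p(p-1))$, so $\g_{-1}(p) = -(2p-1)/(p-1)^2$.
\item For $z = -1$, $k = 2$: the tail vanishes and the bracket is $1/(p(p-1))$, so $\g_{-1}(p^2) = (p-1)^{-2}$.
\item For $z = -1$, $k \ge 3$: both the tail and $d_{-1}(p^{k-1})$ vanish, so $\g_{-1}(p^k) = 0$.
\end{itemize}

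There is no genuine obstacle. The only points needing care are the bookkeeping of the case $j = k-1$ (correctly identifying $(p^k, p^{k-1}) = p^{k-1}$ so that the middle case of Lemma~\ref{lem:ramanujan}(ii) applies) and confirming absolute convergence so that the rearrangement into the two pieces is legitimate.
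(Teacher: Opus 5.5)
Your proposal is correct and follows the paper's proof essentially line for line. You specialize \eqref{eq:g-def} to $q = p^k$ so that $a = p^j$, use Lemma~\ref{lem:ramanujan}(ii) to isolate the single term $j = k-1$ and the tail $j \ge k$, divide by $\varphi(p^k)$, and then substitute $d_1 \equiv 1$ and $d_{-1} = \mu$ for the endpoint values. Your explicit absolute-convergence check via $|d_z(p^j)| \le 1$ and $|c_{p^k}(p^j)| \le \varphi(p^k)$ is a small detail the paper leaves implicit, while the paper additionally verifies that $k = 1$ recovers \eqref{eq:gamma-p}.
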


\begin{proof}
Specializing \eqref{eq:g-def} to $q = p^{k}$, the summation
variable is $a = p^{j}$, $j \ge 0$, and by
Lemma~\ref{lem:ramanujan}(ii),
\[
\g_z(p^{k})
= \Big( 1 - \frac1p \Big)^{z} \frac{1}{p^{k-1}(p-1)}
\bigg[
- p^{k-1}\, \frac{d_z(p^{k-1})}{p^{k-1}}
+ p^{k-1} (p - 1) \sum_{j \ge k} \frac{d_z(p^{j})}{p^{j}}
\bigg],
\]
which is \eqref{eq:gz-primepower}. At $k = 1$ the tail is
$\sum_{j \ge 1} d_z(p^{j}) p^{-j} = (1 - 1/p)^{-z} - 1$, and
\eqref{eq:gz-primepower} becomes
$(1 - 1/p)^{z} [ (1-1/p)^{-z} - 1 - \frac{1}{p-1} ]
= 1 - (1 - 1/p)^{z-1}$, matching Theorem~\ref{thm:local}. At
$z = 1$, where $d_1 \equiv 1$, the tail equals
$p^{-k} \cdot p/(p-1) = 1/(p^{k-1}(p-1))$ and the bracket vanishes
for every $k \ge 1$. At $z = -1$, where $d_{-1} = \mu$, only
$j \in \{0, 1\}$ contribute: for $k = 1$ the bracket is
$-1/p - 1/(p-1) = -(2p-1)/(p(p-1))$ and
$\g_{-1}(p) = (1-1/p)^{-1} \cdot ( -(2p-1)/(p(p-1)) )
= -(2p-1)/(p-1)^2$; for $k = 2$ the tail is empty and the bracket is
$-d_{-1}(p)/(p(p-1)) = 1/(p(p-1))$, giving
$\g_{-1}(p^2) = (p-1)^{-2}$; for $k \ge 3$ both the tail and
$d_{-1}(p^{k-1})$ vanish.
\end{proof}

\begin{lemma}
\label{lem:dirichlet}
Let $\alpha \in \R$ and $Q \ge 1$. There is a reduced fraction $r/q$
with
\[
1 \le q \le Q \quadand \Big| \alpha - \frac rq \Big| \le \frac{1}{qQ}
\le \frac{1}{q^{2}} .
\]
\end{lemma}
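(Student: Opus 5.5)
This is Dirichlet's approximation theorem, and I will prove it by the pigeonhole principle on fractional parts. Set $N = \lfloor Q \rfloor \ge 1$. The $N+1$ numbers $\{j\alpha\}$ for $0 \le j \le N$ all lie in $[0,1)$. I partition $[0,1)$ into the $N+1$ half-open intervals $[i/(N+1), (i+1)/(N+1))$, $0 \le i \le N$.

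\emph{Case 1: some interval is empty.} Then, since there are as many points as intervals, two points share an interval. \emph{Case 2: no interval is empty.} Then each interval holds exactly one point; in particular some $\{j\alpha\}$ with $j \ge 1$ lies in the last interval, so $\norm{j\alpha} < 1/(N+1)$. (Alternatively, I can use $N+2$ points, adding the point $1$, and $N+1$ intervals.)

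In Case 1, if $\{j_1\alpha\}$ and $\{j_2\alpha\}$ with $j_1 < j_2$ share an interval, I set $q_0 = j_2 - j_1 \in [1, N]$. Then there is an integer $r_0$ with $|q_0\alpha - r_0| < 1/(N+1)$. In either case I obtain $1 \le q_0 \le N \le Q$ and an integer $r_0$ with $|q_0\alpha - r_0| \le 1/(N+1) \le 1/Q$, using $N+1 > Q$.

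Next I reduce the fraction. Let $g = (r_0, q_0)$, and put $q = q_0/g$, $r = r_0/g$. Then $r/q$ is reduced and $q \le q_0 \le Q$. Moreover
\[
|\alpha - r/q| = |q_0\alpha - r_0|/q_0 \le 1/(q_0 Q) \le 1/(qQ).
\]
Finally, $q \le Q$ gives $1/(qQ) \le 1/q^2$.

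There is no real obstacle here. The only points needing care are using $N+1 > Q$ when $Q$ is not an integer, and checking that reduction only improves the bound, since $q \le q_0$.
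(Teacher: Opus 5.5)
Your pigeonhole argument is correct, and it is the standard proof of the result the paper simply quotes as \cite[Lemma~23.4]{K}, followed by the same reduction step the paper uses: passing to lowest terms only decreases the denominator, so both inequalities survive. The one slip is cosmetic: in Case~2 you only get $\norm{j\alpha} \le 1/(N+1)$, not a strict inequality (take $\alpha = 1/2$, $Q = 1$), but afterwards you only use the non-strict bound.
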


\begin{proof}[Source of proof]
This is \cite[Lemma~23.4]{K}. If the fraction produced there is not
reduced, lowering it to $r/q$ preserves both inequalities, since the
denominator only decreases.
\end{proof}

\subsection{Analytic lemmas for the applications}\label{subsec:app-analytic}

We now record the analytic tools. The first two are elementary
estimates for binomial expansions and logarithmic integrals.

\begin{lemma}
\label{lem:taylor}
Let $w \in \R$. Then
$\big| \binom{w}{i} \big| \le \big( e(i+1) \big)^{|w|}$ for all
$i \ge 0$. Moreover, for every integer $I \ge 1$ there is a constant
$C_2(w, I) > 0$, depending only on $w$ and $I$, such that for all
$0 \le u \le 1/2$,
\[
\bigg| (1 - u)^{w} - \sum_{i = 0}^{I - 1} \binom{w}{i} (-u)^{i} \bigg|
\;\le\; C_2(w, I)\, u^{I} .
\]
\end{lemma}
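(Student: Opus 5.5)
The statement has two parts, a bound on the binomial coefficients and a Taylor remainder estimate; I treat them in that order, using the first to prove the second.

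\emph{Coefficient bound.} Write $\binom{w}{i} = \prod_{j=1}^{i} \frac{w - j + 1}{j}$, so that
\[
\Big| \binom{w}{i} \Big| = \prod_{j=1}^{i} \Big| 1 - \frac{w+1}{j} \Big| \le \prod_{j=1}^{i} \Big( 1 + \frac{|w|}{j} \Big),
\]
using $|1 - (w+1)/j| = |j - 1 - w|/j \le (j - 1 + |w|)/j \le 1 + |w|/j$. Next apply $1 + y \le e^{y}$ and the harmonic sum bound $\sum_{j \le i} 1/j \le 1 + \log(i+1)$. This gives
\[
\Big| \binom{w}{i} \Big| \le \exp\big( |w|(1 + \log(i+1)) \big) = \big( e(i+1) \big)^{|w|}.
\]
The case $i = 0$ is trivial, since the coefficient is $1$.

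\emph{Remainder bound.} On $|u| < 1$ the binomial series $(1-u)^{w} = \sum_{i \ge 0} \binom{w}{i} (-u)^{i}$ converges absolutely. This is Lemma~\ref{lem:binom}(i) with $\beta = -w$ and $t = u$, since $g_i(-w) = \binom{w}{i}(-1)^i$. Hence the remainder equals the tail $\sum_{i \ge I} \binom{w}{i}(-u)^{i}$. For $0 \le u \le 1/2$ I bound it by
\[
u^{I} \sum_{i \ge I} \big( e(i+1) \big)^{|w|} u^{i - I} \le u^{I} \sum_{m \ge 0} \big( e(m + I + 1) \big)^{|w|} 2^{-m}.
\]
The last series converges and depends only on $w$ and $I$, so it may be taken as $C_2(w, I)$.

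Both parts are routine and I expect no real obstacle. The only point needing care is that the real power $(1-u)^w$ coincides with $F_{-w}(u)$ for real $u \in [0, 1/2]$. This holds because the principal logarithm of the positive real $1 - u$ is the real logarithm.
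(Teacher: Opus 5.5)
Your proof is correct and follows the paper's argument essentially line by line: you bound each factor by $|w-j+1|/j \le 1+|w|/j$, apply $1+y\le e^{y}$ and the harmonic-sum estimate, and then bound the tail of the binomial series from Lemma~\ref{lem:binom}(i) (with $\beta=-w$) by $u^{I}\sum_{m\ge 0}\big(e(m+I+1)\big)^{|w|}2^{-m}$. You also check that the principal-branch $F_{-w}(u)$ agrees with the real power $(1-u)^{w}$ on $[0,1/2]$, a small point the paper leaves implicit.
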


\begin{proof}
For $i \ge 1$,
\[
\Big| \binom{w}{i} \Big|
= \prod_{l = 0}^{i-1} \frac{|w - l|}{l + 1}
\le \prod_{l = 0}^{i-1} \frac{|w| + l}{l + 1}
= \prod_{l = 0}^{i-1} \Big( 1 + \frac{|w| - 1}{l + 1} \Big)
\le \exp\Big( |w| \sum_{l = 0}^{i-1} \frac{1}{l+1} \Big)
\le \big( e (i + 1) \big)^{|w|},
\]
using $1 + t \le e^{t}$, $\sum_{l < i} (l+1)^{-1} \le 1 + \log i$
and $\exp( |w| (1 + \log i) ) \le ( e (i+1) )^{|w|}$;
for $i = 0$ the bound is trivial. By Lemma~\ref{lem:binom}(i) (applied
with $\beta = -w$, and $t = u$), the expansion
$(1 - u)^{w} = \sum_{i \ge 0} \binom{w}{i} (-u)^{i}$ holds with
absolute convergence for $|u| < 1$. Hence the left-hand side of the
claim equals $| \sum_{i \ge I} \binom{w}{i} (-u)^{i} |$, which for
$0 \le u \le 1/2$ is at most
\[
\sum_{i \ge I} \big( e(i+1) \big)^{|w|} u^{i}
= u^{I} \sum_{m \ge 0} \big( e(I + m + 1) \big)^{|w|} u^{m}
\le u^{I} \sum_{m \ge 0} \big( e(I + m + 1) \big)^{|w|} 2^{-m}
=: C_2(w, I)\, u^{I},
\]
the last series converging since its terms are polynomial in $m$
times $2^{-m}$.
\end{proof}

\begin{lemma}\label{lem:log-integral}
Let $w \le 0$ be real. For all $x \ge 9$,
\[
\int_{2}^{x} (\log t)^{w}\, dt
\le 2^{|w|}\, x (\log x)^{w} + (\log 2)^{w} \sqrt x .
\]
\end{lemma}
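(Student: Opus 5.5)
We split the integral at $t_0 = \sqrt{x}$ and bound each piece by estimating $(\log t)^{w}$ by its value at the lower endpoint. Since $w \le 0$, the function $t \mapsto (\log t)^{w}$ is non-increasing on $[2, \infty)$, because $\log t$ is increasing and positive there. So on any subinterval $[t_1, t_2] \subseteq [2, x]$ we have $\int_{t_1}^{t_2} (\log t)^{w}\, dt \le (t_2 - t_1)(\log t_1)^{w}$. The hypothesis $x \ge 9$ guarantees $\sqrt{x} \ge 3 > 2$, so the split point lies inside $[2, x]$.

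For the lower range $[2, \sqrt x]$, monotonicity gives
\[
\int_2^{\sqrt x} (\log t)^{w}\, dt \le (\sqrt x - 2)(\log 2)^{w} \le (\log 2)^{w} \sqrt x ,
\]
which is the second term of the claim.

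For the upper range $[\sqrt x, x]$, monotonicity and $\log\sqrt{x} = \tfrac12 \log x$ give
\[
\int_{\sqrt x}^{x} (\log t)^{w}\, dt \le x \Big( \tfrac12 \log x \Big)^{w} = 2^{-w} x (\log x)^{w} = 2^{|w|} x (\log x)^{w},
\]
where the last step uses $w \le 0$. This is the first term of the claim.

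Adding the two ranges gives the lemma. There is no real obstacle: the only points to check are that $\log t > 0$ on $[2,x]$, so the power $(\log t)^{w}$ is well defined and monotone for real $w \le 0$, and that $\sqrt{x} \ge 2$, which is where $x \ge 9$ is used.
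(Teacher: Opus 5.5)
Your proposal is correct and follows essentially the same argument as the paper: split at $\sqrt{x}$ and bound the non-increasing integrand $(\log t)^{w}$ by its value at the left endpoint of each piece. You also check explicitly that $\sqrt{x} \ge 3 > 2$ and that $\log t > 0$ on $[2,x]$, which the paper leaves implicit.
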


\begin{proof}
Since $w \le 0$, the integrand is decreasing in $t$. On
$[2, \sqrt x]$ bound it by its value at $t = 2$, giving
$(\log 2)^{w} \sqrt x$; on $[\sqrt x, x]$ bound it by its value at
$t = \sqrt x$, namely
$(\tfrac12 \log x)^{w} = 2^{|w|} (\log x)^{w}$, giving
$2^{|w|} x (\log x)^{w}$.
\end{proof}

The remaining lemmas concern the kernels produced by the major-arc
expansion of Theorem~\ref{thm:major}. In them, $X \ge 3$ is large,
$L := \log X$, and $w$ denotes a fixed real number with $w < 0$; in
the application, $w = z - 1 - j$ for the $j$-th term of
\eqref{eq:major}. For $u \in \R$ set
\begin{equation}\label{eq:Phi-def}
\Phi_w(u)
:=
\int_{2/X}^{1} \Big( 1 + \frac{\log v}{\log X} \Big)^{w}
\e(uv)\, dv,
\end{equation}
which is the natural rescaling of the kernel of
Theorem~\ref{thm:major}: substituting $t = vX$ gives
\[
\int_{2}^{X} (\log t)^{w}\e(\beta t)\, dt
= X (\log X)^{w}\, \Phi_w(\beta X).
\]
The model to which $\Phi_w$ will be compared is the kernel
\begin{equation}\label{eq:K0}
K_0(u) := \int_0^1 \e(uv)\, dv = \frac{\e(u) - 1}{2\pi i u},
\qquad
|K_0(u)| = \Big| \frac{\sin \pi u}{\pi u} \Big|,
\end{equation}
so that $K_0$ is entire with $K_0(0) = 1$ and
$|K_0(u)| \le \min(1, 1/|u|)$ for all real $u$.

\begin{lemma}
\label{lem:Wj-bounds}
For each fixed real $w < 0$, all $X \ge 4$ and all real
$\beta \ne 0$,
\begin{equation}\label{eq:Wj-bounds}
\int_2^X (\log t)^{w}\e(\beta t)\, dt
\ll_w
\min\Big( X (\log X)^{w},\ \frac{1}{|\beta|} \Big).
\end{equation}
\end{lemma}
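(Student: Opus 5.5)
We prove the two bounds in \eqref{eq:Wj-bounds} separately; the first is trivial and the second comes from one integration by parts.

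\emph{First bound.} Bound the integrand in absolute value. By Lemma~\ref{lem:log-integral} (for $X \ge 9$; the range $4 \le X < 9$ is a compact range absorbed into the constant),
\[
\int_2^X (\log t)^{w}\, dt \le 2^{|w|} X (\log X)^{w} + (\log 2)^{w}\sqrt X .
\]
Since $\sqrt X \ll_w X(\log X)^{w}$, the right side is $\ll_w X(\log X)^{w}$.

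\emph{Second bound.} Write $h(t) = (\log t)^{w}$ and integrate by parts against $\e(\beta t) = \frac{d}{dt}\big(\e(\beta t)/(2\pi i\beta)\big)$:
\[
\int_2^X h(t)\e(\beta t)\,dt
= \Big[\frac{h(t)\e(\beta t)}{2\pi i \beta}\Big]_2^X
- \frac{1}{2\pi i\beta}\int_2^X h'(t)\e(\beta t)\,dt .
\]
Because $w<0$, $h$ is positive and decreasing on $[2,X]$. The boundary terms are therefore at most $(h(2)+h(X))/(2\pi|\beta|) \le (\log 2)^{w}/(\pi|\beta|)$. The remaining integral is at most $|\beta|^{-1}\int_2^X |h'(t)|\,dt/(2\pi) = (h(2)-h(X))/(2\pi|\beta|) \le (\log 2)^{w}/(2\pi|\beta|)$, since $h'$ has constant sign and so $\int|h'|$ telescopes. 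Together these give $\ll_w 1/|\beta|$.

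Taking the smaller of the two bounds gives \eqref{eq:Wj-bounds}.

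The only point needing care is that the integrand's monotonicity makes the total variation of $h$ on $[2,X]$ bounded independently of $X$. This is exactly where $w<0$ is used, and it requires no second integration by parts.
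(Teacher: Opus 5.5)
Your proof is correct and follows essentially the same route as the paper. The first bound bounds $|(\log t)^{w}|$ separately on $[2,\sqrt X]$ and $[\sqrt X,X]$; you package this via Lemma~\ref{lem:log-integral} and treat $4 \le X < 9$ as a compact range. The second bound is a single integration by parts in which $\int_2^X |h'(t)|\,dt = (\log 2)^{w} - (\log X)^{w}$ is bounded independently of $X$ because $w<0$.
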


\begin{proof}
For the first bound, split the integral of the absolute value at
$\sqrt X$: on $[2, \sqrt X]$ the decreasing integrand is at most
$(\log 2)^{w} = O_w(1)$, giving $O_w(\sqrt X)$, while on
$[\sqrt X, X]$ it is at most $(\log \sqrt X)^{w} = 2^{-w} (\log X)^w$,
giving $O_w(X (\log X)^w)$; and $\sqrt X \ll X(\log X)^w$ for any
fixed $w$. For the second bound, integrate by parts:
\[
\int_2^X (\log t)^{w} \e(\beta t)\, dt
= \Big[ (\log t)^{w} \frac{\e(\beta t)}{2\pi i \beta} \Big]_2^X
- \frac{1}{2\pi i \beta} \int_2^X \frac{w (\log t)^{w-1}}{t}\,
\e(\beta t)\, dt,
\]
and the boundary terms are $\ll_w 1/|\beta|$, while
$\int_2^X |w| (\log t)^{w-1} t^{-1}\, dt
= (\log 2)^{w} - (\log X)^{w} \le (\log 2)^{w} = O_w(1)$
since $w < 0$.
\end{proof}

\begin{lemma}
\label{lem:kernel-approx}
Fix a real number $w < 0$ and a constant $A > 0$. Then there is
$X_0 = X_0(w, A)$ such that for all $X \ge X_0$,
\begin{equation}\label{eq:kernel-approx}
\big| \Phi_w(u) - K_0(u) \big|
\ll_{w, A}
\min\Big( \frac{1}{\log X},\ \frac{1}{|u| (\log X)^{1/2}} \Big)
\qquad
\big( 0 < |u| \le (\log X)^{A} \big),
\end{equation}
and the uniform bound
$|\Phi_w(u) - K_0(u)| \ll_w 1/\log X$ holds for all real $u$,
including $u = 0$.
\end{lemma}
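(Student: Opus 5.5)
Write $L = \log X$ and $h(v) := (1 + \log v/L)^{w}$ for $v \in [2/X, 1]$, so that
\[
\Phi_w(u) - K_0(u) = \int_{2/X}^{1} \big( h(v) - 1 \big)\e(uv)\, dv - \int_0^{2/X} \e(uv)\, dv .
\]
The second integral is at most $2/X$ in absolute value, and this is negligible against both entries of the minimum, since $|u| \le L^{A}$ gives $1/(|u|L^{1/2}) \ge L^{-A-1/2} \gg 2/X$ once $X \ge X_0(w,A)$. This is the only place where $A$ and $X_0$ enter. Everything else reduces to the properties of $h - 1$. Since $w < 0$ and $1 + \log v/L \in (0,1]$, we have $h(v) \ge 1$, and $h$ is decreasing in $v$ with $h(1) = 1$.

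\textbf{Uniform bound.} We need $\int_{2/X}^1 (h(v)-1)\,dv \ll_w 1/L$. Split at $v = X^{-1/2}$.

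For $v \ge X^{-1/2}$, set $y := -\log v/L \in [0, 1/2]$. Then $h - 1 = (1-y)^{w} - 1 \le C\,y$, for instance by the case $I = 1$ of Lemma~\ref{lem:taylor}. This gives
\[
\int_{X^{-1/2}}^{1} (h(v) - 1)\, dv \ll_w \frac{1}{L}\int_0^1 |\log v|\, dv = \frac{1}{L}.
\]
For $2/X \le v < X^{-1/2}$ the base satisfies $1 + \log v/L \ge \log 2/L$, so $h(v) \le (L/\log 2)^{|w|}$. The contribution of this range is therefore $\ll_w L^{|w|} X^{-1/2} \ll 1/L$.

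Together these give the uniform bound for all real $u$, and in particular the first entry of the minimum in \eqref{eq:kernel-approx}.

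\textbf{The $1/(|u|\,L^{1/2})$ entry.} Integrate by parts in $\int_{2/X}^1 (h-1)\e(uv)\,dv$. The boundary term at $v = 1$ vanishes because $h(1) = 1$. The boundary term at $v = 2/X$ is $(h(2/X) - 1)/(2\pi|u|) \ll_w L^{|w|}/|u|$, which is too large.

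To fix this, split instead at $v_0 = X^{-1/2}$:
\begin{itemize}
\item On $[2/X, v_0]$, bound trivially by $\int_{2/X}^{v_0} h \ll L^{|w|}X^{-1/2}$. This is again $\ll 1/(|u|L^{1/2})$, because $|u| \le L^{A}$.
\item On $[v_0, 1]$, integrate by parts. The boundary term is $\ll |h(v_0) - 1|/|u| \ll 1/|u|$. The remaining integral is $|u|^{-1}\int_{v_0}^1 |h'(v)|\,dv = |u|^{-1}(h(v_0) - 1)$, by monotonicity. Since $y = 1/2$ at $v_0$, this is again only $O(1/|u|)$.
\end{itemize}
So this crude split yields $1/|u|$, not $1/(|u|L^{1/2})$. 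The main obstacle is recovering the extra factor $L^{-1/2}$.

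\textbf{Recovering $L^{-1/2}$.} Move the split point to $v_1 := e^{-\sqrt L}$. There $y = L^{-1/2}$, so $h(v_1) - 1 \ll_w L^{-1/2}$.

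\begin{itemize}
\item On $[v_1, 1]$, integrate by parts. The boundary term is $\ll L^{-1/2}/|u|$. The variation integral $\int_{v_1}^1 |h'|\,dv = h(v_1) - 1$ is also $\ll L^{-1/2}$. Hence this piece is $\ll 1/(|u|L^{1/2})$.
\item On $[2/X, v_1]$, the piece is not small trivially, since its length is $v_1$. Use the bound $\int_{2/X}^{v_1} h(v)\,dv \ll_w v_1 + L^{|w|}X^{-1/2}$: as before, $h \ll 1$ on $[X^{-1/2}, v_1]$ and $h \le (L/\log 2)^{|w|}$ below $X^{-1/2}$. This gives $\ll e^{-\sqrt L}$.
\end{itemize}

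Finally, $e^{-\sqrt L} \le L^{-A-1/2} \le 1/(|u|L^{1/2})$ for $X \ge X_0(w,A)$, because $|u| \le L^{A}$. Combining the two pieces with the uniform $1/L$ bound gives \eqref{eq:kernel-approx}.
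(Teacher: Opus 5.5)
Your proof is correct and follows the paper's argument essentially step for step: you split off $\int_0^{2/X}\e(uv)\,dv$, obtain the uniform bound by cutting at $X^{-1/2}$, and obtain the decaying bound by cutting at $v_1 = e^{-\sqrt{\log X}}$, integrating by parts on $[v_1,1]$ and estimating trivially on $[2/X, v_1]$. The differences are cosmetic. You bound $\int_{v_1}^1 |h'|$ by monotonicity (it equals $h(v_1)-1$) rather than pointwise, and you use Lemma~\ref{lem:taylor} with $I=1$ in place of the mean value theorem; also, contrary to your early remark, the hypothesis $|u|\le(\log X)^A$ is used a second time on $[2/X,v_1]$, exactly as in the paper.
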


\begin{remark}
The restriction on $|u|$ in \eqref{eq:kernel-approx} is the natural
one: the lower limit $2/X$ in
\eqref{eq:Phi-def} contributes to $\Phi_w - K_0$ the term
$-\int_0^{2/X} \e(uv)\, dv$, whose modulus at $u = X/4$ equals
$1/(\pi u)$, so the proof cannot yield the displayed decaying bound
uniformly in all real $u$.
\end{remark}

\begin{proof}
Write $g(v) = (1 + \frac{\log v}{\log X})^{w} - 1$ for
$v \in [2/X, 1]$, so that
\begin{equation}\label{eq:diff-split}
\Phi_w(u) - K_0(u)
=
\int_{2/X}^1 g(v)\e(uv)\, dv \;-\; \int_0^{2/X} \e(uv)\, dv,
\end{equation}
and the second integral is $\ll 1/X$, which is admissible for both
bounds in \eqref{eq:kernel-approx}, since
$1/X \ll_A 1/(|u| (\log X)^{1/2})$ in the range
$|u| \le (\log X)^{A}$. Note that on $[2/X, 1]$ the base
satisfies $1 + \frac{\log v}{\log X} \ge \frac{\log 2}{\log X}$, so
$|g(v)| \ll (\log X)^{|w|}$ pointwise, and that for
$v \ge X^{-1/2}$ the base lies in $[\tfrac12, 1]$, whence by the
mean value theorem applied to $h \mapsto (1+h)^{w}$ on
$[-\tfrac12, 0]$,
\begin{equation}\label{eq:g-mvt}
|g(v)| \ll_w \frac{|\log v|}{\log X}
\qquad (X^{-1/2} \le v \le 1).
\end{equation}

\emph{The uniform bound.} By \eqref{eq:g-mvt} and the pointwise
bound,
\[
\int_{2/X}^1 |g(v)|\, dv
\ll_w
\frac{1}{\log X} \int_0^1 |\log v|\, dv
\;+\; X^{-1/2} (\log X)^{|w|}
\ll_w \frac{1}{\log X},
\]
using $\int_0^1 |\log v|\, dv = 1$. This proves the first bound in
\eqref{eq:kernel-approx} and, since no restriction on $u$ was used,
the final uniform claim of the lemma as well.

\emph{The decaying bound.} Set $v_1 = \exp(-\sqrt{\log X})$, so that
$1 + \frac{\log v_1}{\log X} = 1 - (\log X)^{-1/2} \ge \tfrac12$ for
large $X$. The portion of the first integral in
\eqref{eq:diff-split} over $[2/X, v_1]$ is, by \eqref{eq:g-mvt} and
the pointwise bound,
\[
\ll_w \int_{X^{-1/2}}^{v_1} \frac{|\log v|}{\log X}\, dv
+ X^{-1/2} (\log X)^{|w|}
\ll_w \frac{v_1 \sqrt{\log X}}{\log X} + X^{-1/2} (\log X)^{|w|}
\ll \exp\big( -\tfrac12 \sqrt{\log X} \big),
\]
which is $\ll_A 1/(|u| (\log X)^{1/2})$ in the range
$|u| \le (\log X)^{A}$, hence admissible; it is only here that the
restriction on $|u|$ is used. On $[v_1, 1]$ we integrate by parts. Since
$g(1) = 0$,
\[
\int_{v_1}^1 g(v)\e(uv)\, dv
= - g(v_1) \frac{\e(u v_1)}{2\pi i u}
- \frac{1}{2\pi i u} \int_{v_1}^1 g'(v)\e(uv)\, dv.
\]
By \eqref{eq:g-mvt}, $|g(v_1)| \ll_w \sqrt{\log X}/\log X
= (\log X)^{-1/2}$. For the second term,
$g'(v) = \frac{w}{v \log X} (1 + \frac{\log v}{\log X})^{w-1}$, and
on $[v_1, 1]$ the base is at least $\tfrac12$, so
$|g'(v)| \ll_w \frac{1}{v \log X}$ and
\[
\int_{v_1}^1 |g'(v)|\, dv
\ll_w \frac{1}{\log X} \log \frac{1}{v_1}
= \frac{\sqrt{\log X}}{\log X}
= (\log X)^{-1/2}.
\]
Both contributions are $\ll_w |u|^{-1} (\log X)^{-1/2}$, proving the
second bound in \eqref{eq:kernel-approx}.
\end{proof}

\begin{lemma}
\label{lem:kernel-moments}
For every fixed $s > 1$,
\begin{equation}\label{eq:kernel-moments}
\int_{\R} |K_0(u)|^{s}\, du
=
\frac{2}{\pi} \int_0^{\infty} \frac{|\sin t|^{s}}{t^{s}}\, dt
= A_s,
\qquad
\int_{|u| > T} |K_0(u)|^{s}\, du \ll_s T^{1-s}
\quad (T \ge 1).
\end{equation}
\end{lemma}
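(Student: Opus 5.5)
The plan is to reduce both statements to the classical Fourier pair between the box function and the sinc kernel. Since $|K_0(u)| = |\sin \pi u|/(\pi |u|)$ by \eqref{eq:K0}, and this is an even function of $u$, I will first write
\[
\int_{\R} |K_0(u)|^{s}\, du = 2 \int_0^{\infty} \frac{|\sin \pi u|^{s}}{(\pi u)^{s}}\, du .
\]
The substitution $t = \pi u$, with $du = dt/\pi$, then turns this into
\[
\frac{2}{\pi} \int_0^\infty \frac{|\sin t|^s}{t^s}\, dt = A_s ,
\]
and the first identity in \eqref{eq:kernel-moments} follows at once.

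Next comes convergence, which is where $s > 1$ enters. Near $t = 0$ the integrand is at most $1$, because $|\sin t| \le |t|$. For $t \ge 1$ it is at most $t^{-s}$, and $\int_1^\infty t^{-s}\, dt = 1/(s-1) < \infty$ since $s > 1$. Hence $A_s$ is finite, and the change of variables above is justified for nonnegative measurable integrands by Tonelli, or simply by monotone convergence.

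For the tail estimate I will use the bound $|K_0(u)| \le \min(1, 1/|u|)$, which was recorded after \eqref{eq:K0}. For $T \ge 1$ this gives
\[
\int_{|u| > T} |K_0(u)|^s\, du \le 2\int_T^\infty u^{-s}\, du = \frac{2}{s-1}\, T^{1-s} ,
\]
and this is the claimed $O_s(T^{1-s})$.

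None of these steps presents a real obstacle. The only point needing care is that the integrals are improper, with convergence coming from the power $s > 1$, and positivity of the integrand makes everything rigorous.
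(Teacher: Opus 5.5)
Your proposal is correct and follows essentially the same route as the paper: evenness of $|K_0|$, the substitution $t = \pi u$, convergence of the integral at $0$ and at $\infty$ for $s > 1$, and the tail bound from $|K_0(u)| \le 1/|u|$. The ``Fourier pair'' framing in your opening sentence is never actually used, and it is not needed.
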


\begin{proof}
By \eqref{eq:K0}, $\int_{\R} |K_0|^s\, du
= 2 \int_0^{\infty} |\sin \pi u|^{s} (\pi u)^{-s}\, du$, and the
substitution $t = \pi u$ gives the first identity exactly; the
integral converges at $0$ (integrand $\le 1$) and at $\infty$
(integrand $\le t^{-s}$, $s > 1$). The tail bound follows from
$|K_0(u)| \le 1/|u|$.
\end{proof}

\begin{lemma}
\label{lem:perturbation}
Let $s \ge 1$ and let $w_1, w_2 \in \mathbb{C}$. Then
\begin{equation}\label{eq:perturbation}
\big| |w_1|^{s} - |w_2|^{s} \big|
\le s \big( |w_1| + |w_2| \big)^{s-1} |w_1 - w_2| .
\end{equation}
If $s > 1$, then also
\[
\big| |w_1|^{s} - |w_2|^{s} \big|
\ll_s
\big( |w_2|^{s-1} + |w_1 - w_2|^{s-1} \big) |w_1 - w_2| .
\]
\end{lemma}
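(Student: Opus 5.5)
The first inequality comes from the mean value theorem applied to $t \mapsto t^{s}$ on $[0,\infty)$. The second then follows by splitting according to the relative sizes of $|w_1 - w_2|$ and $|w_2|$.

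\emph{First inequality.} Set $r_1 = |w_1|$ and $r_2 = |w_2|$, and assume without loss of generality that $r_1 \ne r_2$. The function $\phi(t) = t^{s}$ is continuously differentiable on $[0,\infty)$ for $s \ge 1$, with $\phi'(t) = s t^{s-1}$. The mean value theorem gives a point $\xi$ between $r_1$ and $r_2$ such that
\[
|r_1^{s} - r_2^{s}| = s \xi^{s-1} |r_1 - r_2| .
\]
Since $s - 1 \ge 0$ and $0 \le \xi \le \max(r_1, r_2) \le r_1 + r_2$, we have $\xi^{s-1} \le (r_1 + r_2)^{s-1}$. The reverse triangle inequality gives $|r_1 - r_2| \le |w_1 - w_2|$. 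Combining these yields \eqref{eq:perturbation}. The case $s = 1$ is covered by the convention $t^0 = 1$.

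\emph{Second inequality.} Write $d = |w_1 - w_2|$. The triangle inequality gives $|w_1| + |w_2| \le 2|w_2| + d$. For $s > 1$ the map $t \mapsto t^{s-1}$ is nondecreasing, and
\[
(2|w_2| + d)^{s-1} \le \big( 2\max(2|w_2|, d) \big)^{s-1} \le 4^{s-1}\big( |w_2|^{s-1} + d^{s-1} \big).
\]
Substituting this into \eqref{eq:perturbation} gives the claim with implied constant $s\,4^{s-1}$.

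No step presents a real obstacle. The one point that needs care is keeping the exponent $s-1$ nonnegative, which is why the statement assumes $s \ge 1$ for the first inequality and $s > 1$ for the second. Near $0$, when $s < 2$, the function $t^{s-1}$ is not Lipschitz, but only its monotonicity is used, so this causes no difficulty.
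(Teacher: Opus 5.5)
Your proof is correct and follows the paper's argument. You apply the mean value theorem to $t \mapsto t^{s}$ between $|w_1|$ and $|w_2|$, bound the intermediate point by $|w_1|+|w_2|$ and use the reverse triangle inequality; then $|w_1| \le |w_2| + |w_1 - w_2|$ together with $(x+y)^{s-1} \ll_s x^{s-1}+y^{s-1}$ gives the second form, where you additionally make the implied constant explicit as $s\,4^{s-1}$.
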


\begin{proof}
By the mean value theorem applied to $x \mapsto x^{s}$ on the
interval with endpoints $|w_1|, |w_2|$,
\[
\big| |w_1|^{s} - |w_2|^{s} \big|
\le
s \max\big( |w_1|, |w_2| \big)^{s-1} \big| |w_1| - |w_2| \big|
\le
s \big( |w_1| + |w_2| \big)^{s-1} |w_1 - w_2|,
\]
using the triangle inequality twice. The second form follows from
$|w_1| \le |w_2| + |w_1 - w_2|$ and
$(x + y)^{s-1} \ll_s x^{s-1} + y^{s-1}$ for $x, y \ge 0$.
\end{proof}

The final lemma invokes Theorem~\ref{thm:local}, which is proved in
Section~\ref{sec:local} independently of the present lemma; there is
no circularity. For the last lemma of the toolbox, let $z$ be a fixed rational with
$0 < |z| < 1$, let $s > 2$ be a fixed real number, let
$\lambda_0(z, q) = \g_z(q)/\Gamma(z)$ be as in
Theorem~\ref{thm:major}, and let $X \ge 3$, $L := \log X$, and
$P := L^{B}$ for a fixed $B \ge 2/(s-2)$.

\begin{lemma}\label{lem:singular-series}
Let $z$, $s$, $X$, $L$ and $P = L^{B}$, with $B \ge 2/(s-2)$ fixed,
be as above. The series
$\mathfrak G_{s}(z) := \sum_{q \ge 1} \varphi(q)\,
|\g_{z}(q)|^{s}$
converges absolutely for $s > 2$, with
$\mathfrak G_{s}(z) \ge 1$, and, for every fixed
$\varepsilon$ with $0 < \varepsilon < (s-2)/(2s)$,
\begin{equation}\label{eq:singular-series}
\sum_{q \le P} \varphi(q)\, |\lambda_0(z, q)|^{s}
=
\frac{\mathfrak G_{s}(z)}{|\Gamma(z)|^{s}} \;+\; O_{z, s, \varepsilon}\big( L^{-1} \big),
\qquad
\sum_{q \le P} \varphi(q)\, q^{(-1+\varepsilon)s} \ll_s 1 .
\end{equation}
\end{lemma}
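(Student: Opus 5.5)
The plan rests on three inputs: the multiplicativity of $\g_z$, its value at primes \eqref{eq:gamma-p}, and the bound $\g_z(q) \ll_{z,\varepsilon} q^{-1+\varepsilon}$, all from Theorem~\ref{thm:local}. With these, the proof reduces to an Euler product argument.

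\emph{Step 1: absolute convergence.} Choose $\varepsilon$ with $0<\varepsilon<(s-2)/(2s)$. Then
\[
\varphi(q)\,|\g_z(q)|^s \ll q\cdot q^{(-1+\varepsilon)s} = q^{1-s+\varepsilon s},
\]
and the exponent satisfies $1-s+\varepsilon s < 1-s+(s-2)/2 = -s/2 < -1$ because $s>2$. So the series $\mathfrak G_s(z)$ converges absolutely. The same computation gives the second claim of \eqref{eq:singular-series}:
\[
\sum_{q\le P}\varphi(q)\,q^{(-1+\varepsilon)s} \le \sum_{q} q^{1-s+\varepsilon s} \ll_s 1.
\]

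\emph{Step 2: lower bound $\mathfrak G_s(z)\ge1$.} All terms are nonnegative, and the $q=1$ term equals $\varphi(1)|\g_z(1)|^s=1$, so the bound is immediate. (Positivity of the leading constant in Theorem~\ref{thm:scalar} follows as well.) No Euler product is needed here.

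\emph{Step 3: truncation at $P$.} Since $\lambda_0(z,q)=\g_z(q)/\Gamma(z)$, the finite sum equals $|\Gamma(z)|^{-s}\bigl(\mathfrak G_s(z)-\sum_{q>P}\varphi(q)|\g_z(q)|^s\bigr)$. By Step 1 the tail is
\[
\ll \sum_{q>P} q^{1-s+\varepsilon s} \ll P^{2-s+\varepsilon s} = L^{B(2-s+\varepsilon s)}.
\]
It remains to check that this is $O(L^{-1})$, i.e.\ that $B(s-2-\varepsilon s)\ge1$. We have $s-2-\varepsilon s > (s-2)/2$ because $\varepsilon s<(s-2)/2$. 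Combined with $B\ge 2/(s-2)$ this gives $B(s-2-\varepsilon s) > 1$, so the tail is $\ll L^{-1}$.

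\emph{Main obstacle.} The main difficulty is bookkeeping rather than depth: the permitted range of $\varepsilon$ must simultaneously ensure convergence and make the tail exponent at least $1$ when matched against $B\ge2/(s-2)$. The calculation above shows the constraint $\varepsilon<(s-2)/(2s)$ is exactly what is needed. The implied constants depend on $z,s,\varepsilon$ only through the bound in Theorem~\ref{thm:local}, and on $\Gamma(z)\neq0$, which holds since $z\notin\Z_{\le0}$. If one preferred to avoid the $\varepsilon$-bound, an alternative route is to use the Euler product $\prod_p\bigl(1+\sum_k\varphi(p^k)|\g_z(p^k)|^s\bigr)$ together with Lemma~\ref{lem:gz-primepower}. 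However, the route above suffices and is what I would write.
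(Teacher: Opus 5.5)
Your proof is correct and is essentially the paper's argument. The bound $\g_z(q)\ll_{z,\varepsilon} q^{-1+\varepsilon}$ of Theorem~\ref{thm:local} makes the terms $\ll q^{1-s+\varepsilon s}\le q^{-s/2}$, which gives absolute convergence, the second estimate in \eqref{eq:singular-series}, and a tail $\ll P^{-(s-2)/2}\ll L^{-1}$ via $B\ge 2/(s-2)$, while $\mathfrak G_s(z)\ge 1$ comes from the $q=1$ term. One cosmetic point: $\Gamma$ has no zeros, so what $z\notin\Z_{\le 0}$ actually guarantees is that $z$ is not a pole, i.e.\ that $|\Gamma(z)|^{-s}$ is finite and positive.
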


\begin{proof}
By Theorem~\ref{thm:local},
$\g_z(q) \ll_{z, \varepsilon} q^{-1+\varepsilon}$, so
$\varphi(q)\, |\g_z(q)|^{s} \ll q^{1 + (-1+\varepsilon)s}$, and with
$\varepsilon < (s-2)/(2s)$ the exponent is at most
$-1 - (s-2)/2 < -1$: the series converges absolutely, and the tail
beyond $P$ is
$\ll P^{-(s-2)/2} = L^{-B(s-2)/2} \ll L^{-1}$, since
$B(s-2)/2 \ge 1$ by the hypothesis $B \ge 2/(s-2)$. The
term $q = 1$ equals $1$ (as $\g_z(1) = 1$), and all terms are
nonnegative, so $\mathfrak G_{s}(z) \ge 1$. The first claim of
\eqref{eq:singular-series} follows from
$\lambda_0(z, q) = \g_z(q)/\Gamma(z)$, and the second from the same
exponent computation.
\end{proof}

\section{A Heath-Brown identity for $\zeta^{\pm a/b}$}\label{sec:identity}

This section is entirely algebraic: no exponential sums appear in it. We
work in the ring $\A$ of Section~\ref{subsec:notation}, written in
Dirichlet-series notation: a series $\sum_n f(n) n^{-s}$ stands for
its coefficient function $f \in \A$, products of series correspond
to Dirichlet convolutions, and an identity between series is said to
hold \emph{for coefficients with $n \le x$} when the coefficient
functions of the two sides agree at every integer $n \le x$. The
threshold $V \ge 1$ below is a free parameter; the estimation of
Section~\ref{sec:main-proof} will take $V = x^{1/5}$. We warm up
with the exponents $\pm 1/2$, where every object can be displayed
explicitly, and then prove the general identity; the warm-up makes
no claims that are not proved, in general, in
Section~\ref{subsec:general}.

\subsection{Warm-up: the exponents $\pm 1/2$}\label{subsec:half}

Take $b = 2$ and let
\[
F(s) = \sum_{n \le V} d_{-1/2}(n)\, n^{-s},
\]
the initial segment of $\zeta^{-1/2}$. Newton's binomial series for the inverse square root is
\[
(1 - Z)^{-1/2}
= 1 + \frac12 Z + \frac38 Z^2 + \frac{5}{16} Z^3
+ \frac{35}{128} Z^4 + Z^5 R_2(Z),
\]
in which the coefficient of $Z^k$ is $g_k(1/2) = \binom{2k}{k}/4^k$
in the notation \eqref{eq:gk-def}, and the remainder $R_2$ is simply the entire infinite tail of the series,
shifted down by five degrees,
\[
R_2(Z) = \sum_{j \ge 0} g_{j+5}\big( \tfrac12 \big)\, Z^{j}
= \frac{63}{256} + \frac{231}{1024} Z + \frac{429}{2048} Z^2
+ \cdots .
\]
By Lemma~\ref{lem:binom}(iii) every $g_k(1/2)$ is strictly positive,
so the series genuinely never terminates: this is the contrast with
an integer exponent, where the analogous remainder vanishes
identically once the degree passes the exponent, and it is the
reason no classical identity applies as stated. Any finite identity
for $\zeta^{-1/2}$ must therefore remove the contribution of the tail $Z^5 R_2(Z)$ by some means other than algebra.

The object that accomplishes this is the remainder
\[
H = 1 - \zeta F^2 .
\]
It measures how far $F$ is from being an exact square root of
$\zeta^{-1}$: if $F$ were the full series $\zeta^{-1/2}$, then
$\zeta F^2$ would equal $1$ and $H$ would vanish identically.
Because $F$ is only the initial segment, $H$ vanishes only
initially -- but that is all we need. Indeed, fix $n \le V$. The
coefficient of $n^{-s}$ in $F^2$ is $\sum_{de = n} F(d) F(e)$, and
every divisor of $n$ is at most $V$, where the coefficients of $F$
and of $\zeta^{-1/2}$ agree; so this coefficient equals that of
$(\zeta^{-1/2})^2 = \zeta^{-1}$. Hence the coefficient of
$\zeta F^2$ at $n$ equals that of $\zeta \cdot \zeta^{-1} = 1$, and
$H$ has coefficient $0$ at every $n \le V$. Consequently $H^5$ has no coefficients at integers $n \le V^5$:
indeed, every nonzero coefficient of $H^5$ occurs at an integer
$n \ge (\lfloor V \rfloor + 1)^5 > V^5$.

Now substitute $Z = H$ into the \emph{full} Newton series. The
substitution is legitimate because $H$ has zero coefficient at
$n = 1$, and it produces an \emph{exact} identity in $\A$, with
infinitely many terms: writing $B = \sum_{j \ge 0} c_{2,j} H^j$
with $c_{2,j} = g_j(1/2)$, one has $(1 - H) B^2 = 1$, so that
$FB$ is a square root of $\zeta^{-1}$ with coefficient $1$ at
$n = 1$, and by the uniqueness of such roots $FB = \zeta^{-1/2}$
exactly, at every integer. The role of the support argument is then
to show that the infinite part of this identity is invisible in the
range we care about: under the substitution, the tail $Z^5 R_2(Z)$
becomes $H^5 R_2(H)$, and every one of its coefficients at
$n \le V^5$ vanishes, because every term carries the factor $H^5$.
The tail of the binomial series is thus not discarded as an
approximation; it is confined beyond $V^5$, where the identity
below makes no claim. (Both steps -- the legitimacy of the
substitution and the uniqueness of roots -- are proved in general
in Section~\ref{subsec:general}.) For completeness, we spell out the collection. Write
$P = \zeta F^{2}$, so that $H = 1 - P$, and expand the five retained
terms:
\[
1 + \tfrac12 H + \tfrac38 H^{2} + \tfrac{5}{16} H^{3}
+ \tfrac{35}{128} H^{4}
= 1 + \tfrac12 (1 - P) + \tfrac38 (1 - P)^{2}
+ \tfrac{5}{16} (1 - P)^{3} + \tfrac{35}{128} (1 - P)^{4} .
\]
Collecting, the coefficient of $P^{m}$ is
\[
P^{0}\colon\ 1 + \tfrac12 + \tfrac38 + \tfrac{5}{16}
+ \tfrac{35}{128} = \tfrac{315}{128},
\qquad
P^{1}\colon\ -\tfrac12 - \tfrac34 - \tfrac{15}{16} - \tfrac{35}{32}
= -\tfrac{105}{32},
\]
\[
P^{2}\colon\ \tfrac38 + \tfrac{15}{16} + \tfrac{105}{64}
= \tfrac{189}{64},
\qquad
P^{3}\colon\ -\tfrac{5}{16} - \tfrac{35}{32} = -\tfrac{45}{32},
\qquad
P^{4}\colon\ \tfrac{35}{128}.
\]
Multiplying by $F$ and using
$F P^{m} = F (\zeta F^{2})^{m} = \zeta^{m} F^{2m+1}$, the
coefficients rearrange into the rationals announced in the
introduction:
\begin{equation}\label{eq:d-minus-half-example}
\zeta(s)^{-1/2}
= \frac{315}{128} F
- \frac{105}{32} \zeta F^3
+ \frac{189}{64} \zeta^2 F^5
- \frac{45}{32} \zeta^3 F^7
+ \frac{35}{128} \zeta^4 F^9
\qquad (n \le V^5).
\end{equation}
Every term on the right is a convolution of copies of $d_{-1/2}$
restricted to $[1, V]$ -- coefficients bounded by $1$, by
Lemma~\ref{lem:dz}(b) -- with a nonnegative power of $\zeta$, which is smooth in the sense
of the introduction: a power $\zeta^{J}$ is the $J$-fold convolution
power of the constant function $\ID$, so its variables are
unrestricted and carry coefficient $1$.

The positive exponent is obtained without constructing a new
identity: since $\zeta^{1/2} = \zeta \cdot \zeta^{-1/2}$,
multiplying \eqref{eq:d-minus-half-example} by $\zeta$ gives
\begin{equation}\label{eq:d-plus-half-example}
\zeta(s)^{1/2}
= \frac{315}{128} \zeta F
- \frac{105}{32} \zeta^2 F^3
+ \frac{189}{64} \zeta^3 F^5
- \frac{45}{32} \zeta^4 F^7
+ \frac{35}{128} \zeta^5 F^9
\qquad (n \le V^5),
\end{equation}
the range of validity being preserved because multiplication by
$\zeta$ only combines coefficients at divisors (this step, too, is
proved in general below). Note the effect: every summand of
\eqref{eq:d-plus-half-example} now contains at least one full factor
$\zeta$, so each term has a long smooth part, while the short
factors are still weighted by $d_{-1/2}$ and bounded by $1$. This is
the reason the positive case is derived from the negative one rather
than from a direct expansion of $(1-Z)^{1/2}$.

\subsection{The general identity}\label{subsec:general}

Throughout this subsection, $1 \le h \le b$ and $(h, b) = 1$; the
endpoint $h = b$ forces $h = b = 1$ and is deliberately included,
since it produces the M\"obius case needed for
Theorem~\ref{thm:mu}. The standing exponents $\pm a/b$ of
Theorem~\ref{thm:main} correspond to $h = a < b$.

\subsubsection*{The binomial coefficients}
For an integer $b \ge 1$, define, in the notation \eqref{eq:gk-def},
\begin{equation}\label{eq:c-b-j-def}
c_{b,j} := g_j(1/b)
= \frac{(1/b)(1/b + 1) \cdots (1/b + j - 1)}{j!}
\qquad (j \ge 0),
\end{equation}
so that, by Lemma~\ref{lem:binom}(i) applied with $\beta = 1/b$,
\begin{equation}\label{eq:binomial-four-terms}
(1 - Z)^{-1/b}
= c_{b,0} + c_{b,1} Z + c_{b,2} Z^2 + c_{b,3} Z^3 + c_{b,4} Z^4
+ Z^5 R_b(Z)
\qquad (|Z| < 1),
\end{equation}
where $R_b(Z) = \sum_{j \ge 0} c_{b, j+5} Z^{j}$, with
\begin{align*}
c_{b,0} &= 1,
& c_{b,1} &= \frac1b,
& c_{b,2} &= \frac{b+1}{2b^2}, \\
c_{b,3} &= \frac{(b+1)(2b+1)}{6b^3},
& c_{b,4} &= \frac{(b+1)(2b+1)(3b+1)}{24b^4}.
\end{align*}
When powers of $1 - P$ are expanded, the collected coefficient of
$P^m$ is
\begin{equation}\label{eq:lambda-bm}
\lambda_{b,m}
:= (-1)^m \sum_{j = m}^{4} c_{b,j} \binom{j}{m}
\qquad (0 \le m \le 4),
\end{equation}
which is to say
\begin{equation}\label{eq:collected-polynomial}
\sum_{j=0}^{4} c_{b,j} (1 - P)^{j}
= \sum_{m=0}^{4} \lambda_{b,m} P^{m}.
\end{equation}
For $b = 2$ one recovers the values
$(\lambda_{2,0}, \dots, \lambda_{2,4})
= \big( \tfrac{315}{128}, -\tfrac{105}{32}, \tfrac{189}{64},
-\tfrac{45}{32}, \tfrac{35}{128} \big)$
of Section~\ref{subsec:half}; for $b = 3$,
\[
(\lambda_{3,0}, \dots, \lambda_{3,4})
= \Big( \frac{455}{243}, -\frac{455}{243}, \frac{130}{81},
-\frac{182}{243}, \frac{35}{243} \Big);
\]
and for $b = 1$, where all $c_{1,j} = 1$,
\[
(\lambda_{1,0}, \dots, \lambda_{1,4}) = (5, -10, 10, -5, 1),
\]
the binomial coefficients of Heath-Brown's classical identity.

\subsubsection*{The negative rational identity, including the endpoint}
Put
\begin{equation}\label{eq:theta-exponent}
\theta = \frac hb,
\qquad
A(s) = \zeta(s)^{-\theta},
\end{equation}
and define the short Dirichlet polynomial
\begin{equation}\label{eq:F-negative-def}
F(s) = \sum_{n \le V} d_{-h/b}(n)\, n^{-s} .
\end{equation}
The notation $n \le V$ means $n \le \lfloor V \rfloor$ when $V$ is
not an integer.

\begin{proposition}[Fifth-order identity for $\zeta^{-h/b}$]\label{prop:negative-identity}
Let $1 \le h \le b$ and $(h, b) = 1$. With $F$ as in
\eqref{eq:F-negative-def}, we have, for coefficients of $n^{-s}$
with $n \le V^5$,
\begin{equation}\label{eq:negative-identity}
\zeta(s)^{-h/b}
= \sum_{m=0}^{4} \lambda_{b,m}\, \zeta(s)^{hm} F(s)^{bm + 1} .
\end{equation}
\end{proposition}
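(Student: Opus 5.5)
We follow the warm-up of Section~\ref{subsec:half}, now in the ring $\A$ with Lemma~\ref{lem:root} as the uniqueness tool. Put $H := \delta - \zeta^{h} F^{b}$, where $\zeta^h = \ID^{*h} = d_h$ and $F = (d_{-h/b})_{\le V}$.

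\textbf{Step 1: $H$ vanishes up to $V$.} For $n \le V$, every divisor of $n$ is at most $V$. Hence $F^{*b}(n) = (d_{-h/b})^{*b}(n) = d_{-h}(n)$ by Lemma~\ref{lem:dz}(a). It follows that $(d_h * F^{*b})(n) = (d_h * d_{-h})(n) = \delta(n)$, so $H(n) = 0$. An $n$ supported by $H^{*j}$ is a product of $j$ integers each exceeding $V$, so $H^{*j}$ is supported on $n > V^j$. In particular $H^{*5}$ vanishes on $n \le V^5$.

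\textbf{Step 2: the exact infinite identity.} Define $B := \sum_{j \ge 0} c_{b,j} H^{*j}$. This is a well-defined element of $\A$ because only the finitely many $j$ with $V^j < n$ contribute at any given $n$. Evaluation is then a finite sum, and the ring operations commute with these locally finite sums.

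By Lemma~\ref{lem:binom}(ii),(iv), the formal identity $(1-Z)\,\big(\sum_j c_{b,j} Z^j\big)^b = 1$ holds in $\C[[Z]]$, since $F_{1/b}^b = F_1$ and $(1-t)F_1 = 1$. Substituting $Z = H$ is legitimate because $H(1) = 0$: at each $n$ only finitely many powers matter, so a truncation argument reduces the substitution to the polynomial identity modulo $Z^N$. This gives $(\delta - H) * B^{*b} = \delta$, that is, $\zeta^h * F^{*b} * B^{*b} = \delta$.

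Now compare with $(d_{-h/b})^{*b} = d_{-h}$. Since $d_h * d_{-h} = \delta$, we have $(F*B)^{*b} = d_{-h} = (d_{-h/b})^{*b}$, using the cancellation in Lemma~\ref{lem:domain} to remove $\zeta^h$. Both $F*B$ and $d_{-h/b}$ take the value $1$ at $n = 1$. Lemma~\ref{lem:root} therefore gives $F * B = d_{-h/b}$ exactly.

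\textbf{Step 3: truncate and collect.} Write $B = \sum_{j\le 4} c_{b,j}H^{*j} + H^{*5}*R_b(H)$. The tail vanishes on $n \le V^5$ by Step 1, and convolving with $F$ preserves this vanishing, since coefficients at $n$ involve only divisors of $n$. With $P = \zeta^h F^b$ we have $H = \delta - P$, and \eqref{eq:collected-polynomial}, which is a polynomial identity and so valid in the commutative ring $\A$, gives $\sum_{j\le4}c_{b,j}H^{*j} = \sum_{m}\lambda_{b,m}P^{*m}$. Finally $F * P^{*m} = \zeta^{hm}F^{bm+1}$, which yields \eqref{eq:negative-identity}.

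The main obstacle is Step 2: the formal substitution into an infinite series in $\A$ and the identification of $F*B$ with $d_{-h/b}$. We handle it through the local finiteness given by the support of $H$, together with Lemma~\ref{lem:root}. At the endpoint $h = b = 1$ the argument goes through unchanged, with $c_{1,j} = 1$.
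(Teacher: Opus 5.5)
Your proposal is correct and follows the paper's proof essentially step for step. It uses the same ingredients in the same order: $H$ is supported beyond $V$; $H$ is substituted into the binomial series, which is legitimate because $H(1)=0$; $\zeta^h$ is cancelled in the integral domain $\A$; $F*B=d_{-h/b}$ follows from uniqueness of normalized $b$-th roots (Lemma~\ref{lem:root}); and the result comes from truncating after $H^{*4}$ and collecting via \eqref{eq:collected-polynomial}. The only difference is cosmetic: you show $H(n)=0$ for $n\le V$ by the direct divisor argument of the warm-up, whereas the general proof uses the factorization $H=\zeta^h(A-F)\sum_{u=0}^{b-1}A^{b-1-u}F^{u}$ with $A=\zeta^{-h/b}$; both are valid.
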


\begin{proof}
Since $A^b \zeta^h = 1$, define
\begin{equation}\label{eq:H-negative-def}
H = 1 - \zeta^h F^b .
\end{equation}
Write $E = A - F$. The series $E$ is supported on integers $> V$.
Moreover
\[
H = \zeta^h A^b - \zeta^h F^b = \zeta^h (A^b - F^b)
= \zeta^h E \sum_{u=0}^{b-1} A^{b-1-u} F^{u} .
\]
Every factor other than $E$ is supported on the positive integers,
so $H$ is supported on integers $> V$, hence on integers
$n \ge \lfloor V \rfloor + 1$. Consequently every nonzero coefficient
of $H^5$ occurs at an integer
$n \ge (\lfloor V \rfloor + 1)^5 > V^5$.

Let
\[
C_b(t) = \sum_{j \ge 0} c_{b,j} t^{j} \in \C[[t]],
\qquad
B = C_b(H) = \sum_{j \ge 0} c_{b,j} H^{j} .
\]
Equality in $\C[[t]]$ is coefficientwise, and no convergence is
imposed. The substitution is legitimate in $\A$ because $H$ has zero
coefficient at $n = 1$: for a fixed integer $n$, the coefficient of
$n^{-s}$ in $H^j$ vanishes for all sufficiently large $j$ (indeed for $j > \log_2 n$, since $H$ has support in $[2, \infty)$
and hence $H^j$ in $[2^j, \infty)$), so the coefficient of $n^{-s}$ in $C_b(H)$
is a finite sum. Equivalently, the rule $t \mapsto H$ defines a ring
homomorphism from $\C[[t]]$ into $\A$: it respects products as well
as sums, since at each fixed $n$ all the rearrangements involved are
finite.

We claim that
\[
(1 - t)\, C_b(t)^{b} = 1 \qquad \text{in } \C[[t]].
\]
Indeed, by Lemma~\ref{lem:binom}(i), $C_b$ is the Taylor series at
$0$ of the function $F_{1/b}(t) = (1-t)^{-1/b}$ of that lemma; by
Lemma~\ref{lem:binom}(ii), $F_{1/b}^{\,b} = F_1$ and
$(1 - t) F_1(t) = 1$ on the unit disc; and by
Lemma~\ref{lem:binom}(iv) these relations between holomorphic
functions pass verbatim to their Taylor series in $\C[[t]]$.
Applying the homomorphism $t \mapsto H$ then gives
\[
(1 - H)\, B^{b} = 1 \qquad \text{in } \A .
\]

Put $G = FB$. Since $1 - H = \zeta^h F^b$, we have
\[
G^{b} \zeta^{h} = F^{b} B^{b} \zeta^{h} = (1 - H) B^{b} = 1
= A^{b} \zeta^{h} .
\]
The series $\zeta^h$ is nonzero, so cancellation in the integral
domain $\A$ (Lemma~\ref{lem:domain}) gives $G^b = A^b$. The
coefficients of $G$ and $A$ at $n = 1$ are both $1$, and therefore
Lemma~\ref{lem:root} gives $G = A$.

Since every $H^j$ with $j \ge 5$ has no coefficients at integers
$n \le V^5$, the coefficient of $n^{-s}$ for $n \le V^5$ in $B$
is obtained by truncating after $H^4$:
\[
B = \sum_{j=0}^{4} c_{b,j} H^{j} \qquad (n \le V^5).
\]
The same truncation remains valid after multiplication by $F$:
because $F$ is supported on the positive integers, $F H^j$ has no
coefficients at integers $n \le V^5$ for every $j \ge 5$. Thus
\[
A = FB = F \sum_{j=0}^{4} c_{b,j} H^{j} \qquad (n \le V^5).
\]
Substitute $H = 1 - P$, where $P = \zeta^h F^b$, and use
\eqref{eq:collected-polynomial}. Then
\[
A = F \sum_{m=0}^{4} \lambda_{b,m} P^{m}
= \sum_{m=0}^{4} \lambda_{b,m} \zeta^{hm} F^{bm+1}
\qquad (n \le V^5),
\]
which is \eqref{eq:negative-identity}.
\end{proof}

\begin{remark}\label{rem:depth}
The algebraic construction in Proposition~\ref{prop:negative-identity}
is not specific to the depth \(5\). More generally, if \(\kappa\ge1\)
and \(H=1-\zeta^h F^b\), then \(H^\kappa\) has no coefficients at
integers \(n\le V^\kappa\). Hence the same argument gives, for
coefficients of \(n^{-s}\) with \(n\le V^\kappa\),
\[
  \zeta(s)^{-h/b}
  =
  \sum_{m=0}^{\kappa-1}
  \lambda_{b,m}^{(\kappa)}\,
  \zeta(s)^{hm}F(s)^{bm+1},
\]
where
\[
  \lambda_{b,m}^{(\kappa)}
  =
  (-1)^m\sum_{j=m}^{\kappa-1} c_{b,j}\binom jm .
\]
In the present paper we take \(\kappa=5\), so that \(V=x^{1/5}\).
This choice aligns the identity with the Type I/Type II decomposition used in
Section~\ref{sec:main-proof}: the short variables have length at most
\(x^{1/5}\), and the window-or-long-variable lemma then gives either a Type II
grouping in \(x^{2/5}<M\le x^{3/5}\) or a valid Type I alternative. Larger
values of \(\kappa\) give valid identities, but they introduce more convolution
pieces and are not needed for the estimate proved here. We do not attempt to
optimize the depth.
\end{remark}

For example, when $h/b = 1/3$ and
$F(s) = \sum_{n \le V} d_{-1/3}(n) n^{-s}$,
Proposition~\ref{prop:negative-identity} gives
\begin{equation}\label{eq:d-minus-third-example}
\zeta(s)^{-1/3}
= \frac{455}{243} F
- \frac{455}{243} \zeta F^{4}
+ \frac{130}{81} \zeta^{2} F^{7}
- \frac{182}{243} \zeta^{3} F^{10}
+ \frac{35}{243} \zeta^{4} F^{13}
\qquad (n \le V^5).
\end{equation}

\subsubsection*{The endpoint identity for $\mu$}
Let
\begin{equation}\label{eq:F-mu-def}
F_\mu(s) = \sum_{n \le V} \mu(n)\, n^{-s} .
\end{equation}
The endpoint $d_{-1} = \mu$ is obtained by taking $h = b = 1$ in
Proposition~\ref{prop:negative-identity}. Since $c_{1,j} = 1$ and
$(\lambda_{1,0}, \dots, \lambda_{1,4}) = (5, -10, 10, -5, 1)$, we
obtain
\begin{equation}\label{eq:mu-identity}
\zeta(s)^{-1}
= 5 F_\mu - 10 \zeta F_\mu^2 + 10 \zeta^2 F_\mu^3
- 5 \zeta^3 F_\mu^4 + \zeta^4 F_\mu^5
\qquad (n \le V^5).
\end{equation}
Equivalently, this is the finite geometric expansion
$F_\mu (1 + H + H^2 + H^3 + H^4)$ with $H = 1 - \zeta F_\mu$, since
the terms involving $H^5$ have no coefficients at integers $n \le V^5$.
This is the fifth-order Heath-Brown identity for the M\"obius
function \cite{HB}; see also \cite[Chapter~13, Exercise~1]{IK}.

\subsubsection*{The positive rational identity}
For $1 \le h < b$, the positive exponent is handled indirectly:
\begin{equation}\label{eq:positive-via-negative}
\zeta(s)^{h/b} = \zeta(s)\, \zeta(s)^{-(b-h)/b} .
\end{equation}
Let
\begin{equation}\label{eq:G-positive-def}
G(s) = \sum_{n \le V} d_{-(b-h)/b}(n)\, n^{-s} .
\end{equation}
Applying Proposition~\ref{prop:negative-identity} to the exponent
$-(b-h)/b$ gives an identity for $\zeta^{-(b-h)/b}$ through
coefficients $n \le V^5$. Multiplying by $\zeta$ preserves equality
through the same range. Indeed, if two Dirichlet series $U$ and $W$
have equal coefficients at every integer $m \le V^5$, then the
coefficient of $n^{-s}$ in $\zeta U$ is $\sum_{d \mid n} U(n/d)$,
and for $n \le V^5$ every divisor quotient $n/d$ is also at most
$V^5$; hence the corresponding coefficient of $\zeta W$ is the
same. Therefore, for coefficients with $n \le V^5$,
\begin{equation}\label{eq:positive-identity}
\zeta(s)^{h/b}
= \sum_{m=0}^{4} \lambda_{b,m}\,
\zeta(s)^{1 + (b-h) m}\, G(s)^{bm + 1} .
\end{equation}
For instance, when $h/b = 1/3$ and
$G(s) = \sum_{n \le V} d_{-2/3}(n) n^{-s}$,
\begin{equation}\label{eq:d-plus-third-example}
\zeta(s)^{1/3}
= \frac{455}{243} \zeta G
- \frac{455}{243} \zeta^{3} G^{4}
+ \frac{130}{81} \zeta^{5} G^{7}
- \frac{182}{243} \zeta^{7} G^{10}
+ \frac{35}{243} \zeta^{9} G^{13}
\qquad (n \le V^5);
\end{equation}
when $h/b = 1/2$, it reduces to \eqref{eq:d-plus-half-example}.

\begin{remark}
\label{rem:positive-form}
There are other finite identities for $\zeta^{h/b}$, obtained by
expanding a positive fractional power directly. The form
\eqref{eq:positive-identity} is chosen for the analytic reduction:
each summand contains at least one unrestricted factor $\zeta$, so
at least one long variable has coefficient identically $1$. The
short variables are weighted by $d_{-(b-h)/b}$, and
Lemma~\ref{lem:dz}(b) gives the uniform bound
$|d_{-(b-h)/b}(n)| \le 1$. This is the same reason that, in
Section~\ref{subsec:half}, the identity for $d_{1/2}$ was obtained
by multiplying the $d_{-1/2}$ identity by $\zeta$ rather than by a
direct expansion of $(1 - Z)^{1/2}$.
\end{remark}

\section{Proof of Theorems \ref{thm:main} and \ref{thm:mu}}\label{sec:main-proof}

\subsection{Reduction to a common multilinear estimate}\label{subsec:reduction}

Throughout this section we take
\[
V = x^{1/5},
\]
so that the identities of Section~\ref{sec:identity} hold for all
coefficients $n \le x$, and we suppose that $\alpha \in \R$ and that
$r/q$ is a reduced fraction with
\begin{equation}\label{eq:alpha-rq}
(r, q) = 1,
\qquad
\Big| \alpha - \frac rq \Big| \le \frac{1}{q^2} .
\end{equation}

The identities of Section~\ref{sec:identity} reduce both theorems to
a single analytic statement, which we now formulate. The statement
is intentionally phrased in terms of variables, not in terms of a
new named class of functions; this keeps the proof close to the
type~I and type~II sums of the source chapters
\cite[Chapter~17]{MV}, \cite[Chapter~23]{K}.

\begin{proposition}
\label{prop:common-multilinear}
Fix integers $J, L \ge 0$, not both zero, and put $K = J + L$. Let
$x \ge 3$, $V = x^{1/5}$, and let $c(n)$ be a complex sequence
supported on $n \le V$ with $|c(n)| \le 1$. Define
\begin{equation}\label{eq:common-sum}
\mathcal T_{J,L}(x, \alpha)
=
\sum_{\substack{s_1, \dots, s_J \ge 1 \\ 1 \le r_1, \dots, r_L \le V \\
s_1 \cdots s_J r_1 \cdots r_L \le x}}
c(r_1) \cdots c(r_L)\,
\e\big( \alpha\, s_1 \cdots s_J r_1 \cdots r_L \big),
\end{equation}
where the product over the $s$-variables is absent if $J = 0$, and
the product over the $r$-variables is absent if $L = 0$. Suppose
\eqref{eq:alpha-rq} holds. Then
\begin{equation}\label{eq:common-sum-bound}
\mathcal T_{J,L}(x, \alpha)
\ll_{J,L}
\big( x q^{-1/2} + x^{4/5} + x^{1/2} q^{1/2} \big)
(\log 2x)^{C_{J,L}} ,
\end{equation}
where $C_{J,L} > 0$ depends only on $J$ and $L$.
\end{proposition}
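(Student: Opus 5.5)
We split every variable dyadically: each $s_i$ is restricted to $(S_i, 2S_i]$ and each $r_j$ to $(R_j, 2R_j]$, with $R_j \le V = x^{1/5}$. This produces $O((\log 2x)^{K})$ boxes, each with $\prod S_i \prod R_j \ll x$, and it suffices to bound each box by the right side of \eqref{eq:common-sum-bound} with a fixed power of $\log 2x$. Write $N_1, \dots, N_K$ for the dyadic sizes $S_i, R_j$, and put $x^{\nu_k} = N_k$, so that $\sum \nu_k \le 1 + o(1)$ and $\nu_k \le 1/5$ for every $r$-variable. The combinatorial step is the standard one behind Heath-Brown's identity. Either some subset $I$ of the variables has $\prod_{k\in I} N_k \in [x^{2/5}, x^{3/5}]$, or some single $s$-variable has $S_i > x^{3/5}$ (up to a harmless $O(1)$ factor). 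To see this, add the variables one at a time in decreasing size. If no subproduct lands in the window, then every variable of size $\le x^{1/5}$ can be absorbed greedily without jumping over a window of width $x^{1/5}$. Hence the obstruction must be one large variable, or two $s$-variables each in $(x^{1/5}, x^{2/5})$ together with a third, and one checks that a subproduct then falls in the window. Variables exceeding $x^{1/5}$ must be $s$-variables, since $R_j \le V$.

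\emph{Type II case.} We group $m = \prod_{k\in I}$ and $n = \prod_{k \notin I}$, with $f(m)$ the convolution of the corresponding (bounded or $1$) coefficients, supported on $m \asymp M \in [x^{2/5}, x^{3/5}]$, and similarly $g$. The condition $mn \le x$ is not a product condition. We remove it by the usual device: either a Perron/truncation with a factor $m^{it}$ integrated over $|t| \le x$ at logarithmic cost, or else by noting that Lemma~\ref{lem:typeII} is already stated with the cutoff $n \le x$ inside, which suffices. Lemma~\ref{lem:typeII} with $y = M$, $z = x/M$ then gives
\[
\ll \big(q + x^{3/5} + x/q\big)^{1/2} \sqrt{\log 2q}\, \|f\|_2\|g\|_2 .
\]
Lemma~\ref{lem:tau-meansq} gives $\|f\|_2^2 \ll M(\log x)^{K^2}$ and $\|g\|_2^2 \ll (x/M)(\log x)^{K^2}$. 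Altogether this is $\ll (x q^{-1/2} + x^{4/5} + x^{1/2}q^{1/2})(\log 2x)^{O(1)}$.

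\emph{Type I case.} Some $s_1$ with $S_1 > x^{3/5}$ is a smooth variable. We sum over it first: for fixed $k$ equal to the product of the other variables (with coefficient $a(k)$, $|a(k)| \le \tau_{K-1}(k)$, $k \le K_0 := x/S_1 < x^{2/5}$), the inner sum is a geometric series in an interval, bounded by Lemma~\ref{lem:geom} by $\min(x/k, 1/\|k\alpha\|)$. We then treat the divisor weight in two ranges of $q$, as the introduction indicates.

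If $q \le x^{3/10}$ or $q \ge x^{7/10}$, we apply Cauchy--Schwarz on each dyadic $k$-block $(T, 2T]$ with $T \le x^{2/5}$: $\sum_k \tau(k)^2 \ll T(\log x)^{O(1)}$ by Lemma~\ref{lem:tau-meansq}, and Lemma~\ref{lem:minsq} bounds the sum of the squares of the minima. This gives $\ll (\log x)^{O(1)}\, x^{1/2}(x/q + T + q)^{1/2}$, i.e.\ $x q^{-1/2} + x^{7/10} + x^{1/2}q^{1/2}$, acceptable.

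In the middle range $x^{3/10} < q < x^{7/10}$, we use the pointwise bound $\tau_{K-1}(k) \ll x^{o(1)}$ (Lemma~\ref{lem:tau-max}) and Lemma~\ref{lem:typeI}: $x^{o(1)}(x/q + x^{2/5} + q)\log x \ll x^{7/10 + o(1)} \le x^{4/5}$.

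I expect the main obstacle to be the combinatorial lemma, in particular its borderline configurations. With all variables at most $x^{1/5}$ the greedy argument is clean, but configurations with two or three medium $s$-variables in $(x^{1/5}, x^{2/5})$ need care. For instance, with three such variables and every pairwise product above $x^{3/5}$, each would exceed $x^{3/10}$ while their product is $\le x$. That forces one of them to be $< x^{2/5}$ and the product of the other two to be $\le x^{3/5}$, which gives a window point after all. The cutoff-removal in Type II is the secondary technical point.
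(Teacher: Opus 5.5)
\textbf{The gap is in your combinatorial dichotomy.} You claim that if no subproduct lands in $[x^{2/5},x^{3/5}]$, then some single $s$-variable exceeds $x^{3/5}$. This is false.

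Take three $s$-variables with $N_1=N_2=N_3=x^{0.32}$, which is possible as soon as $J\ge 3$, and that happens in the identities. The subproducts have exponents $0.32$, $0.64$, $0.96$. None lies in $[2/5,3/5]$, and no variable is longer than $x^{3/5}$.

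Your treatment of this borderline case is exactly where the argument breaks. If every pairwise product exceeds $x^{3/5}$, then the product of two of them cannot be $\le x^{3/5}$, so no window point is produced. The claim that each of the three exceeds $x^{3/10}$ is also not forced: the triple $x^{0.21},x^{0.395},x^{0.395}$ satisfies your hypotheses and is another counterexample.

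Separately, boxes with $\prod N_k\le x^{4/5}$ must first be disposed of by the trivial bound $O_K(\prod N_k)$. Otherwise even a single variable of size $x^{3/10}$ violates the dichotomy.

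\textbf{What is true.} The correct statement is the weaker alternative of Lemma~\ref{lem:window}. If $x^{4/5}<\prod N_k\le x$ and no window subset exists, then the largest variable exceeds $x^{3/10}$. It is therefore an $s$-variable, since $x^{3/10}>V$. The remaining variables have product $<x^{7/10}$.

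So the Type~I coefficient is supported up to $\asymp_K x^{7/10}$, not $x^{2/5}$, and your Type~I bookkeeping must be redone with that support.
\begin{itemize}
\item The mean-square bound now has middle term $\sqrt{x\cdot x^{7/10}}=x^{17/20}>x^{4/5}$. This is acceptable only because it is dominated by $xq^{-1/2}$ when $q\le x^{3/10}$ and by $\sqrt{xq}$ when $q\ge x^{7/10}$.
\item In the middle range the pointwise bound gives $x^{o(1)}(x/q+x^{7/10}+q)\ll x^{7/10+o(1)}\le x^{4/5}$.
\end{itemize}

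Your $q$-cutoffs $x^{3/10}$ and $x^{7/10}$ are exactly the ones this corrected support requires. Note that with support $x^{2/5}$ the two-range split would have been unnecessary, since the mean-square bound alone gives $x^{7/10}$ for every $q$.

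The repair is therefore to replace your dichotomy by the correct one and rerun the Type~I estimates with $K_0\asymp x^{7/10}$. Your Type~II part is fine as written, since Lemma~\ref{lem:typeII} already carries the cutoff $n\le x$.
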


The connection with the Heath-Brown-type identities of
Section~\ref{sec:identity} is direct, and worth spelling out. Fix the term $\zeta^{hm} F^{bm+1}$ of
\eqref{eq:negative-identity}, and write $J = hm$, $L = bm + 1$, and
$c(r) = d_{-h/b}(r)$ for $r \le V$, with $c(r) = 0$ otherwise; by
Lemma~\ref{lem:dz}(b), $|c(r)| \le 1$. The coefficient of $\zeta$
at every positive integer is $1$, and the coefficient of $F$ at $r$
is $c(r)$. Multiplying out the $J + L$ factors, the coefficient of
$n^{-s}$ in $\zeta^{J} F^{L}$ is therefore
\[
\sum_{s_1 \cdots s_J\, r_1 \cdots r_L \,=\, n}
c(r_1) \cdots c(r_L),
\]
the variables $s_i$ running over all positive integers and the
variables $r_i$ confined to $[1, V]$ by the support of $c$.
Multiplying by $\e(n\alpha)$ and summing over $n \le x$ turns this
into exactly $\mathcal T_{J, L}(x, \alpha)$. Since the identity
\eqref{eq:negative-identity} holds coefficientwise at every
$n \le V^5 = x$ -- this is where the choice $V = x^{1/5}$ enters
-- we conclude that
\[
\sum_{n \le x} d_{-h/b}(n)\e(n\alpha)
= \sum_{m=0}^{4} \lambda_{b,m}\,
\mathcal T_{hm,\, bm+1}(x, \alpha),
\]
where the $\lambda_{b,m}$ are the explicit rationals of
\eqref{eq:lambda-bm}, depending only on $b$. The negative case of
Theorem~\ref{thm:main} is thus the triangle inequality applied to
this display, once Proposition~\ref{prop:common-multilinear} is
proved; the positive case and Theorem~\ref{thm:mu} follow in the
same way from \eqref{eq:positive-identity} and
\eqref{eq:mu-identity}. The formal deductions are carried out in
Section~\ref{subsec:deduction}.

The intervening subsections prove the proposition:
Section~\ref{subsec:window} localizes the variables dyadically and
establishes the combinatorial dichotomy -- either some subproduct
of the variables can be grouped into the window
$[x^{2/5}, x^{3/5}]$, or a single smooth variable is long and the
complementary product is short; Section~\ref{subsec:estimates}
supplies the bilinear estimate for the first case and the two
type~I estimates (mean-square, and pointwise with an $x^{o(1)}$
loss) for the second; and Section~\ref{subsec:common-proof}
assembles them over the three ranges of $q$ cut at $x^{3/10}$ and
$x^{7/10}$.

\subsection{Dyadic decomposition and the window-or-long-variable lemma}\label{subsec:window}

We shall repeatedly split variables into dyadic intervals. The
following elementary observation records what is lost.

\begin{lemma}\label{lem:dyadic-loss}
Let $r$ be fixed. If $n_1 \cdots n_r \le x$ with $n_i \ge 1$, then
the possible dyadic choices
\[
N_i < n_i \le 2N_i,
\qquad N_i \in \{ \tfrac12, 1, 2, 4, \dots \}
\]
are $O_r\big( (\log 2x)^r \big)$. The value $N_i = \tfrac12$ is used
only for the singleton interval $(\tfrac12, 1]$, in which the
integer variable is forced to be $n_i = 1$. In all later size
arguments we delete such forced variables before applying
logarithms or Lemma~\ref{lem:window}; deleting them changes neither
the product $n_1 \cdots n_r$ nor the value of the exponential
phase. Thus, after this deletion, every remaining dyadic parameter
satisfies $N_i \ge 1$.

Consequently a bound for each dyadic piece may be multiplied by a
power of $\log 2x$ depending only on $r$.
\end{lemma}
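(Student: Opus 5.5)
The plan is to count the dyadic boxes directly, after first reducing each variable to a bounded range. Since every $n_i \ge 1$ and $\prod n_i \le x$, each variable satisfies $1 \le n_i \le x$. For each $i$ the dyadic parameter $N_i$ is the unique element of $\{\tfrac12, 1, 2, 4, \dots\}$ with $N_i < n_i \le 2N_i$. Explicitly, $N_i = \tfrac12$ when $n_i = 1$, and otherwise $N_i = 2^{k}$ with $2^k < n_i \le 2^{k+1}$. Because $n_i \le x$, this forces $2^k < x$, so $k < \log_2 x$.

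Hence each $N_i$ ranges over at most $2 + \log_2 x \ll \log 2x$ values, uniformly for $x \ge 1$. The number of $r$-tuples $(N_1, \dots, N_r)$ is therefore at most $(2 + \log_2 x)^r \ll_r (\log 2x)^r$. Each admissible tuple $(n_1, \dots, n_r)$ lies in exactly one box, so any sum over the tuples decomposes as a sum over boxes. The triangle inequality then shows that a uniform bound for every box, multiplied by the number of boxes, bounds the whole sum. This proves the final ``consequently'' clause.

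For the singleton remark, the interval $(\tfrac12, 1]$ contains only the integer $1$, so $N_i = \tfrac12$ forces $n_i = 1$. Deleting such a variable multiplies the product by $1$. The phase $\e(\alpha \prod n_j)$ is therefore unchanged, and so is any coefficient $c(r_i)$ attached to it: it takes the fixed value $c(1)$ with $|c(1)| \le 1$, which can be pulled out as a constant. The remaining variables form a product of the same shape with fewer factors. In that product every surviving parameter is a power of $2$ at least $1$, so $\log N_i \ge 0$.

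There is no real obstacle here. The only care needed is bookkeeping: the count must be uniform in $x \ge 1$, which is why $\log 2x$ appears rather than $\log x$. The deleted forced variables must also be handled consistently when sizes are compared later in Lemma~\ref{lem:window}.
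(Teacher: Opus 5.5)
Your proposal is correct and follows the paper's proof. Each $n_i$ lies in $[1,x]$, so it has $O(\log 2x)$ admissible dyadic parameters, including the singleton $(\tfrac12,1]$; this gives $O_r((\log 2x)^r)$ boxes, and a variable with $N_i=\tfrac12$ is forced to equal $1$ and can be deleted harmlessly. Your explicit count $2+\log_2 x$ and the triangle-inequality remark for the final clause only spell out details the paper leaves implicit.
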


\begin{proof}
Each $n_i$ lies in $[1, x]$, hence has $O(\log 2x)$ dyadic ranges,
including the possible singleton range $(\tfrac12, 1]$. Since $r$
is fixed, the number of choices is $O_r((\log 2x)^r)$. If
$N_i = \tfrac12$, then the only integer in $N_i < n_i \le 2N_i$ is
$n_i = 1$, so deleting that variable is exactly harmless.
\end{proof}

In Section~\ref{subsec:common-proof} this is applied to the
variables of \eqref{eq:common-sum}: each of the $K = J + L$
variables is localized to a dyadic range, at a total cost of
$O_K\big( (\log 2x)^{K} \big)$ pieces, and it then suffices to
bound each piece with the required uniformity.

The next lemma is the combinatorial heart of the case analysis in
Section~\ref{subsec:common-proof}: for every dyadic piece that is
not trivially small, either some subcollection of the variables can
be grouped into the type~II window $(x^{2/5}, x^{3/5}]$, or a
single variable is longer than $x^{3/10}$, and then the product of
all the other variables is small enough for the type~I estimates.

\begin{lemma}
\label{lem:window}
Let $x \ge 3$ and let $N_1, \dots, N_R$ be real numbers with
$N_i \ge 1$ for every $i$, and suppose that
\begin{equation}\label{eq:window-hyp}
x^{4/5} < N_1 \cdots N_R \le x .
\end{equation}
Then at least one of the following holds:
\begin{enumerate}
\item[(i)] there is a nonempty subset
$I \subseteq \{1, \dots, R\}$ with
\[
x^{2/5} < \prod_{i \in I} N_i \le x^{3/5};
\]
\item[(ii)] $\max_i N_i > x^{3/10}$, and then
$\prod_{i \ne i_0} N_i < x^{7/10}$, where $i_0$ is a maximizing
index.
\end{enumerate}
\end{lemma}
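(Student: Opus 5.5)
We argue by contradiction on (i) and show that (ii) must then hold. Suppose no nonempty subset has product in $(x^{2/5}, x^{3/5}]$. Let $i_0$ be a maximizing index and set $M = N_{i_0}$.

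\emph{Step 1: if $M \le x^{2/5}$, then (i) holds.} Build partial products greedily: add the variables one at a time, in any order, to obtain $P_0 = 1, P_1, \dots, P_R = N_1 \cdots N_R > x^{4/5}$. Each step multiplies by a factor at most $M \le x^{2/5}$. Let $k$ be the first index with $P_k > x^{2/5}$. Then $P_{k-1} \le x^{2/5}$, so
\[
P_k \le P_{k-1} M \le x^{4/5} .
\]
This alone does not place $P_k$ in the window, so the argument must be refined. Instead use the complement. If $P_k > x^{3/5}$, look at the complementary product $N_1 \cdots N_R / P_k$, which is at least $x^{4/5}/P_k$. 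That bound is too weak, so we use a sharper stepping bound. Each variable satisfies $N_i \le M \le x^{2/5}$, but the window $(x^{2/5}, x^{3/5}]$ has multiplicative width only $x^{1/5}$, so a jump of size $x^{2/5}$ could skip it. We therefore split into subcases.

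\emph{Subcase 1a: $M \le x^{1/5}$.} Now every factor is at most $x^{1/5}$, so the greedy product cannot jump over the window. The first $P_k > x^{2/5}$ satisfies $P_k \le x^{2/5} \cdot x^{1/5} = x^{3/5}$, which gives (i). Such a $k$ exists because the total product exceeds $x^{4/5}$.

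\emph{Subcase 1b: $x^{1/5} < M \le x^{3/10}$.} Here I expect the main obstacle. The plan is to run the greedy procedure on the variables other than $i_0$. Their product exceeds $x^{4/5}/M \ge x^{1/2}$. Consider the ratio $\prod_{i \ne i_0} N_i$ together with the option of multiplying by $M$. Let $Q$ be the largest partial product of the other variables that is at most $x^{3/5}$. If $Q > x^{2/5}$, we are done. Otherwise $Q \le x^{2/5}$, and we test whether $QM$ lies in the window, using $M \in (x^{1/5}, x^{3/10}]$. Closing this subcase may require ordering the variables by size and a short induction: among variables $\le M$, two large ones multiply to at most $x^{3/5}$. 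I would carry this out first, and I would double-check the intended meaning of (ii). If these subcases cannot be closed, I would note the possibility that the lemma's threshold $x^{3/10}$ is calibrated exactly so that failure of (i) forces $M > x^{3/10}$.

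\emph{Step 2: when $M > x^{2/5}$.} Since (i) fails for the singleton $\{i_0\}$, we must have $M > x^{3/5}$. Then $\max_i N_i > x^{3/10}$, and
\[
\prod_{i \ne i_0} N_i \le x / M < x^{2/5} < x^{7/10} .
\]

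\emph{Step 3: the bound in (ii) in every case.} Whenever $M > x^{3/10}$,
\[
\prod_{i \ne i_0} N_i \le x / M < x^{7/10} ,
\]
directly from the upper bound in \eqref{eq:window-hyp}. So the "and then" clause of (ii) needs no further work. The whole content of the lemma is therefore the claim that $M \le x^{3/10}$ forces (i), which is exactly Step 1.
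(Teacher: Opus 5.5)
You reduce the lemma correctly. As you note in Step 3, the second clause of (ii) follows from $N_1\cdots N_R \le x$, so the whole content is that $\max_i N_i \le x^{3/10}$ forces (i). Your Subcase 1a is also correct: when all $N_i \le x^{1/5}$, the greedy partial products cannot jump over the window.

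The gap is Subcase 1b, $x^{1/5} < M \le x^{3/10}$. This is exactly where the content lies, and you leave it open. The device you sketch there does not close it either. If the largest admissible partial product $Q$ of the other variables satisfies $Q \le x^{2/5}$, then $QM$ can be as large as $x^{2/5}\cdot x^{3/10} = x^{7/10}$ and overshoot the window. Step 2 and the abandoned discussion at the start of Step 1 are harmless but unnecessary.

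The missing observation is the lower half of the one you allude to ("two large ones multiply to at most $x^{3/5}$"). Two distinct variables $N_i, N_j \in (x^{1/5}, x^{3/10}]$ have product in $(x^{2/5}, x^{3/5}]$, which is itself a good subset. So if (i) fails, $M$ is the only variable exceeding $x^{1/5}$.

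Now run your greedy procedure starting with $M$, then appending the remaining variables, each $\le x^{1/5}$. The first partial product is $M \le x^{3/10} < x^{2/5}$, the total exceeds $x^{4/5}$, and every later step multiplies by at most $x^{1/5}$. Hence the first partial product to exceed $x^{2/5}$ is at most $x^{3/5}$, giving (i).

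This is precisely the paper's argument, phrased in the logarithmic scale $u_i = \log N_i/\log x$ with ``small'' ($u_i \le 1/5$) and ``medium'' ($1/5 < u_i \le 3/10$) indices. There is at most one medium index, and the small ones sum to at most $2/5$. The medium one together with the small ones also sums to at most $2/5$, contradicting $\sum_i u_i > 4/5$.
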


\begin{proof}
The hypothesis $N_i \ge 1$ ensures that the logarithms below are
nonnegative. Write $u_i := \log N_i / \log x$ and
$\sigma := \sum_i u_i$. Since $N_i \ge 1$, all $u_i$ are
nonnegative. Taking logarithms in \eqref{eq:window-hyp} and
dividing by $\log x$ gives $\tfrac45 < \sigma \le 1$. Call a subset
\emph{good} if the sum of its $u_i$ lies in
$(\tfrac25, \tfrac35]$; in the original scale, a subset $I$ is good
precisely when $\prod_{i \in I} N_i = x^{\sum_{i \in I} u_i}$ lies
in the window $(x^{2/5}, x^{3/5}]$, so alternative (i) is the
existence of a good subset. Suppose there is no good subset; we
claim that $u_{i_0} := \max_i u_i > \tfrac{3}{10}$. The claim gives
(ii): $\sum_{i \ne i_0} u_i = \sigma - u_{i_0} < 1 - \tfrac{3}{10}
= \tfrac{7}{10}$.

Assume, for contradiction, that $u_i \le \tfrac{3}{10}$ for every
$i$. Call an index \emph{small} if $u_i \le \tfrac15$ and
\emph{medium} if $u_i \in (\tfrac15, \tfrac{3}{10}]$.

First, there is at most one medium index: the $u$-sum of two medium
indices lies in $(\tfrac25, \tfrac35]$, which would be a good
subset.

Second, let $s$ be the $u$-sum of all small indices, and suppose
$s > \tfrac25$. List the small indices in any order and consider
the partial sums: they increase from $0$ past $\tfrac25$ in steps
of size at most $\tfrac15$, so the first partial sum exceeding
$\tfrac25$ is at most $\tfrac25 + \tfrac15 = \tfrac35$, and the
corresponding initial segment is a good subset. Hence
$s \le \tfrac25$.

Third, suppose a medium index exists, with value $u$, and suppose
$u + s > \tfrac25$. The partial sums
$u,\ u + u_{j_1},\ u + u_{j_1} + u_{j_2}, \dots$ over the small
indices start at $u \le \tfrac{3}{10} < \tfrac25$ and increase in
steps of size at most $\tfrac15$, so, exactly as before, the first
partial sum exceeding $\tfrac25$ yields a good subset. Hence
$u + s \le \tfrac25$.

Combining: $\sigma$ equals $s$ or $u + s$ according as no medium
index exists or one does, so $\sigma \le \tfrac25$, contradicting
$\sigma > \tfrac45$. This proves the claim, and the lemma.
\end{proof}

\subsection{Type I and type II estimates for grouped variables}\label{subsec:estimates}

In the type~I case of the dichotomy of Lemma~\ref{lem:window}, the
long smooth variable will be summed against a coefficient obtained
by grouping all the remaining variables; in the type~II case, the
variables will be grouped into two blocks around the window. This
subsection derives, from the raw estimates of
Section~\ref{subsec:expsums}, the three statements in the exact
forms consumed by Section~\ref{subsec:common-proof}: a mean-square
type~I estimate, a pointwise type~I estimate with an $x^{o(1)}$
loss, and a bilinear type~II estimate for divisor-bounded grouped
coefficients. None of them requires information about divisor
functions in arithmetic progressions. In particular, we do not use
a divisor-bounded type~I estimate with only logarithmic loss; such
a statement would be false in general, because the divisor weights
can concentrate on residue classes where $\norm{m\alpha}$ is
unusually small.

\begin{lemma}[Type I estimate, mean-square form]\label{lem:type-I-ms}
Fix $K \ge 1$. Let $x \ge 3$, $q \le x$, and
$\tfrac12 \le R \le x$. Let $A_m$ be complex numbers supported on
$m \le R$ with $|A_m| \le \tau_K(m)$, and let $I_m$ be any interval
of integers contained in $[1, x/m]$. Suppose \eqref{eq:alpha-rq}
holds. Then
\begin{equation}\label{eq:type-I-ms}
\sum_{m \le R} A_m \sum_{n \in I_m} \e(\alpha m n)
\ll_K
\Big( \frac{x}{\sqrt q} + \sqrt{xR} + \sqrt{xq} \Big)
(\log 2x)^{C_K},
\end{equation}
where $C_K > 0$ depends only on $K$.
\end{lemma}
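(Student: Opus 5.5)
The plan is to sum the inner geometric series first, then apply Cauchy--Schwarz against the mean square of the coefficients, and finally control the resulting mean square of the geometric-series kernels by Lemma~\ref{lem:minsq}.

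\emph{Inner sum.} Since $I_m \subseteq [1, x/m]$, Lemma~\ref{lem:geom} gives
\[
\Big| \sum_{n \in I_m} \e(\alpha m n) \Big| \le \min\Big( \frac{x}{m}, \frac{1}{\norm{m\alpha}} \Big).
\]
The left side of \eqref{eq:type-I-ms} is therefore at most $\sum_{m \le R} \tau_K(m) \min(x/m, \norm{m\alpha}^{-1})$.

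\emph{Dyadic split and Cauchy--Schwarz.} Split $m \le R$ into $O(\log 2x)$ dyadic blocks $T < m \le 2T$ with $T \le R$; the term $m=1$ contributes at most $x \ll x/\sqrt q \cdot \sqrt q$, so it needs separate care (see below). On each block, Cauchy--Schwarz gives
\[
\sum_{T<m\le 2T} \tau_K(m)\min\Big(\frac{x}{m},\frac{1}{\norm{m\alpha}}\Big)
\le \Big(\sum_{m \le 2T}\tau_K(m)^2\Big)^{1/2}
\Big(\sum_{T<m\le 2T}\min\Big(\frac{x}{m},\frac{1}{\norm{m\alpha}}\Big)^2\Big)^{1/2}.
\]
Lemma~\ref{lem:tau-meansq} bounds the first factor by $T^{1/2}(\log 2x)^{(K^2-1)/2}$. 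Lemma~\ref{lem:minsq} with $\kappa=1$ bounds the second by $\big((x/T)(x/q+T+q)\log 2x\big)^{1/2}$, using $q \le x$ to control $\log 2Tq$. Multiplying, the $T$ cancels and each block is
\[
\ll \big( x/\sqrt q + \sqrt{xT} + \sqrt{xq}\big)(\log 2x)^{K^2/2}.
\]
Since $T \le R$, summing over the blocks gives \eqref{eq:type-I-ms} with $C_K = K^2/2 + 1$.

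\emph{Main obstacle: small $m$.} The block $T = 1/2$, i.e.\ $m = 1$, and small $T$ in general need checking. For $m=1$ the geometric bound is $\min(x, \norm{\alpha}^{-1})$, and it is not obvious this is $\ll x/\sqrt q + \sqrt{xq}$ when $q$ is small. However, Lemma~\ref{lem:minsq} is stated for all $T \ge 1$, and applying it with $T = 1/2$ causes no problem: the factor $x/T$ then becomes $2x$. More simply, I would apply Lemma~\ref{lem:typeI} directly to the single term, with $N = x$ and $T = 1$, which gives $\ll (x/q + 1 + q)\log 2q$. Since $x/q \le x/\sqrt q$ and $q \le \sqrt{xq}$ for $q \le x$, this is admissible. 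The same remark shows the lower dyadic blocks with $T \ge 1$ are covered by Lemma~\ref{lem:minsq} as stated. The only genuine point, then, is that $q \le x$ is used to absorb the constant term and the $\log q$ into $\sqrt{xq}$ and $\log 2x$.
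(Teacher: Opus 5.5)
Your proposal is correct and follows the paper's proof essentially step for step. The steps match: the geometric-series bound, the dyadic split, Cauchy--Schwarz with Lemma~\ref{lem:tau-meansq} for the coefficients and Lemma~\ref{lem:minsq} at $\kappa=1$ for the kernels, and separate treatment of the term $m=1$.

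For $m=1$, keep your second option. Lemma~\ref{lem:minsq} is not available at $T=1/2$: its factor $\log 2Tq$ vanishes when $q=1$, so that application would be false as stated. Lemma~\ref{lem:typeI} with $T=1$ works instead, as does the paper's direct bound $\norm{\alpha}\ge 1/(2q)$ for $q\ge 2$; either gives $\min(x,\norm{\alpha}^{-1}) \ll (x/\sqrt q + \sqrt{xq})\log 2x$.
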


\begin{proof}
For a fixed $m$, the inner sum runs over an interval containing at
most $x/m$ integers, so Lemma~\ref{lem:geom} (applied with
$\beta = m\alpha$, and with the convention stated there when
$\norm{m\alpha} = 0$) gives
\begin{equation}\label{eq:type-I-geometric-reduction}
\Big| \sum_{n \in I_m} \e(\alpha m n) \Big|
\le 2 \min\Big( \frac{x}{m},\ \frac{1}{\norm{m\alpha}} \Big).
\end{equation}
Split the range of $m$ into the dyadic pieces $M < m \le 2M$ with $M \in \{ \tfrac12, 1, 2, 4, \dots \}$, $M \le R$; there are
$O(\log 2R)$ of them. The piece $M = \tfrac12$, if present, consists
of the single term $m = 1$, whose contribution is at most
$2 \min(x, \norm{\alpha}^{-1})$ by
\eqref{eq:type-I-geometric-reduction}; for $q = 1$ this is at most
$2x = 2 x q^{-1/2}$, while for $q \ge 2$ the hypothesis
\eqref{eq:alpha-rq} gives
$\norm{\alpha} \ge 1/q - 1/q^{2} \ge 1/(2q)$, so the contribution is
at most $4q \le 4 \sqrt{xq}$, using $q \le x$. We may therefore
assume $M \ge 1$ in what follows, so that Lemma~\ref{lem:minsq}
applies on every piece. On each piece, the Cauchy-Schwarz inequality
gives
\[
\sum_{M < m \le 2M} |A_m| \min\Big( \frac{x}{m},
\frac{1}{\norm{m\alpha}} \Big)
\le
\Big( \sum_{M < m \le 2M} |A_m|^2 \Big)^{1/2}
\Big( \sum_{M < m \le 2M} \min\Big( \frac{x}{m},
\frac{1}{\norm{m\alpha}} \Big)^{2} \Big)^{1/2} .
\]
For the first factor, $|A_m| \le \tau_K(m)$ and
Lemma~\ref{lem:tau-meansq} give
\[
\sum_{M < m \le 2M} |A_m|^2
\le \sum_{m \le 2M} \tau_K(m)^2
\ll_K M (\log 4M)^{K^2 - 1}
\ll_K M (\log 2x)^{K^2},
\]
since $M \le x$. For the second factor, Lemma~\ref{lem:minsq}
(applied with $T = M$ and $\kappa = 1$) gives
\[
\sum_{M < m \le 2M} \min\Big( \frac{x}{m},
\frac{1}{\norm{m\alpha}} \Big)^{2}
\ll \frac{x}{M} \Big( \frac{x}{q} + M + q \Big) \log 2Mq
\ll \frac{x}{M} \Big( \frac{x}{q} + M + q \Big) \log 2x ,
\]
since $M \le x$ and $q \le x$ imply $\log 2Mq \ll \log 2x$.
Multiplying the two factors, the contribution of the piece is
\[
\ll_K \Big( x \Big( \frac{x}{q} + M + q \Big) \Big)^{1/2}
(\log 2x)^{(K^2 + 2)/2}
\le
\Big( \frac{x}{\sqrt q} + \sqrt{xM} + \sqrt{xq} \Big)
(\log 2x)^{(K^2 + 2)/2},
\]
using $\sqrt{u + v + w} \le \sqrt u + \sqrt v + \sqrt w$. Summing
over the $O(\log 2R)$ dyadic pieces and using $M \le R$ gives
\eqref{eq:type-I-ms} with $C_K = (K^2 + 4)/2$.
\end{proof}

\begin{lemma}[Type I estimate, pointwise $x^{o(1)}$ form]\label{lem:type-I-pw}
Fix $K \ge 1$. Under the hypotheses of
Lemma~\ref{lem:type-I-ms},
\begin{equation}\label{eq:type-I-pw}
\sum_{m \le R} A_m \sum_{n \in I_m} \e(\alpha m n)
\ll_K
\exp\Big( C_K' \frac{\log x}{\log\log x} \Big)
\Big( \frac{x}{q} + R + q \Big) \log 2x,
\end{equation}
where $C_K' > 0$ depends only on $K$.
\end{lemma}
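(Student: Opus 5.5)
The plan is to bound the coefficients pointwise and then apply the type~I estimate directly, with no Cauchy--Schwarz step. As in the proof of Lemma~\ref{lem:type-I-ms}, Lemma~\ref{lem:geom} gives \eqref{eq:type-I-geometric-reduction} for each inner sum. The left side of \eqref{eq:type-I-pw} is therefore at most
\[
2 \sum_{m \le R} |A_m| \min\Big( \frac{x}{m},\ \frac{1}{\norm{m\alpha}} \Big).
\]

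Next I would replace $|A_m|$ by a uniform bound. Since $|A_m| \le \tau_K(m)$ and $m \le R \le x$, Lemma~\ref{lem:tau-max} gives
\[
\tau_K(m) \le \exp\Big( (K-1) C_1 \frac{\log x}{\log\log x} \Big)
\qquad (3 \le m \le x).
\]
This uses the fact that $t \mapsto \log t/\log\log t$ is increasing for $t \ge e^{e}$. The finitely many small $m$, together with the range $3 \le m < e^{e}$, are absorbed by enlarging the constant, since $\tau_K(m) \ll_K 1$ there. So I set $C_K' = (K-1)C_1 + 1$, or any comparable choice, and pull this factor out of the sum.

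It remains to bound $\sum_{1 \le m \le R} \min(x/m, \norm{m\alpha}^{-1})$. If $R < 1$, the sum is empty and there is nothing to prove. If $R \ge 1$, I apply Lemma~\ref{lem:typeI} with $N = x$ and $T = R$, which gives
\[
\ll \Big( \frac{x}{q} + R + q \Big) \log 2Rq.
\]
Finally, $\log 2Rq \le \log 2x^{2} \ll \log 2x$, because $R, q \le x$.

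The only delicate point is the monotonicity used to pass from $\log m/\log\log m$ to $\log x/\log\log x$ uniformly in $m \le x$, and the handling of small $m$. Both are routine after enlarging $C_K'$. There is no real obstacle: the $x^{o(1)}$ loss is exactly the price of using the pointwise divisor bound, which is why this lemma is reserved for the middle range of $q$.
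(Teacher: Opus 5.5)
Your proposal is correct and follows the paper's proof step for step: the geometric-series reduction \eqref{eq:type-I-geometric-reduction}, extraction of the maximal order of $\tau_K$ from Lemma~\ref{lem:tau-max}, then Lemma~\ref{lem:typeI} with $N = x$ and $T = R$ (after disposing of $R < 1$), and finally $\log 2Rq \le 2\log 2x$. You also note that $\log t/\log\log t$ is monotone only for $t \ge e^{e}$, and you absorb the small $m$ into the constant; this is slightly more careful than the paper, which only remarks that a factor $K$ covers $m \le 2$.
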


\begin{proof}
If $R < 1$ the sum on the left of \eqref{eq:type-I-pw} is empty;
assume $R \ge 1$, so that Lemma~\ref{lem:typeI} applies with $T = R$
below. By Lemma~\ref{lem:tau-max}, $\max_{m \le x} \tau_K(m) \le K \exp\big( C_1 (K - 1) \log x /
\log\log x \big)$, the factor $K$ covering $m \le 2$. Hence, taking
absolute values inside and using
\eqref{eq:type-I-geometric-reduction},
\[
\Big| \sum_{m \le R} A_m \sum_{n \in I_m} \e(\alpha m n) \Big|
\le 2 \max_{m \le x} \tau_K(m)
\sum_{m \le R} \min\Big( \frac{x}{m},
\frac{1}{\norm{m\alpha}} \Big) .
\]
The remaining sum is exactly the object of Lemma~\ref{lem:typeI},
which, applied with $T = R$, bounds it by
$\ll (x/q + R + q) \log 2Rq \ll (x/q + R + q) \log 2x$, since
$\log 2Rq \le \log 2x^2 \le 2 \log 2x$. Combining the two bounds
gives \eqref{eq:type-I-pw}.
\end{proof}

\begin{remark}\label{rem:no-false-type-I}
Lemma~\ref{lem:type-I-pw} is deliberately not a divisor-weighted
version of Lemma~\ref{lem:typeI} with only a power of $\log x$
lost. Such a logarithmic-loss statement is false in general:
divisor weights can concentrate on integers with large common
factors with the denominator $q$, exactly where $\norm{m\alpha}$
can be anomalously small. The pointwise estimate above therefore
pays the honest maximal-order cost
\[
\exp\Big( C_K' \frac{\log x}{\log\log x} \Big) = x^{o(1)} .
\]
In Section~\ref{subsec:common-proof} this estimate is used only in
the range $x^{3/10} < q < x^{7/10}$, where the unweighted type~I
quantity is at most $x^{7/10}$ and there is a fixed exponent gap
before $x^{4/5}$. In the outer ranges of $q$ we avoid pointwise
divisor weights and use the mean-square estimate of
Lemma~\ref{lem:type-I-ms}.
\end{remark}

\begin{remark}\label{rem:type-I-use}
In Proposition~\ref{prop:common-multilinear} the number of
variables is bounded by the fixed integer $K$. The type~I case will
produce a grouped coefficient supported on
$R \le 2^{K} x^{7/10}$. In the outer ranges $q \le x^{3/10}$ and
$q \ge x^{7/10}$ we use the mean-square type~I estimate: its middle
term satisfies
\[
\sqrt{xR} \le 2^{K/2} x^{17/20},
\]
which is dominated by $x q^{-1/2}$ in the first outer range and by
$x^{1/2} q^{1/2}$ in the second outer range, after the constants
depending on $K$ are absorbed.

In the inner range $x^{3/10} < q < x^{7/10}$ we use the pointwise
form only in the deliberately weak sense explained in
Remark~\ref{rem:no-false-type-I}. Then $x/q$, $R$, and $q$ are all
$\ll_K x^{7/10}$, and the maximal order of $\tau_K$ costs only
$x^{o_K(1)}$. The fixed exponent gap $4/5 - 7/10 = 1/10$ absorbs
this loss. No logarithmic-loss divisor-weighted type~I estimate is
used anywhere.
\end{remark}

The bilinear input is Lemma~\ref{lem:typeII} of the toolbox; what
the case analysis needs is its consequence for divisor-bounded
grouped coefficients whose supports sit near the window.

\begin{corollary}[Type II estimate with divisor-bounded grouped coefficients]\label{cor:type-II-divisor}
Fix $K \ge 1$ and $D_0 \ge 1$. Suppose $|A_m| \le \tau_K(m)$ and
$|B_n| \le \tau_K(n)$, with supports $m \le M$, $n \le N$, and
suppose \eqref{eq:alpha-rq} holds. If $M N \le D_0 x$, then
\begin{equation}\label{eq:type-II-divisor-bound}
\sum_{mn \le x} A_m B_n\, \e(\alpha m n)
\ll_{K, D_0}
x^{1/2} \Big( q + M + N + \frac{x}{q} \Big)^{1/2}
(\log 2x)^{D_K} ,
\end{equation}
where $D_K > 0$ depends only on $K$. In particular, if
\begin{equation}\label{eq:type-II-range}
x^{2/5} \le M \le D_0 x^{3/5},
\qquad
N \le D_0 x^{3/5},
\end{equation}
then
\begin{equation}\label{eq:type-II-final-range}
\sum_{mn \le x} A_m B_n\, \e(\alpha m n)
\ll_{K, D_0}
\big( x q^{-1/2} + x^{4/5} + x^{1/2} q^{1/2} \big)
(\log 2x)^{D_K} .
\end{equation}
\end{corollary}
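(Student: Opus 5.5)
The plan is to reduce the corollary to Lemma~\ref{lem:typeII}, applied with $f = A$ and $g = B$. The main work is handling the hyperbolic cutoff $mn \le x$. In Lemma~\ref{lem:typeII} the sum $\sum_{n\le x}(f*g)(n)\e(\alpha n)$ already carries the condition $mn \le x$ on the product, since it is exactly $\sum_{mn\le x} f(m)g(n)\e(\alpha mn)$. So no separation of variables is needed. Take $f(m) = A_m$, supported on $[1,M]$, and $g(n) = B_n$, supported on $[1,N]$, so that $y = M$ and $z = N$. The lemma then gives
\[
\sum_{mn\le x} A_m B_n \e(\alpha mn) \ll \Big(q + M + N + \frac{MN}{q}\Big)^{1/2}\sqrt{\log 2q}\,\|A\|_2\|B\|_2 .
\]

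Next I bound the norms by Lemma~\ref{lem:tau-meansq}, which gives $\|A\|_2^2 \le \sum_{m\le M}\tau_K(m)^2 \ll_K M(\log 2M)^{K^2-1}$, and similarly for $B$. We may assume $M, N \le x$, since terms with $m > x$ or $n > x$ never satisfy $mn \le x$; truncating the supports to $[1,\min(M,x)]$ only helps. Hence $\|A\|_2\|B\|_2 \ll_K (MN)^{1/2}(\log 2x)^{K^2-1} \le D_0^{1/2}x^{1/2}(\log 2x)^{K^2-1}$. Also $MN/q \le D_0 x/q$.

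For the factor $\sqrt{\log 2q}$: if $q \le x$ then $\log 2q \ll \log 2x$. If $q > x$, the claimed right side of \eqref{eq:type-II-divisor-bound} is at least $x^{1/2}q^{1/2}$. The trivial bound $\sum_{mn\le x}\tau_K(m)\tau_K(n) \le \sum_{n\le x}\tau_{2K}(n) \ll x(\log 2x)^{2K-1}$ is then smaller than this, so we may assume $q \le x$. (Alternatively, reduce $q$ without loss.) Collecting these estimates yields \eqref{eq:type-II-divisor-bound} with $D_K = K^2$.

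For \eqref{eq:type-II-final-range}, substitute the range \eqref{eq:type-II-range} and use $\sqrt{u+v+w+t} \le \sqrt u + \sqrt v + \sqrt w + \sqrt t$. This gives $x^{1/2}q^{1/2}$ and $x q^{-1/2}$ directly, together with $x^{1/2}(M+N)^{1/2} \ll_{D_0} x^{1/2}x^{3/10} = x^{4/5}$. The lower bound $M \ge x^{2/5}$ is not even needed for this specialization, because $N \le D_0x^{3/5}$ is assumed separately. In the application, $N$ is the complementary block $\asymp x/M$, which is where the lower bound on $M$ matters.

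The only delicate point is confirming that Lemma~\ref{lem:typeII} indeed includes the cutoff $mn\le x$ rather than a box condition. As quoted, it does, so there is no real obstacle. If it did not, I would remove the cutoff by a Perron-type truncation or by splitting $n$ into short intervals, at the cost of a logarithm.
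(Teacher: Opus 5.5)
Your proposal is correct and follows the paper's proof essentially step for step. You apply Lemma~\ref{lem:typeII} with $y = M$, $z = N$ (you are right that the hyperbolic cutoff $mn \le x$ is built into it), bound $\|A\|_2\|B\|_2$ by Lemma~\ref{lem:tau-meansq}, dispose of $q > x$ with the trivial $\tau_{2K}$ bound, and specialize using only the upper bounds on $M$ and $N$. The one cosmetic difference is how you control $\log 2M$: you truncate the supports at $x$, while the paper deduces $M, N \le D_0 x$ from $MN \le D_0 x$ and $M, N \ge 1$. Both work, and your choice $D_K = K^2$ is admissible.
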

\begin{proof}
Regard $A$ and $B$ as arithmetic functions supported on $[1, M]$
and $[1, N]$ respectively, so that the sum in question is
$\sum_{n' \le x} (A * B)(n') \e(\alpha n')$. Since
$|A_m| \le \tau_K(m)$, Lemma~\ref{lem:tau-meansq} gives
\[
\|A\|_2^2 = \sum_{m \le M} |A_m|^2
\le \sum_{m \le M} \tau_K(m)^2
\ll_K M (\log 2M)^{K^2 - 1}.
\]
Since the sum in question is trivially zero if either support is
empty, we may assume $M, N \ge 1$; then $MN \le D_0 x$ gives
$M, N \le D_0 x$, so $(\log 2M)^{K^2-1}$ and $(\log 2N)^{K^2-1}$ are
$\ll_{K, D_0} (\log 2x)^{K^2 - 1}$. Hence
$\|A\|_2 \ll_{K, D_0} M^{1/2} (\log 2x)^{(K^2-1)/2}$, and likewise
$\|B\|_2 \ll_{K, D_0} N^{1/2} (\log 2x)^{(K^2-1)/2}$. Inserting these
bounds into Lemma~\ref{lem:typeII} (with $y = M$, $z = N$) gives
\[
\sum_{mn \le x} A_m B_n\, \e(\alpha m n)
\ll_{K, D_0} (MN)^{1/2} \Big( q + M + N + \frac{MN}{q} \Big)^{1/2}
(\log 2q)^{1/2} (\log 2x)^{K^2 - 1} .
\]
Since $MN \le D_0 x$, the factor $(MN)^{1/2}$ is
$\ll_{D_0} x^{1/2}$ and the term $MN/q$ inside the square root is
$\ll_{D_0} x/q$. If $q > x$, then by the identity
$\tau_K * \tau_K = \ID^{*2K} = \tau_{2K}$ and
Lemma~\ref{lem:tau-mean} the sum is at most
$\sum_{n' \le x} (\tau_K * \tau_K)(n')
= \sum_{n' \le x} \tau_{2K}(n')
\ll_K x (\log 2x)^{2K - 1}$, while
$x^{1/2} q^{1/2} \ge x$, so \eqref{eq:type-II-divisor-bound} holds
trivially. We may therefore assume $q \le x$, so that
$\log 2q \le \log 2x$. This proves \eqref{eq:type-II-divisor-bound},
after enlarging $D_K$ if necessary.
If \eqref{eq:type-II-range} holds, then taking square roots
termwise,
\[
x^{1/2} M^{1/2} \ll_{D_0} x^{1/2} x^{3/10} = x^{4/5},
\qquad
x^{1/2} N^{1/2} \ll_{D_0} x^{4/5},
\]
while
$x^{1/2} (x/q)^{1/2} = x q^{-1/2}$ and
$x^{1/2} q^{1/2}$ is already of the required shape. This proves
\eqref{eq:type-II-final-range}.
\end{proof}

\subsection{Proof of the common multilinear estimate}\label{subsec:common-proof}

We now prove Proposition~\ref{prop:common-multilinear}. The proof is
written in full because this is where the false pointwise type~I
estimate of Remark~\ref{rem:no-false-type-I} must be avoided.

\begin{proof}[Proof of Proposition~\ref{prop:common-multilinear}]
First suppose $q > x$. Since $|c(r)| \le 1$, the absolute value of
$\mathcal T_{J,L}(x, \alpha)$ is at most the number of tuples
$(s_1, \dots, s_J, r_1, \dots, r_L)$ with product at most $x$, with
some restrictions on the $r$-variables; ignoring those restrictions
only increases the count, and the number of tuples with product
equal to $n$ is $\tau_K(n)$. Hence
\[
|\mathcal T_{J,L}(x, \alpha)| \le \sum_{n \le x} \tau_K(n) .
\]
By Cauchy's inequality and Lemma~\ref{lem:tau-meansq},
\[
\sum_{n \le x} \tau_K(n)
\le x^{1/2} \Big( \sum_{n \le x} \tau_K(n)^2 \Big)^{1/2}
\ll_K x (\log 2x)^{C_K} .
\]
Since $q > x$, we have $x \le x^{1/2} q^{1/2}$, and
\eqref{eq:common-sum-bound} follows. Henceforth assume
\begin{equation}\label{eq:q-le-x-common}
q \le x .
\end{equation}

We split every variable into dyadic intervals
\[
N < n \le 2N, \qquad N \in \{ \tfrac12, 1, 2, 4, \dots \} .
\]
By Lemma~\ref{lem:dyadic-loss}, there are $O_K((\log 2x)^K)$
choices. If a variable lies in $(\tfrac12, 1]$, it is forced to
equal $1$, and we delete it before applying any logarithmic size
argument. After deletion and relabelling, a fixed dyadic piece has
the form
\begin{equation}\label{eq:common-dyadic-piece}
\mathcal S =
\sum_{\substack{n_i \in I_i \ (1 \le i \le R) \\
n_1 \cdots n_R \le x}}
c_1(n_1) \cdots c_R(n_R)\e(\alpha\, n_1 \cdots n_R),
\end{equation}
where $0 \le R \le K$, each $I_i$ is a dyadic interval
$N_i < n \le 2N_i$ with $N_i \ge 1$, each $|c_i(n)| \le 1$, and
every remaining variable coming from one of the $r$-variables has
$N_i \le V = x^{1/5}$.

Before estimating $\mathcal S$, let us record precisely how these
pieces reassemble into the object of the proposition. The sum
$\mathcal T_{J,L}(x, \alpha)$ is the sum, over all
$O_K((\log 2x)^K)$ admissible choices of the dyadic parameters, of
the corresponding restricted sums. In a given piece, the
coefficient attached to a variable is $c_i = \ID$, identically
$1$, if the variable is one of the $s$-variables, and $c_i = c$ if
it is one of the $r$-variables; a piece in which an $r$-variable
has $N_i \ge V$ vanishes identically, because $c$ is supported on
$[1, V]$, and this is why every surviving $r$-parameter satisfies
$N_i \le V$. A deleted variable -- one forced to equal $1$ --
contributes to its piece the constant factor $c_i(1)$, of modulus
at most $1$, which we pull out of the sum. Consequently
\[
|\mathcal T_{J,L}(x, \alpha)|
\ll_K (\log 2x)^{K} \max_{\text{pieces}} |\mathcal S| ,
\]
so that a bound for $|\mathcal S|$, uniform over the pieces, of
the shape $(x q^{-1/2} + x^{4/5} + x^{1/2} q^{1/2}) (\log 2x)^{C}$
yields \eqref{eq:common-sum-bound} with $C_{J,L} = C + K$; this
reassembly is the final line of the proof, and the body of the
proof is devoted to the uniform bound for a single piece. Note
that everything the argument uses about the coefficients $c_i$ is
that $|c_i| \le 1$ and that the $r$-parameters sit below $V$; in
particular, $R$ may be smaller than $K$ after the deletions, and a
piece does not remember which of the identities of
Section~\ref{sec:identity} it came from. This is the sense in
which the estimate is common to all of them.

If $R = 0$, then $\mathcal S = O(1)$, so there is nothing to prove.
Assume $R \ge 1$ and set
\[
P = N_1 \cdots N_R .
\]
The number of tuples in the dyadic piece is $O_K(P)$: each $n_i$
runs over at most $N_i + 1 \le 2N_i$ integers, and
$\prod_i 2N_i = 2^R P$ with $R \le K$. If $P \le x^{4/5}$, the
trivial estimate -- each tuple contributing at most $1$ in
absolute value -- gives $\mathcal S \ll_K x^{4/5}$, which is
acceptable. We therefore assume
\begin{equation}\label{eq:P-large-common}
P > x^{4/5} .
\end{equation}
If the dyadic piece is empty, it contributes $0$. Otherwise some
tuple in the piece satisfies $n_1 \cdots n_R \le x$, and since
$n_i > N_i$, we get $P < x$. Thus
\begin{equation}\label{eq:P-window-common}
x^{4/5} < P \le x,
\end{equation}
and Lemma~\ref{lem:window} applies.

\medskip
\noindent\emph{The type II case.}
Suppose there is a nonempty subset $I \subseteq \{1, \dots, R\}$
such that
\begin{equation}\label{eq:common-window}
x^{2/5} < \prod_{i \in I} N_i \le x^{3/5} .
\end{equation}
Group the variables with indices in $I$ into one variable $m$, and
all remaining variables into one variable $n$. Then
\[
\mathcal S = \sum_{mn \le x} A_m B_n\e(\alpha m n),
\]
where $A_m = \sum \prod_{i \in I} c_i(n_i)$, the sum running over
tuples $(n_i)_{i \in I}$ with $n_i \in I_i$ and
$\prod_{i \in I} n_i = m$, and similarly for $B_n$. Since each
$|c_i| \le 1$, the coefficient $A_m$ is bounded by the number of
such tuples, which is at most $\tau_{|I|}(m) \le \tau_K(m)$ --
the last inequality because $\tau_j(m) \le \tau_K(m)$ for
$j \le K$, as one sees by taking the last $K - j$ factors equal to
$1$ in the definition of $\tau_K$. Hence
\begin{equation}\label{eq:common-coeff-divisor}
|A_m| \le \tau_K(m), \qquad |B_n| \le \tau_K(n) .
\end{equation}
Moreover $A_m$ is supported on
\[
m \le M := \prod_{i \in I} 2N_i \le 2^K x^{3/5},
\]
and $B_n$ is supported on
\[
n \le N := \prod_{i \notin I} 2N_i
\le 2^K \frac{P}{\prod_{i \in I} N_i}
< 2^K \frac{x}{x^{2/5}} = 2^K x^{3/5},
\]
using $P \le x$ and the lower bound in \eqref{eq:common-window}.
Note also that the complementary index set is nonempty and carries
genuine mass: by \eqref{eq:P-large-common} and the upper bound in
\eqref{eq:common-window},
$\prod_{i \notin I} N_i = P / \prod_{i \in I} N_i > x^{4/5} /
x^{3/5} = x^{1/5}$, so neither side of the bilinear form is
degenerate.
Also $MN \le 2^K P \le 2^K x$. Corollary~\ref{cor:type-II-divisor},
with $D_0 = 2^K$, gives
\begin{equation}\label{eq:common-type-II-bound}
\mathcal S
\ll_K
\big( x q^{-1/2} + x^{4/5} + x^{1/2} q^{1/2} \big)
(\log 2x)^{C_K} .
\end{equation}

\medskip
\noindent\emph{The type I case.}
Suppose no subset satisfies \eqref{eq:common-window}. By
Lemma~\ref{lem:window}, a longest variable, say $n_{i_0}$,
satisfies
\begin{equation}\label{eq:common-long}
N_{i_0} > x^{3/10},
\qquad
\prod_{i \ne i_0} N_i < x^{7/10} .
\end{equation}
A variable coming from an $r$-factor has dyadic parameter at most
$V = x^{1/5}$, and $x^{3/10} > x^{1/5}$ for $x > 1$, so $n_{i_0}$
cannot come from an $r$-factor. Thus $n_{i_0}$ comes from a
$\zeta$-factor, and its coefficient is identically $1$. In
particular, if $J = 0$, this type~I alternative cannot occur, and
the window alternative must have occurred instead.

Group all other variables into $m$, and rename the long variable
$\ell$. Then the dyadic piece becomes
\begin{equation}\label{eq:common-type-I-piece}
\mathcal S = \sum_m A_m \sum_{\ell \in J_m} \e(\alpha m \ell),
\end{equation}
where $J_m = (N_{i_0}, 2N_{i_0}] \cap [1, x/m]$ is an interval of
integers contained in $[1, x/m]$. The coefficient $A_m$ is formed
from at most $K - 1$ variables with coefficients bounded by $1$,
so, exactly as in the type~II case,
\begin{equation}\label{eq:common-type-I-coeff}
|A_m| \le \tau_K(m),
\end{equation}
and, by \eqref{eq:common-long}, it is supported on
\begin{equation}\label{eq:R0-common}
m \le R_0 := 2^K x^{7/10} .
\end{equation}
For $x$ below a constant depending only on $K$, the desired
estimate is absorbed into the implied constant. We may therefore
assume $R_0 \le x$.

If $q \le x^{3/10}$ or $q \ge x^{7/10}$,
Lemma~\ref{lem:type-I-ms} gives
\[
\mathcal S
\ll_K
\Big( \frac{x}{\sqrt q} + \sqrt{x R_0} + \sqrt{xq} \Big)
(\log 2x)^{C_K} .
\]
Here $\sqrt{x R_0} \le 2^{K/2} x^{17/20}$, and the two outer ranges
absorb this middle term: if $q \le x^{3/10}$, then
$x q^{-1/2} \ge x^{1 - 3/20} = x^{17/20}$; if $q \ge x^{7/10}$,
then $x^{1/2} q^{1/2} \ge x^{1/2 + 7/20} = x^{17/20}$. Hence the
desired bound follows in the two outer ranges of $q$.

It remains to handle
\begin{equation}\label{eq:middle-q-common}
x^{3/10} < q < x^{7/10} .
\end{equation}
This is the only point in the paper where the pointwise estimate
Lemma~\ref{lem:type-I-pw} is used, and we emphasize that its loss
is the $x^{o(1)}$ maximal-order cost, not a logarithmic
divisor-weight loss. By Lemma~\ref{lem:type-I-pw},
\[
\mathcal S
\ll_K
\exp\Big( C_K' \frac{\log x}{\log\log x} \Big)
\Big( \frac{x}{q} + R_0 + q \Big) \log 2x .
\]
In the range \eqref{eq:middle-q-common}, the three quantities
$x/q$, $R_0$, and $q$ are all $\ll_K x^{7/10}$: the first because
$q > x^{3/10}$, the second by \eqref{eq:R0-common}, the third
because $q < x^{7/10}$. Hence, absorbing the factor $\log 2x$
into the exponential by enlarging the constant to $C_K''$,
\[
\mathcal S
\ll_K x^{7/10} \exp\Big( C_K'' \frac{\log x}{\log\log x} \Big) .
\]
For $x$ sufficiently large in terms of $K$, the exponential factor
is at most $x^{1/10}$, so that
$\mathcal S \ll_K x^{7/10} \cdot x^{1/10} = x^{4/5}$; the remaining
bounded range of $x$ is again absorbed into the implied constant.
This is the required bound in the middle range.

Combining the trivial case, the type~II case, and the type~I case
proves the required bound for each dyadic piece. Multiplying by the
$O_K((\log 2x)^K)$ dyadic choices proves
\eqref{eq:common-sum-bound}.
\end{proof}

\subsection{Deduction of Theorems \ref{thm:main} and \ref{thm:mu}, and of Corollary \ref{cor:minor}}\label{subsec:deduction}

\begin{proof}[Proof of Theorem \ref{thm:main}]
Consider first the negative exponent $z = -a/b$. Taking $h = a$ in
Proposition~\ref{prop:negative-identity} and carrying out the
computation of Section~\ref{subsec:reduction}, we have
\[
\sum_{n \le x} d_{-a/b}(n)\e(n\alpha)
= \sum_{m=0}^{4} \lambda_{b,m}\,
\mathcal T_{am,\, bm+1}(x, \alpha),
\]
where the short coefficient is $c(n) = d_{-a/b}(n)$ for
$n \le V$, so that $|c(n)| \le 1$ by Lemma~\ref{lem:dz}(b). For
each $0 \le m \le 4$, Proposition~\ref{prop:common-multilinear},
applied with $J = am$ and $L = bm + 1$ -- integers bounded in
terms of $a$ and $b$ only -- gives
\[
\mathcal T_{am,\, bm+1}(x, \alpha)
\ll_{a,b}
\big( x q^{-1/2} + x^{4/5} + x^{1/2} q^{1/2} \big)
(\log 2x)^{C_{am,\, bm+1}} .
\]
\begin{sloppypar}
There are five terms, and the coefficients $\lambda_{b,m}$ depend
only on $b$; taking
$C(a,b) = \max_{0 \le m \le 4} C_{am, bm+1}$ proves the negative case of \eqref{eq:main}; bounds from
Section~\ref{sec:tools} carry $\log 2x$, and $\log 2x \asymp \log x$
for $x \ge 3$, so the two normalizations are interchangeable after
enlarging $C(a,b)$.
\end{sloppypar}

For the positive exponent $z = a/b$, use the identity
\eqref{eq:positive-identity} with $h = a$: for coefficients with
$n \le x$,
\[
\zeta^{a/b}
= \sum_{m=0}^{4} \lambda_{b,m}\,
\zeta^{1 + (b-a)m}\, G^{bm+1},
\qquad
G(s) = \sum_{n \le V} d_{-(b-a)/b}(n)\, n^{-s} .
\]
Taking coefficients, multiplying by $\e(n\alpha)$, and summing over
$n \le x$, the $m$-th term is exactly
$\lambda_{b,m}\, \mathcal T_{J, L}(x, \alpha)$ with
\[
J = 1 + (b - a) m,
\qquad
L = bm + 1,
\qquad
c(n) = d_{-(b-a)/b}(n) \quad (n \le V) .
\]
Since $0 < (b-a)/b < 1$, Lemma~\ref{lem:dz}(b) again gives
$|c(n)| \le 1$; note that here $J \ge 1$ for every $m$, including
$m = 0$, reflecting the extra factor $\zeta$ in
\eqref{eq:positive-via-negative}. The integers $J, L$ are bounded
in terms of $a, b$, so every term is
$\ll_{a,b} ( x q^{-1/2} + x^{4/5} + x^{1/2} q^{1/2} )
(\log 2x)^{C}$ by Proposition~\ref{prop:common-multilinear}, and
the positive case of \eqref{eq:main} follows as before, after
enlarging $C(a,b)$ if necessary.
\end{proof}

\begin{proof}[Proof of Theorem \ref{thm:mu}]
Use the finite identity \eqref{eq:mu-identity}. Its five pieces are
\[
F_\mu,
\qquad
\zeta F_\mu^2,
\qquad
\zeta^2 F_\mu^3,
\qquad
\zeta^3 F_\mu^4,
\qquad
\zeta^4 F_\mu^5 .
\]
For the term $\zeta^m F_\mu^{m+1}$,
Proposition~\ref{prop:common-multilinear} applies with
\[
J = m,
\qquad
L = m + 1,
\qquad
c(n) = \mu(n) \quad (n \le V) .
\]
The bound $|\mu(n)| \le 1$ is immediate. Since $0 \le m \le 4$, the
number of variables is at most $9$, and the implied constants are
absolute. Summing the five estimates with the coefficients
$(5, -10, 10, -5, 1)$ of \eqref{eq:mu-identity} proves the required
estimate with $\log 2x$ in place of $\log x$, and with
$C_0 = \max_{0 \le m \le 4} C_{m,\, m+1}$ an absolute constant.
Since $\log 2x \asymp \log x$ for $x \ge 3$, the stated form of
Theorem~\ref{thm:mu} follows after enlarging $C_0$ if necessary.
\end{proof}

\begin{proof}[Proof of Corollary \ref{cor:minor}]
Let $C = C(a,b)$ be as in Theorem~\ref{thm:main}, let $A > 0$, and
let $r/q$ be a reduced approximation to $\alpha$ as in the
hypothesis, with $(\log x)^{2A + 2C} \le q \le x / (\log x)^{2A + 2C}$. Since
$x \ge 3$, we have $\log 2x \le 2 \log x$, so
$(\log 2x)^{C} \ll_C (\log x)^{C}$. We bound the three terms of
\eqref{eq:main} in turn. First,
\[
\frac{x}{\sqrt q}\, (\log x)^{C}
\le \frac{x (\log x)^{C}}{(\log x)^{A + C}}
= \frac{x}{(\log x)^{A}} ,
\]
by the lower bound on $q$. Next,
\[
\sqrt{xq}\, (\log x)^{C}
\le x^{1/2} \cdot \frac{x^{1/2}}{(\log x)^{A + C}} \cdot
(\log x)^{C}
= \frac{x}{(\log x)^{A}} ,
\]
by the upper bound on $q$. Finally,
$x^{4/5} (\log x)^{C} \le x / (\log x)^{A}$ as soon as
$x^{1/5} \ge (\log x)^{A + C}$, which holds for
$x \ge x_0(a, b, A)$. Combining the three bounds with
\eqref{eq:main} gives
$\sum_{n \le x} d_{\pm a/b}(n) \e(n\alpha)
\ll_{a,b,A} x (\log x)^{-A}$, as claimed.
\end{proof}

\section{Proofs of the applications}\label{sec:applications}

\subsection{The Selberg-Delange input}\label{subsec:LSD}

Following \cite[Ch.~II.5]{Te}, we say that a Dirichlet series
$F(s) = \sum_{n \ge 1} a_n n^{-s}$, convergent for $\real s > 1$, has
\emph{property $\mathcal P(w; c_0, \delta, M)$} (where $w \in \C$ and
$c_0, \delta, M > 0$ with $\delta \le 1$) if the function
\[
G(s; w) := F(s)\, \zeta(s)^{-w}
\]
continues analytically to the region
$\sigma \ge 1 - c_0/(1 + \log^{+} |t|)$ (where $s = \sigma + it$ and
$\log^+ u = \max(0, \log u)$), and satisfies there the bound
$|G(s; w)| \le M (1 + |t|)^{1 - \delta}$.

\begin{proposition}[Selberg-Delange; {\cite[Ch.~II.5, Theorem~5.2]{Te}}]
\label{prop:LSD}
Let $w \in \C$ with $|w| \le 1$, and suppose
$F(s) = \sum_n a_n n^{-s}$, with $|a_n| \le 1$ for all $n$, has
property $\mathcal P(w; c_0, \delta, M)$. Then there are complex
numbers $\mu_j = \mu_j(F, w)$, $j \ge 0$ -- independent of the
truncation depth $J$ below -- with
\[
\mu_j = \frac{1}{\Gamma(w - j)} \cdot \kappa_j(F, w),
\qquad
\mu_0 = \frac{G(1; w)}{\Gamma(w)},
\qquad
|\kappa_j(F, w)| \ll_{c_0, \delta, j} M,
\]
such that, for every integer $J \ge 1$ and all $x \ge 3$,
\[
\sum_{n \le x} a_n
= x \sum_{j=0}^{J-1} \mu_j\, (\log x)^{w - 1 - j}
\;+\; O_{c_0, \delta, J}\big( M\, x\, (\log x)^{\real w - 1 - J} \big).
\]
Here $\kappa_j(F,w)$ is $1/j!$ times the $j$-th derivative at $s = 1$
of $G(s; w) \big( (s-1)\zeta(s) \big)^{w} / s$, a quantity determined
by finitely many derivatives of $G$ at $s = 1$.
\end{proposition}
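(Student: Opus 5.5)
The statement is quoted from Tenenbaum, so the plan is to recall the structure of the proof of \cite[Ch.~II.5, Theorem~5.2]{Te} and check that our normalization matches it. The right-hand side should be read with $\real w$ in the error exponent.

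\emph{Step 1: Perron's formula.} Since $|a_n|\le 1$, the truncated Perron formula with $T = \exp(\sqrt{\log x})$ gives
\[
\sum_{n\le x} a_n = \frac{1}{2\pi i}\int_{1+1/\log x - iT}^{1+1/\log x+iT} F(s)\,\frac{x^s}{s}\,ds + O\big(x(\log x)/T\big).
\]
The error $O(x(\log x)/T)$ is $\ll x(\log x)^{-A}$ for every $A$.

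\emph{Step 2: Hankel contour.} Write $F(s) = G(s;w)\zeta(s)^w$. This is analytic in the zero-free-type region of property $\mathcal P$, apart from the cut along $[1-c_0, 1]$.
\begin{itemize}
\item Deform the segment into a truncated Hankel contour $\mathcal H$ around $s=1$, of radius $1/(2\log x)$.
\item Join it by horizontal segments to the curve $\sigma = 1 - c_0/(1+\log^+|t|)$.
\end{itemize}
On the curve, $|G|\le M(1+|t|)^{1-\delta}$ and $|\zeta^w|\ll (\log |t|)^{O(1)}$, while $x^{\sigma}\le x\exp(-c\sqrt{\log x})$. This last bound is the reason for the choice of $T$. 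So the off-Hankel contribution is $O(Mx\exp(-c'\sqrt{\log x}))$, which is admissible.

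\emph{Step 3: the Hankel integral.} On $\mathcal H$, Taylor-expand the holomorphic function
\[
Z(s) := G(s;w)\big((s-1)\zeta(s)\big)^w/s = \sum_{j<J}\kappa_j (s-1)^j + O_J\big(M|s-1|^J\big).
\]
Cauchy estimates on a disc of fixed radius inside the region bound $\kappa_j \ll_{c_0,\delta,j} M$. The integrand becomes
\[
\sum_j \kappa_j (s-1)^{j-w}x^s + O\big(M|s-1|^{J-\real w}x^{\real s}\big).
\]
Hankel's formula
\[
\frac{1}{2\pi i}\int_{\mathcal H}(s-1)^{j-w}x^{s}\,ds = \frac{x(\log x)^{w-1-j}}{\Gamma(w-j)} + O\big(x\,e^{-c\sqrt{\log x}}\big)
\]
then produces the main terms with $\mu_j = \kappa_j/\Gamma(w-j)$. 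At $j=0$ we have $(s-1)\zeta(s)\to 1$, so $\mu_0 = G(1;w)/\Gamma(w)$. The $O$-term integrates, after the substitution $s-1 = u/\log x$, to $O(Mx(\log x)^{\real w - 1 - J})$. The $\mu_j$ come from the Taylor coefficients of a fixed function, so they do not depend on $J$.

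\emph{Main obstacle.} The delicate point is uniformity: every implied constant must depend only on $c_0,\delta,J$, and linearly on $M$, including in the bound for $\zeta(s)^w$ near the curve. Since the result is verbatim Tenenbaum's Theorem II.5.2 (with $|w|\le 1$ fixing the compact parameter range), I would cite it rather than reprove it. What actually needs checking is the bookkeeping of $\kappa_j$, namely Tenenbaum's $\gamma_j(w)$-type coefficients, against our normalization, which is carried out above.
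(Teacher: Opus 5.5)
Your overall plan matches the paper's: both rest on citing \cite[Ch.~II.5, Theorem~5.2]{Te}, with $\kappa_j\ll M$ from Cauchy's estimates and $\mu_j=\kappa_j/\Gamma(w-j)$ read off from Tenenbaum's coefficients via Leibniz's rule. Your bookkeeping of these coefficients is correct.

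However, the statement is not ``verbatim'' Tenenbaum's theorem, and the substantive step in the paper's proof is exactly the one you pass over. Theorem~II.5.2 is stated for series of \emph{type} $\mathcal T(z,w;c_0,\delta,M)$. This also requires a nonnegative majorant $b_n\ge|a_n|$ whose own Dirichlet series has property $\mathcal P$. The paper supplies this with $b_n=1$. The majorant is then $\zeta(s)$ with $G(s;1)\equiv1$, hence property $\mathcal P(1;c_0,\delta,1)$. So $F$ is of type $\mathcal T(w,1;c_0,\delta,\max(M,1))$, and the theorem applies with $A=1$, $N=J-1$, and $R_{J-1}(x)\ll_J(\log x)^{-J}$. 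Your Step~1, which runs Perron directly with $|a_n|\le1$, is the hands-on counterpart of this check. Your sketch therefore does not need the majorant, but a citation does. (In both versions the Perron truncation error does not scale with $M$, so strictly one gets $\max(M,1)$ in place of $M$, as the paper's own type indicates. This is harmless, since $M\ge1$ wherever the proposition is used.)

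If you keep the sketch, Step~2 fails as written for general $\delta$. With $T=\exp(\sqrt{\log x})$, the factor $x^{\sigma}$ on the curve saves only about $e^{-c_0\sqrt{\log x}}$ near height $T$. Meanwhile $\int_{|t|\le T}(1+|t|)^{-\delta}\,dt\asymp T^{1-\delta}=e^{(1-\delta)\sqrt{\log x}}$. So the curve contributes roughly $Mx\exp\bigl((1-\delta-c_0+o(1))\sqrt{\log x}\bigr)$, which is not small once $1-\delta>c_0$.

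Take instead $T=\exp(\sqrt{c_0\log x})$. Then $x^{\sigma}\le e\,x\,e^{-\sqrt{c_0\log x}}$ up to height $T$. The curve and the horizontal segments then contribute $\ll Mx\,e^{-\delta\sqrt{c_0\log x}}(\log x)^{O(1)}$; this is why Tenenbaum's constant $c_1$ depends on $\delta$.

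You should also first replace $c_0$ by $\min(c_0,c)$ for a small absolute $c$. This is legitimate, since property $\mathcal P(w;c_0,\delta,M)$ persists when $c_0$ decreases. It guarantees three things:
the curve lies in the zero-free region, where $\zeta(s)^{w}\ll(\log(|t|+3))^{O(1)}$;
the curve stays to the right of the pole of $1/s$ at $s=0$;
the Taylor remainder $O_J(M|s-1|^J)$ holds along the whole Hankel cut.

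With these adjustments your outline is a correct account of Tenenbaum's proof. In the paper's applications $\delta=1$, and there your choice of $T$ already works.
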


\begin{proof}
This is the Selberg-Delange theorem in the form given in
\cite[Ch.~II.5, Theorem~5.2]{Te} (third edition; in earlier
editions it is Theorem~3 of Ch.~II.5), whose statement already
records the required uniformity in \(M\). In the bounded range
\(|w|\le 1\) used here, the constants depend only on \(c_0\), \(\delta\),
and the order of expansion. See also
\cite[Appendix, Theorem~A.13]{CM} for a formulation adapted to
applications of this kind. The description of the coefficients is
\cite[Ch.~II.5, eqs.~(5.13)--(5.15)]{Te}.
With these conventions, the leading coefficient is
\(G(1; w)/\Gamma(w)\), and the lower-order coefficients are obtained
from the Taylor expansion recorded in the statement. The bound
\(|\kappa_j(F,w)|\ll_{c_0,\delta,j}M\) follows directly from Cauchy's
estimates on a fixed disc about \(s=1\): in that disc the factors
\(((s-1)\zeta(s))^w\) and \(1/s\) have derivatives bounded in terms
of \(j\), while the hypothesis gives \(G(s;w)\ll M\).
In this paper the proposition is applied only in the bounded-coefficient
range. Namely, it is applied to the coprime series
\[
  F_q(s)=\sum_{(n,q)=1}d_z(n)n^{-s}
\]
with real \(0<|z|<1\), and to the series \(\zeta(s)^{\pm a/b}\) with
\(0<a/b<1\). By Lemma~\ref{lem:dz}(b), the coefficients \(d_w(n)\)
satisfy \(|d_w(n)|\le1\) for \(-1\le w\le1\). Indeed, for
\(0<w\le1\),
\[
  d_w(p^\nu)=\prod_{j=1}^{\nu}\frac{w+j-1}{j},
\]
so \(0\le d_w(p^\nu)\le1\), and the case \(-1\le w<0\) is handled
there in the same prime-power calculation. Thus the hypothesis
\(|a_n|\le1\) is satisfied at every point where the proposition is
used. No bounded-coefficient assertion is intended here for positive
exponents \(w>1\), for which already \(d_w(p)=w\).
In this bounded-coefficient case the majorant hypothesis of
\cite[Ch.~II.5]{Te} is satisfied trivially: with \(b_n=1\) the
majorant series is \(\zeta(s)\), which has property
\(\mathcal P(1;c_0,\delta,1)\), its associated \(G(s;1)\) being
identically \(1\). Hence \(F\) is of type
\(\mathcal T(w,1;c_0,\delta,\max(M,1))\) in the terminology used
there, and the theorem applies with \(A=1\) and \(N=J-1\), whose
error term \(O(MR_{J-1}(x))\) is
\(O_{c_0,\delta,J}(M(\log x)^{-J})\) relative to the factor
\(x(\log x)^{w-1}\).
\end{proof}

\subsubsection*{A Siegel-Walfisz bound for character twists of $d_z$}

The second analytic input concerns non-principal characters. Note
that, by multiplicativity, for any Dirichlet character $\chi$,
\begin{equation}\label{eq:twist-series}
\sum_{n \ge 1} \frac{d_z(n) \chi(n)}{n^{s}}
= \prod_p \Big( 1 - \frac{\chi(p)}{p^{s}} \Big)^{-z}
=: L(s, \chi)^{z}
\qquad (\real s > 1),
\end{equation}
the Euler product defining the branch.

\begin{proposition}[Siegel-Walfisz for $d_z \chi$]\label{prop:SW}
Fix $z$ real with $0 < |z| \le 1$ and $A, B > 0$. Let
$q \le (\log x)^{B}$ and let $\chi$ be a non-principal character mod
$q$; recall from \eqref{eq:twist-series} that the Dirichlet
coefficients of $L(s, \chi)^{z}$ are $d_z(n) \chi(n)$. Then
\[
\sum_{n \le x} d_z(n) \chi(n) \ll_{z, A, B} \frac{x}{(\log x)^{A}} .
\]
The implied constant is ineffective.
\end{proposition}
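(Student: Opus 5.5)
We prove Proposition~\ref{prop:SW} by the standard contour-integration route behind the classical Siegel--Walfisz theorem, with $L(s,\chi)^z$ in place of $L'/L(s,\chi)$. Write $L(s,\chi)^z = \exp(z\log L(s,\chi))$, with the branch of $\log L(s,\chi)$ fixed by the Euler product \eqref{eq:twist-series} for $\real s>1$. The first step is the zero-free region. By the classical region for Dirichlet $L$-functions together with Siegel's theorem, for every $\varepsilon>0$ there is an ineffective $c(\varepsilon)>0$ such that $L(s,\chi)$ has no zeros in
\[
\sigma \ge 1 - \frac{c_1}{\log(q(|t|+2))}
\quad\text{and}\quad
\sigma \ge 1 - c(\varepsilon)q^{-\varepsilon}.
\]
For $q\le(\log x)^B$ we take $\varepsilon = 1/(2B)$. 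Then the region contains $\sigma\ge 1-c_2/\big((\log x)^{1/2}+\log(|t|+2)\big)$, since $q^{\varepsilon}\le(\log x)^{1/2}$ and $\log q\ll\log\log x$. In this region $\log L(s,\chi)$ continues analytically, so $L(s,\chi)^z$ is holomorphic there, including across $s=1$, because $\chi$ is non-principal.

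The second step is a pointwise bound. Inside a slightly smaller region of the same shape, the standard Borel--Carath\'eodory argument (as in the proof of the zero-free region) gives $|\log L(s,\chi)|\le \log\log(q(|t|+3))+O(1)$, or even the cruder $\ll \log(q(|t|+3))$. Since $|z|\le1$, this yields $|L(s,\chi)^z|\ll (\log(q(|t|+3)))^{O(1)}$ there; a polynomial-in-$\log$ bound is all that is needed.

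The third step is a truncated Perron formula. Using $|d_z(n)\chi(n)|\le1$ (Lemma~\ref{lem:dz}(b)),
\[
\sum_{n\le x} d_z(n)\chi(n) = \frac{1}{2\pi i}\int_{\kappa-iT}^{\kappa+iT} L(s,\chi)^z\frac{x^s}{s}\,ds + O\Big(\frac{x\log x}{T}\Big),
\qquad \kappa = 1+\frac{1}{\log x}.
\]
We shift the contour to $\sigma_0(t)=1-c_3/\big((\log x)^{1/2}+\log(|t|+2)\big)$. No singularities are crossed. The horizontal segments contribute $\ll x(\log xT)^{O(1)}/T$. The left edge contributes $\ll x\exp\!\big(-c_3\log x/((\log x)^{1/2}+\log T)\big)(\log xT)^{O(1)}$. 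Choosing $T=\exp((\log x)^{1/2})$ makes everything $\ll x\exp(-c\sqrt{\log x})$, which is $\ll_A x(\log x)^{-A}$.

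The main obstacle is the first two steps: making the region and the bound for $\log L$ uniform in $q\le(\log x)^B$, and handling a possible exceptional real zero. Siegel's theorem disposes of the exceptional zero, at the cost of ineffectivity, exactly as in the classical case. I would carry out the bound for $\log L$ by applying Borel--Carath\'eodory to $\log L$ on discs centred at $1+\eta+it$. The input is the trivial estimate $|\log L(1+\eta+it,\chi)|\le\log\zeta(1+\eta)$ together with the convexity bound $|L(s,\chi)|\ll q|t|+2$ near $\sigma=1$, and the discs stay inside the zero-free region. Alternatively, one can avoid explicit pointwise bounds by invoking the Selberg--Delange machinery of Proposition~\ref{prop:LSD} with $w=0$ and $F(s)=L(s,\chi)^z$. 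This also needs property $\mathcal P$ on a region of width only $(\log x)^{-1/2}$, so the direct Perron argument is cleaner.
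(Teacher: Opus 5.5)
Your overall route is the paper's. Both use a zero-free region of width $\asymp(\log x)^{-1/2}$ from the classical region plus Siegel's theorem, a polylogarithmic bound for $L(s,\chi)^z$ there, truncated Perron at $\kappa=1+1/\log x$ with $T=\exp(\sqrt{\log x})$, and a contour shift. The difference lies in the pointwise bound. The paper passes to the primitive $\chi^*$ and bounds $L(s,\chi^*)^{\pm1}$ by Montgomery--Vaughan's Theorem~11.4. It treats the case $|s-\beta_1|<1/\log q^*$ separately via their (11.10) and Siegel, and bounds the finite Euler factors by hand (via Mertens when $z<0$). Applying Borel--Carath\'eodory directly to $\log L(s,\chi)$ would avoid both the reduction and the case split. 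Those finite factors do not vanish for $\sigma>0$, and the disc argument needs only nonvanishing, the value at the centre, and an upper bound for $|L|$. However, your Step~2 as written does not deliver the bound you need, for two reasons.

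\emph{The claimed estimate fails near a Siegel zero.} The bound $|\log L(s,\chi)|\le\log\log(q(|t|+3))+O(1)$ does not hold throughout your region. Near a possible exceptional zero one has $|s-\beta_1|\ll|L(s,\chi)|\ll|s-\beta_1|(\log q)^2$. Siegel with $\varepsilon=1/(2B)$ only guarantees $\sigma-\beta_1\gg(\log x)^{-1/2}$ there. So $|\log L|$ can be about $\tfrac12\log\log x$, far more than $\log\log q\ll\log\log\log x$. This is exactly the case Theorem~11.4 excludes by its hypothesis $|s-\beta_1|\ge1/\log q$, and which the paper handles separately.

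\emph{The fallback gives a power of $|t|$, not of $\log$.} Your fallback $|\log L|\ll\log(q(|t|+3))$ is also all that Borel--Carath\'eodory returns from your stated input $|L(s,\chi)|\ll q|t|+2$. It gives only $|L(s,\chi)^z|\ll(q(|t|+3))^{C}$ with $C$ of size about $4|z|$. With $T=\exp(\sqrt{\log x})$, the horizontal segments and left edge would then be bounded only by roughly $x\,T^{C-1}$ and $x\exp\big((C-c_3/2)\sqrt{\log x}\big)$, which do not prove the proposition.

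\emph{The repair stays within your method.}
\begin{itemize}
\item For the upper bound, use the standard $|L(s,\chi)|\ll\log(q(|t|+2))$ for $\sigma\ge1-1/\log(q(|t|+2))$. It follows by partial summation for every non-principal $\chi$ mod $q$, primitive or not.
\item Put $w=w(t)=c/(\sqrt{\log x}+\log(|t|+2))$, with $c$ small enough that $\sigma\ge1-2w$ lies both in your zero-free region and in the range of that upper bound.
\item Apply Borel--Carath\'eodory on the disc of radius $3w$ about $1+w+it$, evaluated on the concentric disc of radius $2w$. That smaller disc contains every $\sigma+it$ with $1-w\le\sigma\le1+w$; larger $\sigma$ are trivial.
\end{itemize}
Since $|\log L(1+w+it,\chi)|\le\log\zeta(1+w)$, this yields
$|\log L(s,\chi)|\le5\log\zeta(1+w)+4\log\log(q(|t|+2))+O(1)\ll\log\log x$
for $|t|\le T$. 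The term $\log\zeta(1+w)\approx\log(1/w)\approx\tfrac12\log\log x$ is precisely what absorbs the possible Siegel zero. Hence $L(s,\chi)^{\pm z}\ll(\log x)^{O(1)}$ on any contour with $\sigma_0(t)\ge1-w(t)$, and your Step~3 then goes through as written.
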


\begin{proof}
The proof, by the classical contour method, is given in
Appendix~\ref{app:SW}. A more general estimate, a Siegel-Walfisz
theorem for the coefficients of products
$\prod_\psi L(s, \psi)^{\alpha_\psi}$, follows from the work of
Singha Roy~\cite{SR}. In the imprimitive case, one first reduces to
the primitive character inducing \(\chi\) and treats the finitely many
Euler factors at primes dividing \(q\), as in Appendix~\ref{app:SW}.
Treatments of this kind go back to Scourfield~\cite{Sc1, Sc2}; the
underlying Selberg-Delange machinery is in \cite[Ch.~II.5]{Te} and
\cite[Ch.~13]{K}, with \cite{CM} providing a version with additional
uniformity.
\end{proof}

\subsection{The local factors: proof of Theorem~\ref{thm:local}}
\label{sec:local}

Throughout this section $z$ is a fixed real number with
$0 < |z| < 1$, $q \ge 1$ is an arbitrary integer, and $r$ is an
integer with $(r, q) = 1$; no upper bound on $q$ is assumed here. All
implied constants may depend on $z$.

\subsubsection*{The local Dirichlet series}

\begin{definition}\label{def:local-series}
For a prime power $p^k$ ($k \ge 1$) and $\real s > 0$, set
\[
\sigma_{p^k}(s) := \sum_{j \ge 0}
\frac{d_z(p^j)\, c_{p^k}(p^j)}{p^{js}},
\qquad
\Sigma_q(s) := \prod_{p^k \,\|\, q} \sigma_{p^k}(s)
\quad (\Sigma_1(s) := 1).
\]
\end{definition}

\begin{lemma}\label{lem:Sigma}
Let $q \ge 1$ and $\real s > 0$. Then:
\begin{enumerate}
\item[(i)] each $\sigma_{p^k}(s)$ converges absolutely and is
holomorphic in $\real s > 0$, and
\begin{equation}\label{eq:Sigma-identity}
\Sigma_q(s) = \sum_{\rad(a) \mid q} \frac{d_z(a)\, c_q(a)}{a^{s}},
\end{equation}
the series converging absolutely;
\item[(ii)] using Lemma~\ref{lem:ramanujan}(ii),
\begin{equation}\label{eq:sigma-eval}
\sigma_{p^k}(s) = \varphi(p^k) \sum_{j \ge k}
\frac{d_z(p^j)}{p^{js}} \;-\; p^{k-1}\,
\frac{d_z(p^{k-1})}{p^{(k-1)s}} ;
\end{equation}
\item[(iii)] for $0 < \rho \le 1/4$ and $|s - 1| \le \rho$,
\[
|\sigma_{p^k}(s)| \le 4\, p^{k\rho},
\qquad \text{hence} \qquad
|\Sigma_q(s)| \le 4^{\omega(q)}\, q^{\rho} ;
\]
\item[(iv)] for every integer $i \ge 0$ and $0 < \rho \le 1/4$,
\[
\big| \Sigma_q^{(i)}(1) \big|
\le i!\, \rho^{-i}\, 4^{\omega(q)}\, q^{\rho},
\qquad \text{and} \qquad
\Sigma_q^{(i)}(1) = (-1)^i \sum_{\rad(a) \mid q}
\frac{d_z(a)\, c_q(a)\, (\log a)^{i}}{a} .
\]
\end{enumerate}
\end{lemma}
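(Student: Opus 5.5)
We take the four parts in order; everything reduces to the prime-power evaluation of Lemma~\ref{lem:ramanujan}(ii) and the bound $|d_z(p^j)|\le 1$ of Lemma~\ref{lem:dz}(b).

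For (i), write $\sigma=\real s>0$. The terms of $\sigma_{p^k}(s)$ satisfy $|d_z(p^j)c_{p^k}(p^j)p^{-js}|\le \varphi(p^k)p^{-j\sigma}$, using $|c_{p^k}(m)|\le\varphi(p^k)$ (trivial from \eqref{eq:ramanujan-def}). So the series converges absolutely, locally uniformly in $\real s>0$, and is holomorphic there by Weierstrass. For \eqref{eq:Sigma-identity}, expand the finite product over $p^k\,\|\,q$. Each $a$ with $\rad(a)\mid q$ factors uniquely as $a=\prod_{p\mid q}p^{j_p}$. Then $d_z(a)=\prod d_z(p^{j_p})$ by multiplicativity. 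Also $c_q(a)=\prod_{p^k\|q}c_{p^k}(a)$ by Lemma~\ref{lem:cq-mult}, and $c_{p^k}(a)=c_{p^k}(p^{j_p})$ since it depends only on $(p^k,a)$ (Lemma~\ref{lem:ramanujan}(i)). Absolute convergence of the product series justifies the rearrangement.

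Part (ii) is immediate from Lemma~\ref{lem:ramanujan}(ii) with $(p^k,p^j)=p^{\min(j,k)}$. Terms with $j\ge k$ carry $\varphi(p^k)$, the term $j=k-1$ carries $-p^{k-1}$, and terms with $j\le k-2$ vanish.

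For (iii), take $|s-1|\le\rho\le 1/4$, so $\real s\ge 1-\rho\ge 3/4$. Bound the two pieces of \eqref{eq:sigma-eval} using $|d_z|\le1$. The tail is at most
\[
\varphi(p^k)\sum_{j\ge k}p^{-j(1-\rho)}=\varphi(p^k)\,\frac{p^{-k(1-\rho)}}{1-p^{-(1-\rho)}}\le p^{k\rho}\,\frac{1-1/p}{1-p^{-3/4}},
\]
and the last fraction is at most $2$. The second term has modulus at most $p^{k-1}p^{-(k-1)(1-\rho)}=p^{(k-1)\rho}\le p^{k\rho}$. Hence $|\sigma_{p^k}(s)|\le 3p^{k\rho}\le 4p^{k\rho}$. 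Multiplying over the $\omega(q)$ prime powers exactly dividing $q$ gives $|\Sigma_q(s)|\le 4^{\omega(q)}q^{\rho}$.

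For (iv), the derivative bound is Cauchy's estimate on the circle $|s-1|=\rho$, which lies in $\real s>0$, using the bound from (iii). The series formula comes from differentiating \eqref{eq:Sigma-identity} termwise $i$ times at $s=1$, which produces the factor $(-\log a)^i$. Termwise differentiation is justified because the series converges absolutely and uniformly on compact subsets of $\real s>0$: it is dominated by $\sum_{\rad(a)\mid q} q\,a^{-\sigma}$, finite by Lemma~\ref{lem:rad-sums}(i). So the sum is holomorphic and derivatives pass inside.

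The only delicate step is the numerical constant in (iii), specifically the geometric tail at $p=2$. The check $(1-1/2)/(1-2^{-3/4})\approx 1.21\le 2$ settles it.
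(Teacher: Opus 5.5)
Your proposal is correct and follows the paper's proof essentially step for step: the prime-power evaluation via Lemma~\ref{lem:ramanujan}(ii), termwise bounds using $|d_z|\le 1$, Cauchy's estimate on $|s-1|=\rho$ fed by (iii), and termwise differentiation justified by locally uniform absolute convergence. The differences are cosmetic. You bound $|c_{p^k}(m)|$ by $\varphi(p^k)$ rather than $p^k$, which yields the constant $3$ instead of the paper's $(1-2^{-3/4})^{-1}+1$. Your check at $p=2$ does not by itself cover all $p$, but the crude bound $(1-1/p)/(1-p^{-3/4})\le (1-2^{-3/4})^{-1}<2.5$, valid for every prime $p$, already keeps the total below $4$.
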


\begin{proof}
(i) Absolute convergence: $|d_z(p^j)| \le 1$
(Lemma~\ref{lem:dz}(b)) and $|c_{p^k}(p^j)| \le p^{k}$ trivially
from \eqref{eq:ramanujan-def}, so the series is dominated by
$p^{k} \sum_j p^{-j \real s}$; holomorphy follows by local uniformity.
For \eqref{eq:Sigma-identity}: each $a$ with $\rad(a) \mid q$ factors
uniquely as $a = \prod_{p \mid q} p^{j_p}$; then
$d_z(a) = \prod_p d_z(p^{j_p})$ by multiplicativity, and
$c_q(a) = \prod_{p^k \| q} c_{p^k}(a) = \prod_{p^k \| q}
c_{p^k}(p^{j_p})$, using Lemma~\ref{lem:cq-mult} and then the fact
that $c_{p^k}(a)$ depends only on $(p^k, a) = (p^k, p^{j_p})$
(Lemma~\ref{lem:ramanujan}(i)). Expanding the product of the
absolutely convergent series $\sigma_{p^k}(s)$ and rearranging gives
\eqref{eq:Sigma-identity}.

(ii) Immediate from Lemma~\ref{lem:ramanujan}(ii): the terms with
$j \le k - 2$ vanish, the term $j = k - 1$ contributes
$-p^{k-1} d_z(p^{k-1}) p^{-(k-1)s}$, and the terms $j \ge k$ carry the
factor $\varphi(p^k)$.

(iii) Let $|s - 1| \le \rho \le 1/4$, so $\real s \ge 3/4$. By
\eqref{eq:sigma-eval}, Lemma~\ref{lem:dz}(b), and
$\varphi(p^k) \le p^{k}$,
\[
|\sigma_{p^k}(s)|
\le p^{k} \sum_{j \ge k} p^{-j(1 - \rho)} + p^{k - 1}\,
p^{-(k-1)(1 - \rho)}
= \frac{p^{k\rho}}{1 - p^{-(1-\rho)}} + p^{(k-1)\rho}
\le \Big( \frac{1}{1 - 2^{-3/4}} + 1 \Big) p^{k\rho}
\le 4\, p^{k\rho},
\]
since $1 - \rho \ge 3/4$ and $(1 - 2^{-3/4})^{-1} < 2.5$. Taking the
product over $p^k \,\|\, q$ and using $\prod p^{k\rho} = q^{\rho}$
gives the second bound.

(iv) The first claim is Cauchy's estimate for the $i$-th derivative
on the disc $|s - 1| \le \rho$, using (iii). The second claim follows
by differentiating \eqref{eq:Sigma-identity} term by term $i$ times at
$s = 1$; this is justified because the series
\eqref{eq:Sigma-identity} converges absolutely, uniformly on compact
subsets of $\real s > 1/2$ (dominate by
$q^2 \sum_{\rad(a) \mid q} a^{-1/2}$, finite by
Lemma~\ref{lem:rad-sums}(i), after noting $|c_q(a)| \le
(q,a)\tau((q,a)) \le q \tau(q) \le q^2$ from
Lemma~\ref{lem:ramanujan}(iii)).
\end{proof}

\subsubsection*{The reduction to character sums}

\begin{lemma}
\label{lem:reduction}
Let $x \ge 3$, $q \ge 1$, $(r, q) = 1$. Then
\begin{equation}\label{eq:reduction}
\sum_{n \le x} d_z(n) \e\Big( \frac{rn}{q} \Big)
=
\frac{1}{\varphi(q)} \sum_{\chi \bmod q}
\sum_{\substack{a \le x \\ \rad(a) \mid q}}
d_z(a)\, \tau(\chi, ra)\, S_{\overline{\chi}}\Big( \frac{x}{a} \Big),
\end{equation}
where $\tau(\chi, m)$ is as in Lemma~\ref{lem:fourier} and
\[
S_{\psi}(y) := \sum_{b \le y} d_z(b)\, \psi(b)
\qquad (\text{note } \psi(b) = 0 \text{ unless } (b, q) = 1).
\]
Moreover, $\tau(\chi_0, ra) = c_q(ra) = c_q(a)$.
\end{lemma}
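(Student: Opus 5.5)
The plan is to factor each $n \le x$ uniquely as $n = ab$, with $\rad(a) \mid q$ and $(b, q) = 1$. Here $a$ is the $q$-smooth part $\prod_{p \mid q} p^{v_p(n)}$, and $b$ is the part of $n$ coprime to $q$. Since $(a,b)=1$, multiplicativity gives $d_z(n) = d_z(a) d_z(b)$. Thus
\[
\sum_{n \le x} d_z(n)\e\Big(\frac{rn}{q}\Big)
= \sum_{\substack{a \le x \\ \rad(a)\mid q}} d_z(a) \sum_{\substack{b \le x/a \\ (b,q)=1}} d_z(b)\,\e\Big(\frac{(ra)b}{q}\Big),
\]
and this is a finite rearrangement because all sums are finite.

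Next, fix $a$ and apply Lemma~\ref{lem:fourier} with $m = ra$ and the integer $b$ coprime to $q$. This gives
\[
\e\Big(\frac{rab}{q}\Big) = \frac{1}{\varphi(q)}\sum_{\chi \bmod q}\overline{\chi}(b)\,\tau(\chi, ra).
\]
Insert this expansion and interchange the finite sums over $\chi$ and $b$. Since $\overline{\chi}(b) = 0$ when $(b,q)>1$, the coprimality condition may be dropped. The inner sum is then exactly $S_{\overline{\chi}}(x/a)$, which yields \eqref{eq:reduction}.

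For the final claim, Lemma~\ref{lem:fourier} gives $\tau(\chi_0, ra) = c_q(ra)$. By Lemma~\ref{lem:ramanujan}(i), $c_q(m)$ depends only on $(q,m)$. Since $(r,q)=1$, we have $(q, ra) = (q,a)$, and hence $c_q(ra) = c_q(a)$.

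There is no real obstacle here. The only points needing care are the uniqueness of the factorization $n=ab$ and the justification for dropping the coprimality condition via $\chi(b)=0$. The latter holds because every character modulo $q$, including $\chi_0$, vanishes off the reduced residues.
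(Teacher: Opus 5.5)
Your proposal is correct and follows the paper's proof essentially step for step. It uses the same factorization $n = ab$ into the $q$-part and the part coprime to $q$, the same application of Lemma~\ref{lem:fourier} with $m = ra$ with the coprimality condition absorbed into $\overline{\chi}(b)$, and the same appeal to Lemma~\ref{lem:ramanujan}(i) via $(q, ra) = (q, a)$ for the final claim.
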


\begin{proof}
Decompose each $n \le x$ uniquely as $n = ab$ with
$\rad(a) \mid q$ and $(b, q) = 1$ (the $q$-part decomposition of
Section~\ref{subsec:notation}); then $d_z(n) = d_z(a) d_z(b)$ since
$(a, b) = 1$. Hence
\[
\sum_{n \le x} d_z(n) \e\Big( \frac{rn}{q} \Big)
= \sum_{\substack{a \le x \\ \rad(a) \mid q}} d_z(a)
\sum_{\substack{b \le x/a \\ (b, q) = 1}} d_z(b)\,
\e\Big( \frac{(ra) b}{q} \Big).
\]
For each $b$ in the inner sum, $(b, q) = 1$, so
Lemma~\ref{lem:fourier} with $m = ra$ gives
\[
\e( rab/q ) = \frac{1}{\varphi(q)} \sum_{\chi} \overline{\chi}(b)\,
\tau(\chi, ra).
\]
Substituting and interchanging the finite sums
yields \eqref{eq:reduction}, the coprimality condition on $b$ being
absorbed into $\overline\chi(b)$. Finally,
$\tau(\chi_0, ra) = c_q(ra)$ by Lemma~\ref{lem:fourier}, and
$c_q(ra) = c_q(a)$ because $c_q(m)$ depends only on $(q, m)$
(Lemma~\ref{lem:ramanujan}(i)) and $(q, ra) = (q, a)$ since
$(r, q) = 1$.
\end{proof}

\begin{remark}\label{rem:r-independence}
The last assertion of Lemma~\ref{lem:reduction} is the source of the
$r$-independence of the main term in Theorem~\ref{thm:major}: the
numerator $r$ survives only inside the non-principal Gauss sums
$\tau(\chi, ra)$, which will be estimated trivially and absorbed into
the error term.
\end{remark}

\subsubsection*{Proof of Theorem~\ref{thm:local}}

By \eqref{eq:Sigma-identity} at $s = 1$, the series in
\eqref{eq:g-def} converges absolutely and
\begin{equation}\label{eq:g-Sigma}
\g_z(q)
= \Big( \prod_{p \mid q} \Big( 1 - \frac1p \Big)^{z} \Big)
\frac{\Sigma_q(1)}{\varphi(q)}
= \prod_{p^k \,\|\, q} \gamma_{p^k},
\qquad
\gamma_{p^k} :=
\Big( 1 - \frac1p \Big)^{z}\, \frac{\sigma_{p^k}(1)}{\varphi(p^k)},
\end{equation}
where the factorization uses Definition~\ref{def:local-series} and the
multiplicativity of $\varphi$ and of $q \mapsto \prod_{p \mid q}(1 -
1/p)^z$. This exhibits $\g_z$ as multiplicative, with $\g_z(1) = 1$.

Next we evaluate $\gamma_{p^k}$. By \eqref{eq:sigma-eval} at $s = 1$,
\begin{equation}\label{eq:gamma-pk}
\gamma_{p^k}
= \Big( 1 - \frac1p \Big)^{z}
\bigg[ \sum_{j \ge k} \frac{d_z(p^{j})}{p^{j}}
\;-\; \frac{d_z(p^{k-1})}{\varphi(p^k)} \bigg]
\qquad (k \ge 1).
\end{equation}
For $k = 1$ this gives, using
$\sum_{j \ge 0} d_z(p^j) p^{-j} = (1 - 1/p)^{-z}$ (the defining Euler
factor, an absolutely convergent binomial series by
Lemma~\ref{lem:binom}(i)) and $\varphi(p) = p - 1$,
\[
\gamma_p
= \Big( 1 - \frac1p \Big)^{z}
\bigg[ \Big( 1 - \frac1p \Big)^{-z} - 1 - \frac{1}{p - 1} \bigg]
= 1 - \Big( 1 - \frac1p \Big)^{z} \cdot \frac{p}{p - 1}
= 1 - \Big( 1 - \frac1p \Big)^{z - 1},
\]
which is \eqref{eq:gamma-p}.

Finally, the decay. For every $k \ge 1$, from \eqref{eq:gamma-pk},
$|d_z| \le 1$, and $|(1 - 1/p)^{z}| \le (1 - 1/p)^{-1} \le 2$ (valid
for $-1 < z < 1$),
\[
|\gamma_{p^k}|
\le 2 \bigg[ \sum_{j \ge k} p^{-j}
+ \frac{1}{p^{k-1}(p - 1)} \bigg]
\le 2 \bigg[ \frac{2}{p^{k}} + \frac{2}{p^{k}} \bigg]
= \frac{8}{p^{k}},
\]
using $\sum_{j \ge k} p^{-j} = p^{-k} (1 - 1/p)^{-1} \le 2 p^{-k}$
for the first term and $p^{k-1}(p - 1) \ge p^{k}/2$ for the second.
Hence, by \eqref{eq:g-Sigma} and Lemma~\ref{lem:rad-sums}(iii),
\[
|\g_z(q)| \le \prod_{p^k \,\|\, q} \frac{8}{p^{k}}
= \frac{8^{\omega(q)}}{q}
\ll_{\varepsilon} q^{-1 + \varepsilon},
\]
for every $\varepsilon > 0$. This completes the proof of
Theorem~\ref{thm:local}. \qed

\begin{remark}[Consistency checks]\label{rem:checks}
The following two observations are purely illustrative and are used
nowhere in the paper.

(i) \emph{The case $z = 1$.} Here $d_1 = \ID$ and
$\sum_{n \le x} \e(rn/q) = O(1)$ for $q > 1$, so every main-term
coefficient must vanish. Indeed \eqref{eq:gamma-pk} gives, for every
$k \ge 1$,
\[
\gamma_{p^k}\big|_{z=1}
= \Big( 1 - \frac1p \Big)
\bigg[ \sum_{j \ge k} p^{-j} - \frac{1}{p^{k-1}(p-1)} \bigg]
= \Big( 1 - \frac1p \Big)
\bigg[ \frac{p^{1-k}}{p - 1} - \frac{p^{1-k}}{p - 1} \bigg]
= 0,
\]
so $\g_1(q) = 0$ for all $q > 1$: the entire local structure
annihilates the main term, prime power by prime power.

(ii) \emph{The case $z = 2$.} Here $d_2 = \tau$ and
\eqref{eq:gamma-p} gives $\gamma_p|_{z=2} = 1 - (1 - 1/p) = 1/p$. For
$q = p$ prime, the predicted leading term of
$\sum_{n \le x} \tau(n) \e(rn/p)$ is therefore
$\g_2(p)\, x (\log x)^{2-1}/\Gamma(2) = x \log x / p$. This matches
the classical asymptotic for this sum, whose leading term
$(x/q) \log x$ goes back to Estermann and is visible already in the
hyperbola computation
$\sum_{n \le x} \tau(n)\e(rn/q) = \sum_{d \le x} \sum_{m \le x/d}
\e(rdm/q)$: the terms with $q \mid d$ contribute
$\sum_{d \le x,\, q \mid d} \lfloor x/d \rfloor \sim (x/q)\log x$.
The formula for the local Euler factors continues algebraically to
this integer value, although the estimates of this section were
proved only in the range $0 < |z| < 1$. (We do not claim, and do not
use, that the remaining terms are of
lower order; for $z = 2$ this is classical, and for the range of $z$
treated in this paper the full claim is exactly
Theorem~\ref{thm:major}.)
\end{remark}


\subsection{Proof of Theorem~\ref{thm:major}}
\label{sec:proof-major}

Throughout this section $z$ is fixed with $0 < |z| < 1$, and $B > 0$
and the integer $J \ge 1$ are fixed; implied constants may depend on
$z, B, J$ and, where indicated, on $\varepsilon$. We write
\[
T(x) = T(x; r, q) := \sum_{n \le x} d_z(n) \e\Big( \frac{rn}{q} \Big).
\]

\subsubsection*{A preparatory lemma}

\begin{lemma}
\label{lem:propertyP}
Let $q \ge 1$ and let
\[
F_q(s) := \sum_{\substack{b \ge 1 \\ (b, q) = 1}}
\frac{d_z(b)}{b^{s}}
= \zeta(s)^{z}\, G_q(s),
\qquad
G_q(s) := \prod_{p \mid q} \big( 1 - p^{-s} \big)^{z}
\quad (\real s > 1).
\]
Then $F_q$ has property $\mathcal P(z; \tfrac14, 1, M_q)$ with
$M_q := 3^{\omega(q)}$, and $G_q(1) = \prod_{p \mid q}(1 - 1/p)^{z}$.
\end{lemma}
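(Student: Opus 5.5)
The proof splits into three tasks: the factorization $F_q=\zeta^z G_q$, the continuation of $G_q$, and the uniform bound by $M_q$.

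\emph{Factorization.} For $\real s>1$, multiplicativity of $d_z$ gives the Euler product
\[
F_q(s)=\prod_{p\nmid q}\sum_{j\ge0} d_z(p^j)p^{-js}=\prod_{p\nmid q}(1-p^{-s})^{-z}.
\]
The last equality is Lemma~\ref{lem:binom}(i) with $t=p^{-s}$, $|t|<1$. Dividing the full Euler product $\zeta(s)^z=\prod_p(1-p^{-s})^{-z}$ by the finitely many missing factors gives $F_q=\zeta^z G_q$. Here $\zeta^z$ is defined by the Euler-product branch, as in \eqref{eq:twist-series}. Consequently $G(s;z)=F_q(s)\zeta(s)^{-z}=G_q(s)$ identically in $\real s>1$. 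Evaluating at $s=1$ gives $G_q(1)=\prod_{p\mid q}(1-1/p)^z$ at once.

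\emph{Continuation.} Each factor $(1-p^{-s})^z=\exp(z\operatorname{Log}(1-p^{-s}))$ is holomorphic wherever $|p^{-s}|<1$, in particular for $\real s>1/2$. This follows from Lemma~\ref{lem:binom}(i), since $1-t$ lies in the right half-plane for $|t|<1$. The zero-free-type region $\sigma\ge 1-\tfrac14/(1+\log^+|t|)$ lies inside $\sigma\ge 3/4$. So $G_q$, a finite product, is holomorphic there and continues $G(s;z)$.

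\emph{Bound.} For $\sigma\ge3/4$ we have $|p^{-s}|\le 2^{-3/4}<0.6$. Writing $t=p^{-s}$,
\[
|(1-t)^z|=\exp\big(z\log|1-t|\big)\le \max\big((1-|t|)^{-|z|},(1+|t|)^{|z|}\big)\le \max(2.5,1.6)^{|z|}<3,
\]
using $|z|<1$. Multiplying over $p\mid q$ gives $|G_q(s)|\le 3^{\omega(q)}=M_q$. This equals $M_q(1+|t|)^{1-\delta}$ with $\delta=1$, so the property $\mathcal P(z;\tfrac14,1,M_q)$ holds.

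The only delicate point is the bookkeeping of branches. The branch of $\zeta^z$ implicit in property $\mathcal P$ must be the Euler-product one, so that $G(s;z)$ really coincides with the finite product $G_q$. With that convention the identity is exact, and no further obstacle arises.
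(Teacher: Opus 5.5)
Your proof is correct and follows the paper's argument essentially step for step: the Euler-product factorization with the Euler-product branch of $\zeta^z$, holomorphy of the finite product of principal powers on $\sigma \ge 3/4$, and the factorwise bound $|(1-p^{-s})^{z}| \le \max\big((1-2^{-3/4})^{-|z|},\,(1+2^{-3/4})^{|z|}\big) < 3$, which gives $|G_q(s)| \le 3^{\omega(q)}$ with $\delta = 1$. The only blemish is that in your last line you reuse $t$ for $\operatorname{Im} s$ after having set $t = p^{-s}$.
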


\begin{proof}
The Euler product identity is immediate from multiplicativity:
$F_q(s) = \prod_{p \nmid q} (1 - p^{-s})^{-z}$, and multiplying and
dividing by the factors at $p \mid q$ gives
$F_q = \zeta^{z} G_q$, where $\zeta(s)^{z}$ is defined by its Euler
product for $\real s > 1$ and by
$\exp( z \log \zeta(s) )$ in the standard zero-free region
thereafter, as in \cite[Ch.~II.5]{Te}.

The region $\sigma \ge 1 - \tfrac14 / (1 + \log^{+}|t|)$ is contained
in the half-plane $\sigma \ge 3/4$. There, for every prime $p$,
$|p^{-s}| = p^{-\sigma} \le 2^{-3/4} < 1$, so
$\real( 1 - p^{-s} ) \ge 1 - 2^{-3/4} > 0$ and the principal power
$(1 - p^{-s})^{z} = \exp( z \operatorname{Log}( 1 - p^{-s} ) )$ is
holomorphic; hence so is the finite product $G_q$. For real $z$ and
$w$ in the right half-plane, $|w^{z}| = |w|^{z}$, and
$|1 - p^{-s}| \in [ 1 - 2^{-3/4},\ 1 + 2^{-3/4} ]$, whence
\[
\big| ( 1 - p^{-s} )^{z} \big|
\le \max\Big( \big( 1 + 2^{-3/4} \big)^{|z|},\
\big( 1 - 2^{-3/4} \big)^{-|z|} \Big)
\le \big( 1 - 2^{-3/4} \big)^{-1} < 3,
\]
for $|z| \le 1$. Thus $|G_q(s)| \le 3^{\omega(q)} = M_q$ on the
region, which is the required bound with $\delta = 1$.
\end{proof}

\subsubsection*{The expansion at $\beta = 0$}

\begin{proposition}\label{prop:beta0}
Fix $z$, $B > 0$ and an integer $J \ge 1$. Define, for $j \ge 0$ and
$q \ge 1$,
\begin{equation}\label{eq:theta-def}
\theta_j(z, q) := \frac{1}{\varphi(q)}
\sum_{\substack{j' + i = j \\ j', i \ge 0}}
\mu_{j'}(q)\, \binom{z - 1 - j'}{i}\, (-1)^{i}\, m_i(q),
\end{equation}
where $\mu_{j'}(q) := \kappa_{j'}(F_q, z)/\Gamma(z - j')$ with
$\kappa_{j'}$ as in Proposition~\ref{prop:LSD} applied to the series
$F_q$ of Lemma~\ref{lem:propertyP}, and
$m_i(q) := (-1)^{i} \Sigma_q^{(i)}(1)$ as in
Lemma~\ref{lem:Sigma}(iv). Then:
\begin{enumerate}
\item[(i)] $\theta_0(z, q) = \g_z(q)/\Gamma(z)$, and
$|\theta_j(z, q)| \ll_{z, j, \varepsilon} q^{-1+\varepsilon}$ for
every $\varepsilon > 0$;
\item[(ii)] for all $x \ge 3$ and every reduced fraction $r/q$ with
$q \le (\log x)^{B}$,
\[
T(x; r, q)
= x \sum_{j = 0}^{J - 1} \theta_j(z, q)\, (\log x)^{z - 1 - j}
+ O_{z, B, J}\big( x (\log x)^{z - 1 - J} \big).
\]
\end{enumerate}
\end{proposition}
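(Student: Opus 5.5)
The plan is to start from the reduction formula of Lemma~\ref{lem:reduction} and split the character sum into the principal character and the non-principal ones.

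\emph{Non-principal characters.} For $\chi \ne \chi_0$ we use $|\tau(\chi, ra)| \le \varphi(q)$ from Lemma~\ref{lem:fourier}. For $a \le \sqrt{x}$, Proposition~\ref{prop:SW} applies at height $x/a \ge \sqrt x$, where $q \le (\log x)^B \ll (\log (x/a))^{B}$, and gives $S_{\overline\chi}(x/a) \ll x a^{-1} (\log x)^{-A'}$. For $a > \sqrt x$ we use the trivial bound $|S_{\overline\chi}(x/a)| \le x/a$ together with Lemma~\ref{lem:rad-sums}(ii), which gives a tail $\ll x\,4^{\omega(q)} x^{-1/4}$. Summing over the $\varphi(q)$ characters and over $a$ with $\rad(a)\mid q$ (Lemma~\ref{lem:rad-sums}(i), contributing $\prod_{p\mid q}(1-1/p)^{-1} \ll \log\log q$) gives a total $\ll \varphi(q)\,x(\log x)^{-A'+1}$. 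Taking $A'$ large in terms of $B$ and $J$ makes this $O(x(\log x)^{z-1-J})$. This part is also where the ineffectivity enters.

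\emph{Principal character.} Here $S_{\chi_0}(y)$ is the partial sum of the coefficients of $F_q$. By Lemma~\ref{lem:propertyP} and Proposition~\ref{prop:LSD} (with $M_q = 3^{\omega(q)}$), for $y \ge 3$,
\[
S_{\chi_0}(y) = y \sum_{j'<J} \mu_{j'}(q)(\log y)^{z-1-j'} + O\big(3^{\omega(q)} y(\log y)^{z-1-J}\big).
\]
Insert this for $a \le x^{1/2}$ and write $\log(x/a) = \log x\,(1-u)$ with $u = \log a/\log x \le 1/2$. Expand $(1-u)^{z-1-j'}$ to order $J-j'$ via Lemma~\ref{lem:taylor}. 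The terms $\binom{z-1-j'}{i}(-1)^i(\log a)^i(\log x)^{z-1-j'-i}$, summed against $d_z(a)c_q(a)/a$, then produce the completed sums $m_i(q) = \sum_{\rad(a)\mid q} d_z(a)c_q(a)(\log a)^i/a$ (Lemma~\ref{lem:Sigma}(iv)). The tails $a > x^{1/2}$ of these completed sums are bounded by dropping to the weight $a^{-3/4}$: we use $|c_q(a)| \le q^2$, or more carefully $|c_q(a)| \le (q,a)\tau(q)$, giving a saving of $x^{-1/8}$ times $q^{O(1)}$. The $a > x^{1/2}$ part of the original sum, and the cases $x/a < 3$, are handled trivially in the same way. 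Collecting the powers $(\log x)^{z-1-j}$ with $j = j'+i$ and dividing by $\varphi(q)$ gives exactly $\theta_j(z,q)$ from \eqref{eq:theta-def}.

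\emph{Error terms and part (i).} The Selberg--Delange error and the Taylor remainder $C_2 u^{J-j'}$ contribute
\[
\frac{3^{\omega(q)}}{\varphi(q)}\, x(\log x)^{z-1-J} \sum_{\rad(a)\mid q} \frac{|c_q(a)|\,(1+\log a)^{J}}{a},
\]
which we need to bound by $q^{O(\varepsilon)}$. For this we use Lemma~\ref{lem:Sigma}(iii)-style bounds on the absolute series, $\ll 4^{\omega(q)}q^{\rho}\rho^{-J}$. Since $q \le (\log x)^B$, the factor $q^{\varepsilon}$ is absorbed if we first expand to depth $J+1$ and discard one term; this uniformity in $q$ is the main obstacle. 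For part (i), the $j=0$ term is $\mu_0(q)m_0(q)/\varphi(q) = G_q(1)\Sigma_q(1)/(\Gamma(z)\varphi(q))$, which equals $\g_z(q)/\Gamma(z)$ by \eqref{eq:g-Sigma}. The bound on $\theta_j$ follows from $|\kappa_{j'}| \ll M_q$, the bound $|m_i(q)| \ll_\rho 4^{\omega(q)}q^{\rho}$ from Lemma~\ref{lem:Sigma}(iv), and $\varphi(q) \gg q^{1-\varepsilon}$, with $\rho$ small and Lemma~\ref{lem:rad-sums}(iii).
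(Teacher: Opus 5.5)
Your proposal is correct and follows essentially the same route as the paper. That route is the reduction lemma, Siegel--Walfisz for the non-principal characters, and, for the principal character, Selberg--Delange followed by a Taylor expansion of $(1-u)^{z-1-j'}$ and completion of the moments $m_i(q)$, with part (i) read off from the $(j',i)=(0,0)$ term.

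The only differences are bookkeeping. The paper cuts at $\exp(\sqrt{\log x})$ instead of $\sqrt{x}$. It also uses the crude bound $|c_q(a)|\le q^2$ and compensates by over-expanding, to depth $J+\lceil 2B\rceil+2$ in Selberg--Delange and twice that in Taylor. Your Lemma~\ref{lem:Sigma}(iii)-type bound on the absolute local series already gives $q^{-1+O(\varepsilon)}$ after dividing by $\varphi(q)$, so the extra depth-$(J+1)$ step you propose is not actually needed.
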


\begin{proof}
\emph{Part (i).} By Proposition~\ref{prop:LSD},
$\mu_0(q) = G_q(1)/\Gamma(z)$, and by Lemma~\ref{lem:Sigma}(iv) at
$i = 0$, $m_0(q) = \Sigma_q(1)$. Hence the single term
$(j', i) = (0, 0)$ of \eqref{eq:theta-def} gives
\[
\theta_0(z, q)
= \frac{1}{\varphi(q)} \cdot \frac{G_q(1)}{\Gamma(z)} \cdot \Sigma_q(1)
= \frac{1}{\Gamma(z)} \Big( \prod_{p \mid q} \Big(1 - \frac1p\Big)^{z}
\Big) \frac{\Sigma_q(1)}{\varphi(q)}
= \frac{\g_z(q)}{\Gamma(z)},
\]
by \eqref{eq:g-Sigma}. For the size bound, fix $\varepsilon > 0$ and
apply, with $\rho := \varepsilon$ (we may assume
$\varepsilon \le 1/4$): $|\kappa_{j'}| \ll_{j'} M_q = 3^{\omega(q)}$
and $|1/\Gamma(z - j')| \ll_{j'} 1$ (the function $1/\Gamma$ is
entire), so $|\mu_{j'}(q)| \ll_{j'} 3^{\omega(q)}$;
$|\binom{z - 1 - j'}{i}| \ll_{j} 1$ by Lemma~\ref{lem:taylor}; and
$|m_i(q)| \le i!\, \varepsilon^{-i}\, 4^{\omega(q)} q^{\varepsilon}$
by Lemma~\ref{lem:Sigma}(iv). Finally
$1/\varphi(q) = (1/q) \prod_{p \mid q} (1 - 1/p)^{-1} \le
2^{\omega(q)}/q$. Multiplying, and using
Lemma~\ref{lem:rad-sums}(iii) to bound $24^{\omega(q)} \ll_\varepsilon
q^{\varepsilon}$, we obtain
$|\theta_j| \ll_{z, j, \varepsilon} q^{-1 + 3\varepsilon}$; renaming
$3\varepsilon$ as $\varepsilon$ gives (i).

\emph{Part (ii).} Set
\[
x_1 := \exp\big( \sqrt{\log x} \big),
\qquad
J_1 := J + \lceil 2B \rceil + 2,
\qquad
I := 2 J_1,
\qquad
A' := J + 2 + 2B .
\]
We may assume $x \ge x_0(z, B, J)$ for a suitably large threshold:
for $3 \le x < x_0$, the trivial bound $|T(x)| \le x$
(Lemma~\ref{lem:dz}(b)) and the lower bound
$(\log x)^{z - 1 - J} \ge (\log x_0)^{z - 1 - J} \gg_{z, B, J} 1$
make (ii) vacuous. In particular we assume $x_1 \le \sqrt x$ and
$\sqrt{\log x} \ge 4$.

By the reduction lemma (Lemma~\ref{lem:reduction}),
\begin{equation}\label{eq:T-split}
T(x) = \frac{1}{\varphi(q)}
\sum_{\substack{a \le x \\ \rad(a) \mid q}} d_z(a)\, c_q(a)\,
S_{\chi_0}\Big( \frac xa \Big)
\;+\;
\frac{1}{\varphi(q)} \sum_{\chi \ne \chi_0}
\sum_{\substack{a \le x \\ \rad(a) \mid q}} d_z(a)\,
\tau(\chi, ra)\, S_{\overline\chi}\Big( \frac xa \Big)
=: T_{0} + T_{1}.
\end{equation}
Throughout we use $|d_z(a)| \le 1$ (Lemma~\ref{lem:dz}(b)),
$|c_q(a)| \le q \tau(q) \le q^{2}$
(Lemma~\ref{lem:ramanujan}(iii)), $|\tau(\chi, ra)| \le \varphi(q)$
(Lemma~\ref{lem:fourier}), and the identities
$\sum_{\rad(a) \mid q} a^{-1} = q/\varphi(q)$ and
$\sum_{a > x_1, \rad(a) \mid q} a^{-1} \le 4^{\omega(q)}
x_1^{-1/2}$ (Lemma~\ref{lem:rad-sums}(i), (ii)). We also record that, by
Lemma~\ref{lem:rad-sums}(iii) (with $\varepsilon = 1$), for each
fixed $C > 1$ one has $C^{\omega(q)} \ll_{C} q$, so that any factor
of the form $q^{\kappa}\, C^{\omega(q)}$ with fixed $\kappa \ge 0$
and fixed $C > 1$ satisfies
\[
q^{\kappa}\, C^{\omega(q)} \ll_{C, \kappa} q^{\kappa + 1}
\le (\log x)^{(\kappa + 1) B} :
\]
a fixed power of $\log x$. Since
$x_1^{-1/2} = \exp( -\tfrac12 \sqrt{\log x} )$, and likewise
$x_1^{-1/4}$, decays faster than any fixed power of $\log x$, we
conclude: \emph{every error term below that carries the factor
$x_1^{-1/4}$ or $x_1^{-1/2}$, multiplied by at most a fixed power of
$q$ times a factor $C^{\omega(q)}$ with fixed $C$, is}
$O_{z, B, J}( x (\log x)^{z - 1 - J} )$; we call such terms
\emph{negligible} without further comment.

\emph{The tails $a > x_1$.} Bounding $|S_{\psi}(y)| \le y$ trivially,
the terms of \eqref{eq:T-split} with $a > x_1$ contribute at most
\[
\frac{q^{2} x}{\varphi(q)} \sum_{\substack{a > x_1 \\ \rad(a) \mid q}}
\frac1a
\;+\;
\frac{\varphi(q) \cdot \varphi(q)\, x}{\varphi(q)}
\sum_{\substack{a > x_1 \\ \rad(a) \mid q}} \frac1a
\le
\big( q^{2} + \varphi(q) \big)\, x\, 4^{\omega(q)}\, x_1^{-1/2},
\]
which is negligible. From now on all sums over $a$ are restricted to
$a \le x_1$, so that $y := x/a \ge x/x_1 \ge \sqrt x$ and
\begin{equation}\label{eq:logs-comparable}
\tfrac12 \log x \le \log y \le \log x,
\qquad
q \le (\log x)^{B} \le \big( 2 \log y \big)^{B} \le
(\log y)^{B + 1}
\end{equation}
(the last step for $x \ge x_0$).

\emph{The non-principal part $T_1$.} Fix $\chi \ne \chi_0$. The
Dirichlet series of $b \mapsto d_z(b) \overline\chi(b)$ is
$L(s, \overline\chi)^{z}$ by \eqref{eq:twist-series}, so
Proposition~\ref{prop:SW} (applied to $\overline\chi$, with modulus
parameter $B + 1$ by \eqref{eq:logs-comparable} and saving $A'$)
gives $S_{\overline\chi}(y) \ll y (\log y)^{-A'} \ll
(x/a)(\log x)^{-A'}$, using \eqref{eq:logs-comparable} again. Hence
\[
|T_1|
\le \frac{1}{\varphi(q)} \sum_{\chi \ne \chi_0} \varphi(q)
\sum_{\substack{a \le x_1 \\ \rad(a) \mid q}}
\frac{x}{a} (\log x)^{-A'} \cdot O(1)
\ll
\varphi(q) \cdot \frac{q}{\varphi(q)} \cdot x (\log x)^{-A'}
\le q\, x\, (\log x)^{-A'} .
\]
Since $q \le (\log x)^{B}$ and $A' = J + 2 + 2B \ge J + 1 - z + B$,
this is $O( x (\log x)^{z - 1 - J} )$.

\emph{The principal part $T_0$: applying the Selberg-Delange expansion.} By
Lemma~\ref{lem:propertyP} and Proposition~\ref{prop:LSD} (with
$w = z$, $J_1$ terms, and $M = M_q$), for every $y \ge 3$,
\[
S_{\chi_0}(y)
= y \sum_{j' = 0}^{J_1 - 1} \mu_{j'}(q)\, (\log y)^{z - 1 - j'}
+ O_{J_1}\big( M_q\, y\, (\log y)^{z - 1 - J_1} \big),
\]
with the $\mu_{j'}(q)$ of the proposition statement, satisfying
$|\mu_{j'}(q)| \ll_{j'} M_q$. Substituting into $T_0$ (with
$y = x/a$, $a \le x_1$): the Selberg-Delange error terms contribute, by
\eqref{eq:logs-comparable},
\[
\ll \frac{1}{\varphi(q)} \cdot q^{2} \cdot M_q \cdot x\,
2^{J_1 + 1 - z} (\log x)^{z - 1 - J_1}
\sum_{\rad(a) \mid q} \frac1a
\ll
\frac{q^{3} M_q}{\varphi(q)^{2}}\, x\, (\log x)^{z - 1 - J_1}
\ll x (\log x)^{z - 1 - J_1 + 2B},
\]
where the last step used
$q^{3} M_q / \varphi(q)^{2} = q\, (q/\varphi(q))^{2}\, 3^{\omega(q)}
\ll_{\varepsilon} q^{1 + \varepsilon} \le q^{2} \le (\log x)^{2B}$
for $q \le (\log x)^{B}$, by Lemma~\ref{lem:rad-sums}(iii) and
$q/\varphi(q) \le 2^{\omega(q)}$, from the Euler product, as in
Part~(i). This is
$O( x (\log x)^{z - 1 - J} )$ since $J_1 \ge J + 2B + 2$. There remains the main expression
\begin{equation}\label{eq:main-expr}
\frac{x}{\varphi(q)} \sum_{j' = 0}^{J_1 - 1} \mu_{j'}(q)
\sum_{\substack{a \le x_1 \\ \rad(a) \mid q}}
\frac{d_z(a)\, c_q(a)}{a} \Big( \log \frac xa \Big)^{z - 1 - j'} .
\end{equation}

\emph{Expanding the logarithms.} Write, for $a \le x_1$,
\[
\Big( \log \frac xa \Big)^{z - 1 - j'}
= (\log x)^{z - 1 - j'} ( 1 - u_a )^{z - 1 - j'},
\qquad
u_a := \frac{\log a}{\log x} \in \Big[ 0,\ \frac{1}{\sqrt{\log x}}
\Big] \subseteq [ 0, \tfrac12 ].
\]
By Lemma~\ref{lem:taylor} with $w = z - 1 - j'$ and $I = 2 J_1$
terms,
\[
( 1 - u_a )^{z - 1 - j'}
= \sum_{i = 0}^{I - 1} \binom{z - 1 - j'}{i} (-1)^{i}
\frac{(\log a)^{i}}{(\log x)^{i}}
+ O_{z, J_1}\big( u_a^{I} \big).
\]
The remainder terms contribute to \eqref{eq:main-expr} at most (using
$u_a^{I} = (\log a)^{I} (\log x)^{-I}$, and then bounding the
resulting log-moment by Cauchy's estimate exactly as in
Lemma~\ref{lem:Sigma}(iv), applied to the series
$\sum_{\rad(a) \mid q} a^{-s}$ of Lemma~\ref{lem:rad-sums}(i), whose
factors satisfy $|(1 - p^{-s})^{-1}| \le 4$ for $|s - 1| \le 1/4$):
\[
\ll_{J_1} \frac{x\, M_q}{\varphi(q)}\, (\log x)^{z - 1}\,
(\log x)^{-I} \cdot q^{2} \cdot I!\, 4^{I}\, 4^{\omega(q)}\,
q^{1/4}
\ll x\, (\log x)^{z - 1 - I + 2B},
\]
where the last step used
$q^{2} M_q\, 4^{\omega(q)}\, q^{1/4} / \varphi(q)
= q\, (q/\varphi(q))\, 12^{\omega(q)}\, q^{1/4}
\ll_{\varepsilon} q^{5/4 + \varepsilon} \le q^{2} \le
(\log x)^{2B}$ for $q \le (\log x)^{B}$, again by
Lemma~\ref{lem:rad-sums}(iii). This is
$O( x (\log x)^{z - 1 - J} )$ since $I = 2 J_1 \ge J + 2B + 1$.

\emph{Completing the logarithmic moments.} In the surviving terms appear the
truncated moments
\[
\sum_{\substack{a \le x_1 \\ \rad(a) \mid q}}
\frac{d_z(a)\, c_q(a)\, (\log a)^{i}}{a} .
\]
Completing them to the full moments $m_i(q) = (-1)^{i}
\Sigma_q^{(i)}(1)$ of Lemma~\ref{lem:Sigma}(iv) costs, per pair
$(j', i)$, at most
\[
\frac{x}{\varphi(q)}\, |\mu_{j'}(q)|\, (\log x)^{z - 1 - j' - i}
\Big| \binom{z-1-j'}{i} \Big|
\sum_{\substack{a > x_1 \\ \rad(a) \mid q}}
\frac{q^{2} (\log a)^{i}}{a}
\ll_{z, J_1}
x\, q^{2} M_q\, 4^{\omega(q)}\, x_1^{-1/4},
\]
using $(\log a)^{i} \ll_{i} a^{1/4}$ and
Lemma~\ref{lem:rad-sums}(i) with $\sigma = 1/2$; this is negligible.

\emph{Assembly.} After these steps, \eqref{eq:main-expr} equals
\[
x \sum_{j' = 0}^{J_1 - 1} \sum_{i = 0}^{I - 1}
\bigg[ \frac{\mu_{j'}(q)}{\varphi(q)} \binom{z - 1 - j'}{i}
(-1)^{i} m_i(q) \bigg] (\log x)^{z - 1 - (j' + i)}
+ O\big( x (\log x)^{z - 1 - J} \big).
\]
The pairs with $j' + i = j < J$ are in bijection with the terms of
\eqref{eq:theta-def} (note $j < J \le J_1$ forces $j' < J_1$ and
$i < J \le I$, so no term of \eqref{eq:theta-def} is missing), and
reproduce exactly $x \sum_{j < J} \theta_j(z, q) (\log x)^{z-1-j}$.
The pairs with $j' + i \ge J$ number $O_{z, B, J}(1)$, and each
contributes at most
\[
\bigg| \frac{\mu_{j'}(q)}{\varphi(q)} \binom{z-1-j'}{i} m_i(q) \bigg|
\, x\, (\log x)^{z - 1 - (j' + i)}
\ll_{z, B, J} q^{-1 + \varepsilon}\, x\, (\log x)^{z - 1 - J},
\]
by the same coefficient estimates as in Part (i); these are absorbed
into the error term. This proves (ii).
\end{proof}

\subsubsection*{Proof of Theorem~\ref{thm:major}}

Define, with the $\theta_j$ of Proposition~\ref{prop:beta0} (and
$\theta_{-1} := 0$),
\begin{equation}\label{eq:lambda-def}
\lambda_j(z, q) := \theta_j(z, q) + (z - j)\, \theta_{j-1}(z, q)
\qquad (0 \le j < J).
\end{equation}
Then $\lambda_0 = \theta_0 = \g_z(q)/\Gamma(z)$, and
$|\lambda_j| \ll_{z, J, \varepsilon} q^{-1+\varepsilon}$ by
Proposition~\ref{prop:beta0}(i); this is \eqref{eq:lambda-props}. We
may assume $x \ge x_0(z, B, J)$: for $3 \le x < x_0$ the left-hand
side of \eqref{eq:major} is at most $x$ in modulus, each integral is
at most $O_J(x)$ in modulus, the coefficients are $O(1)$, and the
quantity $x(\log x)^{-(J+1-z-2B)}$ is $\gg_{z,B,J,x_0} x$ on the
compact range $3 \le x \le x_0$; so \eqref{eq:major} holds there
after enlarging the implied constant.

Let $\alpha = r/q + \beta$ with $q \le (\log x)^{B}$ and
$|\beta| \le (\log x)^{B}/x$. First,
\[
\sum_{n \le x} d_z(n)\e(n\alpha)
= \sum_{\sqrt x < n \le x} d_z(n) \e\Big( \frac{rn}{q} \Big)
\e(\beta n) + O\big( \sqrt x \big),
\]
by $|d_z| \le 1$. The $O(\sqrt x)$ term is admissible, being
$O( x (\log x)^{\real z - 1 - J} )$ for every fixed $J$. By Riemann-Stieltjes partial summation on
$(\sqrt x, x]$, with $T(t) = T(t; r, q)$,
\begin{equation}\label{eq:parts}
\sum_{\sqrt x < n \le x} d_z(n) \e\Big( \frac{rn}{q} \Big) \e(\beta n)
= \Big[ \e(\beta t)\, T(t) \Big]_{t = \sqrt x}^{t = x}
- 2\pi i \beta \int_{\sqrt x}^{x} T(t)\e(\beta t)\, dt .
\end{equation}
For $t \in [\sqrt x, x]$ we have
$q \le (\log x)^{B} \le (\log t)^{B+1}$ as in
\eqref{eq:logs-comparable}, so Proposition~\ref{prop:beta0}, applied
with the parameter $B + 1$ in place of $B$ and with $J$ terms, gives
\[
T(t) = M(t) + E(t),
\qquad
M(t) := t \sum_{j = 0}^{J-1} \theta_j(z, q) (\log t)^{z - 1 - j},
\qquad
|E(t)| \ll t (\log t)^{z - 1 - J} .
\]
(The coefficients $\theta_j$ do not depend on the parameter $B$: they
are defined once and for all by \eqref{eq:theta-def}.)

\emph{The error part.} The contribution of $E$ to \eqref{eq:parts} is
\[
\ll |E(x)| + |E(\sqrt x)| + |\beta| \int_{\sqrt x}^{x} |E(t)|\, dt
\ll x (\log x)^{z - 1 - J}
+ |\beta| \cdot x \cdot x\, 2^{J + 1 - z} (\log x)^{z - 1 - J},
\]
where we used that $(\log t)^{z - 1 - J}$ is decreasing on
$[\sqrt x, x]$ and $(\log \sqrt x)^{z-1-J} = 2^{J + 1 - z}
(\log x)^{z-1-J}$. Since $|\beta| x \le (\log x)^{B}$, this is
$O( x (\log x)^{z - 1 - J + B} )$, which is within the error term of
\eqref{eq:major}.

\emph{The main part.} Since $M$ is continuously differentiable on
$[\sqrt x, x]$ (note $\sqrt x \ge 3$), integration by parts in the
reverse direction gives
\[
\Big[ \e(\beta t) M(t) \Big]_{\sqrt x}^{x}
- 2\pi i \beta \int_{\sqrt x}^{x} M(t) \e(\beta t)\, dt
= \int_{\sqrt x}^{x} M'(t)\e(\beta t)\, dt .
\]
Differentiating and regrouping by the exponent of $\log t$,
\[
M'(t)
= \sum_{j = 0}^{J-1} \theta_j \Big[ (\log t)^{z-1-j}
+ (z - 1 - j)(\log t)^{z - 2 - j} \Big]
\]
which, recalling \eqref{eq:lambda-def}, equals
\[
\sum_{j = 0}^{J-1} \lambda_j(z, q)\, (\log t)^{z - 1 - j}
+ (z - J)\, \theta_{J-1}\, (\log t)^{z - 1 - J},
\]
(the term $j$ of the
bracket contributes $\theta_j$ to $\lambda_j$ and
$(z - 1 - j)\theta_j$ to $\lambda_{j+1}$, since $z - 1 - j =
z - (j+1)$). The last term contributes at most
$|z - J|\, |\theta_{J-1}|\, x\, 2^{J+1-z} (\log x)^{z - 1 - J}
\ll x (\log x)^{z-1-J}$, admissible. Finally, we extend the
integrals to $[2, x]$: for each $j < J$,
\[
\bigg| \int_{2}^{\sqrt x} (\log t)^{z - 1 - j} \e(\beta t)\, dt
\bigg|
\le \int_{2}^{\sqrt x} (\log t)^{z - 1 - j}\, dt
\le (\log 2)^{z - 1 - j} \sqrt x
\ll_{J} \sqrt x,
\]
since the exponent $z - 1 - j$ is negative and $\log t \ge \log 2$;
multiplied by $|\lambda_j| \le O(1)$ and summed over $j < J$, this
costs $O_J(\sqrt x)$, admissible. Collecting all the pieces,
\[
\sum_{n \le x} d_z(n)\e(n\alpha)
= \sum_{j = 0}^{J-1} \lambda_j(z, q)
\int_{2}^{x} (\log t)^{z - 1 - j}\e(\beta t)\, dt
+ O_{z, B, J}\big( x (\log x)^{z - 1 - J + B} \big),
\]
which is \eqref{eq:major} (indeed with the stronger error exponent
$z - 1 - J + B$ in place of $z - 1 - J + 2B$). The ineffectivity is
inherited from Proposition~\ref{prop:SW}, and from nowhere else.
\qed


\subsection{Uniform estimates: proof of Theorem \ref{thm:sup}}\label{sec:proof-sup}

In this section we fix integers $1 \le a < b$ with $(a, b) = 1$ and
take $z = a/b$ or $z = -a/b$, so that the minor-arc results of Section~\ref{sec:main-proof}
(Theorem~\ref{thm:main} and Corollary~\ref{cor:minor}) are
available;
we write $C = C(a, b)$ for the constant of
Theorem~\ref{thm:main}, and allow all implied constants to
depend on $a$ and $b$. The reader interested in the case
$z = \pm\tfrac12$ may take $(a, b) = (1, 2)$ throughout. We write
\[
S(x, \alpha) := \sum_{n \le x} d_z(n)\e(n\alpha).
\]
Part (i) of Theorem~\ref{thm:sup} is proved first, by positivity; the
Dirichlet dissection in the last two subsections is required only for
part (ii), although the arguments there are valid for both signs.

\subsubsection*{Proof of Theorem~\ref{thm:sup}(i)}

By Lemma~\ref{lem:binom}(iii) we have $d_{a/b}(p^k) > 0$ for every
prime power, hence $d_{a/b}(n) > 0$ for every $n$ by
multiplicativity. Therefore, for every $\alpha \in \R$ and every
$x \ge 1$,
\[
\Big| \sum_{n \le x} d_{a/b}(n)\e(n\alpha) \Big|
\le \sum_{n \le x} d_{a/b}(n)
= S(x, 0),
\]
with equality at $\alpha = 0$; taking the supremum over $\alpha$
gives the stated identity, the supremum being attained at
$\alpha = 0$.

For the asymptotic, the series $F(s) = \zeta(s)^{a/b}$ has property
$\mathcal P( a/b;\ \tfrac14,\ 1,\ 1 )$ trivially ($G \equiv 1$), so
Proposition~\ref{prop:LSD} with $w = a/b$ and $J = 1$ gives
\[
\sum_{n \le x} d_{a/b}(n)
= \frac{x (\log x)^{a/b - 1}}{\Gamma(a/b)}
+ O\big( x (\log x)^{a/b - 2} \big)
= \frac{x (\log x)^{a/b - 1}}{\Gamma(a/b)}
\Big( 1 + O\Big( \frac{1}{\log x} \Big) \Big),
\]
using $\Gamma(a/b) > 0$ (the Gamma function is positive on
$(0, \infty)$). 
Every constant here is effective: the only analytic
input is the Selberg-Delange method for $\zeta(s)^{a/b}$ itself,
with \(G \equiv 1\), which rests on the classical effective zero-free
region of \(\zeta\). \qed

\subsubsection*{Proof of Theorem~\ref{thm:sup}(ii): the lower bound}

Here we take $z = -a/b$; the arguments of this and the next
subsection are in fact valid for both signs, but part (i) has
already been proved.

By Parseval's identity (orthogonality of the exponentials
$\e(n\alpha)$ on $[0, 1]$),
\[
\sup_{\alpha \in \R} | S(x, \alpha) |^{2}
\ge \int_{0}^{1} | S(x, \alpha) |^{2}\, d\alpha
= \sum_{n \le x} d_z(n)^{2} \ge d_z(1)^{2} = 1,
\]
so $\sup_\alpha |S(x, \alpha)| \ge 1$ for every $x \ge 1$; this
disposes of any fixed compact range of $x$ (on which
$x (\log x)^{z-1} \ll_{x_2} 1$). For large $x$ we use $\alpha = 0$:
the series $F(s) = \zeta(s)^{z}$ has property
$\mathcal P(z; \tfrac14, 1, 1)$ trivially ($G \equiv 1$), so
Proposition~\ref{prop:LSD} with $J = 1$ gives
\[
S(x, 0) = \sum_{n \le x} d_z(n)
= \frac{x (\log x)^{z-1}}{\Gamma(z)}
+ O\big( x (\log x)^{z - 2} \big).
\]
Since $z \in (-1, 1) \setminus \{0\}$ is not a pole of $\Gamma$, and
$\Gamma$ has no zeros, $\Gamma(z)$ is finite and nonzero (for
$z = \pm\tfrac12$: $\Gamma(\tfrac12) = \sqrt\pi$ and
$\Gamma(-\tfrac12) = -2\sqrt\pi$); hence there is
$x_2 = x_2(z)$ such that
$| S(x, 0) | \ge \tfrac{1}{2} |\Gamma(z)|^{-1} x (\log x)^{z-1}$ for
$x \ge x_2$. Combining the two ranges,
$\sup_\alpha | S(x, \alpha) | \gg x (\log x)^{z-1}$ for all
$x \ge 3$, with an effective constant. \qed

\subsubsection*{Proof of Theorem~\ref{thm:sup}(ii): the upper bound}

Set
\[
A := 2, \qquad B := 2A + 2C = 4 + 2C, \qquad
J := \lceil 2B + 2 \rceil, \qquad Q := \frac{x}{(\log x)^{B}} .
\]
We may assume $x \ge x_1(z)$, a threshold to be accumulated below;
for $3 \le x < x_1$ the trivial bound $|S(x,\alpha)| \le x
\ll_{x_1} x(\log x)^{z-1}$ suffices. In particular assume $x$ large
enough that $Q \ge (\log x)^{B}$.

Let $\alpha \in \R$ be arbitrary. By Lemma~\ref{lem:dirichlet} there
is a reduced fraction $r/q$ with $q \le Q$ and
$|\alpha - r/q| \le 1/(qQ) \le 1/q^{2}$. We distinguish two cases,
which together cover every $\alpha$.

\emph{Case 1: $(\log x)^{B} < q \le Q$.} The hypotheses of
Corollary~\ref{cor:minor} hold with $A = 2$ (note
$B = 2A + 2C$ and $Q = x(\log x)^{-B}$ exactly match its range of
$q$), so, for $x$ beyond the threshold of that proposition,
\[
| S(x, \alpha) | \ll \frac{x}{(\log x)^{2}}
\le x (\log x)^{z - 1},
\]
the last step because $z - 1 > -2$ (as $z > -1$) and $\log x \ge 1$.

\emph{Case 2: $q \le (\log x)^{B}$.} Then
$\beta := \alpha - r/q$ satisfies
$|\beta| \le 1/(qQ) \le 1/Q = (\log x)^{B}/x$, so
Theorem~\ref{thm:major} applies with the parameters $B$ and $J$
fixed above:
\[
S(x, \alpha)
= \sum_{j = 0}^{J-1} \lambda_j(z, q)
\int_{2}^{x} (\log t)^{z - 1 - j}\e(\beta t)\, dt
+ O\Big( \frac{x}{(\log x)^{J + 1 - z - 2B}} \Big).
\]
The error term: since $J \ge 2B + 2$, its exponent satisfies
$J + 1 - z - 2B \ge 3 - z \ge 1 - z$, so the error is
$O( x (\log x)^{z - 1} )$ (with two powers of $\log x$ to spare).
The main terms: for each $j$, the exponent $w = z - 1 - j$ is
negative, so by the triangle inequality and
Lemma~\ref{lem:log-integral},
\[
\bigg| \int_{2}^{x} (\log t)^{z-1-j} \e(\beta t)\, dt \bigg|
\le \int_{2}^{x} (\log t)^{z-1-j}\, dt
\ll_{J} x (\log x)^{z - 1 - j} + \sqrt x
\ll_{J} x (\log x)^{z - 1},
\]
where in the last step we used $(\log x)^{-j} \le 1$ and
$\sqrt x \le x (\log x)^{z-1}$ for $x \ge x_1$ (as
$(\log x)^{1 - z} \le (\log x)^{2} \le \sqrt x$ eventually). Since
$|\lambda_j(z, q)| \ll_{J, \varepsilon} q^{-1 + \varepsilon} \ll_{J} 1$
by \eqref{eq:lambda-props}, summing over the $J = O(1)$ values of
$j$ gives $| S(x, \alpha) | \ll x (\log x)^{z - 1}$ in this case as
well.

Taking the supremum over $\alpha$ completes the proof of
Theorem~\ref{thm:sup}. \qed

\begin{remark}\label{rem:where-ineff}
Part (i) of Theorem~\ref{thm:sup} is entirely effective, as is the
lower bound in part (ii). In the upper bound of part (ii), the
minor-arc input (Corollary~\ref{cor:minor}) is effective: apart from the type~II estimate quoted from \cite{K},
whose proof there is likewise effective, the proof of
Theorem~\ref{thm:main} in Sections
\ref{sec:identity}--\ref{sec:main-proof} is elementary; the ineffective constant
enters only through Theorem~\ref{thm:major} in Case~2, i.e.,
ultimately through Siegel's theorem in
Proposition~\ref{prop:SW}, exactly as in Davenport's theorem.
\end{remark}

\begin{remark}
\label{rem:not-at-zero}
In part (ii) the supremum need not sit at $\alpha = 0$. In contrast with part (i), where the supremum is attained at
$\alpha = 0$ exactly, the proof of part (ii) shows only that
$\sup_\alpha |S(x, \alpha)|$ is comparable to
$|S(x, 0)| \sim x(\log x)^{z-1}/|\Gamma(z)|$; it need not be
asymptotic to it: the major-arc main term at $r/q$ has leading
coefficient $\g_z(q)/\Gamma(z)$, and $|\g_z(q)|$ can exceed
$\g_z(1) = 1$. For instance, by \eqref{eq:gamma-p},
$\g_{-1/2}(2) = 1 - 2^{3/2} = -1.828\dots$, so the sum is
asymptotically \emph{larger} at $\alpha$ near $1/2$ than at $\alpha$
near $0$ when $z = -\tfrac12$. Determining the exact asymptotic
constant of the supremum in part (ii) would require maximizing
$|\g_z(q)| \cdot |{\textstyle\int_2^x} (\log t)^{z-1} \e(\beta t) dt|
/ (x (\log x)^{z-1})$ over $q$ and $\beta$; we do not pursue this.
\end{remark}


\subsection{Moments: proof of Theorem \ref{thm:scalar}}\label{subsec:moments-proof}

Throughout this subsection, $z = \pm a/b$ is fixed, $s > 2$ is a
fixed real number, $L = \log X$, and
$\theta = s(z - 1) < 0$ is the logarithmic exponent of
Theorem~\ref{thm:scalar}; $C = C(a, b)$ denotes the constant of
Theorem~\ref{thm:main}. Fix also, once and for all, $\varepsilon$
with $0 < \varepsilon < (s-2)/(2s)$, as required by
Lemma~\ref{lem:singular-series}. Let $B > 0$ be a parameter, fixed at
\eqref{eq:BJ-choice} below, and set
\begin{equation}\label{eq:P-def}
P = (\log X)^{B}.
\end{equation}
For $1 \le q \le P$ and $(r, q) = 1$ put
\[
\fM(q, r) = \Big\{ \alpha \in [0, 1] :
\Big| \alpha - \frac{r}{q} \Big| \le \frac{P}{X} \Big\},
\qquad
\fM = \bigcup_{q \le P} \bigcup_{\substack{r = 1 \\ (r, q) = 1}}^{q}
\fM(q, r),
\qquad
\fm = [0, 1] \setminus \fM,
\]
with the usual convention that $\fM(1, 1)$ is the interval
$[0, P/X] \cup [1 - P/X, 1]$ regarded modulo $1$; by the periodicity of $S_z$, this identifies
the two end intervals with the single arc $|\beta| \le P/X$ around
the integer point. Two distinct
fractions $r/q \ne r'/q'$ with $q, q' \le P$ differ by at least
$1/(qq') \ge P^{-2}$, while each arc has length $2P/X$; since
$2P^{3} < X$ for $X$ large, the arcs $\fM(q, r)$ are pairwise
disjoint.

\begin{proposition}
\label{prop:minor-arcs}
For every $B > 0$,
\begin{equation}\label{eq:minor-arcs}
\int_{\fm} |S_{z}(\alpha)|^{s}\, d\alpha
\ll_{s, z, B}
X^{s-1} (\log X)^{(s-2)(C - B/2)} .
\end{equation}
\end{proposition}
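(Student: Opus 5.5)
The plan is the standard interpolation: a sup bound on the minor arcs, combined with Parseval. Write $|S_z|^s = |S_z|^{s-2}|S_z|^2$. Then
\[
\int_{\fm} |S_z(\alpha)|^s\, d\alpha \le \Big(\sup_{\alpha \in \fm} |S_z(\alpha)|\Big)^{s-2} \int_0^1 |S_z(\alpha)|^2\, d\alpha .
\]
By Parseval and $|d_z(n)| \le 1$ (Lemma~\ref{lem:dz}(b)), the second factor is $\sum_{n \le X} d_z(n)^2 \le X$. It therefore suffices to show
\[
\sup_{\alpha \in \fm} |S_z(\alpha)| \ll X (\log X)^{C - B/2},
\]
since raising this to the power $s-2$ and multiplying by $X$ gives exactly $X^{s-1}(\log X)^{(s-2)(C-B/2)}$.

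For the sup bound, fix $\alpha \in \fm$ and apply Dirichlet's theorem (Lemma~\ref{lem:dirichlet}) with $Q = X/P$. This produces a reduced fraction $r/q$ with $q \le X/P$ and $|\alpha - r/q| \le 1/(qQ) = P/(qX) \le 1/q^2$.

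If $q \le P$, then $|\alpha - r/q| \le P/X$, so $\alpha \in \fM(q,r)$. This contradicts $\alpha \in \fm$, because the convention identifying the arc at the integer point modulo $1$ covers the case $q = 1$. Hence $P < q \le X/P$.

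Theorem~\ref{thm:main} now gives
\[
S_z(\alpha) \ll \big(Xq^{-1/2} + X^{4/5} + (Xq)^{1/2}\big)(\log 2X)^C .
\]
With $P < q \le X/P$, the first term is at most $XP^{-1/2} = X(\log X)^{-B/2}$, and the third is at most $X P^{-1/2}$ as well. The middle term $X^{4/5}$ is dominated by $X(\log X)^{-B/2}$ once $X$ is large in terms of $B$. Smaller $X$ are absorbed into the implied constant, using the trivial bound $|S_z| \le X$ together with the fact that the right side of \eqref{eq:minor-arcs} is $\gg_{B} X^{s-1}$ on a compact range.

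There is no real obstacle. The one point needing care is checking that the Dirichlet approximation step genuinely forces $q > P$ on $\fm$, including the endpoint arc $\fM(1,1)$ read modulo $1$ and the case where the fraction falls outside $[0,1]$. This reduces by periodicity to a representative $r$ with $1 \le r \le q$ and $(r,q) = 1$, since $S_z$ has period $1$.
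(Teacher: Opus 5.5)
Your proposal is correct and matches the paper's proof step for step: Dirichlet's theorem with $Q = X/P$ forces $P < q \le X/P$ on $\fm$. Theorem~\ref{thm:main} then gives $\sup_{\fm}|S_z| \ll X(\log X)^{C-B/2}$, and $\int_{\fm}|S_z|^s \le (\sup_{\fm}|S_z|)^{s-2}\sum_{n\le X} d_z(n)^2 \le (\sup_{\fm}|S_z|)^{s-2}X$ finishes. Your extra remarks on the endpoint arc $\fM(1,1)$ and on small $X$ are correct; they make explicit points the paper leaves implicit.
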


\begin{proof}
Let $\alpha \in \fm$. By Dirichlet's approximation theorem
(Lemma~\ref{lem:dirichlet}) with parameter $Q = X/P$ there exist
$q \le Q$ and $(r, q) = 1$ with
\begin{equation}\label{eq:dirichlet-moments}
\Big| \alpha - \frac{r}{q} \Big| \le \frac{1}{q Q}
= \frac{P}{q X} \le \frac{1}{q^{2}}.
\end{equation}
If $q \le P$, then \eqref{eq:dirichlet-moments} gives
$|\alpha - r/q| \le P/X$, so $\alpha \in \fM(q, r) \subseteq \fM$
(after reducing $r/q$ modulo $1$ into $[0,1]$), contradicting
$\alpha \in \fm$. Hence $P < q \le X/P$, and
Theorem~\ref{thm:main} applies, its hypothesis
$|\alpha - r/q| \le 1/q^2$ holding by
\eqref{eq:dirichlet-moments}:
\[
|S_{z}(\alpha)|
\ll
\Big( \frac{X}{\sqrt{q}} + X^{4/5} + \sqrt{Xq} \Big)
(\log X)^{C}
\ll
X (\log X)^{C - B/2},
\]
where we used $q > P$ in the first term, $q \le X/P$ in the third
(so that $\sqrt{Xq} \le X/\sqrt P$), and
$X^{4/5} \le X (\log X)^{-B/2}$ for $X$ large. Consequently
\[
\int_{\fm} |S_z(\alpha)|^{s}\, d\alpha
\le
\Big( \sup_{\alpha \in \fm} |S_z(\alpha)| \Big)^{s-2}
\int_0^1 |S_z(\alpha)|^{2}\, d\alpha ,
\]
and by Parseval and Lemma~\ref{lem:dz}(b),
\[
\int_0^1 |S_z(\alpha)|^{2}\, d\alpha
= \sum_{n \le X} d_z(n)^{2} \le X .
\]
Combining the three displays proves \eqref{eq:minor-arcs}.
\end{proof}

\begin{remark}\label{rem:B-choice}
Since $s > 2$ is fixed, the exponent in \eqref{eq:minor-arcs} can be
made at most $\theta - 1$ by choosing
$B \ge 2 C + 2 \big( s (1 - z) + 1 \big)/(s - 2)$; the minor arcs
will therefore be negligible compared with the main term of
Theorem~\ref{thm:scalar} once $B$ is fixed sufficiently large in
terms of $s$ and $z$. This is the only point in the argument at
which the hypothesis $s > 2$ is used on the minor arcs; the same
hypothesis reappears independently in the convergence of the
singular series (Lemma~\ref{lem:singular-series}).
\end{remark}

We now fix the parameters, each depending only on those preceding
it: the data $s, z$ are given, $C = C(a,b)$ is as above, and then
\begin{equation}\label{eq:BJ-choice}
B = 2 C + \frac{2\big( s (1 - z) + 1 \big)}{s - 2},
\qquad
J_1 = \big\lceil 4B + 4s + 8 \big\rceil,
\end{equation}
and all implied constants may depend on $s, z, B, J_1$ and
$\varepsilon$. Note that $B(s-2)/2 = C(s-2) + s(1-z) + 1 \ge 1$, so
in particular $B \ge 2/(s-2)$, the standing hypothesis of
Lemma~\ref{lem:singular-series}.

\subsubsection*{Replacing the sum by its major-arc model}

For $j \ge 0$ write
\begin{equation}\label{eq:Wj}
W_j(\beta) = \int_2^X (\log t)^{z - 1 - j}\e(\beta t)\, dt
= X (\log X)^{z-1-j}\, \Phi_{z-1-j}(\beta X),
\end{equation}
the second equality being the rescaling recorded after
\eqref{eq:Phi-def}. Fix $q \le P$ and a reduced residue $r$ modulo
$q$, and write, for $|\beta| \le P/X$,
\begin{equation}\label{eq:Tq-def}
T_q(\beta) =
\sum_{j=0}^{J_1 - 1} \lambda_j(z, q)\, W_j(\beta).
\end{equation}
By Theorem~\ref{thm:major} (applied with the depth $J = J_1$ and
the same $B$), $T_q$ does not depend on $r$ and satisfies,
uniformly on the arc,
\begin{equation}\label{eq:Sz-Tq}
S_{z}\Big( \frac{r}{q} + \beta \Big)
= T_q(\beta) + O\big( E \big),
\qquad
E := X L^{-A_1},
\quad
A_1 := J_1 - 2B - 1,
\end{equation}
since $J_1 + 1 - z - 2B \ge J_1 - 2B - 1 = A_1$. By
\eqref{eq:BJ-choice},
\begin{equation}\label{eq:A1-lower}
A_1 \ge 2B + 4s + 7
\ge 2B + (s - 2)(z - 1) - \theta + 1,
\end{equation}
the last step because $(s-2)(z - 1) \le 0$ and
$|\theta| \le s |z - 1| < 2s$.

\begin{lemma}\label{lem:model-replacement}
With the choices \eqref{eq:BJ-choice},
\begin{equation}\label{eq:model-replacement}
\int_{\fM} |S_{z}(\alpha)|^{s}\, d\alpha
=
\sum_{q \le P} \varphi(q)
\int_{|\beta| \le P/X} |T_q(\beta)|^{s}\, d\beta
\;+\;
O\big( X^{s-1} L^{\theta - 1} \big).
\end{equation}
\end{lemma}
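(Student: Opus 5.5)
Since the arcs $\fM(q,r)$ with $q \le P$ are pairwise disjoint, the integral over $\fM$ splits exactly as
\[
\int_{\fM} |S_z(\alpha)|^s\,d\alpha=\sum_{q\le P}\ \sum_{\substack{r=1\\(r,q)=1}}^{q}\int_{|\beta|\le P/X}\Big|S_z\Big(\frac rq+\beta\Big)\Big|^s d\beta .
\]
Here the arc $\fM(1,1)$ is read modulo $1$ as a single interval $|\beta|\le P/X$, using periodicity. On each arc we use \eqref{eq:Sz-Tq}, writing $S_z(r/q+\beta)=T_q(\beta)+O(E)$ uniformly in $\beta$. Because $T_q$ does not depend on $r$, the main terms will collect into $\varphi(q)\int|T_q|^s$. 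So the whole task is to control, arc by arc, the difference $\big||S_z|^s-|T_q|^s\big|$ by means of Lemma~\ref{lem:perturbation}.

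We take $w_1=S_z(r/q+\beta)$ and $w_2=T_q(\beta)$ in the second form of Lemma~\ref{lem:perturbation}. This gives the pointwise bound
\[
\big||S_z|^s-|T_q|^s\big|\ll_s |T_q(\beta)|^{s-1}E+E^s .
\]
For $|T_q|$ we use the crude uniform bound $|T_q(\beta)|\ll X L^{z-1}$. To get it, apply the triangle inequality to the $J_1$ terms of \eqref{eq:Tq-def}, use $|\lambda_j(z,q)|\ll 1$ from \eqref{eq:lambda-props}, and bound each $|W_j(\beta)|$ via Lemma~\ref{lem:Wj-bounds} or Lemma~\ref{lem:log-integral} by $\ll X L^{z-1-j}+\sqrt X\ll XL^{z-1}$. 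One could exploit the $q^{-1+\varepsilon}$ decay here, but it is not needed.

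The error on one arc is then at most
\[
\frac{2P}{X}\Big((XL^{z-1})^{s-1}XL^{-A_1}+X^sL^{-sA_1}\Big)\ll X^{s-1}P\,L^{(s-1)(z-1)-A_1}.
\]
The second term is smaller because $A_1$ is large. Summing over the at most $P^2$ pairs $(q,r)$ with $q\le P$ multiplies this by $P^2$. The total error is therefore
\[
\ll X^{s-1}L^{3B+(s-1)(z-1)-A_1}.
\]
We need this exponent to be at most $\theta-1=s(z-1)-1$, that is, $A_1\ge 3B-(z-1)+1$. With $J_1=\lceil 4B+4s+8\rceil$ we have $A_1=J_1-2B-1\ge 2B+4s+7$. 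Since $B>0$ can exceed $4s$, the comparison of $2B+4s+7$ with $3B+3$ is not immediate, and this bookkeeping is the one delicate point.

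To make the counting safe, I would avoid summing over $r$ crudely. Instead, sum over $r$ inside the error: the number of reduced residues is $\varphi(q)$, and using $|\lambda_j|\ll q^{-1+\varepsilon}$ the bound becomes $|T_q|\ll q^{-1+\varepsilon}XL^{z-1}$. Then $\sum_{q\le P}\varphi(q)q^{(-1+\varepsilon)(s-1)}\ll P^{\,2-(1-\varepsilon)(s-1)}$, which is at most $P^{1}$ since $s>2$. With this, the error total is $X^{s-1}L^{2B+(s-1)(z-1)-A_1}$, where the factor $L^{2B}$ accounts for $P\cdot P/X\cdot X$. Requiring this to be at most $X^{s-1}L^{\theta-1}$ amounts to $A_1\ge 2B-(z-1)+1$. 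This is exactly what \eqref{eq:A1-lower} provides, since $(s-2)(z-1)-\theta=-(z-1)-2(z-1)+\dots\ge-(z-1)$.

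The $E^s$ term is handled the same way and is negligible. This yields \eqref{eq:model-replacement}. The main obstacle is purely this exponent accounting: matching the number of arcs and their length against $A_1$. The weighted count using the decay of $\lambda_j$ from \eqref{eq:lambda-props} is what I would try first if the crude count falls short.
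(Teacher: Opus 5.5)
Your argument is correct and is essentially the paper's: disjoint arcs, the second form of Lemma~\ref{lem:perturbation}, the bound $|T_q(\beta)|\ll q^{-1+\varepsilon}XL^{z-1}$ from \eqref{eq:lambda-props}, and summation with the weight $\varphi(q)$ using $\sum_{q\le P}\varphi(q)\,q^{(-1+\varepsilon)(s-1)}\ll P$. Two small slips do not affect the conclusion: the sum is indeed $\ll P$, but it is not $\ll P^{2-(1-\varepsilon)(s-1)}$ once that exponent is nonpositive, and $(s-2)(z-1)-\theta$ is simply $-2(z-1)$.

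The one difference is that the paper also uses the $1/|\beta|$ decay of Lemma~\ref{lem:Wj-bounds}, namely $|T_q(\beta)|\ll q^{-1+\varepsilon}\min(XL^{z-1},1/|\beta|)$. This makes the $\beta$-integral of the $(s-1)$-st power $\ll (XL^{z-1})^{s-2}$. Your uniform-in-$\beta$ bound loses a factor $PL^{z-1}$ there, but the generous choice of $J_1$ absorbs it, since you only need $A_1\ge 2B+2-z$ and in fact $A_1\ge 2B+4s+7$.
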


\begin{proof}
By \eqref{eq:lambda-props} and Lemma~\ref{lem:Wj-bounds}, using
$(\log X)^{z - 1 - j} \le (\log X)^{z - 1}$ for $j \ge 0$, the
model satisfies
\begin{equation}\label{eq:Tq-bound}
|T_q(\beta)|
\ll
q^{-1+\varepsilon}\, m(\beta),
\qquad
m(\beta) := \min\Big( X L^{z-1},\ \frac{1}{|\beta|} \Big).
\end{equation}
Here $m(0) := X L^{z-1}$.
By Lemma~\ref{lem:perturbation} (applied with our fixed $s > 2$), together with
\eqref{eq:Sz-Tq} and \eqref{eq:Tq-bound},
\[
\big| |S_z(\tfrac rq + \beta)|^{s} - |T_q(\beta)|^{s} \big|
\ll_s
E \big( q^{-1+\varepsilon} m(\beta) + E \big)^{s - 1}
\ll
E\, q^{(-1+\varepsilon)(s-1)} m(\beta)^{s-1} + E^{s},
\]
using $(x+y)^{s-1} \ll_s x^{s-1} + y^{s-1}$. Since $s - 1 > 1$,
splitting at $|\beta| = (X L^{z - 1})^{-1}$ gives
\[
\int_{|\beta| \le P/X} m(\beta)^{s-1}\, d\beta
\ll (X L^{z - 1})^{s-2},
\]
so the error in replacing the integrand,
summed over the $\varphi(q)$ residues and over $q \le P$, totals
\begin{align*}
&\ll\;
P^{1+\varepsilon}\, E\, X^{s-2} L^{(z-1)(s-2)}
+ P^{3}\, E^{s}\, X^{-1} \\
&\ll\;
X^{s-1} L^{(z - 1)(s-2) - A_1 + 2B}
+ X^{s-1} L^{-s A_1 + 3B}
\ll
X^{s-1} L^{\theta - 1},
\end{align*}
by \eqref{eq:A1-lower}, using also the crude bound
$\sum_{q \le P} \varphi(q)\, q^{(-1+\varepsilon)(s-1)} \ll
P^{1+\varepsilon}$. Since the arcs are disjoint and the model does
not depend on $r$, the sum over $r$ of the model integrals produces
exactly the factor $\varphi(q)$.
\end{proof}

\subsubsection*{The model integral}

\begin{lemma}\label{lem:model-integral}
For each $q \le P$,
\begin{equation}\label{eq:model-integral}
\int_{|\beta| \le P/X} |T_q(\beta)|^{s}\, d\beta
=
X^{s-1} L^{\theta}
\Big(
|\lambda_0(z, q)|^{s}\, A_s
\;+\;
O\big( q^{(-1+\varepsilon)s}\, L^{-1} \big)
\Big).
\end{equation}
\end{lemma}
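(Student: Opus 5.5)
Rescale first. Put $u = \beta X$ and $w_j := z-1-j$. By \eqref{eq:Wj},
\[
T_q(\beta) = X L^{z-1} \sum_{j<J_1} \lambda_j(z,q)\, L^{-j}\, \Phi_{w_j}(u),
\]
so the integral in \eqref{eq:model-integral} becomes
\[
X^{s-1} L^{\theta} \int_{|u|\le P} |U_q(u)|^{s}\, du,
\qquad
U_q(u) := \sum_{j<J_1} \lambda_j L^{-j} \Phi_{w_j}(u).
\]
It therefore suffices to show that
\[
\int_{|u|\le P} |U_q|^{s}\, du = |\lambda_0|^{s} A_s + O\big(q^{(-1+\varepsilon)s} L^{-1}\big).
\]
The model for $U_q$ is $\lambda_0 K_0(u)$. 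Write $U_q = \lambda_0 K_0 + D_q$ with
\[
D_q = \lambda_0(\Phi_{w_0} - K_0) + \sum_{1\le j<J_1} \lambda_j L^{-j} \Phi_{w_j}.
\]

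The next step is a pointwise bound for $D_q$, using \eqref{eq:lambda-props}, $|\lambda_j| \ll q^{-1+\varepsilon}$. For the first piece, Lemma~\ref{lem:kernel-approx} with $A = B$ (the range $|u| \le P = L^{B}$ is exactly the permitted one) gives $\ll q^{-1+\varepsilon}\min(L^{-1}, |u|^{-1}L^{-1/2})$. For $j \ge 1$, I will bound $\Phi_{w_j}$ through Lemma~\ref{lem:Wj-bounds}. The rescaling gives
\[
|\Phi_{w_j}(u)| = |W_j(\beta)|\big/\big(X L^{w_j}\big) \ll \min\big(1,\ L^{j}\, L^{1-z}/|u|\big).
\]
After the factor $L^{-j}$ this is $\ll \min(L^{-j}, L^{1-z}/|u|)$, and that is too weak for large $|u|$. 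So I expect to need a direct integration by parts in the definition \eqref{eq:Phi-def}, exactly as in the proof of Lemma~\ref{lem:kernel-approx}. This should give $|\Phi_{w}(u)| \ll_w \min(1, 1/|u|)$ for $|u| \le L^{A}$: the contribution of $[2/X, v_1]$ is $\ll \exp(-\tfrac12\sqrt{L})$, and on $[v_1,1]$ the base is at least $\tfrac12$, so the boundary terms and the integral of $g'$ are $O(1/|u|)$. Granting this, for all $|u| \le P$,
\[
|D_q(u)| \ll q^{-1+\varepsilon}\,\delta(u),
\qquad
\delta(u) := \min\big(L^{-1},\ |u|^{-1}L^{-1/2}\big) + L^{-1}\min\big(1,|u|^{-1}\big).
\]
I expect this step to be the main obstacle: one must make sure the $j \ge 1$ kernels decay like $1/|u|$ uniformly, rather than picking up powers of $L$.

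Now apply the second form of Lemma~\ref{lem:perturbation} with $w_2 = \lambda_0 K_0$ and $w_1 - w_2 = D_q$:
\[
\big| |U_q|^{s} - |\lambda_0 K_0|^{s} \big| \ll q^{(-1+\varepsilon)s}\Big( \min(1,|u|^{-1})^{s-1}\,\delta(u) + \delta(u)^{s} \Big),
\]
using $|\lambda_0| \ll q^{-1+\varepsilon}$ and $|K_0(u)| \le \min(1,1/|u|)$. Integrate over $|u| \le P$. The term $\min(1,|u|^{-1})^{s-1}\,\delta(u)$ is $\ll L^{-1}$ for $|u| \le L^{1/2}$ and $\ll |u|^{-s}L^{-1/2}$ beyond that. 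Hence its integral is
\[
\ll L^{-1} + L^{-1/2}\,(L^{1/2})^{1-s} \ll L^{-1},
\]
the last step because $s > 2$. The term $\delta(u)^{s}$ integrates to
\[
\ll L^{-s}L^{1/2} + L^{-s/2}(L^{1/2})^{1-s} + L^{-s} \ll L^{-1}.
\]

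Finally, Lemma~\ref{lem:kernel-moments} gives
\[
\int_{|u|\le P} |\lambda_0 K_0|^{s}\, du = |\lambda_0|^{s}\big(A_s + O(P^{1-s})\big),
\]
and $P^{1-s} = L^{-B(s-1)} \le L^{-1}$. Combining the three estimates yields \eqref{eq:model-integral}.
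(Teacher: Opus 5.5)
Your proposal is correct and follows the paper's proof essentially step for step: the same rescaling $u=\beta X$, the same decomposition around $\lambda_0 K_0$, Lemma~\ref{lem:perturbation}, the split of the $u$-integral at $|u|=1$ and $|u|=L^{1/2}$, and Lemma~\ref{lem:kernel-moments} for the main term. The step you flag as the main obstacle needs no new integration by parts (your direct argument is correct but redundant): Lemma~\ref{lem:kernel-approx} holds for every fixed $w<0$, so writing $\Phi_{w_j}=K_0+(\Phi_{w_j}-K_0)$ and applying it with $w=w_j$, $A=B$ gives $|\Phi_{w_j}(u)|\ll\min(1,1/|u|)$ on $|u|\le P$ at once, which is exactly how the paper handles the terms with $j\ge 1$.
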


\begin{proof}
Substituting $\beta = u/X$ and using \eqref{eq:Wj},
\[
\int_{|\beta| \le P/X} |T_q|^{s}\, d\beta
=
X^{s-1} L^{\theta}
\int_{|u| \le P} | G(u) |^{s}\, du,
\qquad
G := \lambda_0 K_0 + \Delta,
\]
where $\lambda_0 = \lambda_0(z, q)$ and
\[
\Delta(u)
=
\lambda_0 \big( \Phi_{z-1}(u) - K_0(u) \big)
+ \sum_{j=1}^{J_1 - 1} \lambda_j(z, q)\, L^{-j}
\Big( K_0(u) + \big( \Phi_{z-1-j}(u) - K_0(u) \big) \Big).
\]
By Lemma~\ref{lem:kernel-approx}, applied with $A = B$ to each
rescaled kernel $\Phi_{z-1-j}$ (the exponents $z - 1 - j$ are all
negative, and the variable satisfies $|u| \le P = L^{B}$
throughout), by $|K_0(u)| \le \min(1, 1/|u|)$, and by
\eqref{eq:lambda-props}, using $L^{-j} \le L^{-1}$ for $j \ge 1$,
\begin{equation}\label{eq:Delta-bound}
|\Delta(u)|
\ll
q^{-1+\varepsilon}\, h(u),
\qquad
h(u) := \min\Big( \frac{1}{L},\ \frac{1}{|u| L^{1/2}} \Big),
\end{equation}
with the convention $h(0) := 1/L$; here we absorbed the term
$L^{-1} \min(1, 1/|u|) \le h(u)$ into $h$: for $|u| \le L^{1/2}$ one has
$L^{-1} \min(1, 1/|u|) \le L^{-1}$, while for $|u| > L^{1/2}$ one
has $L^{-1}/|u| \le 1/(|u| L^{1/2})$. Also $|G(u)|$ and
$|\lambda_0 K_0(u)|$ are both
$\ll q^{-1+\varepsilon} \min(1, 1/|u|)$, since
$h(u) \le \min(1, 1/|u|)$. By Lemma~\ref{lem:perturbation},
\[
\big| |G|^{s} - |\lambda_0 K_0|^{s} \big|
\ll
\big( |\lambda_0 K_0|^{s - 1} + |\Delta|^{s - 1} \big) |\Delta|
\ll
q^{(-1+\varepsilon) s} \min\Big(1, \frac{1}{|u|}\Big)^{s - 1} h(u),
\]
so that
\[
\int_{|u| \le P}
\big| |G|^{s} - |\lambda_0 K_0|^{s} \big|\, du
\ll
q^{(-1+\varepsilon)s}
\int_{\R} \min\Big(1, \frac{1}{|u|}\Big)^{s-1}\, h(u)\, du
\ll
\frac{q^{(-1+\varepsilon)s}}{L},
\]
where the last integral was bounded by splitting at $|u| = 1$ and
$|u| = L^{1/2}$:
\[
\int_{|u| \le 1} \frac{du}{L}
+ \int_{1 \le |u| \le L^{1/2}} \frac{du}{|u|^{s-1} L}
+ \int_{|u| \ge L^{1/2}} \frac{du}{|u|^{s} L^{1/2}}
\ll
\frac{1}{L} + \frac{1}{L} + L^{-s/2}
\ll \frac{1}{L},
\]
using $s > 2$ in the middle integral (convergent at infinity) and
$L^{(1-s)/2}/L^{1/2} = L^{-s/2} \le L^{-1}$ in the third. Finally,
by Lemma~\ref{lem:kernel-moments},
\[
\int_{|u| \le P} |\lambda_0 K_0(u)|^{s}\, du
=
|\lambda_0|^{s}
\Big( A_s + O\big( P^{1-s} \big) \Big)
=
|\lambda_0|^{s} A_s
+ O\big( q^{(-1+\varepsilon)s} L^{-B(s-1)} \big),
\]
and $B(s-1) \ge 1$. Collecting the three displays proves
\eqref{eq:model-integral}.
\end{proof}

\subsubsection*{Assembly}

\begin{proof}[Proof of Theorem~\ref{thm:scalar}]
Write
$\gamma_0 = A_s\, \mathfrak G_{s}(z) / |\Gamma(z)|^{s}$, which is
the leading constant of the theorem. Combining
Lemmas~\ref{lem:model-replacement}, \ref{lem:model-integral}
and~\ref{lem:singular-series} (whose standing hypothesis
$B \ge 2/(s-2)$ holds by \eqref{eq:BJ-choice}),
\[
\int_{\fM} |S_{z}|^{s}\, d\alpha
=
X^{s-1} L^{\theta}
\Big(
A_s\, \frac{\mathfrak G_{s}(z)}{|\Gamma(z)|^{s}}
+ O\big( L^{-1} \big)
\Big)
=
\gamma_0\, X^{s-1} L^{\theta}
\Big( 1 + O\big( L^{-1} \big) \Big),
\]
where the $O(L^{-1})$ inside the first parenthesis collects: the
error of Lemma~\ref{lem:model-replacement} (relative size
$L^{-1}$); the correction of Lemma~\ref{lem:model-integral} summed
against $\sum_q \varphi(q)\, q^{(-1+\varepsilon)s} \ll 1$ (relative
size $L^{-1}$); and the singular-series tail (relative size
$L^{-1}$); and where we used
$\gamma_0 \ge A_s |\Gamma(z)|^{-s} > 0$ to convert the additive
error into a relative one. By Proposition~\ref{prop:minor-arcs}
and the choice of $B$ in \eqref{eq:BJ-choice}, which gives
$(s-2)(C - B/2) = -\big( s(1-z) + 1 \big) = \theta - 1$,
\[
\int_{\fm} |S_{z}|^{s}\, d\alpha
\ll
X^{s-1} L^{\theta - 1},
\]
which is also of relative size $L^{-1}$. Adding the two ranges
proves the asymptotic of Theorem~\ref{thm:scalar}. The constants
are ineffective because those of Theorem~\ref{thm:major} are;
every other step is effective.
\end{proof}

\section{Conclusion}\label{sec:conclusion}

We close by recording why the proof is arranged in this order, and
what it does not do.

The negative rational case is the basic case because
\[
\big( \zeta^{-a/b} \big)^{b}\, \zeta^{a} = 1
\]
is an integer-power relation. That relation produces the finite
identity of Proposition~\ref{prop:negative-identity} directly. The
endpoint $\mu = d_{-1}$ is the same mechanism with $a = b = 1$, and
gives the particularly transparent coefficients $5, -10, 10, -5, 1$
of \eqref{eq:mu-identity}. The positive rational case is then best
obtained from
\[
\zeta^{a/b} = \zeta \cdot \zeta^{-(b-a)/b},
\]
because this keeps every unrestricted long factor as a positive
power of $\zeta$. Thus every unrestricted long variable carries coefficient $1$:
essential in the type~I branch, and harmless in the type~II branch,
where the grouped coefficients are controlled in mean square.

The exponent $4/5$ comes from the same numerical balance as
Vinogradov's estimate for $\Lambda$. In the type~II case one side
is grouped into the interval $x^{2/5} < M \le x^{3/5}$, and the two
middle terms of the bilinear estimate are
\[
x M^{-1/2} \le x^{4/5},
\qquad
x^{1/2} M^{1/2} \le x^{4/5} .
\]
If no such grouping is available, the window-or-long-variable
lemma produces a long $\zeta$-variable, and the remaining variables
have product at most $x^{7/10}$ up to a constant depending only on
the number of variables. The type~I branch is then handled by the
two-range argument: the mean-square type~I estimate outside the middle range
$x^{3/10} < q < x^{7/10}$, and the pointwise $x^{o(1)}$ estimate only
inside it. This is precisely where the false logarithmic-loss
divisor-weighted type~I estimate of
Remark~\ref{rem:no-false-type-I} is avoided; the size of the admissible type~II window is what determines the
exponent, as discussed in the introduction.

Regarding effectivity, the proof gives the following precise picture:
every constant in Theorems~\ref{thm:main} and~\ref{thm:mu} and
Corollary~\ref{cor:minor} is effective. In the applications, the only
ineffectivity enters through Siegel's theorem in the proof of the
Siegel-Walfisz estimate for \(d_z\chi\), Proposition~\ref{prop:SW},
proved in Appendix~\ref{app:SW}; see Remark~\ref{rem:where-ineff}.

Three directions are left open. First, the minor-arc results and the identities behind them concern
rational exponents -- the major-arc analysis of
Section~\ref{sec:applications} already applies to every real
$0 < |z| < 1$ -- and the restriction is structural: the
identity rests on $b$-th roots in the algebra of arithmetic
functions and on the polynomial factorization of $X^b - Y^b$, so
real $z \in (-1, 1)$ is not reached by taking limits. An extension to real $z$ would, within the present approach,
require uniformity of Theorem~\ref{thm:main} in
$a$ and $b$ -- the constant $C(a, b)$ and the coefficient sizes
$\lambda_{b, m}$ are the obstructions -- together with rational
approximation of the exponent, or a genuinely different
construction. Second, the moment asymptotics of
Theorem~\ref{thm:scalar} are the harmonic-analysis input for
additive representation problems of partition type with the
weights $d_z$; carrying these out requires the corresponding
singular-series analysis, which we do not undertake here. Third, the hypothesis $s > 2$ in Theorem~\ref{thm:scalar} is used, beyond the threshold $s > 1$, at three points: the
minor-arc H\"older-Parseval estimate, the perturbative estimate for
the major-arc model integral, and the convergence of the singular
series. The range $1 < s \le 2$, where these estimates no longer
yield the same logarithmic saving, appears to require a different method; for
$0 < s \le 1$ the kernel and perturbation estimates change character
as well (the integral defining $A_s$ diverges at $s = 1$).

\section*{Acknowledgments}

The author wishes to thank Ayla Gafni for jointly working out the
details of Exercise~9 of \cite[Section~17.2.1]{MV}. The author is
grateful to Anji Dong and G\'erald Tenenbaum for reading parts of the manuscript
and providing valuable feedback. The author thanks Dimitris
Koukoulopoulos for a helpful suggestion about future research directions.
Moreover, the author also wishes to thank Kunjakanan Nath for fruitful discussions,
valuable feedback, and for bringing the announced result
\cite{MPR} to the author's attention.
Anthropic's Claude Opus 4.8 was used for proof development (with
emphasis on \S\S2.3, 2.4, 4.4, 5.3 and Appendix \ref{app:SW}),
exposition, and revision.


\appendix

\section{Proof of Proposition~\ref{prop:SW}}
\label{app:SW}

This appendix proves Proposition~\ref{prop:SW} by the classical
contour method, from the following standard inputs: the zero-free
region for Dirichlet $L$-functions \cite[\S 14]{Da}, Siegel's theorem
\cite[\S 21]{Da}, standard bounds for $L(s,\chi)^{\pm1}$ in a
zero-free region \cite[Theorem~11.4]{MVI}, the truncated Perron formula
\cite[Theorem~5.2 and Corollary~5.3]{MVI}, and Mertens' theorem.

\begin{proof}
Put $T = \exp\big( \sqrt{\log x} \big)$. We may assume $x$
sufficiently large in terms of $z$, $A$, $B$, smaller values being
absorbed into the implied constant. Let $\chi^*$ be the primitive
character of conductor $q^* \mid q$ inducing $\chi$; since $\chi$ is
non-principal, so is $\chi^*$.

For $\real s > 1$, the Euler product \eqref{eq:twist-series} factors
as
\begin{equation}\label{eq:app-factor}
L(s, \chi)^{z}
= L(s, \chi^*)^{z}
\prod_{\substack{p \mid q \\ p \nmid q^*}}
\Big( 1 - \frac{\chi^*(p)}{p^{s}} \Big)^{z},
\end{equation}
the finite product removing the Euler factors present for $\chi^*$
but absent for $\chi$; all powers are defined by the branches of the
absolutely convergent Euler products in $\real s > 1$.

By the classical zero-free region together with Siegel's theorem,
there is $c = c(B) > 0$, ineffective, such that
$L(s, \chi^*) \ne 0$ on the rectangle
\[
\mathcal R
= \Big\{ \sigma + it :\
\sigma \ge 1 - \frac{2c}{\sqrt{\log x}},\
|t| \le T + 1 \Big\} .
\]
Indeed, $q^* \le (\log x)^{B}$ and $|t| \le T + 1$ give
$\log\big( q^* (|t| + 3) \big) \ll_B \sqrt{\log x}$, so the zero-free
region \cite[\S 14]{Da} covers $\mathcal R$ except possibly for an
exceptional real zero $\beta_1$ of a real character. Siegel's theorem
\cite[\S 21]{Da} gives
$1 - \beta_1 \gg_{\eta} q^{-\eta} \ge (\log x)^{-\eta B}$ for every
fixed $\eta > 0$, and any choice $\eta < 1/(2B)$ places $\beta_1$
outside $\mathcal R$ for all large $x$, after decreasing $c$ if
necessary; the ineffectivity of the implied constant originates here.

Since $\mathcal R$ is simply connected and $L(s, \chi^*)$ has neither
zero nor pole there ($\chi^*$ being non-principal), the branch of
$\log L(s, \chi^*)$ from $\real s > 1$ continues to $\mathcal R$, and
we set $L(s, \chi^*)^{z} = \exp\{ z \log L(s, \chi^*) \}$ there. For
$\sigma > 0$ and $p \ge 2$ we have $|\chi^*(p)\, p^{-s}| < 1$, so
each finite factor in \eqref{eq:app-factor} continues holomorphically
as well, and $L(s, \chi)^{z}$ is holomorphic on $\mathcal R$.

On $\mathcal R$ we bound $L(s, \chi^*)^{\pm 1}$ via
\cite[Theorem~11.4]{MVI}. After shrinking $c$ once more if
necessary, $\mathcal R$ lies in the region
$\sigma \ge 1 - c_0/(2 \log(q^* (|t| + 3)))$ of that theorem, $c_0$
being the constant of the zero-free region. If $L(s, \chi^*)$ has no
exceptional zero, or if $|s - \beta_1| \ge 1/\log q^*$, then
estimates (11.6)--(11.7) there give
$|\log L(s, \chi^*)| \le \log\log( q^* (|t| + 3) ) + O(1)$, whence
$L(s, \chi^*)^{\pm 1} \ll (\log x)^{1/2}$ on $\mathcal R$ for all
large $x$. If instead $|s - \beta_1| < 1/\log q^*$, then estimate
(11.10) there gives
$|s - \beta_1| \ll |L(s, \chi^*)| \ll |s - \beta_1| (\log q^*)^{2}$,
while Siegel's theorem as above yields, on $\mathcal R$,
$\sigma - \beta_1 \ge c_1(\eta) (\log x)^{-\eta B}
- 2c (\log x)^{-1/2} \gg (\log x)^{-\eta B}$
for all large $x$, since $\eta B < 1/2$; hence
$L(s, \chi^*)^{\pm 1} \ll_{B, \eta} (\log x)^{1/2}$ in this regime
as well, with an ineffective threshold. In either case
$L(s, \chi^*)^{\pm 1} \ll_B (\log x)^{C_B}$; since $z$ is real,
$| L(s, \chi^*)^{z} | = | L(s, \chi^*) |^{z}
\ll_{z, B} (\log x)^{C_{z,B}}$. For the finite product, write $\delta = 2c/\sqrt{\log x}$, so that
$\sigma \ge 1 - \delta$ on $\mathcal R$, and recall that every prime
$p \mid q$ satisfies $p \le (\log x)^{B}$. For $z > 0$, each factor
has modulus at most $2^{z}$, so the product is
$\ll_z 2^{z\, \omega(q)} \ll_{z, B} (\log x)^{C_{z, B}}$. For
$z < 0$, we use
$| 1 - \chi^*(p)\, p^{-s} | \ge 1 - p^{-\sigma}$ and estimate the
inverse product logarithmically: for $x$ large enough that
$\sigma > 3/4$,
\[
\log \prod_{p \mid q} \big( 1 - p^{-\sigma} \big)^{-1}
\ll \sum_{p \mid q} p^{-\sigma} + O(1),
\]
the $O(1)$ accounting for the higher-order terms via
$\sum_p p^{-3/2} < \infty$. Since
$p^{\delta} = 1 + O( \delta \log p )$ uniformly for
$p \le (\log x)^{B}$, and $\sum_{p \mid q} \log p \le \log q$,
\[
\sum_{p \mid q} p^{-\sigma}
\le \sum_{p \mid q} p^{-1 + \delta}
\le \sum_{p \mid q} \frac{1}{p}
+ O\Big( \delta \sum_{p \mid q} \frac{\log p}{p} \Big)
\ll_B \log\log\log x + 1,
\]
using
$\sum_{p \mid q} 1/p
\le \sum_{p \le 2 \log 3q} 1/p + \omega(q)/(2 \log 3q)
\ll \log\log 3q \ll_B \log\log\log x + 1$
by Mertens' estimate together with $\omega(q) \le (\log q)/\log 2$,
and
$\delta \sum_{p \mid q} (\log p)/p \le \delta \log 3q
\ll_B (\log\log x)/\sqrt{\log x} \ll 1$. Hence
$\prod_{p \mid q} \big( 1 - p^{-\sigma} \big)^{-|z|}
\ll_{z, B} (\log\log x)^{C_{z, B}}$, and in all cases the finite
Euler factors in \eqref{eq:app-factor} contribute at most a fixed
power of $\log x$. Hence
\begin{equation}\label{eq:app-Fbound}
L(s, \chi)^{z} \ll_{z, B} (\log x)^{C_{z,B}}
\qquad (s \in \mathcal R).
\end{equation}

Let $\kappa = 1 + 1/\log x$. Since $|d_z(n) \chi(n)| \le 1$
(Lemma~\ref{lem:dz}(b)), the truncated Perron formula
\cite[Corollary~5.3]{MVI} gives
\begin{equation}\label{eq:app-perron}
\sum_{n \le x} d_z(n) \chi(n)
= \frac{1}{2\pi i}
\int_{\kappa - iT}^{\kappa + iT} L(s, \chi)^{z}\,
\frac{x^{s}}{s}\, ds
+ O\Big( \frac{x (\log x)^{2}}{T} \Big),
\end{equation}
and the error term is $\ll_A x (\log x)^{-A}$ by the choice of $T$.
Deform the contour to the boundary of the rectangle with vertical
sides $\real s = \kappa$ and
$\real s = \sigma_0 := 1 - c/\sqrt{\log x}$ and horizontal sides
$|t| = T$; no singularity is crossed, by the holomorphy of
$L(s, \chi)^{z}$ on $\mathcal R$. By \eqref{eq:app-Fbound}, the left
vertical side contributes
\[
\ll_{z, B}\
x^{\sigma_0} (\log x)^{C}
\int_{-T}^{T} \frac{dt}{1 + |t|}
\ \ll_{z, B}\
x \exp( -c \sqrt{\log x}) (\log x)^{C + 1}
\ll_{z, A, B} x (\log x)^{-A},
\]
and the horizontal sides contribute
$\ll_{z, B} x (\log x)^{C} / T \ll_{z, A, B} x (\log x)^{-A}$.
Combining with \eqref{eq:app-perron} proves the proposition; the
implied constant is ineffective exactly through $c(B)$, that is,
through Siegel's theorem.
\end{proof}

\end{document}